\documentclass[11pt, a4paper, titlepage]{article}

\usepackage{Stylefile}

\usepackage[T1]{fontenc}
\usepackage[utf8]{inputenc}
\usepackage{lmodern}
\usepackage{textcomp}
\usepackage{microtype}

\usepackage[english]{babel}
\usepackage{csquotes}

\usepackage[backend=biber,
			style=ext-numeric,
			isbn=false,
			doi=false,
            maxbibnames=99,
            sortcites,
            articlein=false
			]{biblatex}
\usepackage{graphicx}
\usepackage{setspace}
\usepackage{caption}
\usepackage{subcaption}

\usepackage{amsmath}
\usepackage{amsfonts}
\usepackage{amssymb}
\usepackage{amsthm}
\usepackage{multicol}
\usepackage{tikz}
\usetikzlibrary{arrows.meta, decorations.markings}
\usepackage{pgfplots}
\usepackage{pdfpages}
\pgfplotsset{compat=newest}
\usepackage{nicematrix}
\usepackage{mathtools}
\usepackage{xcolor}
\usepackage{datetime}

\numberwithin{equation}{section}

\newcommand*{\R}{\mathbb{R}}

\DeclareMathOperator{\trace}{tr}
\DeclareMathOperator{\sgn}{sgn}

\newcommand*{\eps}{\varepsilon}
\newcommand{\comment}[1]{\textcolor{red}{[\quad #1 \quad]}}
\newcommand{\lcomment}[1]{\textcolor{blue}{[\quad #1 \quad]}}

\usepackage{thmtools}

\declaretheorem[
	name=Theorem,
	numberwithin=section
	]{thm}
\declaretheorem[
	name=Lemma,
	sibling=thm,
	]{lem}
\declaretheorem[
	name=Proposition,
	sibling=thm,
	]{prop}

\declaretheorem[
	name=Remark,
	sibling=thm
	]{rem}

\usepackage[pdftex,pdfpagelabels,hidelinks]{hyperref}

\begin{document}


\makeatletter
\tikzset{
    dot diameter/.store in=\dot@diameter,
    dot diameter=2pt,
    dot spacing/.store in=\dot@spacing,
    dot spacing=8pt,
    dots/.style={
        line width=\dot@diameter,
        line cap=round,
        dash pattern=on 0pt off \dot@spacing
    }}
\tikzset{->-/.style={decoration={
  markings,
  mark=at position #1 with {\arrow{{Straight Barb}}}},postaction={decorate}}}

\tikzset{->>-/.style={decoration={
  markings,
  mark=at position #1 with {\arrow{{Straight Barb} {Straight Barb}}}},postaction={decorate}}}

\makeatother

\renewcommand{\shorttitle}{GRN}

\hypersetup{
pdftitle={Different Singular Limits in a Gene Regulatory Network with Multiple Small Parameters},
pdfauthor={L.~Baumgartner, S.~Jelbart},
pdfkeywords={Multi-parameter singular perturbations, Gene regulatory network, Geometric singular perturbation theory, Geometric blow-up},
}

\vspace*{1.2cm}

\begin{center}
\LARGE Different Singular Limits in a Gene Regulatory Network \\ with Multiple Small Parameters
\end{center}

\begin{center}
\renewcommand{\thefootnote}{\alph{footnote}}
    \footnote[0]{\small{Date: July 16, 2026.}}
    \large L.~Baumgartner\footnote{Institute of Analysis and Scientific Computing, TU Wien, Wiedner Hauptstrasse 8-10, 1040, Vienna, Austria. \\
    E-Mail: lukas.baumgartner@tuwien.ac.at} \& S.~Jelbart\footnote{School of Mathematical Sciences, University of Adelaide, North Terrace Campus, 5000, Adelaide, SA, Australia. \\
    E-Mail: sam.jelbart@adelaide.edu.au}  \\
\end{center}

\onehalfspacing 

\setcounter{footnote}{0}

\begin{abstract}
We 
consider a planar ODE system from an important class of models for gene regulatory dynamics. 
The system depends singularly on the \textit{steepness parameters} $0<\eps_1,\eps_2\ll1$ and converges to a piecewise-smooth system
as these parameters tend to zero. 
Unlike previous studies, 
we do not assume that $\eps_1 = \eps_2$. As a consequence, the dynamics when $(\eps_1, \eps_2) \to (0,0)$ depends upon how the limit is taken.
Using a preliminary blow-up in parameter space, we identify three 
distinct singular limits. We perform a two-parameter bifurcation analysis in each case, and apply multiple geometric blow-ups in variable and parameter space to determine the bifurcation structure and the associated
global dynamics.
Bogdanov-Takens bifurcations are revealed in two of three cases, and in one case in particular, a \textit{regularised visible-invisible two-fold singularity} is shown to organise the unfolding of singular bifurcations in the vicinity of canards. 
Our results show that the qualitative dynamics and overall sensitivity of the system to parameter variation depends on the relative size of the steepness parameters. More generally, the analytical framework developed herein provides a systematic approach to singular perturbation problems with multiple independent small parameters that should apply well beyond gene regulatory network models.
\end{abstract}

\keywords{Multi-parameter singular perturbations \and Gene regulatory networks \and Geometric singular perturbation theory \and Geometric blow-up}
\vspace{-0.3cm}
\classifications{34E10, 34E13, 34E15, 34E17, 37N25, 94C11}


\section{Introduction} \label{sec:Intro}
We present an in-depth singular perturbation analysis of a low-dimensional model for gene regulatory dynamics that was introduced by Plahte and Kjøglum in \cite{Plahte_GRN_2005}. The model is a toy problem which is intended to capture some of the main qualitative features of an important class of ODE models which date back to \cite{Glass1973,Thomas1973}, and which have since received a lot of attention, see e.g.~\cite{deJong2002,Edwards2015,Machina2013a,Machina2013b,Polynikis2009,Quee2021}. Plahte and Kjøglum's so-called \emph{fundamental problem} is a planar system of nonlinear ODEs given by
\begin{equation} \label{eq:fund_prob}
    \begin{aligned}
        \Dot{x}&=H(x,\eps_1,\theta_1)+H(y,\eps_2,\theta_2)-2 H(x,\eps_1,\theta_1) H(y,\eps_2,\theta_2)-\alpha_1 x, \\
        \Dot{y}&=1-H(x,\eps_1,\theta_1) H(y,\eps_2,\theta_2)- \alpha_2 y, 
    \end{aligned}
\end{equation}
where $(x,y) \in \R_+^2$, $\alpha_1,\, \alpha_2 \in \R_+$ are degredation parameters, and $H$ is a Hill function, i.e.
\begin{equation} \label{eq:Hill_function}
    H(x,\eps_1,\theta_1) = \frac{x^{1 / \eps_1}}{x^{1 / \eps_1}+\theta_1^{1 / \eps_1}}, \qquad  
    H(y,\eps_2,\theta_2) = \frac{y^{1 / \eps_2}}{y^{1 / \eps_2}+\theta_2^{1 / \eps_2}} .
\end{equation}
Here, $\theta_1, \, \theta_2 \in \R_+$ determine the switching locations.
The parameters $\eps_1, \eps_2$ are referred to as \textit{switching parameters} because they control the steepness of the first and second Hill function in \eqref{eq:Hill_function} respectively; see Figure \ref{fig:Hill_function}. For modelling purposes it is common to assume that $0 < \eps_1, \eps_2 \ll 1$, and subsequently to consider the singular limits $\eps_i \to 0$, $i=1,2$, in which the Hill functions converge pointwise to step functions:
\begin{equation} \label{eq:Hill_limit}
  \lim_{\eps_i \to 0}  H(z,\eps_i,\theta_i)= \begin{cases}
        1, & z > \theta_i, \\
        \frac{1}{2}, & z = \theta_i, \\
        0, & z < \theta_i.
    \end{cases}
\end{equation}
System \eqref{eq:fund_prob} with $0 < \eps_1, \eps_2 \ll 1$ has been analysed by a number of authors, e.g.~\cite{Simon_thesis,Machina2011,Edwards2015,Plahte_GRN_2005,Quee2021}, starting of course with \cite{Plahte_GRN_2005}. Our main focus in this work is to relax the `equal steepness assumption' that $\eps_1 = \eps_2$, which appears in all of these works.\footnote{Plahte and Kjøglum themselves considered the effects of varying steepnesses in system \eqref{eq:fund_prob} towards the end of \cite[Sec.~8.2]{Plahte_GRN_2005}, however, these considerations were kept brief in order to focus on other considerations.} In this work, we shall consider the dynamics of system \eqref{eq:fund_prob} over a neighbourhood of the origin in the (positive quadrant of) the $(\eps_1, \eps_2)$-plane. Our findings should be relevant to the study of larger networks to some extent, where the counterpart to this assumption takes the form $\eps_1 = \eps_2 = \cdots = \eps_n$ for all steepness parameters $\eps_i$. Our motivations for relaxing the equal-steepness assumption are two-fold:
\begin{figure}[t!]
    \centering
    \includegraphics[width=0.35\linewidth]{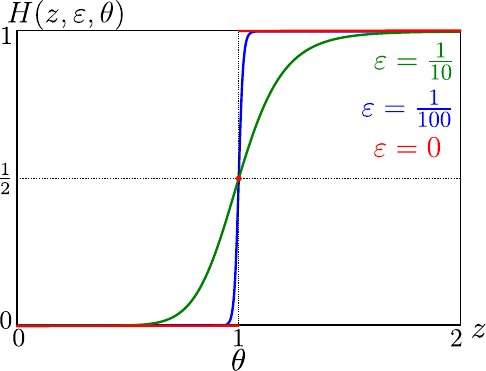}
    \caption{The Hill function $H(z,\eps,\theta)$ converges to a step function in the limit $\eps \to 0$, as described by \eqref{eq:Hill_limit}. Convergence is shown for decreasing values of $\eps = 10^{-1}$ (green), $\eps = 10^{-2}$ (blue) and $\eps \to 0$ (red).}
    \label{fig:Hill_function}
\end{figure}
\begin{itemize}
    \item Although it is very common in the literature, the equal steepness assumption is an analytical assumption that is generally not true in realistic models for GRN dynamics. Indeed, existing numerical investigations indicate non-trivial dependence of the qualitative dynamics on the steepness parameters; see e.g.~\cite{Lewis1991, Lewis1992, Killough2005, Wilds2009}. System \eqref{eq:fund_prob} presents a well-known case study which can be used to probe the extent to which the `symmetry' induced by this assumption biases our predictions about qualitative dynamics in GRN modelling.
    \item From a singular perturbations standpoint, system \eqref{eq:fund_prob} with $0 < \eps_1, \eps_2 \ll 1$ presents an interesting case study for the development of analytical tools for the study of systems with multiple independent small parameters. Our approach will be based on the so-called \textit{geometric blow-up method} \cite{dumortier1996canard,Krupa_2001_Extend,krupa_extending_transcritical}. The geometric blow-up method has already been used in GRN analyses, e.g.~for the fundamental problem with $\eps_1 = \eps_2$ and fixed parameters $\alpha_1, \alpha_2, \theta_1, \theta_2$ in \cite{Simon_thesis} (an MSc thesis supervised by P.~Szmolyan and precursor to this work), and for an activator-inhibitor model in \cite{Jelbart2026}.
\end{itemize}

Due to \eqref{eq:Hill_limit}, the fundamental problem \eqref{eq:fund_prob} is a piecewise-smooth (PWS) system in \textit{either limit} $\eps_1 \to 0$ or $\eps_2 \to 0$. In this sense, it is singularly perturbed with respect to both $\eps_1$ and $\eps_2$. 
Away from the sets
\begin{equation} \label{eq:switching_manifolds}
    \Sigma_1=\{(x,\theta_2) : x \geq 0 \}, \qquad 
    \Sigma_2=\{(\theta_1,y) : y \geq 0\},
\end{equation}
referred to as \textit{switching manifolds} and shown in yellow in Figure \ref{fig:PWS_phase_space}, the system is regularly perturbed. 
%
%
\begin{rem}
    Letting $(\eps_1,\eps_2) \to (0,0)$ in system \eqref{eq:fund_prob} leads to a PWS system with four isolated regions
    $$(A) :\,x > \theta_1,\, y > \theta_2, \qquad (B) :\, x< \theta_1,\, y > \theta_2, \qquad (C):\, x< \theta_1,\, y < \theta_2, \qquad (D):\, x > \theta_1,\, y < \theta_2,$$
    separated by the switching manifold $\Sigma_1 \cup \Sigma_2$ (see Figure \ref{fig:PWS_phase_space}). In all four regions (A)-(D), one obtains a decoupled linear system:
    \begin{equation} \label{eq:piecewise_smooth_systems}
    (A) :
    \begin{cases}
        \dot x = -\alpha_1 x, \\
        \Dot{y}= -\alpha_2 y,
    \end{cases}
    \quad 
    (B) : 
    \begin{cases}
        \dot x = 1 -\alpha_1 x, \\
        \Dot{y}= 1 -\alpha_2 y,
    \end{cases}
    \quad 
    (C) : 
    \begin{cases}
        \dot x = -\alpha_1 x, \\
        \Dot{y}= 1 - \alpha_2 y,
    \end{cases}
    \quad 
    (D) :
    \begin{cases}
        \dot x = 1 -\alpha_1 x, \\
        \Dot{y} = 1 -\alpha_2 y,
    \end{cases}
    \end{equation}
    each of which can be solved explicitly. To leading order, the dynamics of system \eqref{eq:fund_prob} on compact subsets of $\R^2_+ \setminus (\Sigma_1 \cup  \Sigma_2)$ is described by the decoupled linear systems above. 

    The fact that the limiting system \eqref{eq:piecewise_smooth_systems} is PWS has prompted a number of analyses based on Filippov theory. Our approach will be based in singular perturbation theory. In that sense, this work is complementary to e.g.~\cite{Ironi2011,Plahte_GRN_2005}, however, our approach will be considerably more geometric in flavour, and thereby more closely aligned with more recent analyses in e.g.~\cite{Jelbart2026,Simon_thesis}. We refer to \cite{Machina2013a,Machina2013b,Machina2011} for more on the relationship between singular perturbation and Filippov-based approaches in the context of GRN modelling.
\end{rem}

\begin{figure}[t!]
    \centering
    \includegraphics[width=0.4\linewidth]{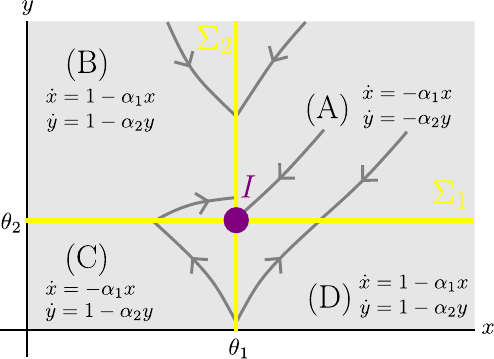}
    \caption{Sketch of the phase space associated with the piecewise-linear system \eqref{eq:piecewise_smooth_systems} obtained from system \eqref{eq:fund_prob} in the double singular limit $(\eps_1, \eps_2) \to (0,0)$. The switching manifolds $\Sigma_1, \Sigma_2$ are shown in yellow, and their intersection $I = \Sigma_1 \cap \Sigma_2$ is indicated in purple. Away from $\Sigma_1 \cup \Sigma_2$, the dynamics of system \eqref{eq:fund_prob} with $0 < \eps_1, \eps_2 \ll 1$ can be approximated using regular perturbation methods.}
    \label{fig:PWS_phase_space}
\end{figure}

A precise description of the leading order dynamics near $\Sigma_1 \cup \Sigma_2$ is considerably more involved, and it is this that will occupy us for the remainder of the manuscript. The dynamics near the intersection point $I : (\theta_1,\theta_2)$, shown in purple in Figure \ref{fig:PWS_phase_space}, is particularly degenerate in the limit $(\eps_1,\eps_2) \to (0,0)$. As we shall see, the qualitative dynamics of system \eqref{eq:fund_prob} close to $I$ depends sensitively on the parameters $\alpha_1, \alpha_2, \theta_1, \theta_2$ \textit{as well as} the relative size of $\eps_1$ and $\eps_2$.


Our main results describe the qualitative dynamics of system \eqref{eq:fund_prob} in an entire neighbourhood of $(0,0)$ in the positive quadrant of the $(\eps_1,\eps_2)$-plane, and in compact subsets of the two-parameter region defined by $0 < \kappa_1 < \kappa_2 < 1$, where
\begin{equation} \label{eq:product_parameters_kappa}
    \kappa_1:=\alpha_1\theta_1, \qquad 
    \kappa_2:=\alpha_2\theta_2 ,
\end{equation}
are considered as bifurcation parameters which replace $\alpha_1$ and $\alpha_2$ (we keep $\theta_1$ and $\theta_2$ fixed). The reasons for restricting to $0 < \kappa_1 < \kappa_2 < 1$ will become clearer as the analysis proceeds, see also Remark \ref{rem:parameter_restriction}. Three distinct two-parameter bifurcation sets are obtained, depending on the relative size of $\eps_1$ and $\eps_2$. This is a consequence of the fact that the small parameter space in problems involving more than one small parameter divides into different regions $B_i$, each of which corresponds to a different singular limit \cite{baumgartner_robertson_2025,Jelbart2026,Kuehn_double_limits_2022, Kristiansen_2024}. Our analysis reveals the existence of three such regions. 
The corresponding asymptotic regimes are
\[
B_1 : \eps_2 \ll \eps_1 \ll 1, \qquad 
B_2 : \eps_1 \sim \eps_2, \qquad 
B_3 : \eps_1 \ll \eps_2 \ll 1.
\]
In the MSc thesis \cite{Simon_thesis}, the geometric blow-up method was used to resolve the loss of smoothness which occurs in \eqref{eq:fund_prob} along the switching manifolds in the singular limit. The authors restricted to the special case with $\eps_1=\eps_2$ and fixed parameters $(\theta_1, \theta_2, \alpha_1, \alpha_2) = (1,1,0.6,0.9)$. 
This corresponds to one particular ray within the region $B_2$, and one particular choice of $(\kappa_1, \kappa_2, \theta_1, \theta_2)$. In this work, we build upon their ideas, and extend them in order to cover the much larger range of parameters specified above. Following their approach, we start by resolving the loss of smoothness by (i) blowing up the intersection point $I = \Sigma_1 \cap \Sigma_2$ to a sphere, and subsequently (ii) blowing up the four remaining branches of $\Sigma_1 \cup \Sigma_2 \setminus I$ to cylinders which are attached to the sphere. In the blown-up state space, solutions of \eqref{eq:fund_prob} extend uniquely to the blown-up switching manifolds and the global structure of trajectories becomes visible. Sliding motions along the switching manifolds are revealed as reduced flows along normally hyperbolic critical manifolds on the blow-up cylinders (these are identified via more classical scaling/boundary layer type arguments in \cite{Plahte_GRN_2005}). These critical manifolds perturb to Fenichel-type slow manifolds in the blown-up space, and blow down to locally invariant manifolds in system \eqref{eq:fund_prob}. Although the locally invariant manifolds obtained after blowing down are not slow manifolds in the sense of Fenichel \cite{Fenichel_1979}, they do inherit the usual invariance and normal attractivity/repulsivity properties from their Fenichel-type counterparts in the blown-up space. This is directly analogous to the identification of sliding dynamics via geometric blow-up in regularised PWS systems; see e.g.~\cite{Bonet2016,Buzzi2006,Kristiansen2019c,Llibre2009}.

While the invariant manifolds associated with sliding motions play an important role in our analysis, the bulk of this work will focus on the dynamics in a neighbourhood about $I = \Sigma_1 \cap \Sigma_2$, since it is here that the key bifurcations are unfolded. In the parameter region of interest, the system has either 1, 2 or 3 equilibria. One of these is a stable node which lies on an attracting critical manifold that is uncovered after one or two cylindrical blow-ups about the right-hand branch of $\Sigma_1$, depending on whether one works in $B_{2,3}$ or $B_1$ respectively. This equilibrium remains bounded away from $I$ in the parameter region of interest. The remaining equilibria, when they exist, lie in a neighbourhood about $I$, and their stability and type depend upon the bifurcation parameters $(\kappa_1,\kappa_2)$. We obtain three different two-parameter bifurcation sets in the above-specified region of the $(\kappa_1, \kappa_2)$-plane; one for each region $B_i$. 
The bifurcation set associated with small parameters $(\eps_1,\eps_2) \in B_1$ is the simplest; it consists only of a saddle-node curve correlated with the creation and destruction of the two equilibria close to $I$ under variation in $\kappa_1$ or $\kappa_2$. This saddle-node curve persists in $B_2$ and $B_3$, and has the same leading order parametrization in each case. We believe that this is a general feature of GRN models in the wider class of systems (we refer again to \cite{Glass1973,Thomas1973,Edwards2015,Machina2013b,Plahte_GRN_2005,Polynikis2009}), i.e.~we believe that saddle-node bifurcations are `robust to variation in steepness parameters'. Our results in $B_2$ and $B_3$ show that this is not the case for other bifurcation types. In $B_2$ we identify a regular Bogdanov-Takens bifurcation, along with the usual branches of (in this case subcritical) Hopf and homoclinic bifurcations. The most complicated dynamics occur in $B_3$. Here we show that the relevant system naturally features a \textit{regularised visible-invisible two-fold}, the normal form of which was recently treated in detail using geometric blow-up in \cite{Kristiansen_2023} (very similar unfoldings were considered in the earlier works \cite{Bonet2018,Kristiansen_2015}). Among other things, the results in \cite{Kristiansen_2023} allow us to prove the existence of canard cycles in a narrow region of the $(\kappa_1, \kappa_2)$-plane. Following this, we proceed to identify a singular Bogdanov-Takens bifurcation that occurs as an equilibrium traverses the neighbourhood associated with the visible-invisible two-fold singularity under parameter variation. The associated codimension-two bifurcation is `singular' in the sense that it requires several additional blow-ups and an associated non-trivial scaling in order to identify it. We provide a partial description of the associated Hopf and homoclinic curves, and outline the incomplete canard explosion that we expect to be associated with the identified geometric structures. 

Despite the complicated technicalities, which we interpret as a necessary consequence of the presence of multiple singular perturbation parameters, we would like to emphasize the (relatively) constructive approach adopted herein. The analytical approach, which we believe can be adapted to singular perturbation problems with multiple small parameters more generally, can be summarized as follows:
\begin{itemize}
    \item[1.] Identify the regions $B_i$ in the small parameter space which correspond to distinct singular limits (here $B_1$, $B_2$ and $B_3$);
    \item[2.] For each region $B_i$, determine the bifurcation set in the specified bifurcation parameters. Use local coordinates associated with a blow-up in parameters if necessary;
    \item[3.] For each region in each bifurcation set, determine the geometry and dynamics in the (possibly blown-up) phase space.
\end{itemize}
We refer to \cite{baumgartner_robertson_2025} for a recent illustration of this approach in the context of the Robertson model, a chemical reaction and well-known benchmark problem for numerical solvers of stiff ODEs, which skips over Step 2 due to the fact that the bifurcations were not considered. The approach is also complementary to the recent analysis in \cite{Jelbart2026}, in which geometric blow-up was used to investigate the validity of quasi-steady state reductions in larger GRN models which involve equations which govern mRNA translation; yet another problem stemming from the presence of multiple small parameters in GRN modelling. Overall, we are hopeful that this approach will be valuable for a wide range of singular perturbation problems featuring multiple small parameters.

The remainder of the manuscript is structured as follows. We start with an extensive presentation of our main results and geometrical constructions in Section \ref{sec:Main_results}, which aims to provide a clear geometric picture early on.
In Section \ref{sub:main_results_bifurcation_diagrams} we present and describe our main results, i.e.~the limiting two-parameter bifurcation sets associated with each region $B_i$, $i=1,2,3$ in the $(\eps_1, \eps_2)$-plane. In Section \ref{sub:main_results_blow-up} we provide an informal description of the blow-up construction that is used in later sections to prove these results, along with a description of the global geometry and dynamics in each case. In Sections \ref{sec:Region_B2}, \ref{sec:Region_B1} and \ref{sec:Region_B3} we present the detailed geometric blow-up analyses and prove our main results in the three cases corresponding to regions $B_2$, $B_1$ and $B_3$ respectively. We conclude with a summary and outlook in Section \ref{sec:Outlook}. 

\section{Main results} \label{sec:Main_results}


Our main results describe the dynamics and bifurcation structure of the fundamental problem \eqref{eq:fund_prob} with \eqref{eq:product_parameters_kappa}, i.e.~of the system
\begin{equation} \label{eq:fund_prob_kappa}
    \begin{aligned}
        \Dot{x}&=H(x,\eps_1,\theta_1)+H(y,\eps_2,\theta_2)-2 H(x,\eps_1,\theta_1) H(y,\eps_2,\theta_2)-\kappa_1 \theta_1^{-1} x, \\
        \Dot{y}&=1-H(x,\eps_1,\theta_1) H(y,\eps_2,\theta_2)- \kappa_2 \theta_2^{-1} y.
    \end{aligned}
\end{equation}
The parameters $\theta_1, \theta_2 > 0$ are fixed, $\kappa_1, \kappa_2 > 0$ are bifurcation parameters, and $(\eps_1, \eps_2) \in \mathcal U := B((0,0), \delta) \cap \{ \eps_1, \eps_2 \geq 0\}$ for some $\delta>0$. We further decompose the neighborhood $\mathcal U$ into three different regions, which we denote by $B_1$, $B_2$ and $B_3$, separated by the lines
\begin{equation}
    \label{eq:C1C2}
    C_1: \eps_2=\beta_1 \eps_1, \qquad C_2: \eps_2=\beta_2 \eps_1,
\end{equation}
see Figure \ref{fig:scaling_regions}. The regions $B_i$ can be written as
\begin{equation}
\label{eq:B_i}
    \begin{split}
    B_1 &= \{ (\eps_1, \eps_2) \in \mathcal U : \eps_2 < \beta_1 \eps_1 \} , \\
    B_2 &= \{ (\eps_1, \eps_2) \in \mathcal U : \beta_1 \eps_1 \leq \eps_2 \leq \beta_2 \eps_1 \} , \\
    B_3 &= \{ (\eps_1, \eps_2) \in \mathcal U : \eps_1 < \beta_2^{-1} \eps_2 \} . 
    \end{split}
\end{equation}
The constants $\beta_2 > \beta_1 > 0$ are independent of $(\eps_1,\eps_2)$, but $\beta_1$ ($\beta_2$) will need to be fixed sufficiently small (large) for the validity of our results.

\begin{figure}[t!]
\centering
\begin{subfigure}{0.3\textwidth}
    \begin{tikzpicture}[scale=1]
			
        \draw[] (0,-0.5) -- (0,2);
        \draw[] (0,2) -- (0,3.5);
        \draw[] (-0.5,0) -- (2,0);
        \draw[] (2,0) -- (3.5,0);
        
        \draw[scale=1, domain=0:3, smooth, variable=\x, line width=0.5mm, dotted] plot ({\x}, {(1/3)*\x});
        \draw[scale=1, domain=0:1, smooth, variable=\x, line width=0.5mm, dotted] plot ({\x}, {(3)*\x});
        \draw (83:2) node{$\textcolor{black}{B_3}$};
        \draw (45:2) node{$\textcolor{black}{B_2}$};
        \draw (7:2) node{$\textcolor{black}{B_1}$};
        \draw (45:2.8) node{$\textcolor{black}{\mathcal U}$};
        \draw (18:3.5) node{$C_1$};
        \draw (69:3.5) node{$C_2$};
        \draw[] (2.5, 0) arc (0:90:2.5);
        \draw (3.8,0) node{$\varepsilon_1$};
        \draw (2.5,-0.2) node{$\delta$};
        \draw (0,3.8) node{$\varepsilon_2$};
        \filldraw[red] (0,0) circle (2pt);

			\end{tikzpicture}
            \caption{}
            \label{fig:scaling_regions}
\end{subfigure}
\begin{subfigure}{0.15\textwidth}
\begin{tikzpicture}
       \path[->]
    (1.3,2) edge[bend right] node [left] {} (0.3,2);
    \draw (0.8,2.5) node{$\Phi_{par}$};
    \draw (0,0) node{};
\end{tikzpicture}
\end{subfigure}
\begin{subfigure}{0.3\textwidth}
    \begin{tikzpicture}[scale=1]
        \draw[] (0,-0.5) -- (0,3.5);
        \draw[] (-0.5,0) -- (3.5,0);
        
        \draw[red,line width=1mm] (2, 0) arc (0:90:2);

        \draw[orange] (2,0) -- (2,1.5);
        \draw[orange] (2,0) -- (3.5,0);

        \draw[cyan] (0,2) -- (0,3.5);
        \draw[cyan] (0,2) -- (1.5,2);
        
            (0:2) -- (0:3) -- (0:3) arc(0:15:3)-- (15:2) --cycle;
            (15:2) -- (15:3) -- (15:3) arc(15:25:3)-- (25:2) --cycle;
            (25:2) -- (25:3) -- (25:3) arc(25:35:3)-- (35:2) --cycle;
            (35:2) -- (35:3) -- (35:3) arc(35:45:3)-- (45:2) --cycle;
            (45:2) -- (45:3) -- (45:3) arc(45:55:3)-- (55:2) --cycle;
            (55:2) -- (55:3) -- (55:3) arc(55:65:3)-- (65:2) --cycle;
            (65:2) -- (65:3) -- (65:3) arc(65:75:3)-- (75:2) --cycle;
            (75:2) -- (75:3) -- (75:3) arc(75:90:3)-- (90:2) --cycle;
            \draw[line width=0.5mm,dotted] (15:2) -- (15:3.4);
            \draw[line width=0.5mm,dotted] (75:2) -- (75:3.4);
        \draw (83:2.7) node{$\Bar{B}_3$};
        \draw (45:2.7) node{$\Bar{B}_2$};
        \draw (7:2.7) node{$\Bar{B}_1$};
        \draw (3.8,0) node{$\Bar{\varepsilon}_1$};
        \draw (0,3.8) node{$\Bar{\varepsilon}_2$};
        \draw[orange,line width=1mm] (3.2,-0.3) node{$\mathcal{P}_1$};
        \draw[orange,line width=1mm] (3.4,0.2) node{$r_1$};
        \draw[orange,line width=1mm] (2.3,1.5) node{$\Tilde{\eps}_2$};
        \fill [orange,opacity=0.5] (2,0) rectangle (2.3,1.1);
        \draw[cyan,line width=1mm] (-0.3,3.2) node{$\mathcal{P}_2$};
        \draw[cyan,line width=1mm] (0.2,3.4) node{$r_2$};
        \draw[cyan,line width=1mm] (1.5,2.3) node{$\Tilde{\eps}_1$};
        \fill [cyan,opacity=0.5] (0,2) rectangle (1.1,2.3);
        \draw[] (3.5,1) node{$\Bar{C}_1$};
        \draw[] (1,3.5) node{$\Bar{C}_2$};

			\end{tikzpicture}
            \caption{}
\end{subfigure}
			\caption{(a): The regions $B_1$, $B_2$ and $B_3$ defined in \eqref{eq:B_i}, corresponding to the scaling regimes $\eps_2 \ll \eps_1$, $\eps_1 \approx \eps_2$, $\eps_1\ll \eps_2$, respectively. (b): Parameter blow-up $\Phi_{par}$ of $(\eps_1,\eps_2)=(0,0)$, as defined in \eqref{eq:parameter_blow-up}, which blows the origin in the figure on the left up to the quarter-circle shown in red. The parameter charts $\mathcal{P}_1$, $\mathcal{P}_2$ and their local coordinate axes are shown in orange, blue respectively, and the pre-images $\bar B_i$ and $\bar C_i$ of the regions $B_i$ and curves $C_i$ respectively are also indicated. We also illustrate the rectangular geometry of the domains $B_1$ and $B_3$ in local coordinates $\mathcal P_1$ and $\mathcal P_2$ in shaded orange and blue respectively.} 
			\label{fig:parameter_blow-up}
\end{figure}

\begin{rem}
\label{rem:overlapping}
    As we have defined them above, the regions $B_i$, $i=1,2,3$, are non-intersecting for any fixed choice of $\beta_1$ and $\beta_2$. We have chosen to define them this way for clarity, however, it is important for the analysis presented in later sections that $B_2$ can be enlarged (by varying the constants $\beta_1$ and  $\beta_2$) so that $B_1 \cap B_2 \neq \emptyset$ and $B_2 \cap B_3 \neq \emptyset$.
\end{rem}

We present results on the following:
\begin{enumerate}
    \item[(i)] Two-parameter bifurcation sets on the triangular region of the $(\kappa_1, \kappa_2)$-plane defined by
    \[
    \Lambda := \{(\kappa_1,\kappa_2) \in \R^2:0<\kappa_1<\kappa_2<1\} .
    \]
    We obtain a distinct bifurcation set for each region $B_i$, $i=1,2,3$.
    \item[(ii)] The structure of the phase space after geometric blow-up and desingularisation in each region $B_i$, $i=1,2,3$.
\end{enumerate}
We begin with the former.

\begin{rem} \label{rem:parameter_restriction}
    The boundaries of the parameter set $\Lambda$ are correlated with important changes in the overall geometry and dynamics of system \eqref{eq:fund_prob_kappa}. In particular:
    \begin{enumerate}
        \item[(i)] For $\kappa_2>1$ there exists an equilibrium in $\R^2_+ \setminus (\Sigma_1 \cup \Sigma_2)$, which lies in a compact subset of region $(D)$; see Figure \ref{fig:PWS_phase_space}.
        \item[(ii)] For $\kappa_1>\kappa_2$ there is an equilibrium close to $\Sigma_2$ (instead of $\Sigma_1$ as in the situation described by Proposition \ref{prop:node} below).
    \end{enumerate}
    We expect the details associated with the variation of parameters over either one of these boundaries to be quite involved, and do not consider either case further herein.
\end{rem}

			
        


\subsection{Two-parameter bifurcation sets}
\label{sub:main_results_bifurcation_diagrams}

In order to formulate the results, it is helpful to blow the singular limit $(\eps_1,\eps_2)=(0,0)$ up to a circle via 
\begin{equation} \label{eq:parameter_blow-up}
 \begin{aligned}
         \Phi_{par}: [0,\infty) \times \mathbb{S}^1 \to \R^2, \quad 
         (r,\Bar{\eps}_1,\Bar{\eps}_2) \mapsto \begin{cases}
             \eps_1=r\Bar{\eps}_1, \\
             \eps_2=r\Bar{\eps}_2,
         \end{cases}
 \end{aligned}
\end{equation}
where we restrict ourselves to the meaningful parameter range $\Bar{\eps}_1,\Bar{\eps}_2\geq 0$. The preimage of the origin under $\Phi_{par}$ is the quarter circle ($r=0$, $\bar \eps_1 \geq 0$ and $\bar \eps_2 \geq 0$). 

\begin{rem}
    As with blow-up transformations in the variable/state space, the particular form of the transformation in \eqref{eq:parameter_blow-up} is determined based on scaling arguments. The homogeneous blow-up defined by \eqref{eq:parameter_blow-up} is the `correct' choice for our particular problem, but weighted blow-ups will be more suitable for other problems. Indeed, weighted blow-ups in parameters will be necessary for the analysis in Section \ref{sec:Region_B3} below. See also \cite{baumgartner_robertson_2025} for a recent application where a weighted blow-up in parameters has facilitated the analytic investigation of the Robertson model, a prototypical example of a multi-scale chemical reaction.
\end{rem}

Our analysis will be simplified significantly by working in directional charts $\mathcal P_1$ and $\mathcal P_2$, formally obtained by setting $\Bar{\eps}_1=1$ and $\bar \eps_2 = 1$ respectively. Denote the local representation of the blow-up transformation in each chart by
\[
\mathcal{P}_1: \eps_1 = r_1, \ \eps_2 = r_1 \Tilde{\eps}_2, \qquad 
\mathcal{P}_2: \eps_1 = r_2 \Tilde{\eps}_1, \ \eps_2 = r_2 ,
\]
where $r_i, \Tilde{\eps}_i, \geq 0$ for $i \in \{1,2\}$. In order to minimise confusion over subscripts, however, we will avoid the $r_i$ notation and write $\eps_2 = \eps_1 \tilde \eps_2$ in chart $\mathcal P_1$ and $\eps_1 = \eps_2 \tilde \eps_1$ in chart $\mathcal P_2$. 
Note that under \eqref{eq:parameter_blow-up}, 
the curves $C_i$ defined in \eqref{eq:C1C2} are mapped onto the straight lines defined via
$$\Bar{C}_1: \Tilde{\eps}_2=\beta_1 \, 
\qquad  
\Bar{C}_2: 
\Tilde \eps_1 = \beta_2^{-1} , $$ 
respectively. It follows that regions $B_1$ and $B_3$ are \textit{rectangular} domains in $\mathcal P_1$ and $\mathcal P_2$ respectively, given by $B_1 = \{ (\eps_1,\tilde \eps_2) : \eps_1 \in [0,\delta], \tilde \eps_2 \in [0, \beta_1] \}$ and $B_3 = \{ (\tilde \eps_1, \eps_2) : \tilde \eps_1 \in [0, \beta_2^{-1}], \eps_2 \in [0,\delta] \}$. The intermediate region $B_2$ can be made to overlap with both $B_1$ and $B_3$ (cf.~Remark \ref{rem:overlapping}), and can be formulated in terms of either $\mathcal P_1$ or $\mathcal P_2$ coordinates. 
A schematic representation of this is shown in Figure \ref{fig:parameter_blow-up}.

Before we describe the bifurcation structure in each region $B_i$, we state a preliminary result which holds for all $(\eps_1, \eps_2) \in \mathcal U$. We state it in terms of the radial variable $r \geq 0$ associated with the parameter blow-up \eqref{eq:parameter_blow-up}.

\begin{prop}
\label{prop:node}
    Fix $(\kappa_1, \kappa_2) \in \Lambda$. Then there exists a $\delta > 0$ such that for all $(\eps_1, \eps_2) \in \textup{Int}(\mathcal U)$, system \eqref{eq:fund_prob_kappa} has a stable node $q_{n} : (x_{n}, y_{n})$ which is $\mathcal O(r)$-close to $\Sigma_1$ and bounded to right of $I = \Sigma_1 \cap \Sigma_2$. In particular, $(x_{n}, y_{n}) \to (\theta_1 \kappa_2 / \kappa_1, \theta_2)$ as $r \to 0$. 
\end{prop}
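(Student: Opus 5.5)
Since $x_n$ should stay a fixed distance to the right of $\theta_1$, the plan is to exploit that $H(x,\eps_1,\theta_1)$ is exponentially close to $1$ there, and to resolve only the layer in $y$ about $\theta_2$ by a rescaling. Fix a compact interval $J=[x_-,x_+]$ with $\theta_1<x_-<\theta_1\kappa_2/\kappa_1<x_+$. This is possible because $\kappa_1<\kappa_2$ on $\Lambda$. For $x\in J$ we have
\[
1-H(x,\eps_1,\theta_1)=\frac{(\theta_1/x)^{1/\eps_1}}{1+(\theta_1/x)^{1/\eps_1}}=\mathcal O\big(e^{-c/\eps_1}\big), \qquad c=\ln(x_-/\theta_1)>0,
\]
and the same bound holds for its $x$-derivative. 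Setting $H_1:=H(x,\eps_1,\theta_1)=1-\eta$, the first equation in \eqref{eq:fund_prob_kappa} reads $\dot x=1-H_2-\kappa_1\theta_1^{-1}x+\eta(2H_2-1)$, where $H_2:=H(y,\eps_2,\theta_2)$. The second reads $\dot y=1-H_2-\kappa_2\theta_2^{-1}y+\eta H_2$.

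Next, I would zoom in on $\Sigma_1$ by writing $y=\theta_2(1+\eps_2 v)$ with $v$ in a compact interval. Then $(\theta_2/y)^{1/\eps_2}=e^{-v}+\mathcal O(\eps_2)$ smoothly in $(v,\eps_2)$, so $1-H_2=(1+e^{v})^{-1}+\mathcal O(\eps_2)$. The equilibrium conditions become $F(x,v;\eps_1,\eps_2)=0$, where
\[
F_1=\frac{1}{1+e^{v}}-\frac{\kappa_1 x}{\theta_1}+\mathcal O(\eps_2)+\mathcal O(e^{-c/\eps_1}),\qquad
F_2=\frac{1}{1+e^{v}}-\kappa_2+\mathcal O(\eps_2)+\mathcal O(e^{-c/\eps_1}).
\]
The error terms are uniformly small in $C^1(J\times[v_-,v_+])$ as $r\to0$, independently of the ratio $\eps_2/\eps_1$. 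This uniformity is the key point: the result has to hold on all of $\mathcal U$, not just along a ray.

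At $r=0$, using $\kappa_2\in(0,1)$, the limiting system has the unique solution
\[
v_*=\ln\frac{1-\kappa_2}{\kappa_2},\qquad x_*=\frac{\theta_1\kappa_2}{\kappa_1}.
\]
Its Jacobian is lower triangular with diagonal entries $-\kappa_1/\theta_1$ and $-e^{v_*}/(1+e^{v_*})^2$, so it is nonsingular. The $\eps_1$-dependence is not smooth at $\eps_1=0$, but it is continuous with $C^1$-small perturbation. Hence a uniform-contraction (Newton-type) version of the implicit function theorem gives a unique zero $(x_n,v_n)$ near $(x_*,v_*)$ for all small $r$. Monotonicity of $F_2$ in $v$ also excludes other equilibria in this strip. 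Undoing the scaling gives $y_n-\theta_2=\theta_2\eps_2 v_n=\mathcal O(r)$ and $x_n\to\theta_1\kappa_2/\kappa_1$, which is the claimed limit.

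For stability, I would compute the Jacobian of \eqref{eq:fund_prob_kappa} at $q_n$ in the original $(x,y)$ variables. Up to exponentially small terms, $\partial_x\dot x=-\kappa_1/\theta_1$ and $\partial_x\dot y=\mathcal O(e^{-c/\eps_1})$, while
\[
\partial_y\dot y=-\frac{1}{\theta_2\eps_2}\,\frac{e^{v_n}}{(1+e^{v_n})^2}+\mathcal O(1).
\]
The matrix is therefore nearly triangular, with eigenvalues $\approx-\kappa_1/\theta_1$ and $\approx -K/\eps_2$, where $K>0$. Both are real and negative, and they are distinct for $\eps_2>0$ small, which is why the statement is restricted to $\textup{Int}(\mathcal U)$. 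Hence $q_n$ is a stable node.

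I expect the main obstacle to be uniformity over the whole quarter-neighbourhood $\mathcal U$, in particular near the axes $\eps_1\to0$ with $\eps_2$ fixed small and vice versa. I would handle this by treating the $\eps_1$-contribution purely as an exponentially small $C^1$ perturbation, bounded because $J$ stays away from $\theta_1$. I would treat the $\eps_2$-contribution through the smooth rescaling above. Neither step depends on the ratio $\eps_2/\eps_1$.
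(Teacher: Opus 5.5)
Your proof is correct, and it takes a genuinely different and more elementary route than the paper.

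The paper obtains $q_n$ from its blow-up machinery. In each of $B_1$, $B_2$, $B_3$ it blows up $I$ to a sphere and $\Sigma_1^+$ to a cylinder, with one further cylindrical blow-up in $B_1$. It then finds an attracting critical manifold $\mathcal S_1^+$ on which the reduced flow is $\dot r_1 \propto \kappa_2-\kappa_1 e^{r_1}$ (Lemmas \ref{lem:B2_S_2^+}, \ref{lem:node_B1}, \ref{lem:q_node_B3}). It appeals to Fenichel theory for persistence, and finally covers $\mathcal U$ by choosing $\beta_1,\beta_2$ so that the three regions overlap.

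You instead use the fact that $q_n$ stays a fixed distance to the right of $\theta_1$. There the $x$-switch is saturated up to an exponentially small error, so only the $y$-layer needs resolving. One rescaling $y=\theta_2(1+\eps_2 v)$ then gives a regular implicit-function problem whose errors are uniform in the ratio $\eps_2/\eps_1$. This gives uniformity over all of $\textup{Int}(\mathcal U)$ in one step. You need neither the decomposition into the $B_i$ nor Fenichel theory, and you check the node type explicitly, which the paper only implies through the slow--fast structure. What the paper's route buys is that $q_n$ sits inside the global blown-up geometry ($\mathcal S_1^+$, its centre-manifold extension onto the sphere, the saddle $q_{in}$). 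The later global-dynamics discussion relies on that geometry.

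Some points of precision:

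\emph{The Jacobian.} It is only triangular up to a small entry in one corner. The entry $\partial_y\dot x=(1-2H_1)\,\partial_yH_2\approx-\kappa_2(1-\kappa_2)/(\theta_2\eps_2)$ is of order $\eps_2^{-1}$. Your argument works because $\partial_x\dot y$ is exponentially small, and you should say so. Better still, both off-diagonal entries are negative. Hence the discriminant $(a-d)^2+4bc$ is positive, and $\det=ad-bc$ differs from $ad$ only by a relatively exponentially small amount. This gives two distinct negative real eigenvalues cleanly.

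\emph{Smoothness in $\eps_1$.} The function $1-H_1=\bigl(1+(x/\theta_1)^{1/\eps_1}\bigr)^{-1}$ is $C^\infty$-flat as $\eps_1\to 0$, uniformly for $x\in J$. So the ordinary implicit function theorem in $(x,v,\eps_1,\eps_2)$ already applies, and no special contraction version is needed. Its $x$-derivative carries an extra, harmless factor $\eps_1^{-1}$, so ``the same bound'' should read $\mathcal O(\eps_1^{-1}e^{-c/\eps_1})$.

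\emph{Why $\textup{Int}(\mathcal U)$.} The restriction is needed because the Hill functions are undefined at $\eps_i=0$, not because of eigenvalue distinctness.
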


Proposition \ref{prop:node} asserts the existence of a stable node $q_n$ close to the set $\Sigma_1$, which remains (i) a stable node, and (ii) bounded to the right of the degenerate point $I$ for all $(\kappa_1,\kappa_2) \in \Lambda$ and $(\eps_1, \eps_2) \in \textup{Int}(\mathcal U)$ (assuming of course that $\delta > 0$ and therefore $\mathcal U$ is sufficiently small). Note that the equilibrium $q_n$ does not appear in the PWS system \eqref{eq:piecewise_smooth_systems}. The proof will follow from the analyses in Sections \ref{sec:Region_B2}, \ref{sec:Region_B1} and \ref{sec:Region_B3}, which cover the geometry and dynamics separately in $B_2$, $B_1$ and $B_3$ respectively. It will follow in a straightforward way from `local' versions of the result in each $B_i$.


\subsubsection{Bifurcation set in $B_1$}

Unlike Proposition \ref{prop:node}, the remainder of the results presented below depend upon the region $B_i$. The first of these describes the two-parameter bifurcation set in $B_1$. We state it for system \eqref{eq:fund_prob_kappa} in parameter chart $\mathcal P_1$, i.e.~for the system
\begin{equation}
    \label{eq:fund_prob_P1}
    \begin{split}
    \Dot{x}&=H(x,\eps_1,\theta_1)+H(y,\eps_1 \tilde \eps_2,\theta_2)-2 H(x,\eps_1,\theta_1) H(y,\eps_1 \tilde \eps_2,\theta_2)-\kappa_1 \theta_1^{-1} x , \\
        \Dot{y}&=1-H(x,\eps_1,\theta_1) H(y,\eps_1 \tilde \eps_2,\theta_2)- \kappa_2 \theta_2^{-1} y ,
    \end{split}
\end{equation}
with $0 < \eps_1, \tilde \eps_2 \ll 1$.

\begin{minipage}{\textwidth}
\begin{thm}
\label{thm:B1}
    \textup{(Regular saddle-node bifurcation in $B_1$)}
    Fix a compact interval $\mathcal I \subset (0,1)$. There exists $\eps_{1,0} > 0$, $\Tilde{\eps}_{2,0}>0$ and a $C^1$-function $\kappa_{1,sn} : \mathcal I \times [0,\eps_{1,0}) \times [0,\Tilde{\eps}_{2,0}) \to \R$ such that 
    system \eqref{eq:fund_prob_P1} undergoes a saddle-node bifurcation when $\kappa_1=\kappa_{1,sn}(\kappa_2,\eps_1,\Tilde{\eps}_2)$. In particular,
    \begin{equation}
    \label{eq:sn_curve_B1}
        \kappa_{1,sn}(\kappa_2,0,0)=2\kappa_2-2+2\sqrt{1-\kappa_2} 
    \end{equation}
    for all $\kappa_2 \in \mathcal I$.
\end{thm}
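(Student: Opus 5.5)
The plan is to reduce the equilibrium problem near $I$ to a finite-dimensional problem that is regular in $(\eps_1,\tilde\eps_2)$, and then apply the implicit function theorem to the saddle-node defining system. Near $I$ we rescale
\[
x=\theta_1(1+\eps_1 X), \qquad y=\theta_2(1+\eps_1\tilde\eps_2 Y).
\]
Then $H(x,\eps_1,\theta_1)=\bigl(1+(1+\eps_1X)^{-1/\eps_1}\bigr)^{-1}$, which depends smoothly on $(X,\eps_1)$ and tends to $h_1:=(1+e^{-X})^{-1}$ as $\eps_1\to0$. The analogous statement holds for $H(y,\cdot)$ in the small quantity $\eps_1\tilde\eps_2$. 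Consequently $\eps_1=0$ and $\tilde\eps_2=0$ are regular boundary points in these coordinates. This is exactly the scaling used in the spherical blow-up of $I$, restricted to chart $\mathcal P_1$.

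It is convenient to use $(h_1,h_2)\in(0,1)^2$ as coordinates in place of $(X,Y)$, which is a diffeomorphism at $\eps=0$ and, by the above, a smooth perturbation of one for small $\eps$. The equilibrium conditions for \eqref{eq:fund_prob_P1} then read $F(h_1,h_2,\kappa_1,\kappa_2,\eps_1,\tilde\eps_2)=0$, where
\[
F_1=h_1+h_2-2h_1h_2-\kappa_1\bigl(1+\eps_1X(h_1)\bigr)+\dots,\qquad
F_2=1-h_1h_2-\kappa_2\bigl(1+\eps_1\tilde\eps_2Y(h_2)\bigr)+\dots,
\]
and the omitted terms are $o(1)$ in the $C^1$ sense, uniformly on compact subsets of $(0,1)^2$.

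At $\eps_1=\tilde\eps_2=0$, eliminating $h_2=(1-\kappa_2)/h_1$ gives
\[
h_1^2-\bigl(2(1-\kappa_2)+\kappa_1\bigr)h_1+(1-\kappa_2)=0 .
\]
This quadratic has a double root precisely when $\bigl(2(1-\kappa_2)+\kappa_1\bigr)^2=4(1-\kappa_2)$, that is, when $\kappa_1=2\kappa_2-2+2\sqrt{1-\kappa_2}$, which is \eqref{eq:sn_curve_B1}. The double root is $h_1=h_2=\sqrt{1-\kappa_2}\in(0,1)$ for $\kappa_2\in\mathcal I$, so it lies in the interior, where the coordinate change is valid. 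A short check using $\kappa_2<1$ confirms that $\kappa_{1,sn}\in(0,\kappa_2)$, so the curve lies in $\Lambda$.

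The Jacobian of the vector field at an equilibrium equals $D_{(h_1,h_2)}F\cdot\mathrm{diag}(\partial_xh_1,\partial_yh_2)$, and both diagonal factors are positive. Hence a zero eigenvalue of the Jacobian is equivalent to $\det D_{(h_1,h_2)}F=0$.

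We then consider the augmented system
\[
G=\bigl(F_1,\;F_2,\;\det D_{(h_1,h_2)}F\bigr)=0
\]
in the unknowns $(h_1,h_2,\kappa_1)$, with $(\kappa_2,\eps_1,\tilde\eps_2)$ as parameters. We verify that $D_{(h_1,h_2,\kappa_1)}G$ is invertible at the explicit solution with $\eps=0$. The implicit function theorem, applied uniformly over the compact interval $\mathcal I$, then yields the $C^1$ function $\kappa_{1,sn}(\kappa_2,\eps_1,\tilde\eps_2)$ with the stated limit. Only $C^1$ regularity is claimed, which matches the regularity of $F$ in $(\eps_1,\tilde\eps_2)$ up to the boundary.

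Finally, to confirm that the bifurcation is a genuine saddle-node, we check the two nondegeneracy conditions at $\eps=0$. The first is transversality, $\partial_{\kappa_1}F\neq0$ projected onto the left kernel; here $\partial_{\kappa_1}F=(-1,0)$. The second is a nonzero quadratic coefficient along the kernel direction, which follows from the quadratic above having a simple double root. Both conditions are open, so they persist for small $(\eps_1,\tilde\eps_2)$.

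The main obstacle I anticipate is the uniform $C^1$ regularity of $H$ in the rescaled variables as $\eps_1\to0$, which amounts to checking that the blow-up chart really desingularises the problem. One must also check that $\eps_2=\eps_1\tilde\eps_2$ enters only through smooth terms that vanish at $\tilde\eps_2=0$; this is precisely where the regime $B_1$ is used. By comparison, the invertibility computation for $D G$ is routine.
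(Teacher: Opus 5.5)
You have not shown that the zero eigenvalue at the fold is \emph{simple}, and that omission is a genuine gap: without it you cannot distinguish a saddle-node from a Bogdanov--Takens point. The rest of the construction is sound, and cleaner than you indicate. With $h_i$ as unknowns, the equilibrium equations are exactly $h_1+h_2-2h_1h_2=\kappa_1\bigl(h_1/(1-h_1)\bigr)^{\eps_1}$ and $1-h_1h_2=\kappa_2\bigl(h_2/(1-h_2)\bigr)^{\eps_2}$. These are analytic in $(h_1,h_2,\eps_1,\eps_2)$ and never refer to the ratio $\tilde\eps_2$. At $\eps_1=\eps_2=0$ one has $\det D_hF=h_2-h_1$, and the bordered Jacobian of $(F_1,F_2,\det D_hF)$ has determinant $2s$, where $s:=\sqrt{1-\kappa_2}$. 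Conditions (SN.1)--(SN.2) also transfer from $F$ to the vector field through $J=D_hF\cdot\mathrm{diag}(d_1,d_2)$. The reason is that the left null vector of $J$ is that of $D_hF$, and it annihilates the terms involving $h_i''$.

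What is missing is a proof that the second eigenvalue, $\trace J$, is nonzero at the fold; without this the saddle-node theorem does not apply. This is not a property of $F$. We have $\trace J=d_1\partial_{h_1}F_1+d_2\partial_{h_2}F_2$ with $d_1=h_1(1-h_1)/(\eps_1x)$ and $d_2=h_2(1-h_2)/(\eps_1\tilde\eps_2y)$. Both factors are singular, and whether the trace vanishes depends on their ratio. In fact every step you wrote goes through unchanged for fixed $\tilde\eps_2\in[\beta_1,\beta_2]$. There it would give a saddle-node along the whole curve, contradicting the Bogdanov--Takens point of Theorem~\ref{thm:B2} at $\sqrt{1-\kappa_2}=z_{BT}$, where $\trace J$ also vanishes. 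Your closing remark therefore misplaces the role of $B_1$: $\eps_2$ enters $F$ smoothly in every regime.

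The repair takes one line, and it is exactly where $B_1$ is needed. At the fold,
$\eps_1\tilde\eps_2\trace J=\tilde\eps_2\,\theta_1^{-1}s(1-s)(1-2s)-\theta_2^{-1}s^2(1-s)+o(1)\to-\theta_2^{-1}s^2(1-s)\neq0$, uniformly for $\kappa_2\in\mathcal I$. Equivalently, $z_{BT}\to0$ as $\tilde\eps_2\to0$ while $s$ stays bounded away from $0$. So the transverse eigenvalue is of order $-1/\eps_2$.

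With this added, your proof is correct and more elementary than the paper's. The paper works in the scaling chart $x=\theta_1e^{\eps_1x_2}$, $y=\theta_2e^{\eps_1\tilde\eps_2y_{22}}$ and treats $y_{22}$ as fast. It then uses the attracting critical manifold $\mathcal S$ and Fenichel theory, and checks a one-dimensional saddle-node for the slow flow \eqref{eq:B1_slow_flow}. In that route, simplicity of the zero eigenvalue comes for free from the normal hyperbolicity of $\mathcal S$, which is the geometric counterpart of your trace estimate.

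Your route also shows that the fold locus depends smoothly on $(\kappa_2,\eps_1,\eps_2)$ jointly. This explains why the same leading-order formula appears in $B_1$, $B_2$ and $B_3$, with only the type of bifurcation depending on $\eps_2/\eps_1$. The paper's route, in exchange, gives you the slow manifold and the global phase-space geometry.
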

\end{minipage}

\begin{figure}[t!]
    \centering
    \includegraphics[width=0.4\linewidth]{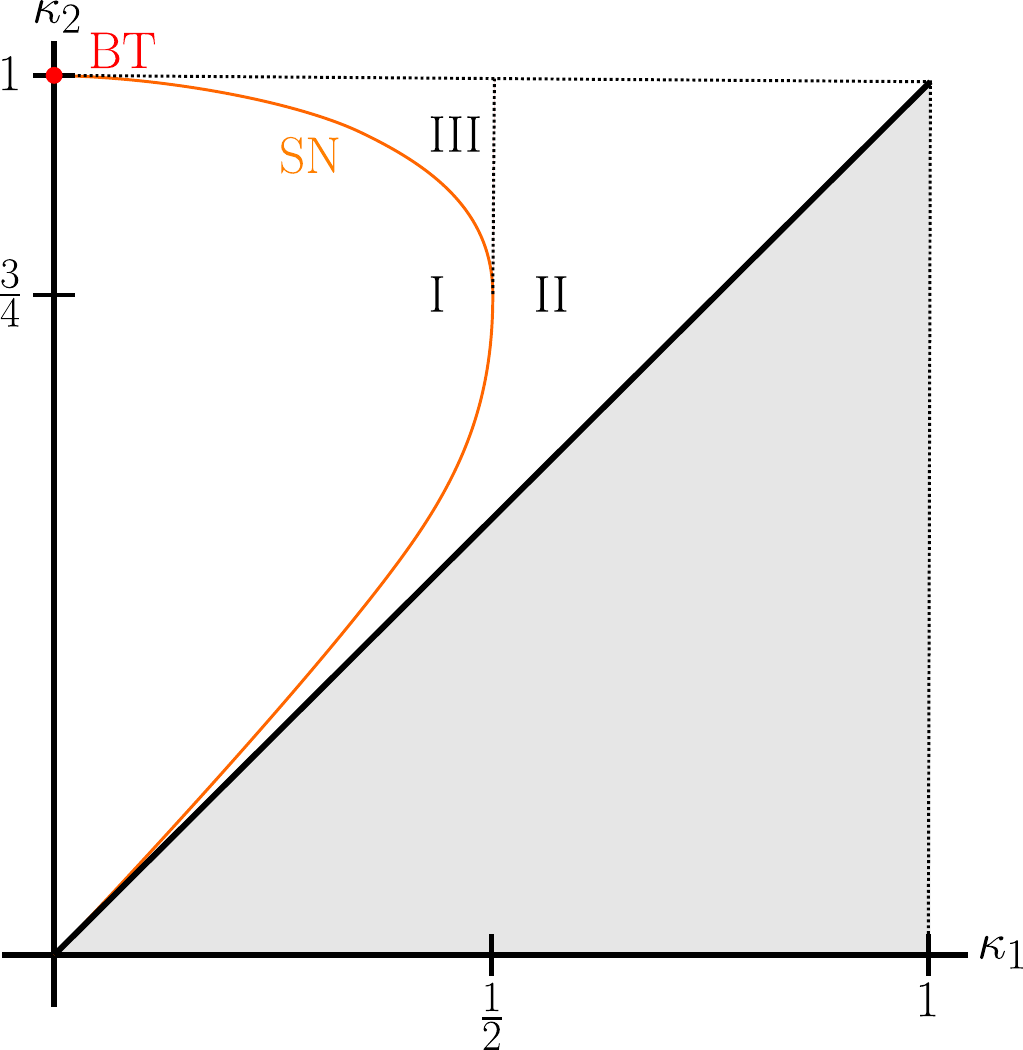}
    \caption{Limiting bifurcation set for system \eqref{eq:fund_prob_P1} in $B_1$, as described by Theorem \ref{thm:B1}. The (limiting) saddle-node curve, shown in orange and denoted by SN, is given by \eqref{eq:sn_curve_B1}. It separates two different regions, denoted I and II$\,\cup\,$III, for which the limiting system associated with system \eqref{eq:fund_prob_P1} has one vs three equilibria respectively. In particular, the system is at least bistable in regions II and III. The grey shaded region below the line $\kappa_1 = \kappa_2$ is not covered by our analysis, and the red point labeled BT at $(\kappa_1, \kappa_2) = (0,1)$ lies on the boundary of the parameter region $\Lambda$ (see Remark \ref{rem:BT_point} below for more on the significance of this point). The boundary between II and III (vertical dotted-black line) is expected to be important for the global dynamics; see Remark \ref{rem:heteroclinic}.}
    \label{fig:B1_bifurcation_diagram}
\end{figure}

The two-parameter bifurcation set in the $B_1$ singular limit $(\eps_1, \tilde \eps_2) = (0, 0)$ is shown in Figure \ref{fig:B1_bifurcation_diagram}. The saddle-node curve (SN, orange) divides the relevant region of the $(\kappa_1,\kappa_2)$-plane into two distinct regions, which we denote here by I and II$\,\cup\,$III, corresponding to  $\kappa_1<\kappa_{1,sn}(\kappa_2,0,0)$ and $\kappa_1>\kappa_{1,sn}(\kappa_2,0,0)$, respectively. In region I, the stable node $q_n$ is the only equilibrium. It appears to be a global attractor. In region III there are three equilibria; two stable equilibria and a saddle. Together, Propositions \ref{prop:node} and \ref{thm:B1} imply that system \eqref{eq:fund_prob_P1} features (at least) bistability for $(\kappa_1, \kappa_2)$-values in region II and III as $(\eps_1, \eps_2) \to (0,0)$ from within $B_1$. 
The saddle-node bifurcation described by Theorem \ref{thm:B1} 
occurs as two equilibria coalesce in a neighbourhood about the point $I$ which shrinks to zero as $(\eps_1, \tilde \eps_2) \to (0,0)$. The proof, which is presented in Section \ref{sec:B1_bifurcation_analysis}, follows from the identification of a \textit{regular} saddle-node bifurcation in local coordinates $(x_2, y_{22})$ defined via
\[
x = \theta_1 e^{\eps_1 x_2} , \qquad
y = \theta_2 e^{\eps_1 \tilde \eps_2 y_{22}} ,
\]
i.e.~when $x = \theta_1 + \mathcal O(\eps_1)$ and $y = \theta_2 + \mathcal O(\eps_2)$ as $\eps_1 \to 0$ and $\eps_2 \to 0$ respectively (we used $\eps_2 = \eps_1 \tilde \eps_2$ to infer the latter). These coordinates are obtained as part of a geometric blow-up construction involving multiple blow-ups; an overview of this procedure will be presented in Section \ref{sub:main_results_blow-up} below. The limiting bifurcation set in Figure \ref{fig:B1_bifurcation_diagram} is obtained when $\eps_1 = \tilde \eps_2 = 0$ in the rescaled system associated with these coordinates.

\begin{rem}
\label{rem:heteroclinic}
    The dotted black line $\{(1/2,\kappa_2),\, \kappa_2 \in (3/4,1)\}$ which separates regions II and III in Figure \ref{fig:B1_bifurcation_diagram} is correlated with a heteroclinic connection in the blown-up state space which is described in detail in Sections \ref{sub:main_results_blow-up} and \ref{sec:Region_B1} below. Similarly to canard solutions in slow-fast systems \cite{dumortier1996canard,Krupa_2001_Extend,Maesschalck_2021}, this connection does not persist as a bifurcation when $0 < \eps_1, \eps_2 \ll 1$ per se, however, it does act like a separatrix which organises the basins of attraction in an important way; we refer to Remark \ref{rem:main_global_dynamics} below for more details.
\end{rem}

\subsubsection{Bifurcation set in $B_2$}

The results in $B_2$ can be formulated in either $\mathcal P_1$ or $\mathcal P_2$. In the following we present our results in $\mathcal P_1$, i.e.~for system \eqref{eq:fund_prob_P1}, but now with $0 < \eps_1 \ll 1$ and fixed $\tilde \eps_2 > 0$.
For notational purposes we introduce the quantity
\begin{equation}
\label{eq:x_B}
    z_{BT}:=\frac{\tilde \eps_2 \theta_2}{\theta_1 + 2 \tilde \eps_2 \theta_2 }.
\end{equation}
The following result describes the bifurcation set in $B_2$.

\begin{thm}
\label{thm:B2}
\textup{(Regular Bogdanov-Takens bifurcation in $B_2$)}
 Fix $\beta_2 > \beta_1 > 0$ and $\tilde \eps_2 \in [\beta_1, \beta_2]$. 
 There exists an $\eps_{1,0} > 0$ and a $C^1$-function $(\kappa_{1,BT}, \kappa_{2,BT}) : [0, \eps_{1,0}) \to \R^2$ such that for all $\eps_1 \in (0,\eps_{1,0})$, system \eqref{eq:fund_prob_P1} undergoes a Bogdanov-Takens bifurcation at $(\kappa_1, \kappa_2) = (\kappa_{1,BT}, \kappa_{2,BT})(\eps_1)$. The leading order asymptotics are given by
 \begin{equation}
 \label{eq:BT_pars_B2}
     (\kappa_{1,BT},\kappa_{2,BT})(0)=( 2 z_{BT} (1-z_{BT}),  1-z_{BT}^2) .
 \end{equation}
 For any fixed compact interval $\mathcal I \subset (0,1)$, $\eps_{1,0} > 0$ can be chosen sufficiently small to ensure that the associated saddle-node bifurcation occurs along $\kappa_1 = \kappa_{1,sn}(\kappa_2,\eps_1)$, where $\kappa_{1,sn}: \mathcal I \times [0,\eps_{1,0}) \to \R$ is a $C^1$-function which satisfies
 \begin{equation}
 \label{eq:sn_curve_B2}
    \kappa_{1,sn}(\kappa_2,0)=2\kappa_2-2+2\sqrt{1-\kappa_2}
 \end{equation}
 for all $\kappa_2 \in \mathcal I$.
  %
\end{thm}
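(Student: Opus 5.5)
The plan is to reduce everything to a \emph{regular} (non-singular) planar system in the scaling chart about $I$, where the Bogdanov--Takens point can be computed explicitly at $\eps_1=0$ and then continued to $\eps_1>0$ by the implicit function theorem. As in the discussion following Theorem \ref{thm:B1}, I introduce
\[
x = \theta_1 e^{\eps_1 x_2}, \qquad y = \theta_2 e^{\eps_2 y_{22}} = \theta_2 e^{\eps_1 \tilde \eps_2 y_{22}} .
\]
Then $H(x,\eps_1,\theta_1) = h(x_2)$ and $H(y,\eps_2,\theta_2) = h(y_{22})$ with $h(s) = e^s/(1+e^s)$, independently of $\eps_1$. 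Dividing by $\eps_1 x$ and rescaling time by $\theta_1$, system \eqref{eq:fund_prob_P1} becomes
\[
x_2' = e^{-\eps_1 x_2}\big(h_1+h_2-2h_1h_2-\kappa_1 e^{\eps_1 x_2}\big), \qquad
y_{22}' = c\, e^{-\eps_1\tilde\eps_2 y_{22}}\big(1-h_1h_2-\kappa_2 e^{\eps_1\tilde\eps_2 y_{22}}\big),
\]
where $c = \theta_1/(\tilde\eps_2\theta_2)$. This vector field is smooth in $(x_2,y_{22},\kappa_1,\kappa_2,\eps_1)$, including at $\eps_1=0$. Because $\tilde\eps_2 \in [\beta_1,\beta_2]$, $c$ stays in a compact subset of $(0,\infty)$, which gives uniformity. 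To analyse the limit $\eps_1=0$ it is convenient to pass to $u=h(x_2)$, $v=h(y_{22})\in(0,1)$, a diffeomorphism. This yields
\[
u' = u(1-u)(u+v-2uv-\kappa_1), \qquad v' = c\,v(1-v)(1-uv-\kappa_2).
\]

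Step 1 is to locate the equilibria and the fold. Equilibria in $(0,1)^2$ satisfy $uv = 1-\kappa_2$ and $u+v = \kappa_1 + 2 - 2\kappa_2$, so $u,v$ are roots of a quadratic. Equilibria collide exactly when the discriminant vanishes, i.e.\ $(\kappa_1+2-2\kappa_2)^2 = 4(1-\kappa_2)$. This gives \eqref{eq:sn_curve_B2} (the same leading-order curve as in $B_1$). At the double root one has $u=v=z$ with $z^2=1-\kappa_2$, and $z$ lies in a compact subset of $(0,1)$ for $\kappa_2\in\mathcal I$. The Jacobian at an equilibrium is
\[
J=\mathrm{diag}\big(u(1-u),\,c v(1-v)\big)\begin{pmatrix}1-2v & 1-2u\\ -v & -u\end{pmatrix},
\]
and the determinant of the right factor equals $v-u$. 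Hence $\det J=0$ precisely on the fold $u=v$. I then check the standard saddle-node nondegeneracy (nonzero quadratic coefficient, transversality in $\kappa_1$). After that, the implicit function theorem in $(x_2,y_{22},\kappa_1)$ with $\eps_1$ as a regular parameter gives the $C^1$ function $\kappa_{1,sn}(\kappa_2,\eps_1)$.

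Step 2 is to locate the Bogdanov--Takens point. On the fold $u=v=z$ the trace is
\[
\operatorname{tr} J = z(1-z)\big[(1-2z) - cz\big],
\]
which vanishes iff $z = 1/(2+c) = z_{BT}$ as defined in \eqref{eq:x_B}. Substituting gives $\kappa_2 = 1-z_{BT}^2$ and $\kappa_1 = 2z_{BT} - 2z_{BT}^2$, which is \eqref{eq:BT_pars_B2}. The BT point is therefore the transverse zero of the map $(x_2,y_{22},\kappa_1,\kappa_2,\eps_1)\mapsto(F,\det DF,\operatorname{tr}DF)$, where $F$ denotes the vector field. I then apply the implicit function theorem at $\eps_1=0$, using invertibility of the derivative with respect to $(x_2,y_{22},\kappa_1,\kappa_2)$, to obtain the $C^1$ curve $(\kappa_{1,BT},\kappa_{2,BT})(\eps_1)$.

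Step 3, which I expect to be the main obstacle, is verifying the genericity conditions of the BT bifurcation at $\eps_1=0$. These are the two quadratic normal-form coefficients $a,b\neq0$ (in $\dot w_1=w_2$, $\dot w_2 = a w_1^2 + b w_1 w_2+\dots$) and the transversality of the map $(\kappa_1,\kappa_2)\mapsto(\det J,\operatorname{tr}J)$ at the BT point. I would compute these in the $(u,v)$ coordinates at $u=v=z_{BT}$, using generalized eigenvectors of the nilpotent Jacobian. The computation is explicit but messy, and I need to confirm that $a$ and $b$ are nonzero for every $z_{BT}\in(0,1/2)$, i.e.\ for every admissible $\tilde\eps_2$. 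The sign of $ab$ should also determine that the Hopf branch is subcritical, as claimed in the introduction. Since all three conditions are open, they persist for small $\eps_1$, uniformly in $\tilde\eps_2\in[\beta_1,\beta_2]$ by compactness, which completes the argument.
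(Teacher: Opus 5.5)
Your plan is correct and is essentially the paper's proof. Like the paper, you pass to $(X,Y)=(\hat H(x_2),\hat H(y_2/\tilde\eps_2))$ in the scaling chart, locate the fold and the double-zero equilibrium explicitly at $\eps_1=0$, verify Kuznetsov's conditions (BT.0)--(BT.3) and (SN.1)--(SN.2), and continue to $0<\eps_1\ll1$ by the implicit function theorem. Your factorisation $\det J=c\,u(1-u)v(1-v)(v-u)$ at equilibria is a tidier route to the fold $u=v$ and to $z_{BT}=1/(2+c)$ than the paper's direct calculation.

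The Step~3 computation you deferred does close. Take $K=z_{BT}(1-z_{BT})(1-2z_{BT})$ and the Jordan data $q_0=(1,-1)^T$, $q_1=(K^{-1},0)^T$, $p_1=K(1,1)^T$, $p_0=(0,-1)^T$. In your normalisation this gives $b_{20}=2K(1-z_{BT})>0$ and $a_{20}+b_{11}=1-3z_{BT}+4z_{BT}^2>0$. The regularity determinant for (BT.3) is a nonzero multiple of $(1-z_{BT})^2$. These results are consistent with the positive signs the paper obtains by computer algebra.
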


\begin{figure}[t!]
    \centering
    \includegraphics[width=0.4\linewidth]{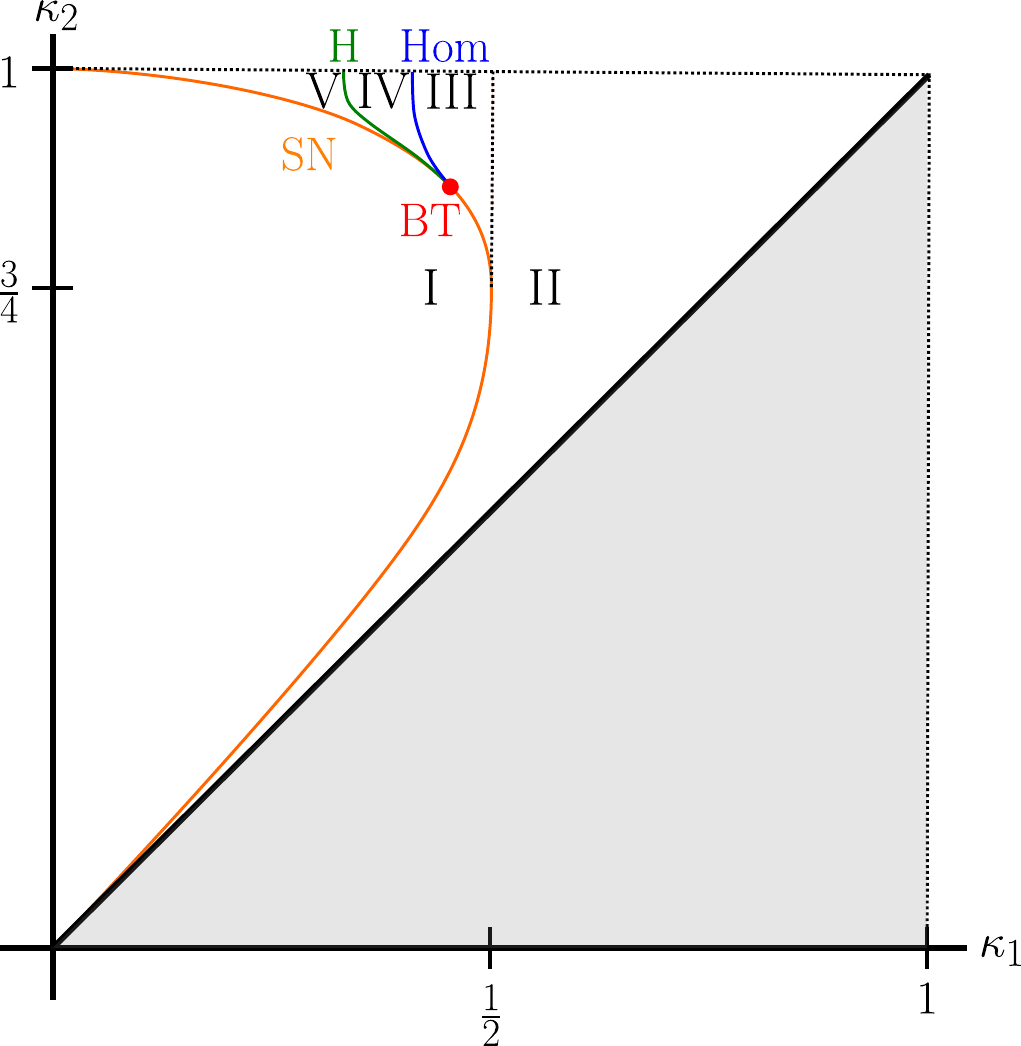}
    \caption{Limiting bifurcation set for system \eqref{eq:fund_prob_P1} in $B_2$, as described by Theorem \ref{thm:B2}. We use the same labelling and coloring conventions as in Figure \ref{fig:B1_bifurcation_diagram} to indicate the saddle-node curve and the lower triangular region not covered by our analysis. In $B_2$, we identify a Bogdanov-Takens point (red point), as well as an associated homoclinic and subcritical Hopf curve, shown here in blue and green respectively. Five open regions are identified: I, II, III, IV and V. The qualitative dynamics in regions I, II and III is similar to $B_1$ (cf.~Figure \ref{fig:B1_bifurcation_diagram}). The limiting system associated with system \eqref{eq:fund_prob_P1} has three equilibria in region V, but only one of these is stable (as opposed to the bistable situation in regions II and III). There are two stable equilibria in region IV, one of which is surrounded by an unstable cycle which is born in the subcritical Hopf bifurcation and eventually terminates in the homoclinic bifurcation as one traverses a path from region V to IV, to III. As in $B_1$, the boundary between II and III is expected to be important for the global dynamics.}
    \label{fig:B2_bifurcation_diagram}
\end{figure}

Theorem \ref{thm:B2} is proven in Section \ref{sec:B2_bifurcation_analysis}. The two-parameter bifurcation diagram when $\eps_1 = 0$ is shown in Figure \ref{fig:B2_bifurcation_diagram}. This time, the subset $\Lambda$ of the $(\kappa_1, \kappa_2)$-plane is divided into five different regions: I, II, III, IV and V. Theorem \ref{thm:B2} provides a global parameterisation of the saddle-node curve (SN, orange). It does not provide a corresponding parameterisation for the Hopf curve (H, green), however, an implicit formula for this curve will be given later in Remark \ref{rem:implicit_hopf}.
Numerical computations in MatCont \cite{MATCONT} indicate that the Hopf bifurcation is subcritical. Our proof of Theorem \ref{thm:B2} will show that this must be so near the Bogdanov-Takens point, however, we do not prove the subcriticality of the Hopf bifurcations along the entire curve in Figure \ref{fig:B2_bifurcation_diagram}. We also sketch the (numerically computed) homoclinic curve (Hom, blue) which is guaranteed to exist and emanate from the BT point, tangent to and to the right of the Hopf curve; see e.g.~\cite[Ch.~8.4]{Kuznetsov_1998}). 

Similarly to the results in $B_1$ that we presented in Theorem \ref{thm:B1}, the bifurcations described by Theorem \ref{thm:B2} -- including the `global' homoclinic bifurcation close to the BT point -- occur in a neighbourhood of the point $I$ in phase space which shrinks to zero as $\eps_1 \to 0$. This time, the relevant bifurcations are obtained as regular bifurcations in local coordinates $(x_2,y_2)$ defined by
\[
x = \theta_1 e^{\eps_1 x_2} , \qquad 
y = \theta_2 e^{\eps_1 y_2} .
\]
This means that $x = \theta_1 + \mathcal O(\eps_1)$ and $y = \theta_2 + \mathcal O(\eps_1)$ (which is equivalent to $x = \theta_1 + \mathcal O(\eps_2)$ and $y = \theta_2 + \mathcal O(\eps_2)$ in $B_2$ since $\eps_1 \sim \eps_2$ here). Because the equilibrium $q_{n}$ is bounded away from this region, the qualitative dynamics as $\eps_1 \to 0$ on compact regions of the $(x_2, y_2)$-plane is primarily organised by the normal form unfolding of the regular (subcritical) BT point; we refer again to \cite{Kuznetsov_1998} for details. Here we simply highlight that system \eqref{eq:fund_prob_P1} is (at least) bistable in regions II, III and IV, and that it exhibits unstable oscillations in IV. These are $\mathcal O(1)$ open regions in the $(\kappa_1,\kappa_2)$-plane, despite the localisation of bifurcation phenomena to a neighbourhood in phase space which shrinks to $I$ as $\eps_1 \to 0$. The proof of Theorem \ref{thm:B2} is given in Section \ref{sec:B2_bifurcation_analysis}, and these particular coordinates are identified with a coordinate chart that arises during a geometric blow-up construction which we outline in Section \ref{sub:main_results_blow-up}. The limiting bifurcation set shown in Figure \ref{fig:B2_bifurcation_diagram} is obtained when $\eps_1 = 0$ in the rescaled system associated with these coordinates.

\begin{rem}
    \label{rem:BT_point}
    The location of the Bogdanov-Takens bifurcation in $(\kappa_1,\kappa_2)$-space depends on the relative size of the small parameters via the dependence of $z_{BT}$ on $\tilde \eps_2 = \eps_2 / \eps_1$; recall \eqref{eq:x_B}. Since $z_{BT} \to 0$ as $\tilde \eps_2 \to 0$, $(\kappa_{1,BT}, \kappa_{2,BT}) \to (0, 1) \in \overline \Lambda \setminus \Lambda$. Thus, the Bogdanov-Takens point `leaves' the parameter set $\Lambda$ in the $B_1$ singular limit, as it converges to the $\kappa_2$-axis as shown in Figure \ref{fig:B1_bifurcation_diagram}.
\end{rem}

\begin{rem}
\label{rem:heteroclinic2}
    As in $B_1$, the analysis in $B_2$ reveals the existence of a heteroclinic connection in a blown-up space when $\kappa_1 = 1/2$ and $\kappa_2 \in (3/4, 1)$. This `connection' breaks when $\eps_1 > 0$, however we expect the existence of something akin to a separatrix for all $0 < \eps_1 \ll 1$, and this is expected to have important consequences for the basins of attraction associated with the two stable fixed points in regions II and III. Further details will be given below.
\end{rem}

\subsubsection{Canards and the singular bifurcation set in $B_3$}

Our final results for this section describe the two-parameter bifurcation set in $B_3$. These results will be stated for system \eqref{eq:fund_prob_kappa} in parameter chart $\mathcal P_2$, i.e.~for the system
\begin{equation}
    \label{eq:fund_prob_P2}
    \begin{split}
    \Dot{x}&=H(x, \tilde \eps_1 \eps_2,\theta_1)+H(y,\eps_2,\theta_2)-2 H(x,\tilde \eps_1 \eps_2,\theta_1) H(y,\eps_2,\theta_2)-\kappa_1 \theta_1^{-1} x , \\
        \Dot{y}&=1-H(x, \tilde \eps_1 \eps_2, \theta_1) H(y, \eps_2, \theta_2)- \kappa_2 \theta_2^{-1} y ,
    \end{split}
\end{equation}
with $0 < \tilde \eps_1, \eps_2 \ll 1$. Within $B_3$, our analysis reveals the existence of three important scaling regimes:
\begin{itemize}
    \item[(S1):] An outer regime defined on compact subsets of $\Lambda \setminus \{ (\tfrac{1}{2}, \kappa_2) : \kappa_2 \in [\tfrac{3}{4}, 1] \}$;
    \item[(S2):] An inner regime with asymptotics $(\kappa_1, \kappa_2) = (\tfrac{1}{2} + \mathcal O(\tilde \eps_1^2) , \tfrac{3}{4} + \mathcal O(\tilde \eps_1) )$;
    \item[(S3):] An inner regime with asymptotics $(\kappa_1, \kappa_2) = (\tfrac{1}{2} + \mathcal O(\tilde \eps_1), \kappa_2 )$, where $\kappa_2 \neq 3/4$.
\end{itemize}
Here the $\mathcal O$-notation is understood with respect to $\tilde \eps_1 \to 0$. Our first main result in $B_3$ describes a singular Bogdanov-Takens bifurcation for $(\kappa_1, \kappa_2)$-values in (S2). In order to state it, we work with $\kappa_2$-values in the interval
\[
\mathcal{I}_{M,\Tilde{\eps}_1}^2 := \{\kappa_2 \in (0,1):\, |\kappa_2-3/4|\leq M\Tilde{\eps}_1\}
\]
where $M > 0$ is arbitrarily large but fixed.

\begin{thm}
    \label{thm:B3_BT}
    \textup{(Singular Bogdanov-Takens bifurcation in $B_3$)}
    Fix $M > 0$. There exists $\Tilde{\eps}_{1,0} > 0$, $\eps_{2,0} > 0$ and a $C^1$-function $(\kappa_{1,BT}, \kappa_{2,BT}) : [0,\tilde \eps_{1,0}) \times [0,\eps_{2,0}) \to \R^2$ such that for all $(\tilde \eps_1, \eps_2) \in (0, \tilde \eps_{1,0}) \times (0, \eps_{2,0})$, system \eqref{eq:fund_prob_P2} undergoes a Bogdanov-Takens bifurcation at $(\kappa_1, \kappa_2) = (\kappa_{1,BT}, \kappa_{2,BT})(\tilde \eps_1, \eps_2)$. The leading order asymptotics are given by
    \[
    (\kappa_{1,BT},\kappa_{2,BT})(0, 0) = \left( \frac{1}{2} , \frac{3}{4} \right) .
    \]
    The associated saddle-node bifurcation occurs along $\kappa_1 = \kappa_{1,sn}(\kappa_2,\tilde\eps_1,\eps_2)$, where $\kappa_{1,sn}: \mathcal{I}_{M,\Tilde{\eps}_1}^2 \times [0, \tilde\eps_{1,0}) \times [0,\eps_{2,0}) \to \R$ is a $C^1$-function which satisfies
    \[
    \kappa_{1,sn}(\kappa_2,0,0)=2\kappa_2-2+2\sqrt{1-\kappa_2}
    \]
    for all $\kappa_2 \in \mathcal{I}_{M,\Tilde{\eps}_1}^2$.
    
    The associated Hopf bifurcation is subcritical and occurs along $\kappa_1 = \kappa_{1,h}(\kappa_2, \tilde\eps_1, \eps_2)$ when $\kappa_2 > \kappa_{2,BT}(\tilde \eps_1, \eps_2)$, where $\kappa_{1,h}: \mathcal{I}_{M,\Tilde{\eps}_1}^2 \times [0, \tilde\eps_{1,0}) \times [0, \eps_{2,0}) \to \R$ is a $C^1$-function which satisfies
    \[
    \kappa_{1,h}(\kappa_2,0,0)=\frac{1}{2} 
    \]
    for all $\kappa_2 \in \mathcal{I}_{M,\Tilde{\eps}_1}^2$.
\end{thm}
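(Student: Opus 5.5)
The plan is to reduce the statement to a regular Bogdanov--Takens unfolding of a planar polynomial vector field. That field arises in a scaling chart centred on $I$ together with a weighted blow-up in the parameters $(\kappa_1,\kappa_2)$ about $(\tfrac12,\tfrac34)$. We then treat $(\tilde\eps_1,\eps_2)$ as regular perturbation parameters.

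\emph{Step 1: local coordinates near $I$.} Set $x=\theta_1 e^{\eps_1 x_2}$ and $y=\theta_2 e^{\eps_2 y_2}$, with $\eps_1=\tilde\eps_1\eps_2$. Then $H(x,\eps_1,\theta_1)=h(x_2)$ and $H(y,\eps_2,\theta_2)=h(y_2)$, where $h(s)=e^s/(1+e^s)$. After multiplying time by $\eps_2$, system \eqref{eq:fund_prob_P2} becomes
\[
\tilde\eps_1\theta_1 e^{\eps_1 x_2}\, x_2' = F, \qquad \theta_2 e^{\eps_2 y_2}\, y_2' = G,
\]
where
\[
F = \tfrac12-2\bigl(h(x_2)-\tfrac12\bigr)\bigl(h(y_2)-\tfrac12\bigr)-\kappa_1 e^{\eps_1x_2}, \qquad G = 1-h(x_2)h(y_2)-\kappa_2e^{\eps_2y_2}.
\]
This is slow--fast with fast variable $x_2$ and small parameter $\tilde\eps_1$. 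The exponentials $e^{\eps_1 x_2}$ and $e^{\eps_2 y_2}$ depend smoothly (in fact regularly) on $\eps_2$ on compact sets. When $\kappa_1=\tfrac12$ the critical manifold $F=0$ reduces to $(1-2h(x_2))(1-2h(y_2))=0$, i.e.\ the two lines $x_2=0$ and $y_2=0$ crossing at the origin. This is the degenerate (two-fold/transcritical-type) structure. An equilibrium sits at the crossing exactly when $G(0,0)=0$, i.e.\ $\kappa_2=\tfrac34$, which explains the leading order BT location.

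\emph{Step 2: rescaling (blow-up chart).} Introduce
\[
\kappa_1=\tfrac12+\tilde\eps_1^2k_1,\qquad \kappa_2=\tfrac34+\tilde\eps_1k_2,\qquad x_2=\tilde\eps_1X,\qquad y_2=\tilde\eps_1Y,
\]
with $|k_2|\le M$, consistent with $\mathcal I^2_{M,\tilde\eps_1}$. Using $h(s)=\tfrac12+\tfrac s4+\mathcal O(s^3)$ and dividing appropriately by powers of $\tilde\eps_1$ gives
\[
X'=-\theta_1^{-1}\bigl(\tfrac18XY+k_1\bigr)+\mathcal O(\tilde\eps_1,\eps_2),\qquad
Y'=-\theta_2^{-1}\bigl(k_2+\tfrac18(X+Y)\bigr)+\mathcal O(\tilde\eps_1,\eps_2).
\]
The remainders are smooth in $(X,Y,k_1,k_2,\tilde\eps_1,\eps_2)$ on compacta. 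The $\eps_2$-terms come from $e^{\eps_1x_2}-1=\mathcal O(\tilde\eps_1^2\eps_2)$ and $e^{\eps_2y_2}-1=\mathcal O(\tilde\eps_1\eps_2)$, and they carry the same powers of $\tilde\eps_1$ after the rescaling.

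\emph{Step 3: bifurcations of the limiting quadratic system, then persistence.} For the quadratic system at $(\tilde\eps_1,\eps_2)=(0,0)$, the equilibria satisfy $Y=-8k_2-X$ and $X^2+8k_2X-8k_1=0$. This yields a saddle-node along $k_1=-2k_2^2$ (discriminant zero). Translating back gives $\kappa_1=\tfrac12-2(\kappa_2-\tfrac34)^2+\dots$, which agrees to the relevant order with the expansion of $2\kappa_2-2+2\sqrt{1-\kappa_2}$ about $\kappa_2=\tfrac34$. The trace condition $-Y/(8\theta_1)-1/(8\theta_2)=0$ together with positivity of the determinant gives a Hopf curve. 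The intersection of the Hopf curve with the saddle-node curve is the BT point $(k_1,k_2)=(k_{1}^*,k_{2}^*)$, which is explicit in $\theta_1,\theta_2$. Next we verify the BT nondegeneracy conditions from \cite{Kuznetsov_1998}: the normal form coefficients $a,b\neq0$, and transversality of the map $(k_1,k_2)\mapsto$ (determinant, trace). For a quadratic system this is a direct computation. The sign of $ab$ determines subcriticality of the emanating Hopf branch, and we also need the sign of the first Lyapunov coefficient along the whole Hopf curve for $k_2$ in the relevant compact range. Since all these conditions are open, the implicit function theorem in $(\tilde\eps_1,\eps_2)$ gives $C^1$ functions for the BT point, the saddle-node curve and the Hopf curve. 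Blowing down via $\kappa_1=\tfrac12+\tilde\eps_1^2k_1$ yields $(\kappa_{1,BT},\kappa_{2,BT})(0,0)=(\tfrac12,\tfrac34)$ and $\kappa_{1,h}(\kappa_2,0,0)=\tfrac12$. To match the stated global formula for $\kappa_{1,sn}$ on all of $\mathcal I^2_{M,\tilde\eps_1}$, we combine the above with the regular saddle-node argument, which is the same computation as in Theorem~\ref{thm:B1}: the equations $uv=1-\kappa_2$ and $u+v=\kappa_1+2(1-\kappa_2)$ for $u=h(x_2)$, $v=h(y_2)$.

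\emph{Main obstacle.} The main difficulty is Step 2. One must check that this weighted scaling is the correct chart, so that no additional $\tilde\eps_1$-dependent terms of lower order survive. One must also check that the $\eps_2$-corrections are genuinely regular and uniform, so that the limit $(\tilde\eps_1,\eps_2)\to(0,0)$ can be taken jointly. A second difficulty is establishing subcriticality along the entire Hopf curve, rather than only near BT. For this I would compute the Lyapunov coefficient of the quadratic limit explicitly and check that its sign is constant for $|k_2|\le M$.
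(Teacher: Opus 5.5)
Your proposal is correct and is essentially the paper's argument. Your coordinates $x=\theta_1e^{\eps_1x_2}$, $y=\theta_2e^{\eps_2y_2}$, followed by $\kappa_1=\tfrac12+\tilde\eps_1^2k_1$, $\kappa_2=\tfrac34+\tilde\eps_1k_2$ and $(x_2,y_2)=\tilde\eps_1(X,Y)$, are exactly the composition of the paper's charts: the $\mathcal K_2$ chart of the spherical blow-up, the scaling chart of the cylindrical blow-up of $\Sigma_2^1$, the weighted parameter chart, and the scaling chart of \eqref{eq:B3_BT_BU}. This gives the same quadratic system \eqref{eq:B3_BT_blow-up_scaling_rho=0}. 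The paper then verifies (BT.0)--(BT.3), computes $l_1=\theta_2/(128\theta_1(4\gamma_2\theta_2-\theta_1))>0$ along the Hopf curve, and perturbs via the implicit function theorem, just as you plan.

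Two small remarks. First, Kuznetsov's (BT.3) is regularity of $(X,Y,k)\mapsto(f,\operatorname{tr}J,\det J)$ with the state variables included, not a map of $(k_1,k_2)$ alone, because the equilibrium cannot be parametrised by $k$ at the BT point. Second, your use of the exact equilibrium equations $uv=1-\kappa_2$, $u+v=\kappa_1+2(1-\kappa_2)$ is a small sharpening over the paper: blowing down $\gamma_{1,sn}(\gamma_2,0,0)=-2\gamma_2^2$ only recovers $2\kappa_2-2+2\sqrt{1-\kappa_2}$ to leading order.
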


We defer our discussion of this result until after the statement of the remaining results in (S1) and (S3). In order to state these results, we need the following preliminary result.

\begin{lem}
    \label{lem:B3_invariant_manifolds}
    Fix $(\kappa_1, \kappa_2) \in \Lambda$, an arbitrarily small constant $\nu > 0$, 
    and consider system \eqref{eq:fund_prob_P2}. 
    There exists $\tilde \eps_{1,0}, \eps_{2,0} > 0$ such that for all $(\Tilde{\eps}_1, \eps_2) \in (0,\Tilde{\eps}_{1,0}) \times (0, \eps_{2,0})$ there exist attracting/repelling, locally invariant manifolds $\mathcal S^{a/r}$ which can be written as graphs of the form
    \[
    \begin{split}
    \mathcal S^a = \{ (h^a(y, \tilde \eps_1, \eps_2) , y) : y \in [\theta_2 + \nu, \nu^{-1}] \} , \qquad 
    \mathcal S^r = \{ (h^r(y, \tilde \eps_1, \eps_2) , y) : y \in [0, \theta_2 - \nu] \} ,
    \end{split}
    \]
    where $h^{a/r}$ are $C^r$-smooth functions which satisfy $h^a(y, \tilde \eps_1, \eps_2) = \theta_1 + \mathcal O(\Tilde{\eps}_1\eps_2)$ and $h^r(y, \tilde \eps_1, \eps_2) = \theta_1 + \mathcal O(\Tilde{\eps}_1\eps_2)$ as $\eps_1 = \Tilde{\eps}_1\eps_2 \to 0$.
\end{lem}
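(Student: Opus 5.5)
The plan is a cylindrical blow-up of the branches of $\Sigma_2=\{x=\theta_1\}$ away from $I$, using the fast scale $\eps_1=\tilde\eps_1\eps_2$. Fix $y$ in either $[\theta_2+\nu,\nu^{-1}]$ or $[0,\theta_2-\nu]$. Along these intervals $H(y,\eps_2,\theta_2)=1+\mathcal O(e^{-c/\eps_2})$ or $=\mathcal O(e^{-c/\eps_2})$ respectively, uniformly in $y$. This holds because $|\ln(y/\theta_2)|\ge c(\nu)>0$ there. For $y$ near $0$ on the lower branch one uses $y^{1/\eps_2}\le (\theta_2-\nu)^{1/\eps_2}$.

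Introduce the local coordinate
\[
x=\theta_1 e^{\eps_1 x_1}, \qquad \eps_1=\tilde\eps_1\eps_2 ,
\]
so that $H(x,\eps_1,\theta_1)=\frac{e^{x_1}}{1+e^{x_1}}=:\sigma(x_1)$ exactly. The $x$-equation then gives $\dot x_1 = (\eps_1 \theta_1)^{-1}e^{-\eps_1x_1}\bigl(\ldots\bigr)$. After dividing out the positive factor $e^{-\eps_1 x_1}/\theta_1$ and rescaling time by $\eps_1$, we get the fast--slow system
\[
x_1' = \sigma(x_1)+H_2-2\sigma(x_1)H_2-\kappa_1 e^{\eps_1 x_1}, \qquad y' = \eps_1\theta_1 e^{-\eps_1x_1}\bigl(1-\sigma(x_1)H_2-\kappa_2\theta_2^{-1}y\bigr),
\]
where $H_2=H(y,\eps_2,\theta_2)$.

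On the upper branch $H_2\to 1$, and the layer equation is $x_1'=1-\sigma(x_1)-\kappa_1$. Since $0<\kappa_1<1$, this has the unique root $x_1^a=\ln\frac{1-\kappa_1}{\kappa_1}$ with derivative $-\sigma'(x_1^a)<0$, so it is attracting. On the lower branch $H_2\to 0$, and the layer equation is $x_1'=\sigma(x_1)-\kappa_1$. Its root $x_1^r=\ln\frac{\kappa_1}{1-\kappa_1}$ has derivative $\sigma'(x_1^r)>0$, so it is repelling. In each case the critical manifold $\{x_1=x_1^{a/r}\}\times[\cdot,\cdot]$ is a compact, normally hyperbolic graph over $y$.

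The perturbation terms are $\mathcal O(\eps_1)$ from $e^{\eps_1x_1}$ and $\mathcal O(e^{-c/\eps_2})$ from $H_2$. The latter is only a function of $\eps_2$, not smooth at $\eps_2=0$ in the usual sense. To handle this, I will treat $\eta:=H_2-1$ (resp.\ $H_2$) together with $\eps_1$ as small perturbation parameters. Its $y$-derivatives of every order are also $\mathcal O(\eps_2^{-k}e^{-c/\eps_2})=o(\eps_1^N)$ for fixed $\tilde\eps_1>0$. Uniformly in $\tilde\eps_1$, they remain exponentially small in $\eps_2$, which dominates $\eps_1\le\eps_2\tilde\eps_{1,0}$. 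So the vector field is a $C^r$-small perturbation of the $\eps_1$-family with $\eta=0$, uniformly for $(\tilde\eps_1,\eps_2)$ in the stated box.

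Fenichel's theorem, in the version for $C^r$-close vector fields with compact normally hyperbolic critical manifolds (after the usual modification at the $y$-boundaries to obtain local invariance), gives slow manifolds $x_1=x_1^{a/r}+\mathcal O(\eps_1+\eta)$ that are $C^r$ graphs over $y$. I expect this uniformity step to be the main obstacle: making the non-analytic dependence on $\eps_2$ through $H_2$ compatible with Fenichel theory, uniformly over the two-parameter box. Blowing down via $x=\theta_1e^{\eps_1x_1}$ then gives
\[
h^{a/r}=\theta_1\exp\bigl(\eps_1(x_1^{a/r}+o(1))\bigr)=\theta_1+\mathcal O(\tilde\eps_1\eps_2).
\]
Attractivity and repulsivity are inherited because $\partial x/\partial x_1>0$. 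The time rescaling by the positive factor $\eps_1 e^{-\eps_1x_1}/\theta_1$ preserves orbits and orientation, so invariance carries over to system \eqref{eq:fund_prob_P2}.
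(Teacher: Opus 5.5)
Your argument is correct for the lemma as stated, but it takes a more direct route than the paper. The paper works in blown-up space. It uses a spherical blow-up of $I$, chart $\mathcal K_3^+$ ($y=\theta_2e^{r}$), a cylindrical blow-up of $\Sigma_2^+$ (so $\eps_2=r\rho$), and a further cylindrical blow-up in $\tilde\eps_1$. The final fast variable is exactly your $x_1$, since $x=\theta_1e^{r\rho\tilde\eps_1x_{322}}=\theta_1e^{\eps_1x_{322}}$, and $H(y,\eps_2,\theta_2)=\hat H(1/\rho)$. So your system is the paper's restricted to the invariant level set $r\rho=\eps_2$, with $r=\ln(y/\theta_2)$ bounded away from zero.

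The two routes differ in how they handle the $\eps_2$-dependence. You treat $H_2$ as an exponentially small perturbation and use $\eps_1$ as the singular parameter. The paper instead keeps $\rho$ as a slow phase variable. Then $\hat H(1/\rho)$ is smooth (flat) at $\rho=0$, the attracting critical manifold $\mathcal S_0$ is $2$-dimensional in $(x,r,\rho)$ and includes $\rho=0$, and Fenichel theory with the single singular parameter $\tilde\eps_1$ gives uniformity in $\eps_2$ for free.

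More importantly, $\mathcal S_0|_{\rho=0}=\mathcal S_2^+$ while $\mathcal S_0|_{r=0}$ is the branch $\mathcal S^a$ on the sphere. So the paper's argument also connects these manifolds to the slow manifolds $\mathcal S^{a/r}_{\tilde\eps}$ of Lemma~\ref{lem:B3_slow-manifold} at $y-\theta_2=\mathcal O(\eps_2)$. That connection is what is used later for the extensions in Theorem~\ref{thm:B3}(iii). Your estimate cannot reach that region, since $c(\nu)\to0$ as $y\to\theta_2$.

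For fixed $\nu$, though, your route is simpler and sufficient. The obstacle you flag is milder than you fear. On $|\ln(y/\theta_2)|\geq c$ the perturbation $\eta$ is $C^r$ (indeed flat) in $(y,\eps_2)$ up to $\eps_2=0$; this is exactly why the $\rho$-coordinate works. Either of the following closes the argument:
\begin{itemize}
\item standard Fenichel theory with $\eps_2$ as a regular parameter;
\item persistence of the layer problem's normally hyperbolic manifold of equilibria under a perturbation of $C^r$-size $\mathcal O(\eps_1+\|\eta\|_{C^r})$, as you propose.
\end{itemize}
No comparison between $\eta$ and $\eps_1$ is needed, and $\eta=o(\eps_1)$ in fact fails uniformly when $\tilde\eps_1\ll e^{-c/\eps_2}$. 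Minor point: the factor in your $y'$-equation should be $e^{+\eps_1x_1}$.
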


Lemma \ref{lem:B3_invariant_manifolds} will be proven in Section \ref{sec:Region_B3}. The manifolds $\mathcal S^{a/r}$ are obtained as Fenichel slow manifolds $\mathcal{S}_{\Tilde{\eps}_1}^{a/r}$ in a suitable blown-up space. They are $\mathcal O(\eps_1)$-close to the switching manifold $\Sigma_2$ as $\eps_1 \to 0$ (recall that $\eps_1=\Tilde{\eps}_1\eps_2$). 
Strictly speaking, they are not slow manifolds in the blown-down space (i.e.~in system \eqref{eq:fund_prob_P2}), however, they do inherit many of the usual invariance and attractivity/repulsivity properties of slow manifolds; we refer again to \cite{Bonet2016,Buzzi2006,Kristiansen2019c,Llibre2009} for details (noting however that the situation is slightly different in our case because of the additional small parameter). 

\begin{rem}
    The blow-up analysis in Sections \ref{sec:Region_B2}-\ref{sec:Region_B3} reveals the existence of an additional attracting and locally invariant manifold which is $\mathcal O(\eps_2)$-close to $\Sigma_1$ on $\{ x > \theta_1 \}$ as $\eps_2 \to 0$. This invariant manifold will also play a role in the analysis, however, we have not included it in Lemma \ref{lem:B3_invariant_manifolds} because it is not necessary for the statement of the main results below.
\end{rem}

Finally, we state our main results on the dynamics in $B_3$ for $(\kappa_1, \kappa_2)$-values in (S1) and (S3). To state it, we consider $\kappa_2$-values in the interval
\[
\mathcal{I}_{M}^1 := \{\kappa_2 \in (0,1):\, |\kappa_2-3/4|\geq 1/M\} ,
\]
where for simplicity we let $M > 0$ denote that the same arbitrarily large but fixed constant used in the definition of $\mathcal I^2_{M,\tilde \eps_1}$.


\begin{thm}
\label{thm:B3}
    \textup{(Singular bifurcations and canards in $B_3$)}
    Fix $M > 0$. There exist $\Tilde{\eps}_{1,0} > 0, \eps_{2,0} > 0$ such that the following assertions are true:
    \begin{enumerate}
        \item[(i)] 
        There exists a $C^1$-function $\kappa_{1,sn}: \mathcal I_M^1 \times [0, \tilde\eps_{1,0}) \times [0,\eps_{2,0}) \to \R$ such that 
        system \eqref{eq:fund_prob_P2} undergoes a saddle-node bifurcation when $\kappa_1 = \kappa_{1,sn}(\kappa_2,\tilde\eps_1,\eps_2)$. Moreover,
        \begin{equation}
        \label{eq:sn_curve_B3}
            \kappa_{1,sn}(\kappa_2,0,0)=2\kappa_2-2+2\sqrt{1-\kappa_2}
        \end{equation}
        for all $\kappa_2 \in \mathcal I^1_M$.
        \item[(ii)] There exists a $C^1$-function $\kappa_{1,h}: \mathcal I_M^1 \times [0, \tilde\eps_{1,0}) \times [0,\eps_{2,0}) \to \R$ such that 
        system \eqref{eq:fund_prob_P2} undergoes a (possibly degenerate) singular Hopf bifurcation when $\kappa_1 = \kappa_{1,h}(\kappa_2,\tilde \eps_1,\eps_2)$ and $\kappa_2 > 3/4$. Moreover,
        \[
        \kappa_{1,h}(\kappa_2,0,0) = \frac{1}{2}
        \]
        for all $\kappa_2 \in \mathcal I^1_M \cap \{\kappa_2 > 3/4\}$.
        \item[(iii)] Fix $\kappa_2 \in \mathcal I^1_M \cap \{\kappa_2 > 3/4\}$. Then there exists a $C^1$-function $\kappa_{1,c} : [0, \tilde \eps_{1,0}) \times [0, \eps_{2,0}) \to \R$ such that for all $(\tilde \eps_1, \eps_2) \in (0, \tilde \eps_{1,0}) \times (0, \eps_{2,0})$, the forward and backward extension of the attracting and repelling invariant manifolds $\mathcal S^a$ and $\mathcal S^r$ under the flow of system \eqref{eq:fund_prob_P2} respectively undergo a canard-type intersection when $\kappa_1 = \kappa_{1,c}(\tilde \eps_1, \eps_2)$. Moreover, 
        \[
        \kappa_{1,c}(0,0)=\frac{1}{2} .
        \]
    \end{enumerate}
\end{thm}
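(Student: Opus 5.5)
The plan is to prove Theorem \ref{thm:B3} in the rescaled coordinates that resolve a neighbourhood of $I$ in the $B_3$ regime. In chart $\mathcal P_2$ I would set
\[
x=\theta_1 e^{\eps_1 x_2},\qquad y=\theta_2 e^{\eps_2 y_2},\qquad \eps_1=\tilde\eps_1\eps_2 ,
\]
so that $H(x,\eps_1,\theta_1)=\sigma(x_2)$ and $H(y,\eps_2,\theta_2)=\sigma(y_2)$, where $\sigma(s)=(1+e^{-s})^{-1}$. I would then rescale time by $\eps_2$. Writing $u=\sigma(x_2)$, $v=\sigma(y_2)$, $f=u+v-2uv-\kappa_1+\mathcal O(\eps_2)$ and $g=1-uv-\kappa_2+\mathcal O(\eps_2)$, the system becomes
\[
\tilde\eps_1 x_2'=\theta_1^{-1}e^{-\eps_1x_2}f,\qquad y_2'=\theta_2^{-1}e^{-\eps_2y_2}g .
\]
Here $\eps_2$ enters regularly and $\tilde\eps_1$ is a genuine singular perturbation parameter, with $x_2$ fast. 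Setting $\eps_2=0$, the layer problem has critical manifold $\{f=0\}=\{u(1-2v)=\kappa_1-v\}$, and $\partial_{x_2}f=\sigma'(x_2)(1-2v)$. Hence the branch with $v>1/2$ is attracting and the branch with $v<1/2$ is repelling, and these branches blow down to $\mathcal S^{a}$ and $\mathcal S^r$ of Lemma \ref{lem:B3_invariant_manifolds}. On $v=1/2$ we have $f\equiv 1/2-\kappa_1$, so for $\kappa_1=1/2$ the whole line $v=1/2$ is critical. The two branches meet only as $u\to0,1$, i.e.\ at $x_2=\pm\infty$; this is the regularised visible-invisible two-fold.

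For (i), equilibria are independent of the time scaling and satisfy $uv=1-\kappa_2$ and $u+v=\kappa_1+2-2\kappa_2$. Coalescence of the two roots gives $(\kappa_1+2-2\kappa_2)^2=4(1-\kappa_2)$, i.e.\ \eqref{eq:sn_curve_B3}. For $\kappa_2\in\mathcal I^1_M$ the coalescence point has $v\neq1/2$, so there $\partial_{x_2}f\neq0$. I would then verify the standard saddle-node nondegeneracy conditions on the reduced equation along the critical manifold; these persist uniformly in $\tilde\eps_1$. The implicit function theorem in $(\kappa_1,\tilde\eps_1,\eps_2)$ then yields the $C^1$ curve $\kappa_{1,sn}$.

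For (ii), I would look for the equilibrium crossing $v=1/2$. The equilibrium equations force $u=2(1-\kappa_2)$ and $\kappa_1=1/2$ there. The Jacobian has trace $\tilde\eps_1^{-1}\theta_1^{-1}f_{x_2}+\mathcal O(1)$ and determinant $\tilde\eps_1^{-1}(\cdots)$ with leading part $-f_{y_2}g_{x_2}=\tfrac12\sigma'(x_2)\sigma'(y_2)(1-2u)$. This is positive exactly when $u<1/2$, i.e.\ $\kappa_2>3/4$, which explains the restriction. Solving $\text{trace}=0$ for $\kappa_1$ by the implicit function theorem, after dividing by $\sigma'(x_2)$ and using that $f_{x_2}$ depends on $v-1/2$ with nonzero derivative in $\kappa_1$, gives $\kappa_{1,h}=1/2+\mathcal O(\tilde\eps_1,\eps_2)$. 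The eigenvalues then cross the imaginary axis transversally at the singular frequency $\mathcal O(\tilde\eps_1^{-1/2})$. I would not attempt to compute the first Lyapunov coefficient, which is why the statement allows the Hopf bifurcation to be degenerate.

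For (iii), which I expect to be the main obstacle, I would compactify $x_2$ by using $u\in[0,1]$ as a coordinate, so that the two-fold points $u=0$ and $u=1$ on $v=1/2$ become finite but nonhyperbolic. After desingularisation, I would bring the system near these points into the normal form of the regularised visible-invisible two-fold in \cite{Kristiansen_2023} and invoke their results to track $\mathcal S^a$ forward and $\mathcal S^r$ backward past the two-fold. Once both manifolds have been tracked, I would define a distance function $D(\kappa_1,\tilde\eps_1,\eps_2)$ on a section transverse to $\{v=1/2\}$ and show that $D(\tfrac12,0,0)=0$, reflecting the singular canard along $v=1/2$. A Melnikov-type computation should give $\partial_{\kappa_1}D(\tfrac12,0,0)\neq0$, since $\kappa_1$ enters $f$ linearly and breaks the critical line. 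The implicit function theorem then gives $\kappa_{1,c}$. The delicate points are making the matching uniform in the regular parameter $\eps_2$, including the factors $e^{-\eps_1x_2}$, which are unbounded as $x_2\to\pm\infty$ and must be absorbed in the compactification, and checking that the nondegeneracy hypotheses of \cite{Kristiansen_2023} hold for $\kappa_2>3/4$ fixed in $\mathcal I^1_M$.
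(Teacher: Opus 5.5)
Your (i) is the paper's argument. It uses Fenichel reduction to the slow flow on $\mathcal S^{a/r}$, which is legitimate because the coalescence point $u=v=\sqrt{1-\kappa_2}$ has $v\neq\tfrac12$ on $\mathcal I^1_M$, followed by the implicit function theorem.

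Your (ii) is correct and more direct than the paper's route. The equilibrium equations $f=g=0$ involve $\tilde\eps_1$ only through $\eps_1=\tilde\eps_1\eps_2$. Hence the equation $\tilde\eps_1\trace J=0$ is smooth, and at $\tilde\eps_1=\eps_2=0$ it reduces to $v^*=\tfrac12$, i.e.\ $\kappa_1=\tfrac12$. Moreover $dv^*/d\kappa_1=(4\kappa_2-3)^{-1}\neq0$, and $\det J$ is of order $\tilde\eps_1^{-1}$ and positive for $\kappa_2>\tfrac34$. The paper reaches this Hopf point only after the parameter blow-ups and the rescaling leading to \eqref{eq:B3_kappa1=0.5_cylinder}, where it becomes a regular Hopf in $\gamma_1$. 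Your route is shorter for (ii), but that rescaled system is exactly what (iii) requires.

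The genuine gap is in (iii), and it starts with the geometry. At $\kappa_1=\tfrac12$, $\eps_2=0$ one has $f=-\tfrac12(1-2u)(1-2v)$, so the critical set is the cross $\{u=\tfrac12\}\cup\{v=\tfrac12\}$. Thus $\mathcal S^a=\{u=\tfrac12,\ v>\tfrac12\}$ and $\mathcal S^r=\{u=\tfrac12,\ v<\tfrac12\}$ meet the degenerate line $\mathcal C=\{v=\tfrac12\}$ at the interior point $(u,v)=(\tfrac12,\tfrac12)$. There the reduced flow satisfies $\dot v\propto\tfrac34-\kappa_2<0$, so it carries $\mathcal S^a$ straight into $\mathcal S^r$.

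The singular canard therefore crosses $\mathcal C$ transversally along $u=\tfrac12$, and the natural section is $\{v=\tfrac12\}$ itself. It does not run along $v=\tfrac12$, and the branches do not meet ``as $u\to0,1$''. The ends $u=0,1$ of $\mathcal C$ are the points $\mathcal Q_I,\mathcal Q_V$. They play no role in the intersection and carry the beyond-all-orders tangencies of Remark \ref{rem:B3_non-hyperbolic_kappa1_neq_0.5}. Compactifying there and matching to \cite{Kristiansen_2023} sends you into the hardest part of phase space for nothing. That normal form describes the regularised PWS system \eqref{eq:B3_spherical_scaling_chart} near the two-fold, and the paper uses it only for the canard cycles of Proposition \ref{prop:B3_canard_uniqueness}.

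More fundamentally, your Melnikov step has no regular limiting problem. In your coordinates the $\tilde\eps_1=0$ system is just the layer problem, and $\partial_{x_2}f\equiv0$ on $\mathcal C$. For $\kappa_1\neq\tfrac12$ the two critical branches stay in $v>\max(\kappa_1,1-\kappa_1)$ and $v<\min(\kappa_1,1-\kappa_1)$ respectively. So there is no connecting orbit to perturb from and no bounded adjoint solution, and no computation defines $\partial_{\kappa_1}D(\tfrac12,0,0)$.

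The missing idea is to blow up $\mathcal C$ with $y_2=\sqrt{\tilde\eps_1}\,\tilde y$, $\kappa_1=\tfrac12+\sqrt{\tilde\eps_1}\,\gamma_1$ and a matching time rescaling. This yields the regular limit $x_2'=\theta_1^{-1}(\tfrac{\tilde y}{4}(1-2\sigma(x_2))-\gamma_1)$, $\tilde y'=\theta_2^{-1}(1-\kappa_2-\tfrac12\sigma(x_2))$. At $\gamma_1=0$ the line $\{x_2=0\}$ is invariant with $\tilde y'<0$. This gives an explicit orbit $\Theta$ from where $\mathcal S^a$ enters to where $\mathcal S^r$ leaves, both manifolds being carried into this chart by center manifolds in the charts $\bar y=\pm1$.

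The adjoint equation along $\Theta$ has the bounded solution $(e^{-at^2/16},0)^T$ with $a=\theta_1^{-1}\theta_2^{-1}(\kappa_2-\tfrac34)$. The Melnikov integral is $\mathcal D_{\gamma_1}=-4\theta_1^{-1}\sqrt{\pi/a}\neq0$. The implicit function theorem then gives $\gamma_{1,c}$, and hence $\kappa_{1,c}=\tfrac12+\sqrt{\tilde\eps_1}\,\gamma_{1,c}$. The unfolding parameter must be $\gamma_1$, not $\kappa_1$ itself.
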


\begin{figure}
\centering
\begin{subfigure}{.4\linewidth}
  \includegraphics[width=\linewidth]{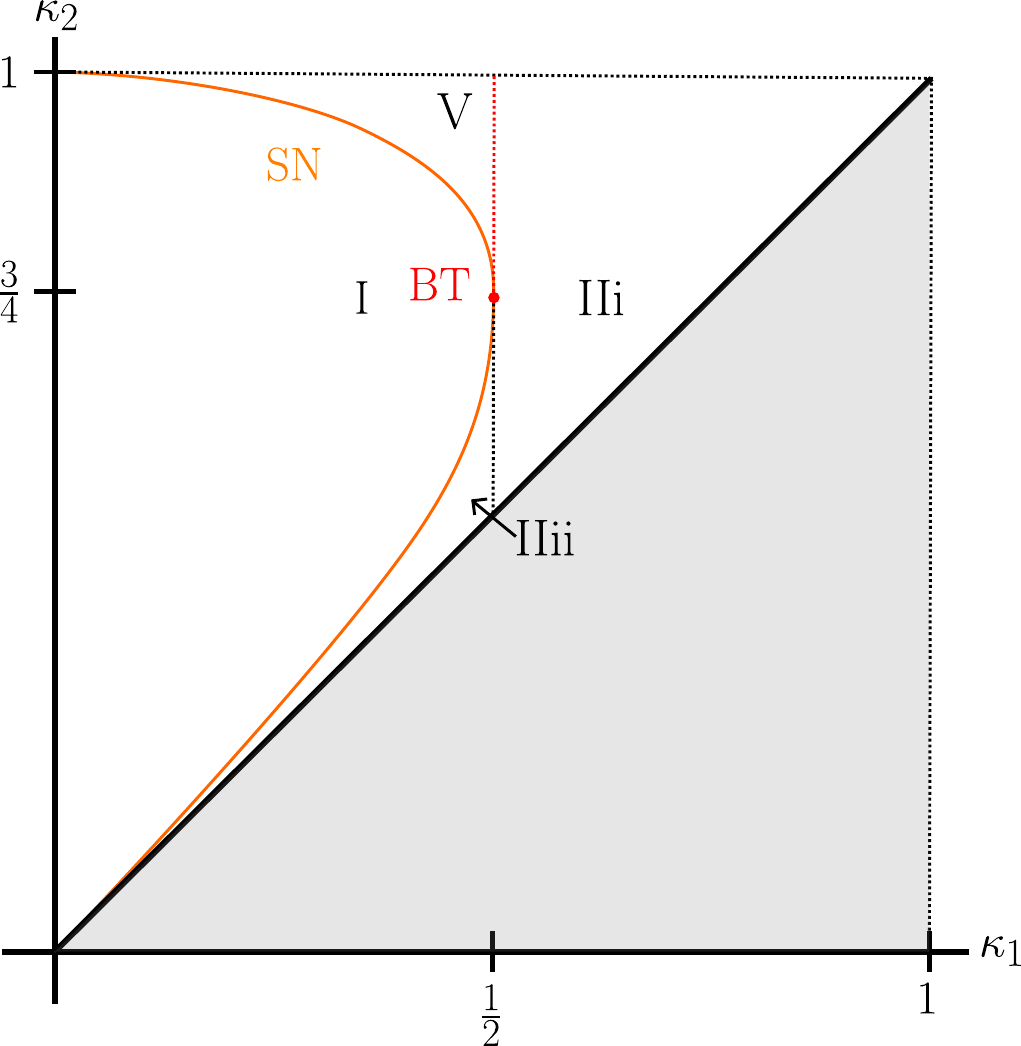}
  \caption{}
  \label{fig:B3_bifurcation_diagram_singular}
\end{subfigure}
\hspace{1cm} 
\begin{subfigure}{.4\linewidth}
  \includegraphics[width=\linewidth]{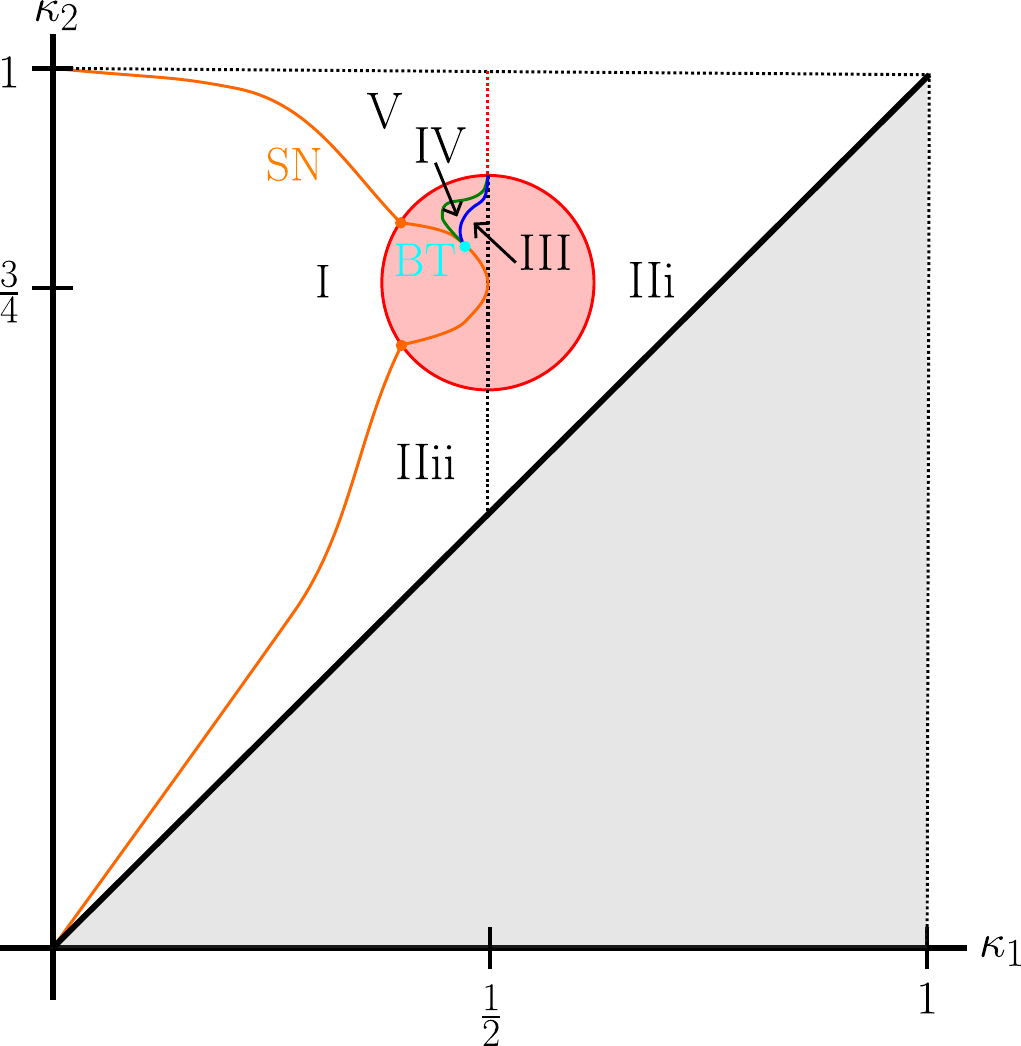}
  \caption{}
  \label{fig:B3_bifurcation_diagram_sphere_only}
\end{subfigure}
\caption{Limiting bifurcation set for system \eqref{eq:fund_prob_P2} in $B_3$, as described by Theorems \ref{thm:B3_BT} and \ref{thm:B3}. The labelling and coloring conventions are as in Figures \ref{fig:B1_bifurcation_diagram} and \ref{fig:B2_bifurcation_diagram}. The bifurcation set in (a) resembles the limiting bifurcation set in $B_2$ shown in Figure \ref{fig:B2_bifurcation_diagram} except that (i) the (now degenerate) Bogdanov-Takens point BT has moved to the turning point; (ii) the Hopf and Hom curves overlap along the degenerate line in red which emanates from the point BT (thus, region III and IV disappear) and organizes the global dynamics of the two attracting equilibria; (iii) region II has been split into subregions II(i) and II(ii) for the reasons provided in the text (see Section \ref{sub:main_results_blow-up}). Figure (b) shows a partially resolved bifurcation set. Here, a regular Bogdanov-Takens with associated subcritical Hopf and homoclinic curves are revealed after spherical blow-up of the degenerate BT point in $(\kappa_1, \kappa_2, \tilde \eps_1)$-space (the $\tilde \eps_1$-axis is omitted to keep the figure minimal). The red line which separates regions IIi and V above the sphere is still degenerate; we refer to Figure \ref{fig:main_results} below for a continuation of this figure and zoom of the bifurcation set on the blow-up sphere.}
\label{fig:parameter_blow-up_2D_sequence}
\end{figure}

\begin{rem}
    The qualifier "possibly degenerate" appears in Assertion (ii) because we were unable to prove that the associated first Lyapunov coefficient is non-zero for all $0 < \tilde \eps_1, \eps_2 \ll 1$. Numerical and analytical investigation suggest that the bifurcation is nondegenerate and subcritical (as it is near the singular Bogdanov-Takens point). We refer to Remark \ref{rem:Hopf_proof} below for further details and a proposed approach to a proof.
\end{rem}

Theorems \ref{thm:B3_BT} and \ref{thm:B3} are proven in Section \ref{sec:Region_B3}. Figures \ref{fig:parameter_blow-up_2D_sequence} and \ref{fig:main_results} show a sequence of two-parameter bifurcation sets, which are progressively desingularised via geometric blow-up transformations in order to reveal the (singular) bifurcation structure described in Theorems \ref{thm:B3_BT} and \ref{thm:B3} in the singular limit when $(\tilde \eps_1, \eps_2) \to (0,0)$. Figure \ref{fig:B3_bifurcation_diagram_singular} shows the bifurcation diagram in the subset $\Lambda$ of the $(\kappa_1,\kappa_2)$-plane. The same saddle-node curve is shown in orange. The red point labeled ``BT'' has shifted to the turning point, and is degenerate in the original coordinates when $(\tilde \eps_1, \eps_2) = (0,0)$. Assertion (i) in Theorem \ref{thm:B3} describes the persistence of this saddle-node bifurcation away from the degenerate point BT; it can be proven in the outer scaling regime (S1). In order to uncover the singular Bogdanov-Takens bifurcation described by Theorem \ref{thm:B3_BT}, which describes the qualitative dynamics in (S2), we apply an additional (weighted) blow-up in the $(\kappa_1, \kappa_2, \tilde \eps_1)$-parameter space; this is sketched in Figure \ref{fig:B3_bifurcation_diagram_sphere_only}. 
Our analysis reveals a regular (subcritical) Bogdanov-Takens bifurcation in local coordinates which cover the top of this blow-up sphere. We also identify the extension of the saddle-node curve (which connects to the saddle-node curve outside of the blow-up sphere), and the associated Hopf and homoclinic curves (shown in green and blue respectively). We conjecture that the Hopf and homoclinic curves connect to the `north pole' of the blow-up sphere as shown in the figure, however, we do not prove this.

\begin{rem}
    Our analysis in Section \ref{sec:Region_B3} leads to the `almost complete' bifurcation set shown in Figure \ref{fig:main_results}, however, we did not connect the SN curves across the equator of the blow-up sphere, or match the Hopf and homoclinic curves between the sphere and the cylinder. 
    Further details on how the latter matching problem can be addressed using geometric blow-up techniques are given in Remark \ref{rem:restriction}.
\end{rem}

Finally, Figure \ref{fig:B3_bifurcation_diagram} shows the extension of the Hopf and (conjectured) homoclinic curves along the vertical axis which tracks the surface of a subsequent cylindrical blow-up along $\kappa_1 = 1/2$, $\kappa_2 > 3/4$ which is used in order to study the dynamics in (S3). Assertions (ii) and (iii) of Theorem \ref{thm:B3} are proven in local coordinates which cover the top of this blow-up cylinder. The (possibly degenerate) singular Hopf bifurcation described in assertion (ii) appears as a (possibly degenerate) regular Hopf in these coordinates, and the canard-like intersection of the invariant manifolds $\mathcal S^{a/r}$ described in assertion (iii) is proven using a straightforward adaptation of the Melnikov-based arguments used in e.g.~\cite{Krupa_2001_Extend,krupa_extending_transcritical,Wechselberger2002}. Indeed, the proof of Theorem \ref{thm:B3} reveals the presence of a \textit{regularised visible-invisible two-fold} singularity in this region of parameter space. This allows us to appeal to existing analyses in \cite{Bonet2018,Kristiansen_2015,Kristiansen_2023} (the reference \cite{Kristiansen_2023} in particular), in order to describe some of the finer details associated with the unfolding of this singularity. In particular it allows us to prove the existence of canard cycles in the relevant blown-up space. The interested reader is referred to Proposition \ref{prop:B3_canard_uniqueness} in Section \ref{sec:Region_B3} for a detailed statement of this result; we have opted not to present it above in order to ease the presentation of the other results.

\begin{rem}
\label{rem:canards_ordering}
    The observations above suggest the existence of a function $\kappa_{1,hom}(\kappa_2, \tilde \eps_1, \eps_2)$ such that for each fixed $\kappa_2 \in (3/4,1)$, system \eqref{eq:fund_prob_P2} undergoes a homoclinic bifurcation when $\kappa_1 = \kappa_{1,hom}(\kappa_2, \tilde \eps_1, \eps_2)$, and $\kappa_{1,hom}(\kappa_2, 0, 0) = 1/2$. Although we do not prove this, consistency with the bifurcation diagram in (S2) and the constraints imposed by the geometry of the phase space suggest that when $0 < \tilde \eps_1, \eps_2 \ll 1$ we have
    \[
    \kappa_{1,h}(\kappa_2, \tilde \eps_1, \eps_2) < \kappa_{1,hom}(\kappa_2, \tilde \eps_1, \eps_2) < \kappa_{1,c}(\kappa_2, \tilde \eps_1, \eps_2) .
    \]
    In particular, we expect regions III and IV to persist as narrow wedge-shaped regions which converge to the line $\kappa_1 = 1/2, \kappa_2 \in (3/4,1)$ as $(\tilde \eps_1, \eps_2) \to (0,0)$; we refer to Figure \ref{fig:B3_blown-up_bifurcation_set} in Section \ref{sec:Region_B3} for a sketch of this. If this is correct, then our results suggest that the topology of the phase portraits in regions I, II, III IV and V is similar in regions $B_2$ and $B_3$. The presence of singular bifurcations and canards in $B_3$, however, shows a substantially increased sensitivity of the system to parameter variation relative to $B_2$. It is also significant to note that region IV, where oscillations exist, shrinks to zero as $(\tilde \eps_1, \eps_2) \to (0,0)$.
\end{rem}

\begin{figure}[t!]
\centering
%
\begin{subfigure}{.4\linewidth}
  \includegraphics[width=\linewidth]{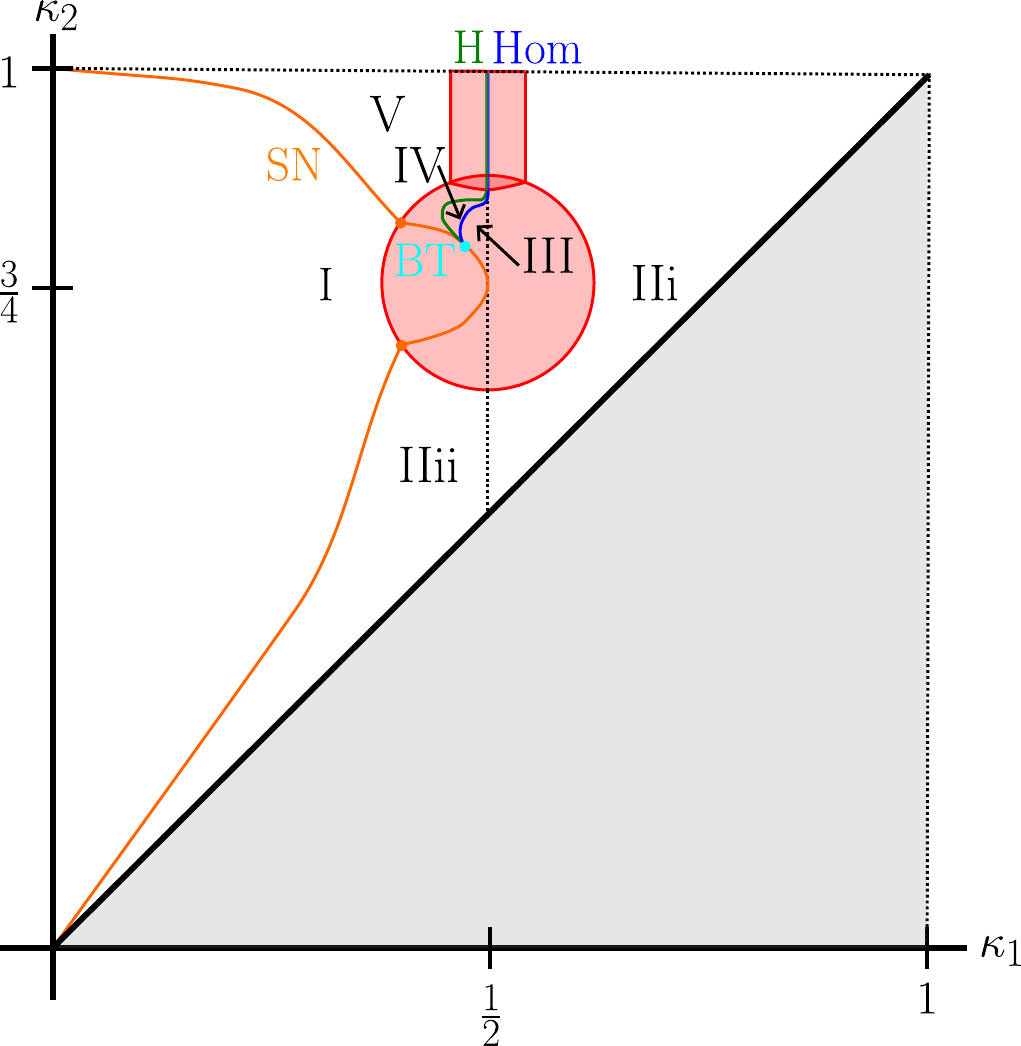}
  \caption{}
  \label{fig:B3_bifurcation_diagram}
\end{subfigure}
\hspace{1cm}
\begin{subfigure}{.41\linewidth}
  \includegraphics[width=\linewidth]{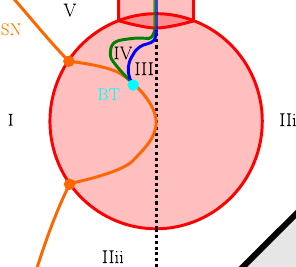}
  \hspace{0.5cm}
  \caption{}
  \label{fig:B3_bifurcation_diagram_zoom}
\end{subfigure}
\caption{Continuation of the blow-up sequence shown in Figure \ref{fig:parameter_blow-up_2D_sequence}. In (a): the degenerate line which emanates from the north pole of the blow-up sphere in Figure \ref{fig:B3_bifurcation_diagram_sphere_only} is blown-up to a cylinder (also shown in shaded red), allowing for the Hopf and homoclinic curves to be extended. These two curves (shown in green and blue respectively) overlap along the cylinder due to the canard-related phenomena identified in Theorem \ref{thm:B3}, however, we expect that they are non-intersecting away from the cylinder's surface, i.e., for $0<\Tilde{\eps}_1 \ll 1$, and that region IV persists as a narrow interval which shrinks to the line shown on the cylinder in the (dual) singular limit; see Remark \ref{rem:canards_ordering} and Figure \ref{fig:B3_blown-up_bifurcation_set} for more on this. (b) A zoom of the bifurcation set on the blow-up sphere and cylinder appearing in (a).}
\label{fig:main_results}
\end{figure}

\begin{rem}
\label{rem:heteroclinic3}
    The presence of canard-like solutions when $\kappa_1 = \kappa_{1,c}(\tilde \eps_1, \eps_2)$, together with the fact that the system is bistable in this regime, appears to have important consequences for the expected global dynamics as $t \to \infty$. In particular, the canards appear to act like a separatrix which determines which attractor solutions are expected to converge to $t \to \infty$. This is discussed in more detail in Section \ref{sub:main_results_blow-up} below. Interestingly, even though it only appears as a canard-type solution in $B_3$, similar solutions appear quite robustly for $\kappa_1 \approx 1/2$ and $\kappa_2 \in (3/4,1)$ across all three regions $B_1$, $B_2$ and $B_3$; cf Remarks \ref{rem:heteroclinic} and \ref{rem:heteroclinic2} above. 
\end{rem}

\begin{rem} \label{rem:main_theorem}
    The fact that the leading order expressions for the saddle-node curve given by \eqref{eq:sn_curve_B1}, \eqref{eq:sn_curve_B2} and \eqref{eq:sn_curve_B3} in $B_1$, $B_2$ and $B_3$ respectively are identical shows that the saddle-node bifurcation in system \eqref{eq:fund_prob_kappa} is extremely robust to variation in the switching parameters $\eps_i$, $i \in \{1,2\}$. Comparing Theorems \ref{thm:B1}, \ref{thm:B2} and \ref{thm:B3} shows that the most significant effects of varying the relative size of $\eps_1$ and $\eps_2$ are (i) the existence and location of Hopf and Bogdanov-Takens bifurcations, and (ii) the multi-scale structure and associated sensitivity of the system to variation in the other system parameters (here $\kappa_1$ and $\kappa_2$). The question of whether these features are true of the higher dimensional ODE-based models for GRN dynamics considered by \cite{Plahte_GRN_2005,Edwards2015,Ironi2011,Machina2013a} and many others is open, and remains for future work.
\end{rem}

\subsection{Singular geometry and dynamics in phase space}
\label{sub:main_results_blow-up}

The (limiting) two-parameter bifurcation sets shown in Figures \ref{fig:B1_bifurcation_diagram}, \ref{fig:B2_bifurcation_diagram} and \ref{fig:main_results}, which are characterised by Theorems \ref{thm:B1}, \ref{thm:B2}, \ref{thm:B3_BT} and \ref{thm:B3}, organize the dynamics in phase space for the respective scaling regimes $B_1$, $B_2$ and $B_3$. 
%
%
The differences between the bifurcation sets are primarily due to the fact that the geometry and dynamics as $(\eps_1, \eps_2) \to (0,0)$ differs when $(0,0)$ is approached from within $B_1$, $B_2$ or $B_3$. In this section we informally summarise the geometry and dynamics in phase space, focusing on the singular limit associated with each region $B_i$; the details will be presented in Sections \ref{sec:Region_B2}, \ref{sec:Region_B1} and \ref{sec:Region_B3} below. In each case, multiple geometric blow-up transformations are required in order to resolve degeneracies associated with the loss of smoothness along discontinuity sets and, in some cases, the loss of hyperbolicity at equilibria identified after preliminary blow-ups. The global geometry and dynamics are described without proof, however, the blow-up constructions (i) show that these degeneracies can be resolved, thereby paving the way for more detailed analyses if necessary, and (ii) lead naturally to the non-trivial coordinate scalings which are needed in order to unfold the bifurcation structure and prove the results of the previous section.

We start by considering the 3-dimensional extended system obtained after appending the trivial equation $\eps_1' = 0$ to system \eqref{eq:fund_prob_P1} in $B_2$. Since $\tilde \eps_2 \in [\beta_1, \beta_2]$ is fixed in $B_2$, the singular limit corresponds to the set $\{\eps_1 = 0\}$, within which the system suffers a loss of smoothness along $\Sigma_1 \cup \Sigma_2 \times \{0\}$. A total of 5 geometric blow-ups are applied in order to resolve this degeneracy: a single spherical blow-up of the intersection point $I = \Sigma_1 \cap \Sigma_2 \times \{0\}$, followed by 4 cylindrical blow-ups which resolve the loss of smoothness along the left/right branches of $\Sigma_1 \times \{0\}$ which emanate from the blow-up sphere, as well as the upper/lower branches of $\Sigma_2 \times \{0\}$. The sequence of blow-up transformations is the same as in \cite{Simon_thesis,Jelbart2026}, and is sketched (in birds-eye view) in Figure \ref{fig:B2_blow-up}. The figure shows normally hyperbolic critical manifolds that are identified in three of the four cylindrical blow-ups; $\mathcal S^+_1$ and $\mathcal S_2^+$ are attracting, whereas $\mathcal S_1^-$ is repelling. These critical manifolds are directly correlated with stable and unstable sliding in the blown-down PWS singular limit (we refer again to \cite{Buzzi2006,Kristiansen2019c,Llibre2009} for an applicable result from the regularized PWS point of view, and to \cite{Plahte_GRN_2005} for a GRN-specific justification of this claim). The reduced dynamics along $\mathcal S^\pm_2$ is oriented towards the sphere, and an analysis of the reduced problem on $\mathcal S_1^+$ reveals the attracting equilibrium $q_{n}$. Proposition \ref{prop:node} ensures that $q_{n}$ remains bounded away from the blow-up sphere on $\mathcal S_1^+$ for all $(\kappa_1, \kappa_2) \in \Lambda$. Finally, we note that each critical manifold can be locally extended onto the blow-up sphere as a center manifold with the orientation indicated in the figure, and that there is a hyperbolic equilibrium $q_{in}$ on the equator whose stable manifold forms a separatrix between solutions which are expected to converge to $q_{n}$ as $t \to \infty$, vs solutions which are expected to converge to an alternative attractor on the blow-up sphere (if there is one), or to pass over the blow-up sphere en route to $q_n$.

\begin{figure}[t!]
\begin{subfigure}{.359\linewidth}
  \includegraphics[width=\linewidth]{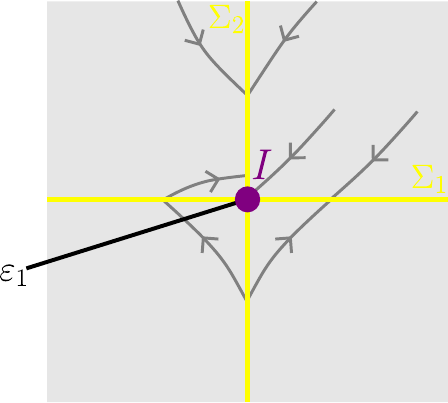}
  \caption{}
  \label{fig:B2_before_blow-up}
\end{subfigure}\hspace{0.13cm}
\begin{subfigure}{.32\linewidth}
  \includegraphics[width=\linewidth]{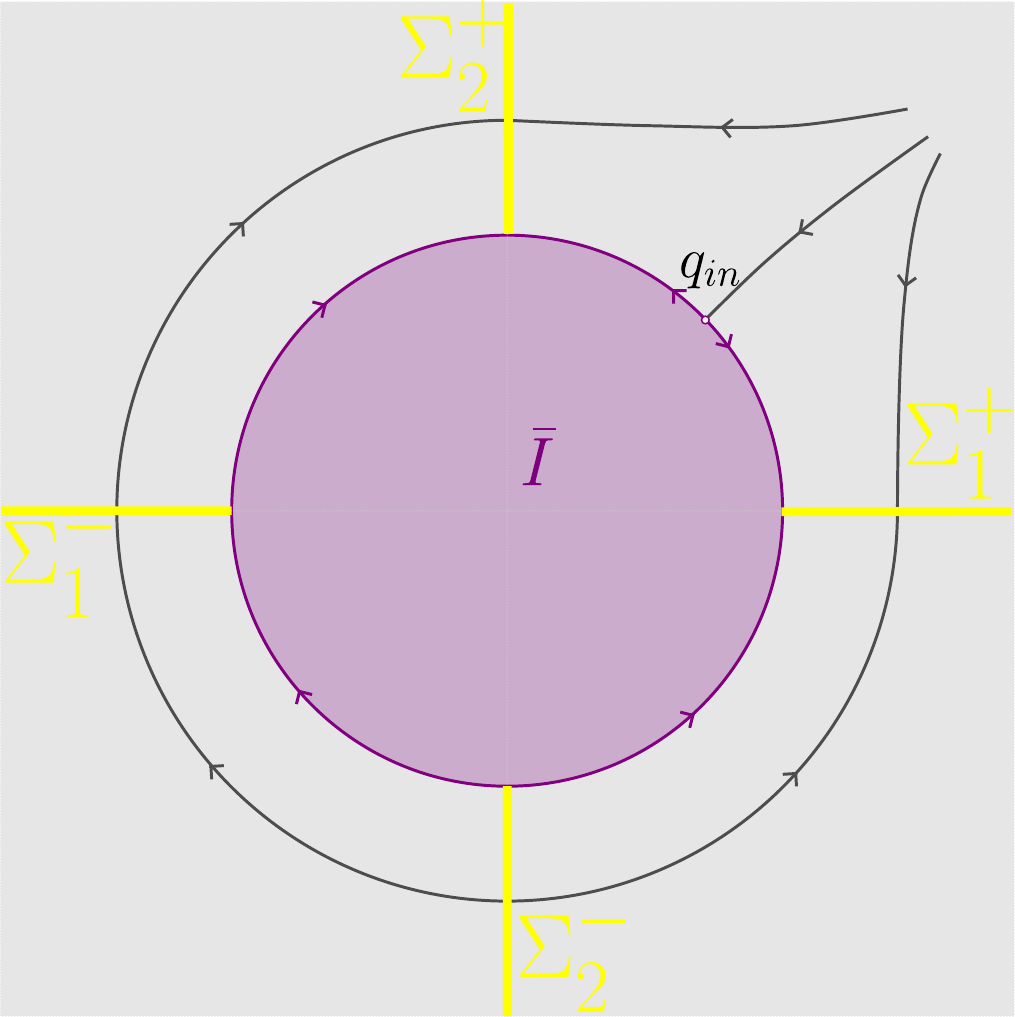}
  \caption{}
  \label{fig:B2_blow-up_1}
\end{subfigure}\hfill 
~ 
\begin{subfigure}{.32\linewidth}
  \includegraphics[width=\linewidth]{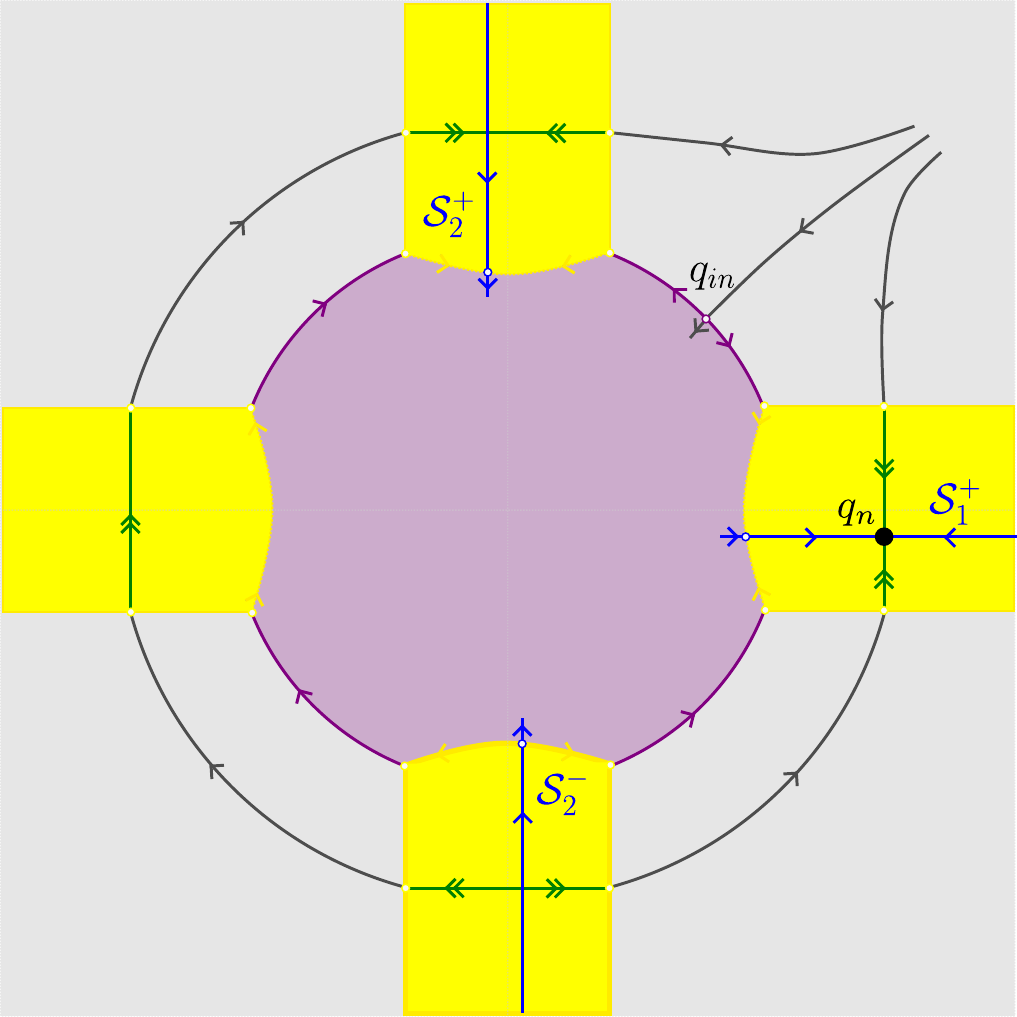}
  \caption{}
  \label{fig:B2_blow-up_2}
\end{subfigure} 
\caption{The blow-up sequence that resolves the loss of smoothness in system \eqref{eq:fund_prob_P1} when $\eps_1 = 0$. (a) Blown-down dynamics in the extended $(x,y,\eps_1)$-space. The PWS dynamics, including the switching manifold $\Sigma_1 \cup \Sigma_2$ and the intersection point $I = \Sigma_1 \cap \Sigma_2$ are contained within $\{\eps_1 = 0\}$. The labelling and coloring conventions are the same as in Figure \ref{fig:PWS_phase_space}. (b) Partial resolution after spherical blow-up of the intersection point $I$. There is still a loss of smoothness along the (now separated) branches denoted here by $\Sigma_1^\pm$ and $\Sigma_2^\pm$. There is a hyperbolic saddle point $q_{in}$ on the equator. (c) Fully resolved geometry and dynamics, following cylindrical blow-ups along $\Sigma_1^\pm$ and $\Sigma_2^\pm$. Attracting critical manifolds $\mathcal S_1^+$ and $\mathcal S_2^+$ are identified on the top and right-hand cylinders, and a repelling critical manifold $\mathcal S_2^-$ is revealed on the lower blow-up cylinder. All are sketched in blue, with fast/layer dynamics in green. We also indicate the orientation of the reduced flow on $\mathcal S_1^+$, $\mathcal S_2^-$ and $\mathcal S_2^+$, the local dynamics onto or off of the sphere, as well as the location of the stable node $q_n \in \mathcal S_1^+$. The dynamics on the blow-up sphere, which depends upon $(\kappa_1, \kappa_2)$, is described by Theorem \ref{thm:B2}.}
\label{fig:B2_blow-up}
\end{figure}

The geometric and dynamic features described above apply for any $(\kappa_1, \kappa_2) \in \Lambda$, however, the qualitative dynamics on the blow-up sphere will vary significantly with $(\kappa_1, \kappa_2)$. The dynamics on the sphere is organised by the unfolding of the Bogdanov-Takens bifurcation that is described in Theorem \ref{thm:B2}, and represented in the 2-parameter bifurcation set in Figure \ref{fig:B2_bifurcation_diagram}. 

\begin{rem} \label{rem:main_global_dynamics}
    Solutions with initial conditions below the stable manifold of $q_{in}$ and to the right of $\Sigma_2$ 
    are expected to approach a slow manifold which perturbs from $\mathcal S_1^+$, and thereafter converge to $q_n$.
    Solutions with initial conditions between 
    $\Sigma^+_2$ and the stable manifold of $q_{in}$, as well as all solutions with initial conditions to the left of $\Sigma_2$, 
    are expected to approach a slow manifold which perturbs from $\mathcal S_2^+$, before tracking the slow flow onto the blow-up sphere. From here, they can either (i) converge to an attractor 
    on the blow-up sphere, or (ii) exit along the intersection with $\mathcal S_1^+$ and converge to $q_n$. 
    Our analysis in Sections \ref{sec:Region_B2} - \ref{sec:Region_B3} suggests the existence of a function $\kappa_{1,het}(\kappa_2, \eps_1, \eps_2)$ with $\kappa_{1,het}(\kappa_2, 0, 0)=1/2$ such that system \eqref{eq:fund_prob_P2}$|_{\kappa_1 = \kappa_{1,het}(\kappa_2,\eps_1,\eps_2)}$ has a hyperbolic saddle $q_s$ on the blow-up sphere, whose stable manifold $w^s(q_s)$ contains the slow manifold which perturbs from $\mathcal S_2^+$ if $\kappa_2>3/4$. 
    This leads to a kind of separatrix, which seems to be independent of the ratio of the singular perturbation parameters $0<\eps_1, \eps_2 \ll 1$ and exists in all regions $B_i$, $i=1,\,2,\,3$ (we refer again to Remarks \ref{rem:heteroclinic}, \ref{rem:heteroclinic2} and \ref{rem:heteroclinic3} above). In region $B_3$ in particular, this is correlated with the existence of canard-like solutions for $\kappa_{1,het} = \kappa_{1,c}$.
    %
\end{rem}

We turn now to the singular geometry and dynamics in $B_1$, which is obtained by considering system \eqref{eq:fund_prob_P1} in the dual singular limit as $(\eps_1, \tilde \eps_2) \to (0, 0)$. Since the blow-up construction in $B_2$ outlined above already resolves degeneracies associated with the loss of smoothness as $\eps_1 \to 0$, we continue to work in blown-up space shown in Figure \ref{fig:B2_blow-up}. While the sequence of blow-up transformations in $B_2$ suffices to resolve the loss of smoothness in system \eqref{eq:fund_prob_P1} for each fixed $\tilde \eps_2 > 0$, there is an additional loss of smoothness along $y = \theta_2$ when $\tilde \eps_2 \to 0$. This induces a PWS system on the blow-up sphere and left/right cylinders. The dynamics is sketched in Figure \ref{fig:B1_piecewise_smooth}; note the presence of an associated switching manifold $\Sigma_1^1$ along the horizontal axis. The PWS analyses on the left and right blow-up cylinders reveals simple crossing and stable sliding dynamics respectively. The transition from stable sliding to crossing is (on the PWS level) facilitated by a regular invisible fold point (denoted by $F$ in Figure \ref{fig:B1_piecewise_smooth}) which is identified in a PWS analysis of the limiting system which governs the dynamics on the blow-up sphere. Three more cylindrical blow-ups are required to resolve the loss of smoothness along $\Sigma_1^1$; one on each cylinder, and another on the sphere. This results in the geometry and dynamics shown in Figure \ref{fig:B1_blow-up_structure}. As before, solutions simply cross over the blow-up cylinder on the left, and an attracting, normally hyperbolic critical manifold (which we again denote by $\mathcal S_1^+$) provides the mechanism for stable sliding on the right. Analysing the reduced problem on $\mathcal S_1^+$ reveals the stable equilibrium $q_{n}$, similarly to the $B_2$ analysis above. This is consistent with Proposition \ref{prop:node}, which asserts the presence of $q_{n}$ as a stable node in this region of phase space for all $(\eps_1, \eps_2) \in \mathcal U$ and $(\kappa_1, \kappa_2) \in \Lambda$.

\begin{figure}[t!]
    \centering
    \begin{subfigure}[t]{0.45\textwidth}
        \centering
        \includegraphics[width=0.99\linewidth]{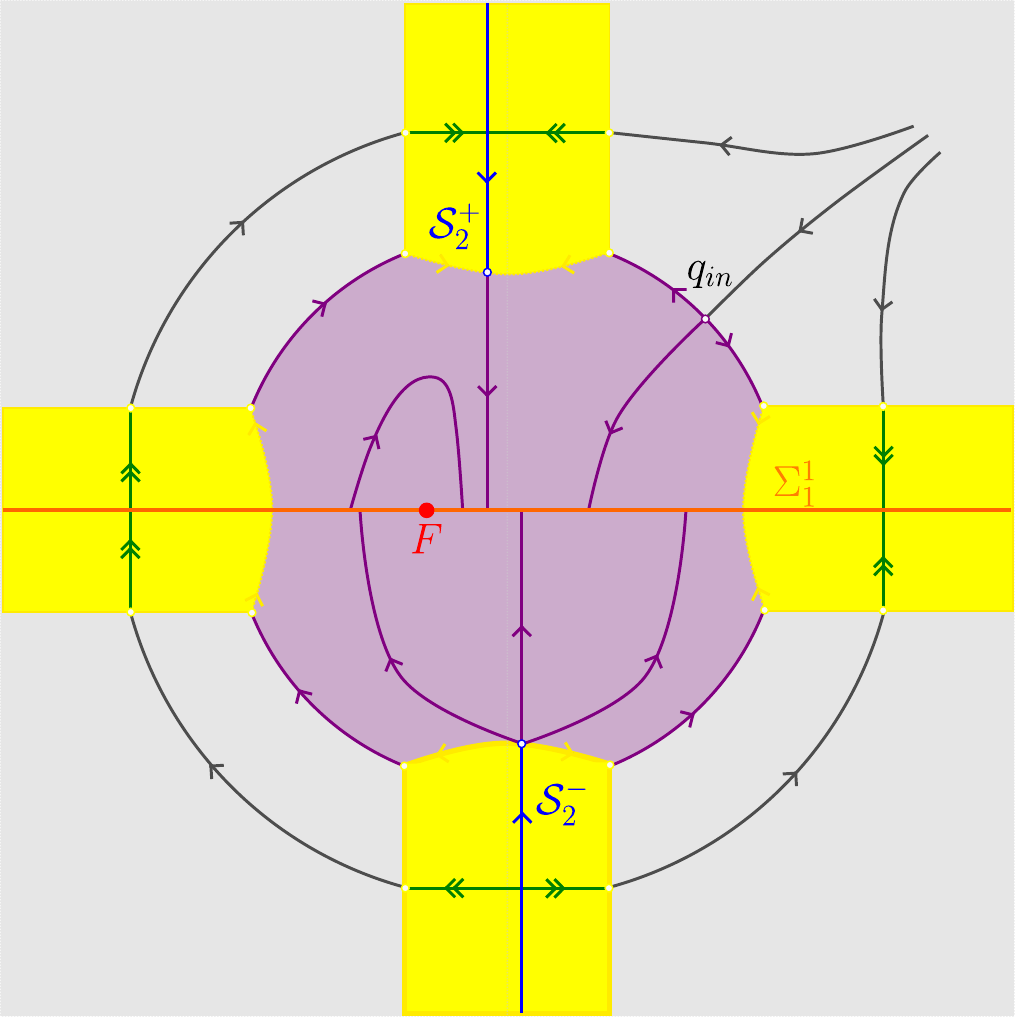}
    \caption{}
    \label{fig:B1_piecewise_smooth}
    \end{subfigure}%
    ~ 
    \begin{subfigure}[t]{0.45\textwidth}
        \centering
        \includegraphics[width=0.99\linewidth]{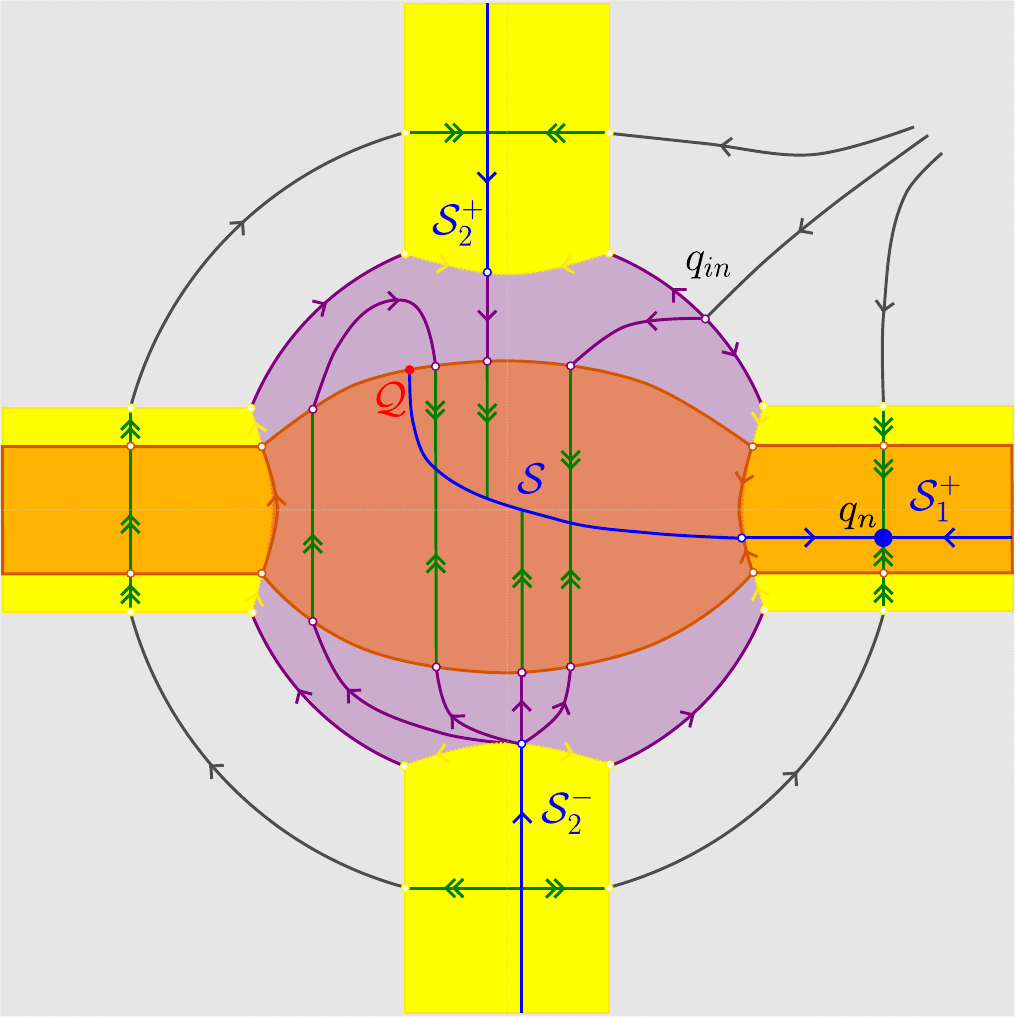}
    \caption{}
    \label{fig:B1_blow-up_structure}
    \end{subfigure}
    \caption{In (a): Singular PWS dynamics 
    in the $B_1$ singular limit $(\eps_1, \Tilde{\eps}_2) \to (0, 0)$. The switching manifold $\Sigma_1^1$ (orange) dissects the phase space horizontally. On the right cylinder (yellow) there is attracting sliding and on the left cylinder there is crossing dynamics. The transition from crossing to sliding is mediated by the presence of an invisible fold point $F$ (red) on the blow-up cylinder. In (b): Dynamics in region $B_1$ after cylindrical blow-up of $\Sigma_1^1$. On the `central' blow-up cylinder in orange, we obtain an attracting critical manifold $\mathcal{S}$, which connects the critical manifold $\mathcal{S}_1^+$ with a degenerate point denoted here by $\mathcal Q$ (red). The reduced flow on $\mathcal{S}$ is not shown; it depends upon the parameters $(\kappa_1, \kappa_2) \in \Lambda$.}
    \label{fig:B1_blow-up}
\end{figure}

The qualitative features of the geometry and dynamics described above are the same for all $(\kappa_1, \kappa_2) \in \Lambda$, but there are key qualitative differences in the dynamics on the `central' blow-up cylinder, i.e.~on the blow-up cylinder which lies on top of the sphere. Here, we identify an attracting critical manifold (denoted by $\mathcal S$ in the figure) which connects a degenerate point $\mathcal Q$ which stems from the presence of the invisible fold point $F$ with the endpoint of $\mathcal S_1^+$ for all $(\kappa_1, \kappa_2) \in \Lambda$. The reduced dynamics on $\mathcal S$ can have either $0$, $1$ or $2$ equilibria depending on the region in the $(\kappa_1,\kappa_2)$-plane due to the saddle-node bifurcation described in Theorem \ref{thm:B1}.

\begin{rem}
    \label{rem:Q_degeneracy}
    Degenerate points like the point $\mathcal Q$ shown in Figure \ref{fig:B1_blow-up_structure} arise naturally in blow-up desingularisations of smooth perturbations of PWS systems with tangency points; see e.g.~\cite{Bonet2016,Kristiansen2019c,Jelbart2020,Jelbart2021}. However, the loss of hyperbolicity at $\mathcal Q$ is more complicated here because of an exponential/beyond all orders tangency between the fast foliation and $\mathcal S$ at $\mathcal Q$. This stems from the fact that the Hill function $H(z)$ is flat as $z \to \pm \infty$. It should be possible to resolve this degeneracy using geometric blow-up techniques based on the method in \cite{Kristiansen_2017} (see also \cite{Jelbart_2021,Kristiansen2019b} for applications of this method), but this is beyond the scope of the present article. For our purposes, it suffices to note that equilibria of the reduced problem on $\mathcal S$, when they exist, are bounded away from $\mathcal Q$ for any fixed $(\kappa_1, \kappa_2) \in \Lambda$.
\end{rem}


We conclude this section with an overview of the geometry and dynamics in $B_3$. Here we consider system \eqref{eq:fund_prob_kappa} in parameter chart $\mathcal P_2$, i.e.~we consider system \eqref{eq:fund_prob_P2}, in the dual singular limit $(\tilde \eps_1, \eps_2) \to (0, 0)$. The loss of smoothness along $\Sigma_1 \cup \Sigma_2$ when $\eps_2 = 0$ can be resolved using the same sequence of transformations described in the overview of the problem in $B_2$ if one starts instead with the extended system obtained by appending $\eps_2' = 0$ to system \eqref{eq:fund_prob_P2}; the same geometry and dynamics is obtained in $\mathcal P_1$ as long as $\tilde \eps_1 > 0$ is fixed. Additional loss of smoothness along $x = \theta_1$ occurs when $\tilde \eps_1 \to 0$, leading to the PWS dynamics on the upper/lower cylinders and the blow-up sphere. The PWS dynamics is sketched for three different values of $\kappa_1$ in Figure \ref{fig:B3_PWS}, where the new switching manifold is denoted by $\Sigma_2^1$. The figure illustrates an important feature of the PWS dynamics on the sphere, namely, the presence of a visible-invisible two-fold singularity when $\kappa_1 = 1/2$, which facilitates a transition from (apparent) unstable sliding on the lower blow-up cylinder to (apparent) stable sliding on the top blow-up cylinder. The PWS dynamics on the blow-up sphere itself depends on the relative orientation of the visible and invisible fold points, which themselves depend upon the sign of $\kappa_1 - 1/2$.

\begin{figure}[t]
    \centering
    \begin{subfigure}[t]{0.3\textwidth}
        \centering
        \includegraphics[width=0.99\linewidth]{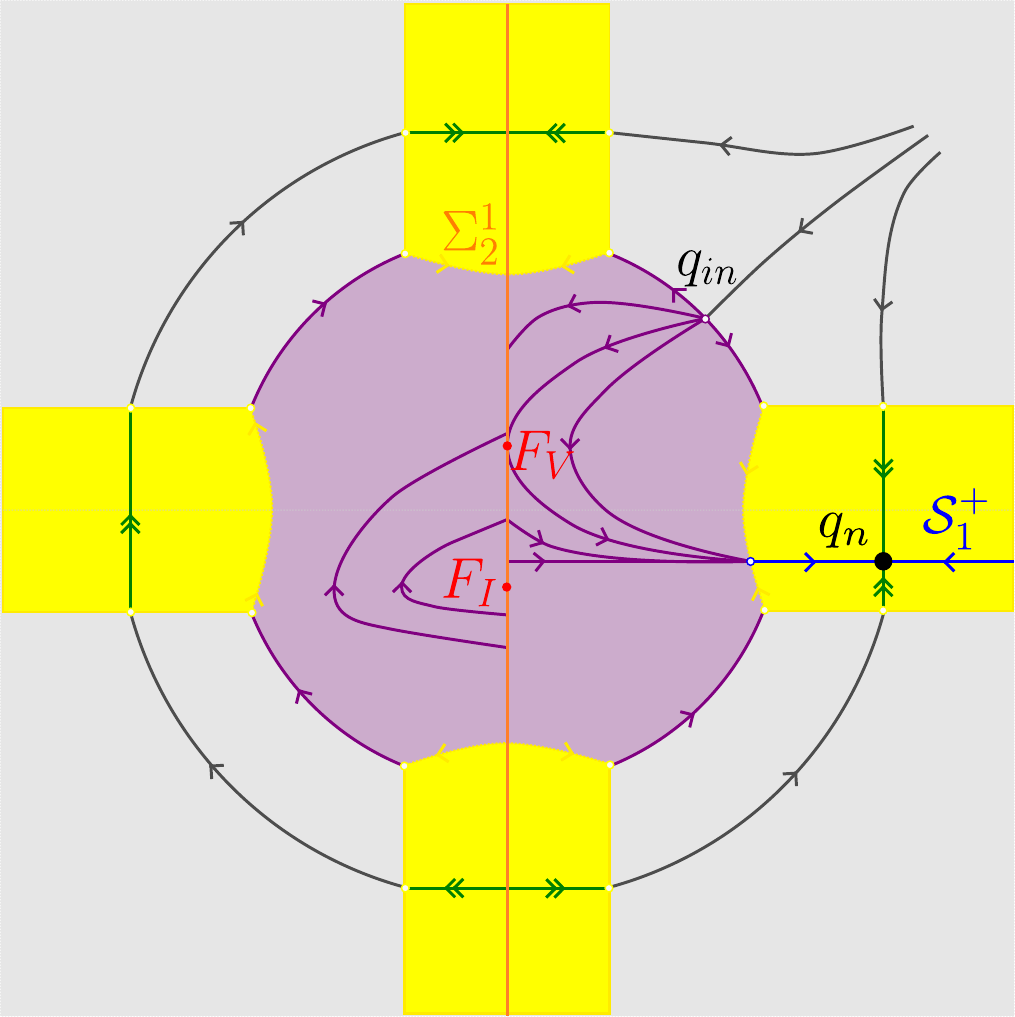}
    \caption{$\kappa_1<1/2$}
    \label{fig:B3_PWS_kappa1<0,5}
    \end{subfigure}%
\hfill
    \begin{subfigure}[t]{0.3\textwidth}
        \centering
        \includegraphics[width=0.99\linewidth]{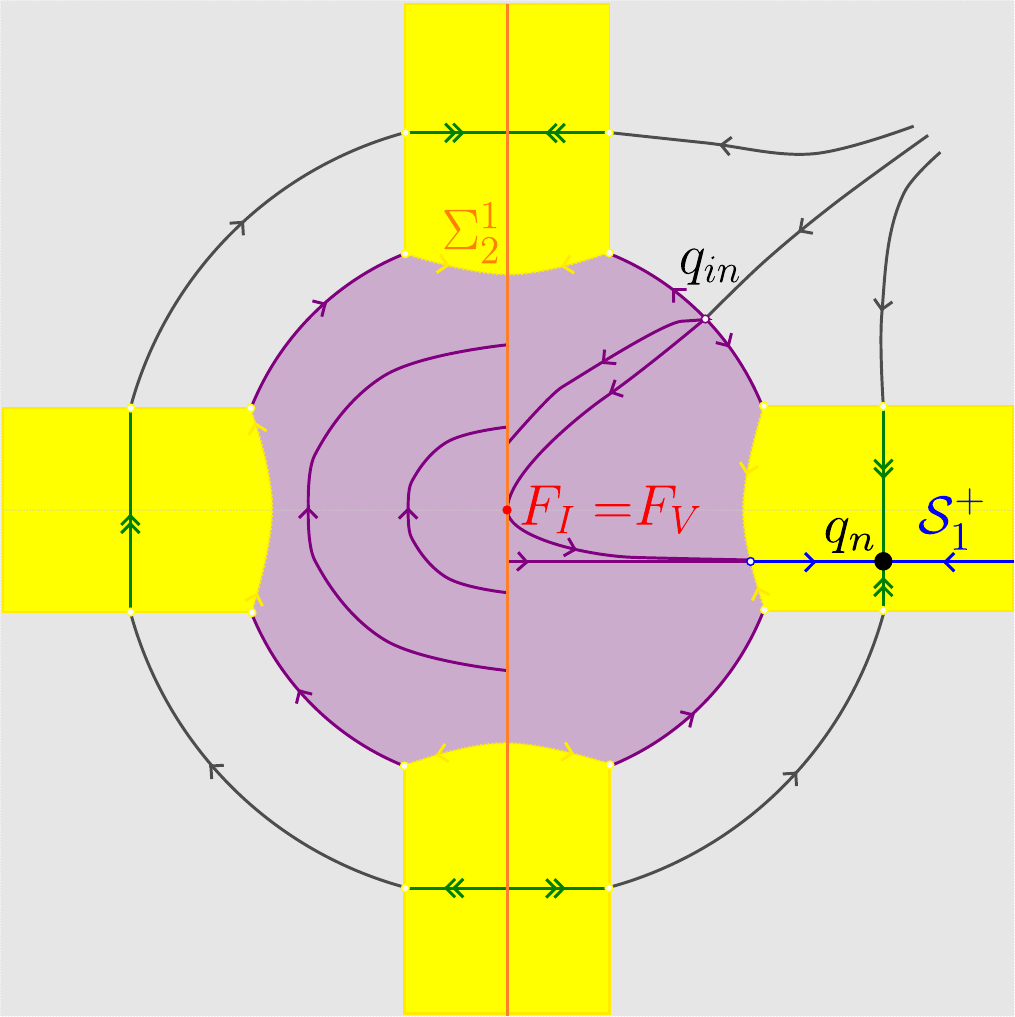}
    \caption{$\kappa_1=1/2$}
    \label{fig:B3_PWS_kappa1=0,5}
    \end{subfigure}
\hfill
    \begin{subfigure}[t]{0.3\textwidth}
        \centering
        \includegraphics[width=0.99\linewidth]{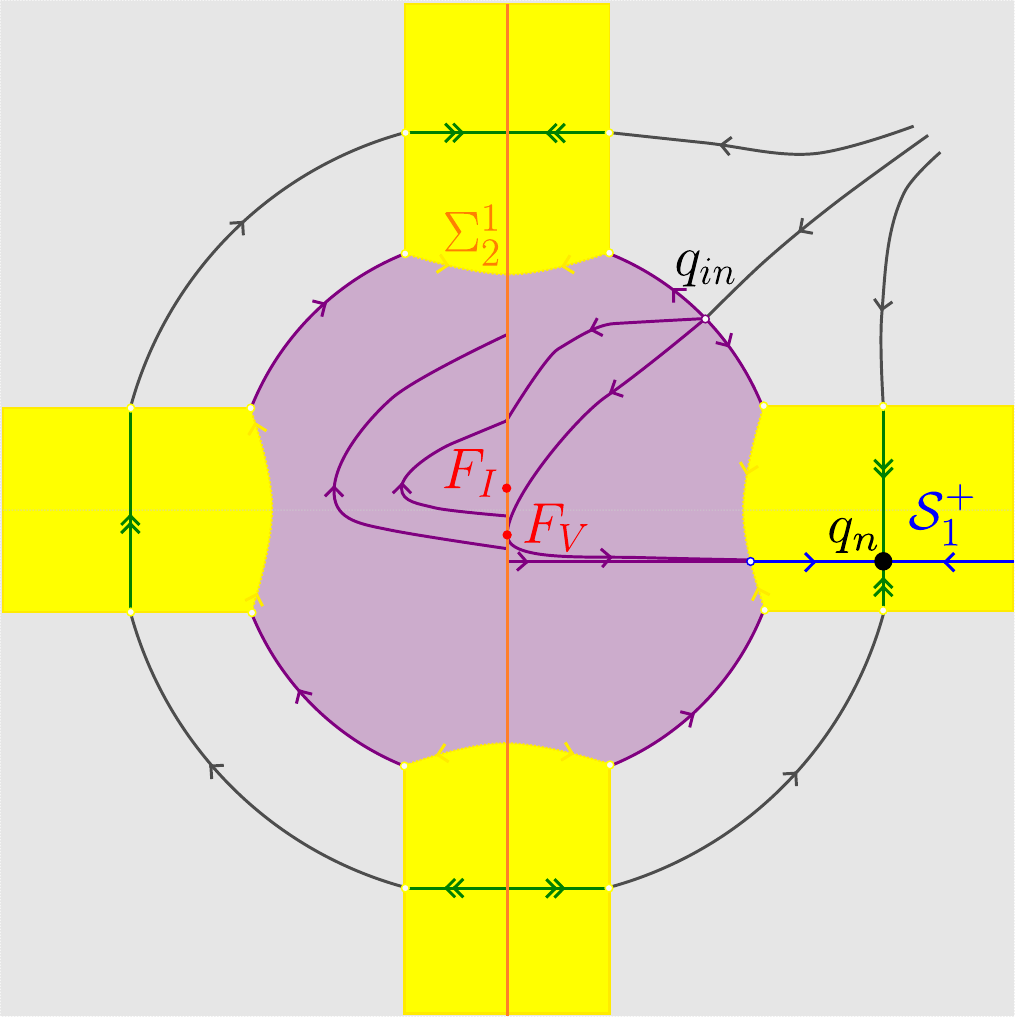}
    \caption{$\kappa_1>1/2$}
    \label{fig:B3_PWS_kappa1>0,5}
    \end{subfigure}
    \caption{PWS geometry and dynamics in the $B_3$ singular limit $(\tilde \eps_1, \eps_2) \to (0,0)$. The switching manifold $\Sigma_2^1$ (orange) dissects the phase space vertically. On the upper cylinder there is attracting sliding and on the lower cylinder there is repelling sliding. The change from stable to unstable sliding is mediated by the presence of a visible-invisible two-fold singularity which is correlated with the coincidence of a visible and an invisible fold point (shown in red and denoted by $F_V$ and $F_I$ respectively) when $\kappa_1 = 1/2$. The figure illustrates the unfolding of the two-fold singularity with (a) $\kappa_1 < 1/2$, (b) $\kappa_1 = 1/2$ and (c) $\kappa_1 > 1/2$.}
    \label{fig:B3_PWS}
\end{figure}

Three more cylindrical blow-ups are needed to resolve the loss of smoothness along $\Sigma_2^1$ when $\tilde \eps_1 = 0$; one for each branch on the upper/lower cylinders, and one to resolve the part of the switching manifold which lies on the sphere. This leads to the blow-up geometry shown in Figure \ref{fig:B3_kappa1_geometry}. Attracting and repelling normally hyperbolic critical manifolds, which we denote again by $\mathcal S^+_2$ and $\mathcal S^-_2$ respectively, are identified on top of the cylinders. This aligns with expectations from the PWS analysis, which suggested stable and unstable sliding in these regions respectively. A direct analysis reveals that the reduced flow along $\mathcal S^+_2$ and $\mathcal S^-_2$ is oriented towards the inner blow-up cylinder which lies on top of the sphere.

The features described above are present for all $(\kappa_1, \kappa_2) \in \Lambda$, however the geometry and slow-fast dynamics in the central blow-up (i.e.~on the cylinder on top of the sphere), differs significantly in different regions of the $(\kappa_1, \kappa_2)$-plane. 

\begin{figure}[t]
    \centering
    \begin{subfigure}[t]{0.3\textwidth}
        \centering
            \includegraphics[width=1\linewidth]{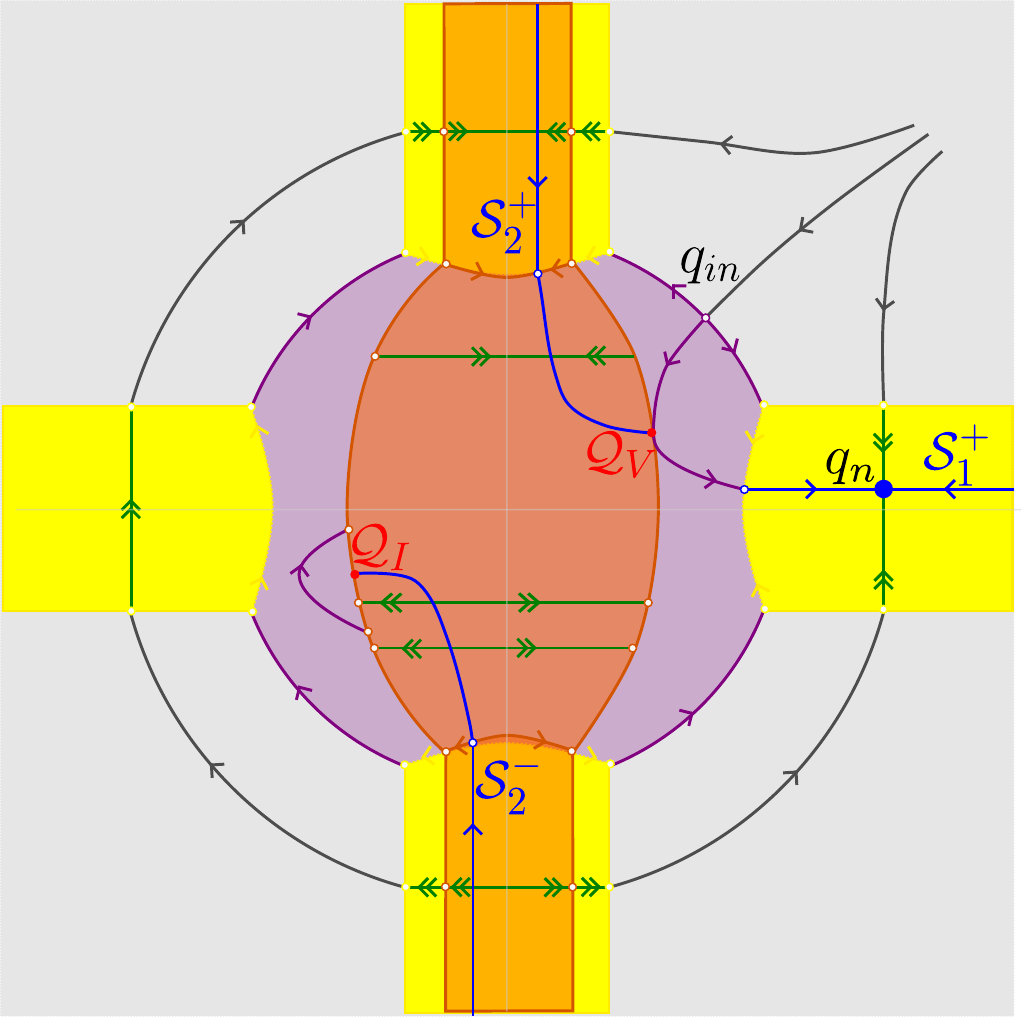}
    \caption{$\kappa_1<1/2$}
    \label{fig:B3_kappa1_less_0,5}
    \end{subfigure}%
    \hfill
    \begin{subfigure}[t]{0.3\textwidth}
        \centering
            \includegraphics[width=1\linewidth]{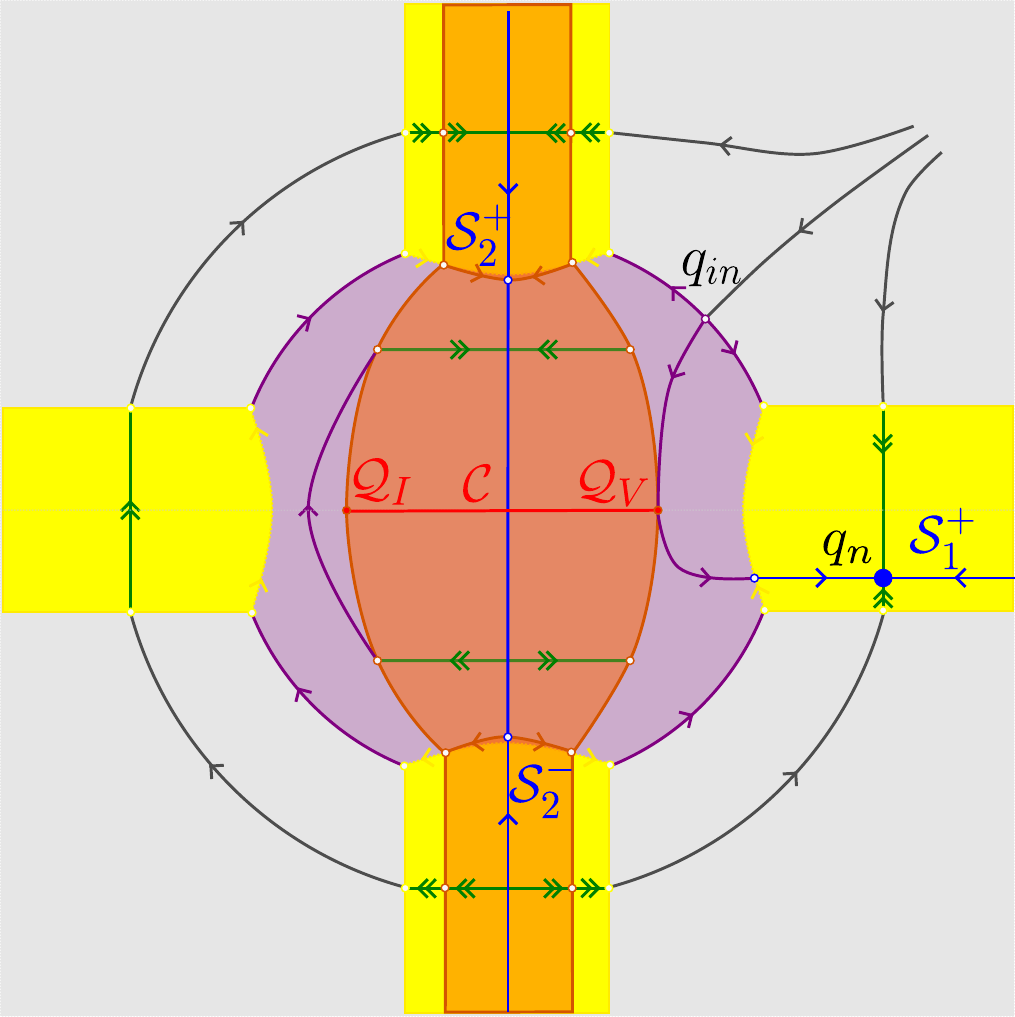}
    \caption{$\kappa_1=1/2$.}
    \label{fig:B3_kappa1=1/2}
    \end{subfigure}
    \hfill
    \begin{subfigure}[t]{0.3\textwidth}
        \centering
        \includegraphics[width=1\linewidth]{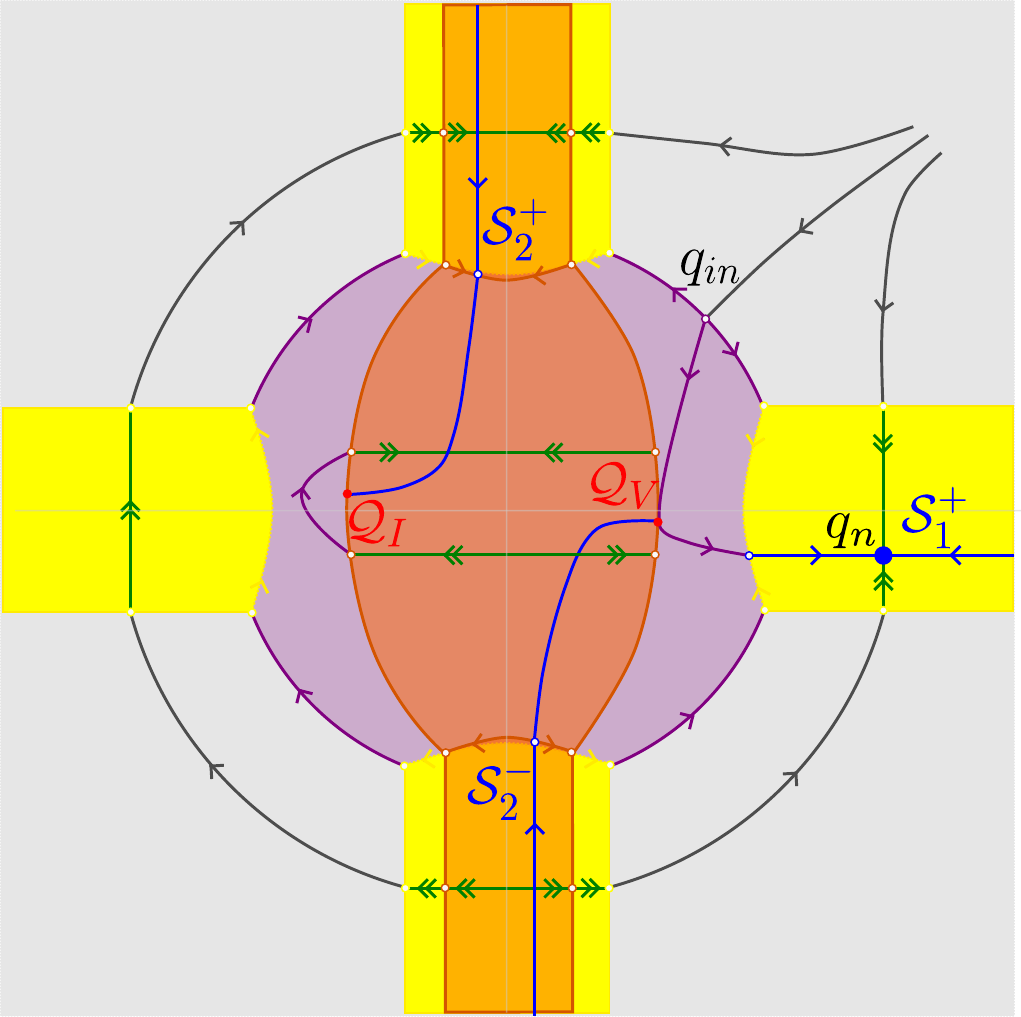}
    \caption{$\kappa_1>1/2$}
    \label{fig:B3_kappa1_greater_0,5}
    \end{subfigure}
    \caption{Singular geometry and dynamics after cylindrical blow-up of the switching manifold $\Sigma_2^1$ on the blow-up sphere and the upper/lower cylinders, for (a) $\kappa_1 < 1/2$, (b) $\kappa_1 = 1/2$ and (c) $\kappa_1 > 1/2$ as in Figure \ref{fig:B3_PWS}. The relative positioning of the critical manifolds $\mathcal S_2^\pm$, which `extend' to the critical manifolds shown in blue on the central blow-up cylinder, changes as $\kappa_1$ varies over the critical value $\kappa_1 = 1/2$ associated with the two-fold singularity in Figure \ref{fig:B3_PWS}. In (b) we show the intersection of these critical manifolds when $\kappa_1 = 1/2$, as well as the presence of an additional critical manifold $\mathcal C$, shown here in red, which is degenerate (i.e.~not normally hyperbolic). The same geometry was recently identified and analysed in \cite{Kristiansen_2023}.}
    \label{fig:B3_kappa1_geometry}
\end{figure}

Figure \ref{fig:B3_kappa1_geometry} shows three different possibilities depending on whether $\kappa_1 < 1/2$, $\kappa_1 = 1/2$ or $\kappa_1 > 1/2$, and should be compared with Figure \ref{fig:B3_PWS} (these are the cases already distinguished by the PWS dynamics above). When $\kappa_1 < 1/2$, a repelling critical manifold connects the endpoint of $\mathcal S_2^-$ with a degenerate point $\mathcal Q_I$ associated with the invisible tangency from the left, and an attracting critical manifold connects the endpoint of $\mathcal S_2^+$ with a degenerate point $\mathcal Q_V$ associated with the visible tangency from the right. The converse orientation applies when $\kappa_1 > 1/2$; here the attracting critical manifold connects to $\mathcal Q_I$, and the repelling critical manifold connects to $\mathcal Q_V$. This transition is mediated by a very degenerate situation when $\kappa_1 = 1/2$, which is shown in Figure \ref{fig:B3_kappa1=1/2}. There are two critical manifolds in this case. The attracting and repelling critical manifolds $\mathcal S_2^\pm$ extend up to the origin, where they intersect another critical manifold which lies along the horizontal axis. This critical manifold is shown in red and denoted by $\mathcal C$ in Figure \ref{fig:B3_kappa1=1/2}, and is degenerate (the non-trivial eigenvalue associated with its linearisation is identically zero). Consideration of the reduced flow on $\mathcal S_2^\pm$ reveals another important feature when $\kappa_1 = 1/2$, namely, the presence of a stable node $q_{f/n} \in \mathcal S_2^+$ when $\kappa_2 \in (1/2, 3/4)$, vs a saddle $q_s \in \mathcal S_2^-$ when $\kappa_2 \in (3/4,1)$. 
\begin{figure}[]
    \centering
    \begin{subfigure}[t]{0.3\textwidth}
        \centering
            \includegraphics[width=1\linewidth]{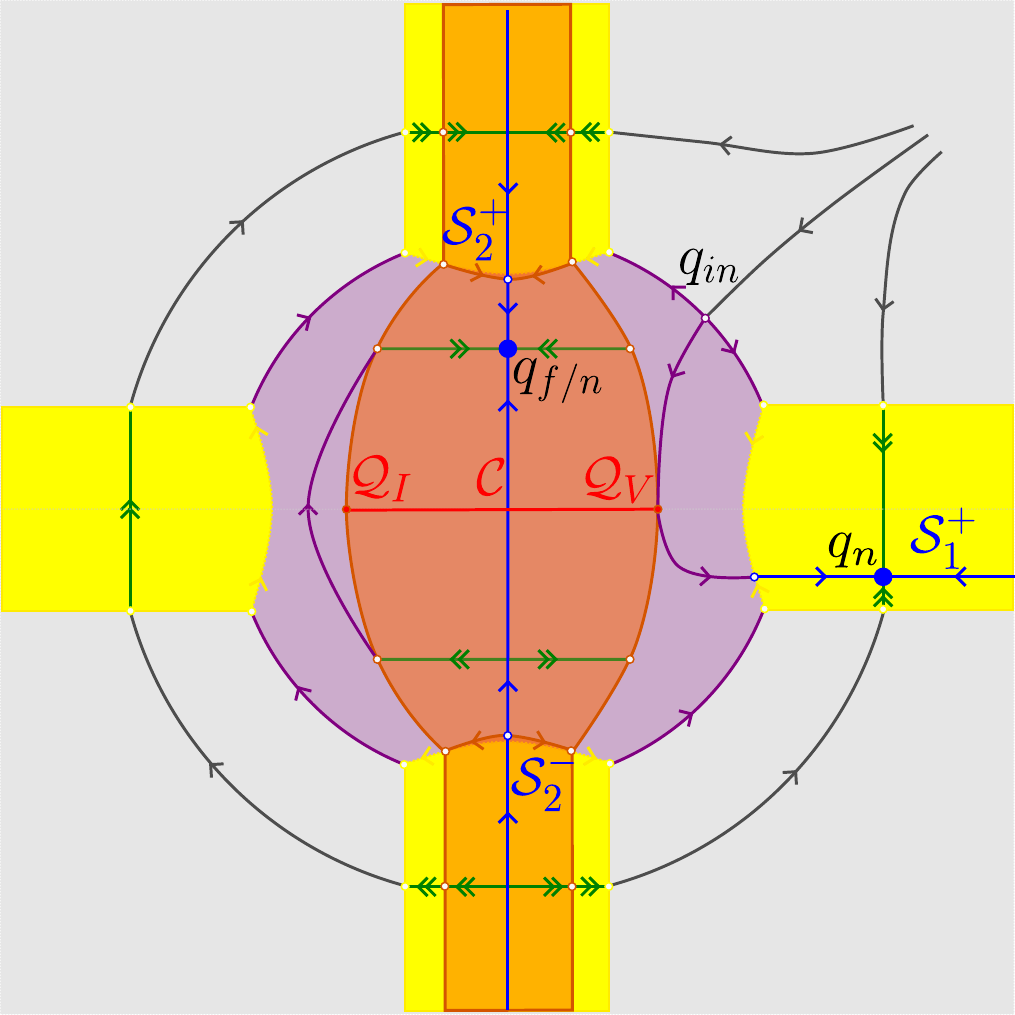}
    \caption{$\kappa_1=1/2$, $\kappa_2<3/4$}
    \label{fig:B3_kappa1_0.5_kappa2_less_0.5}
    \end{subfigure}%
    \hfill
    \begin{subfigure}[t]{0.3\textwidth}
        \centering
            \includegraphics[width=1\linewidth]{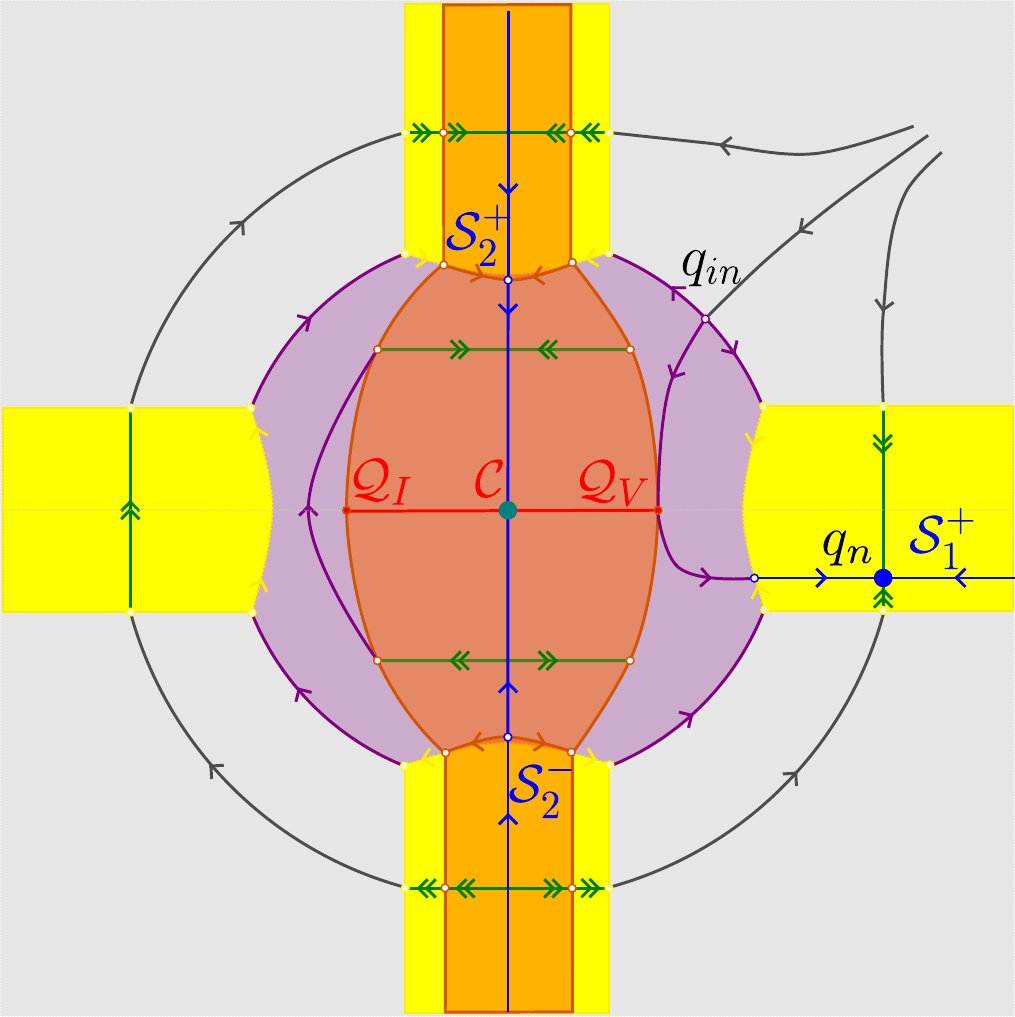}
    \caption{$\kappa_1=1/2$, $\kappa_2=3/4$}
    \label{fig:B3_kappa1_0.5_kappa2=0.5}
    \end{subfigure}
    \hfill
    \begin{subfigure}[t]{0.3\textwidth}
        \centering
        \includegraphics[width=1\linewidth]{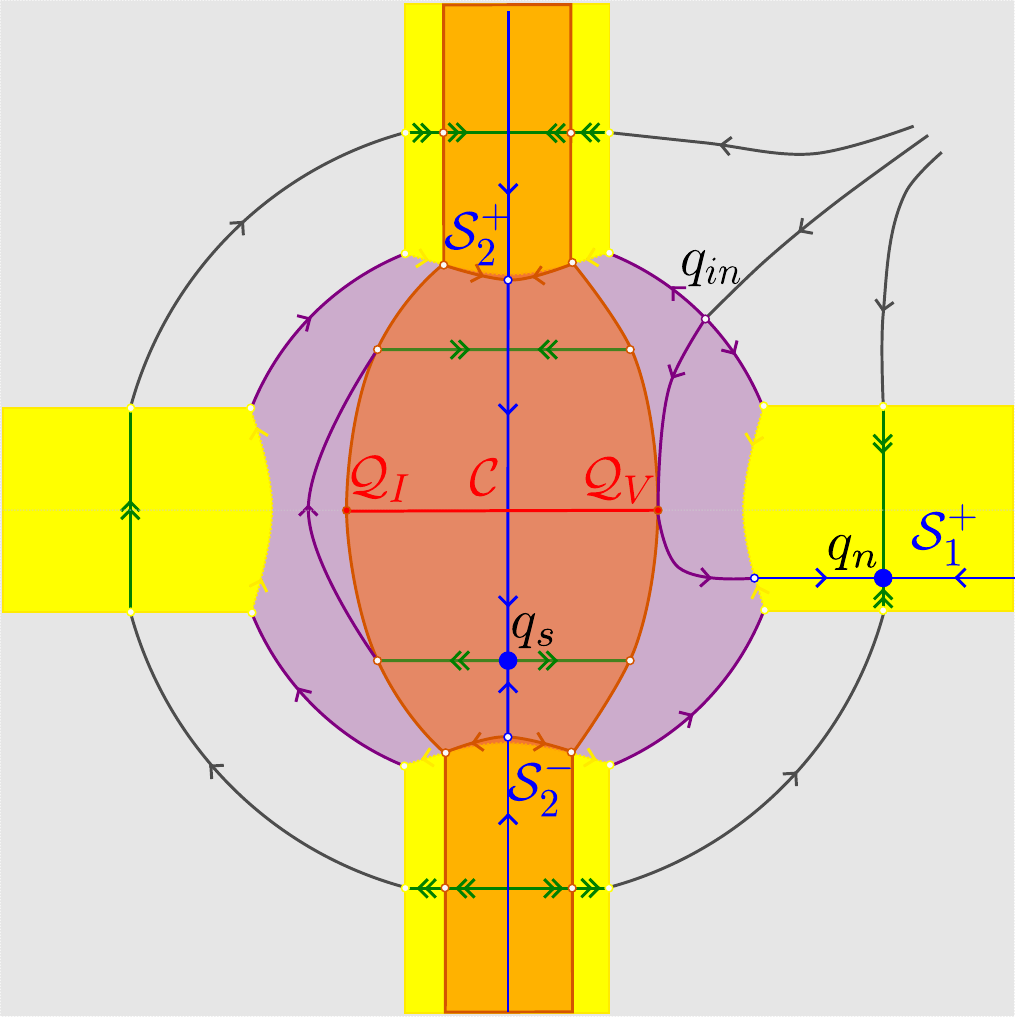}
    \caption{$\kappa_1=1/2$, $\kappa_2>3/4$}
    \label{fig:B3_kappa1_0.5_kappa2_greater_0.5}
    \end{subfigure}
    \caption{Singular geometry and dynamics when $\kappa_1 = 1/2$ (cf.~Figure \ref{fig:B3_kappa1=1/2} above) when (a) $\kappa_2 < 3/4$, (b) $\kappa_2 = 3/4$ and (c) $\kappa_2 > 3/4$. When $\kappa_2 < 3/4$ there is a stable equilibrium $q_{f/n}$ on $\mathcal S_2^+$ (above $\mathcal C$). When $\kappa_2 > 3/4$, there is a saddle equilibrium $q_s$ on $\mathcal S_2^-$ (below $\mathcal C$). In each case, the third equilibrium in the system is `lost' in the sense that it lies within $\mathcal C$. When $\kappa_2 = 3/4$, i.e.~in (b), both $q_{f/n}$ and $q_s$ (if they exist) are expected to lie at the origin, i.e.~at the intersection of $\mathcal S_2^-$, $\mathcal C$ and $\mathcal S_2^+$.}
    \label{fig:B3_kappa1_0.5_kappa2_varied}
\end{figure}
This feature, which we illustrate in Figure \ref{fig:B3_kappa1_0.5_kappa2_varied}, suggests an additional degeneracy when $\kappa_2 = 3/4$. This motivates an additional blow-up of the point $(\kappa_1, \kappa_2, \tilde \eps_1) = (1/2, 3/4, 0)$ in parameter space, followed by a spherical blow-up in variable space. These two blow-ups lead to the blow-up sphere in the bifurcation set in Figure \ref{fig:B3_bifurcation_diagram} and the blown-up geometry in phase space sketched in Figure \ref{fig:BT_phase_blow-up_geometry} respectively. The results in Theorem \ref{thm:B3_BT} follow from the identification of a regular Bogdanov-Takens bifurcation which occurs on this final blow-up sphere in variable space, and on the blow-up sphere in the $(\kappa_1, \kappa_2, \tilde \eps_1)$-parameter space which is shown in Figure \ref{fig:B3_bifurcation_diagram_zoom}.

\begin{figure}[h!]
    \centering
    \includegraphics[width=0.5\linewidth]{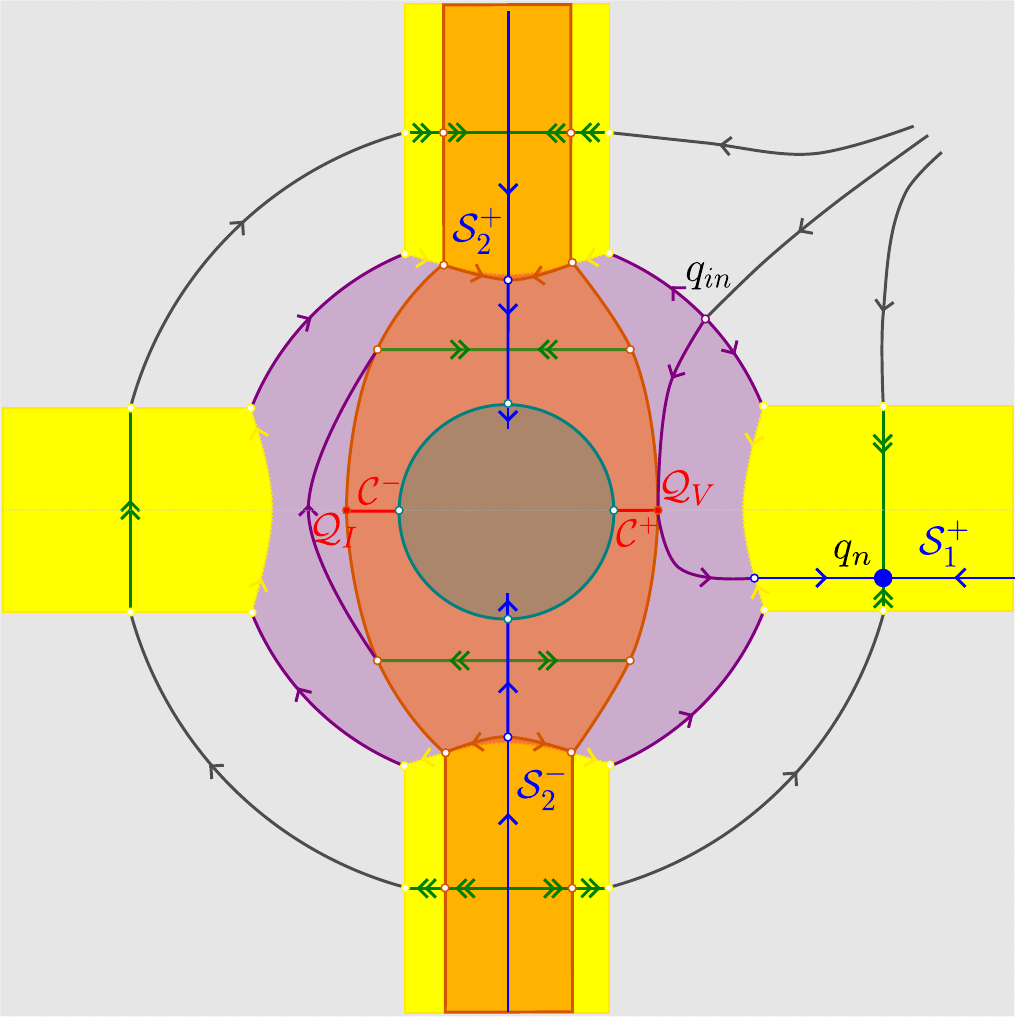}
    \caption{Blow-up geometry in phase space for $(\kappa_1,\kappa_2)=(1/2,3/4)$ after a spherical blow-up of the degenerate point (teal) which lies at the intersection of $\mathcal C$, $\mathcal S^-_2$ and $\mathcal S_2^+$ in Figure \ref{fig:B3_kappa1_0.5_kappa2=0.5}. The dynamics on the blow-up sphere, shown here in blue, depends on the local parameter coordinates obtained from the parameter blow-up close to $(\kappa_1,\kappa_2)=(1/2,3/4)$ and resembles the phase portraits of the classical Bogdanov-Takens bifurcation.}
    \label{fig:BT_phase_blow-up_geometry}
\end{figure}

\begin{figure}[h!]
        \centering
        \includegraphics[width=0.5\linewidth]{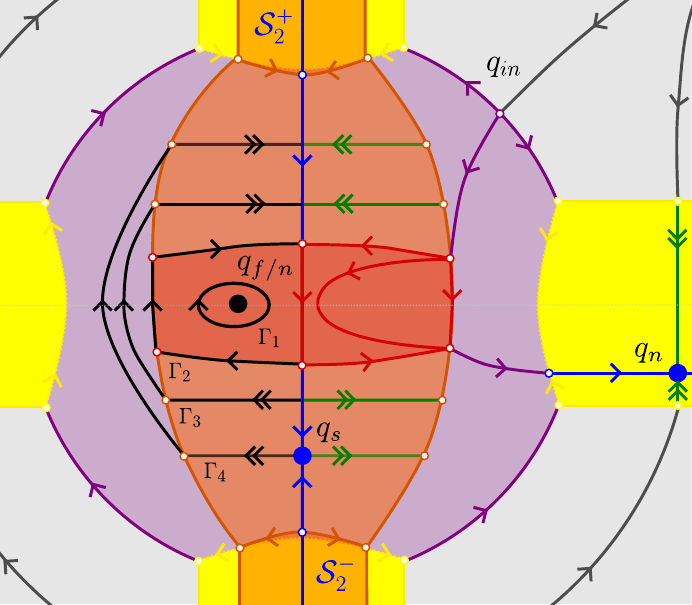}
    \caption{Fully resolved geometry and dynamics in the $B_3$ singular limit $(\tilde \eps_1, \eps_2) \to (0, 0)$ when $\kappa_1 = 1/2$ and $\kappa_2 \in (3/4, 1)$; cf Figure \ref{fig:B3_kappa1_0.5_kappa2_greater_0.5}. Cylindrical blow-up of $\mathcal C$ leads to the innermost `horizontal' blow-up cylinder in red. The equilibrium $q_{f/n}$ is a nonlinear center when $\kappa_1 = 1/2$ and $\kappa_2 \in (3/4, 1)$, and there is a singular canard-like intersection along the vertical axis. The left-hand side of the innermost blow-up cylinder is foliated by periodic orbits; a representative is shown and denoted here by $\Gamma_1$. These connect to the heteroclinic orbit denoted by $\Gamma_2$, which is the smallest of a continuous family of singular canard cycles; a representative singular canard cycle is indicated by $\Gamma_3$. The largest of these cycles is a singular canard homoclinic cycle which connects to the saddle $q_s$; this is denoted by $\Gamma_4$. The overall geometry strongly suggests the presence of an incomplete canard explosion for $0 < \tilde \eps_1, \eps_2 \ll 1$.}
    \label{fig:B3_singular_canard_1}
\end{figure}

\begin{rem}
    The red lines $\mathcal C^\pm$ which connect the points $\mathcal Q_I$ and $\mathcal Q_V$ to the blow-up sphere in Figure \ref{fig:BT_phase_blow-up_geometry} are still degenerate, because the spherical blow-up described above is insufficient to resolve the degeneracy along all of $\mathcal C$. We have chosen not to include the additional blow-ups that are necessary to resolve these degeneracies because they do not play an important role in the proof or description of our main results.
\end{rem}

In order to prove Theorem \ref{thm:B3}, we need to resolve one last degeneracy in both parameter and variable space which is associated with the scaling regime (S3). This degeneracy occurs for $\kappa_1=1/2$ and $\kappa_2 \in (3/4, 1)$ when $\tilde \eps_1 = 0$, i.e.~along the degenerate line which emanates from the top of the blow-up sphere in Figure \ref{fig:B3_bifurcation_diagram_sphere_only}. 
In phase space, it correlates with the presence of the degenerate critical manifold $\mathcal C$ when $\kappa_2 \in (3/4, 1)$, i.e.~in Figure \ref{fig:B3_kappa1_0.5_kappa2_greater_0.5}. This is resolved in a pair of cylindrical blow-ups in both parameter and variable space, leading to the blown-up bifurcation diagram in Figure \ref{fig:B3_bifurcation_diagram} and the singular geometry and dynamics shown in Figure \ref{fig:B3_singular_canard_1} respectively. Figure \ref{fig:B3_singular_canard_1} shows the dynamics at the critical value $\kappa_1 = 1/2$. Here, the planar system governing the dynamics on the final blow-up cylinder (shown in red in the figure) has a first integral, a nonlinear center, periodic orbits and an orbit which connects the endpoints of the attracting and repelling critical manifolds $\mathcal S^\pm_2$. This is very reminiscent of the geometry and dynamics associated with canard points in planar systems \cite{Maesschalck_2021,dumortier1996canard,Krupa_2001_Canard}. Indeed, the proofs for Assertions (ii)-(iii) in Theorem \ref{thm:B3} given in later sections will rely directly on existing results that characterise the close relationship between regularised visible-invisible tangencies and canards in slow-fast systems \cite{Kristiansen_2023}. Like the existing results on canards and associated phenomena near regularised two-fold singularities considered in \cite{Bonet2018,Kristiansen_2015,Kristiansen_2023}, these are obtained using adaptations of the methods developed in e.g.~\cite{Maesschalck_2021,dumortier1996canard,Krupa_2001_Canard}. Although we do not prove it in this work, the singular limit analysis which leads to Figure \ref{fig:B3_singular_canard_1} provides a limiting mechanism for the growth of the small amplitude oscillations that are born in the subcritical singular Hopf bifurcation at $q_{f/n}$ under parameter variation. 
We conjecture that these cycles undergo an incomplete canard explosion in which the Hopf cycles (such as $\Gamma_1$ in the figure) transition to canard-type cycles which perturb from $\Gamma_2$ in the figure, 
after which there is an explosive growth of canard cycles mediated by a family of underlying singular canard cycles (a representative $\Gamma_3$ is shown in the figure) which terminates in a canard-homoclinic orbit which perturbs from a singular canard-homoclinic orbit (denoted $\Gamma_4$ in the figure). 
This is similar to the incomplete canard explosion treated in \cite{Wechselberger2015}. 
We have decided to omit the rigorous treatment of the explosion, for the sake of brevity in what is already an admittedly long manuscript.



\section{Region \texorpdfstring{$B_2$}{B2}} \label{sec:Region_B2}
In this section we present the geometric blow-up construction in region $B_2$, followed by our proof of Theorem \ref{thm:B2}. We start with the intermediate region $B_2$ for the same reasons that we decided to start our informal discussion in Section \ref{sub:main_results_blow-up} in $B_2$: a large amount of the blow-up structure 
established here can be built upon later in the $B_1$ and $B_3$ analyses.


\subsection{Blow-up, singular geometry and dynamics}
\label{sub:B2_geometry}

As in the statement of Theorem \ref{thm:B2}, we work in chart $\mathcal P_1$ and consider system \eqref{eq:fund_prob_P1} with $0 < \eps_1 \ll 1$ and $\tilde \eps_2 \in [\beta_1, \beta_2]$. In order to resolve the loss of smoothness along $\Sigma_1 \cup \Sigma_2$ when $\eps_1 \to 0$, we consider the extended system
\begin{equation}
\label{eq:P1_extended}
\begin{split}
    \Dot{x}&=H(x,\eps_1,\theta_1)+H(y,\eps_1 \tilde \eps_2,\theta_2)-2 H(x,\eps_1,\theta_1) H(y,\eps_1 \tilde \eps_2,\theta_2)-\kappa_1 \theta_1^{-1} x, \\
    \Dot{y}&=1-H(x,\eps_1,\theta_1) H(y,\eps_1 \tilde \eps_2,\theta_2)-\kappa_2\theta_2^{-1} y, \\
    \dot \eps_1 &= 0 ,
\end{split}
\end{equation}
and apply a sequence of geometric blow-up transformations which resolve the degeneracy along $\Sigma_1 \cup \Sigma_2 \times \{0\}$. 
We begin by blowing-up the intersection point $I = \{(\theta_1, \theta_2, 0)\} = \Sigma_1 \cap \Sigma_2 \times \{0\}$ via
\begin{equation} \label{eq:sphere_exponential_blow-up}
    r \geq 0, \  (\bar x, \bar y, \bar \eps_1) \in \mathbb S^2 \mapsto
    \begin{cases}
        x = \theta_1e^{r \Bar{x}} , \\
        y = \theta_2e^{r \Bar{y}} , \\
        \eps_1 = r \Bar{\eps}_1 .
    \end{cases}
\end{equation}
Note that we have permitted a slight abuse of notation by reusing the variable $r$, which was briefly used and replaced by an alternative notation when defining the blow-up transformation in \eqref{eq:parameter_blow-up} above. In order to cover the entire region of interest, $5$ coordinate charts are needed:
\[
\mathcal K_1^\pm : \bar x = \pm 1, \qquad 
\mathcal K_2 : \bar \eps_1 = 1, \qquad 
\mathcal K_3^\pm : \bar y = \pm 1.
\]
In what follows we present the most important details in charts $\mathcal K_1^+$ and $\mathcal K_2$, and omit the majority of details associated with the other charts. Our motivations for these omissions are two-fold: (i) the analysis in charts $\mathcal K_1^-$ and $\mathcal K_3^\pm$ is similar to the analysis in $\mathcal K_1^+$, and (ii) only the $\mathcal K_2$ analysis is necessary for proving Theorem \ref{thm:B2}.

\begin{rem}
    The `usual approach' in blow-up analyses would be to set $x = \theta_1 + r \bar x$ and $y = \theta_2 + r \bar y$ instead of using the exponential transformations in \eqref{eq:sphere_exponential_blow-up}. We opt for the latter, which were introduced in \cite{Simon_thesis} and subsequently applied in \cite{Jelbart2026}, because they lead to simpler formulae for the Hill functions:
    \[
    z = \theta_i e^{r \bar z} \ \implies \ 
    H(z, \eps_1, \theta_i) = \frac{e^{\bar z}}{1 + e^{\bar z}} =: \hat H(\bar z) .
    \]
    Up to a constant scaling, the two approaches coincide to leading order since the exponential transformations satisfy $x = \theta_1 (1 + r \bar x) + \mathcal O(r^2)$ and $y = \theta_2(1 + r \bar y) + \mathcal O(r^2)$ as $r \to 0$.
\end{rem}

We start by considering the dynamics in the scaling chart $\mathcal K_2$. Transforming system \eqref{eq:fund_prob_P1} into local coordinates
$$\mathcal{K}_2 : \  x=\theta_1e^{r_2 x_2}, \qquad y = \theta_2e^{r_2y_2}, \qquad \eps_1 = r_2, $$
and writing $r_2 = \eps_1$ leads to
\begin{equation}\label{eq:B2_spherical_scaling_chart}
    \begin{aligned} 
    x_2'&=\frac{1}{\theta_1e^{\eps_1 x_2}}\left( \Hat{H}(x_2)+ \Hat{H} \left(\frac{y_2}{\Tilde{\eps}_2} \right) -2 \Hat{H}(x_2) \Hat{H} \left( \frac{y_2}{\Tilde{\eps}_2} \right) -\kappa_1 e^{\eps_1 x_2}\right) , \\
    y_2'&=\frac{1}{\theta_2e^{\eps_1 y_2}}\left( 1- \Hat{H}(x_2) \Hat{H} \left(\frac{y_2}{\Tilde{\eps}_2} \right) -\kappa_2 e^{\eps_1 y_2}\right) , 
\end{aligned}
\end{equation}
where $0 < \eps_1 \ll 1$ and $(\cdot)’$ denotes differentiation with respect to the new time scale $t_2 = t / \eps_1$.

Since $\tilde \eps_2 \in [\beta_1, \beta_2]$ in $B_2$, system \eqref{eq:B2_spherical_scaling_chart} is regularly perturbed on compact regions of the $(x_2, y_2)$-plane in the sense that the limiting system when $\eps_1 \to 0$, namely
\begin{equation}\label{eq:B2_dynamics_on_sphere}
    \begin{aligned} 
    x_2'&=\frac{1}{\theta_1}\left( \Hat{H}(x_2)+ \Hat{H} \left(\frac{y_2}{\Tilde{\eps}_2} \right) -2 \Hat{H}(x_2) \Hat{H} \left(\frac{y_2}{\Tilde{\eps}_2} \right) -\kappa_1 \right) , \\
    y_2'&=\frac{1}{\theta_2}\left( 1 - \Hat{H}(x_2) \Hat{H} \left( \frac{y_2}{\Tilde{\eps}_2} \right) -\kappa_2 \right) ,
\end{aligned}
\end{equation}
only has isolated equilibria (if any). A comprehensive analysis of the qualitative dynamics of system \eqref{eq:B2_spherical_scaling_chart} based on the dynamics of system \eqref{eq:B2_dynamics_on_sphere} and regular perturbation theory will be given in Section \ref{sec:B2_bifurcation_analysis} below, in order to prove Theorem \ref{thm:B2}. For now, we continue with the blow-up construction which resolves the remaining degeneracies in the remaining coordinate charts.

We turn to the entry and exit charts $\mathcal K_1^\pm$ and $\mathcal K_3^\pm$ in order to take care of the (still degenerate) branches of the switching manifolds $\Sigma_1$ and $\Sigma_2$ defined by
\begin{equation} \label{eq:Sigma_1_+_-}
    \Sigma_1^+=\{(x,y,0) \in \R^3:y=\theta_2, x> \theta_1 \}, \qquad \Sigma_1^-= \{(x,y,0) \in \R^3:y=\theta_2, x< \theta_1 \},
\end{equation}
and
\begin{equation} \label{eq:Sigma_2_+_-}
       \Sigma_2^+=\{(x,y,0) \in \R^3:x=\theta_1, y > \theta_2 \}, \qquad   \Sigma_2^-=\{(x,y,0) \in \R^3:x=\theta_1, y < \theta_2 \} ,
\end{equation}
respectively. We start in chart $\mathcal K_1^+$, where $\Sigma_1^+$ is visible, 
and work in local coordinates $(r_1, y_1, \eps_{11})$ defined by
$$\mathcal{K}_1^+: \ x = \theta_1 e^{r_1}, \quad y = \theta_2e^{r_1 y_1}, \quad \eps_1 = r_1 \eps_{11} .$$
Rewriting system \eqref{eq:P1_extended} in these coordinates and applying a desingularisation which amounts to a formal multiplication of the vector field by $r_1$, we obtain the system
\begin{equation}\label{eq:B2_spherical_sigma2_+}
    \begin{aligned} 
    r_1'&=r_1G\left(\frac{1}{\eps_{11}},\frac{y_1}{\eps_{11}\Tilde{\eps}_2}, r_1\right) , \\
    y_1'&=\frac{1}{\theta_2e^{r_1 y_1}}\left( 1-  \Hat{H}\left(\frac{1}{\eps_{11}}\right) \Hat{H}\left(\frac{y_1}{\eps_{11}\Tilde{\eps}_2}\right) -\kappa_2 e^{r_1y_1}\right)-y_1G\left(\frac{1}{\eps_{11}},\frac{y_1}{\eps_{11}\Tilde{\eps}_2}, r_1\right) , \\
    \eps_{11}'&=-\eps_{11}G\left(\frac{1}{\eps_{11}},\frac{y_1}{\eps_{11}\Tilde{\eps}_2}, r_1\right) , 
\end{aligned}
\end{equation}
where 
$$G\left(\frac{1}{\eps_{11}},\frac{y_1}{\eps_{11}\Tilde{\eps}_2}, r_1\right):=\frac{1}{\theta_1e^{r_1}}\left( \Hat{H}\left(\frac{1}{\eps_{11}}\right)+ \Hat{H}\left(\frac{y_1}{\eps_{11}\Tilde{\eps}_2}\right) -2 \Hat{H}\left(\frac{1}{\eps_{11}}\right) \Hat{H}\left(\frac{y_1}{\eps_{11}\Tilde{\eps}_2}\right) -\kappa_1 e^{r_1}\right) ,$$
and we permit an abuse of notation by allowing the prime to denote differentiation with respect to the new independent variable.

\begin{rem} \label{rem:desingularization_time_scale}
    In what follows we shall consistently abuse notation in this way. In particular, we shall denote differentiation with respect to `desingularised time-scales' with primes, and differentiation with respect to slow time-scales associated with reduced flow along critical manifolds with dots.
\end{rem}

Direct calculations using this system reveal the existence of a saddle equilibrium at $q_{in} : (0, \kappa_2 \theta_1 / \kappa_1 \theta_2, 0)$, which has a stable manifold $w^s(q_{in})$ contained within the invariant line $\{(r_1,\kappa_2\theta_1/\kappa_1\theta_2,0)\}$ as shown in Figure \ref{fig:B2_blow-up}. The most significant thing to note, however, is that the vector field induced by \eqref{eq:B2_spherical_sigma2_+} is still non-smooth along the line $\{(r_1,0,0) : r_1 \geq 0\}$ which corresponds to the switching manifold $\Sigma_1^+$ (extended up to the blow-up sphere $\{r_1 = 0\}$). We can resolve the loss of smoothness with a cylindrical blow-up of the form
\begin{equation} \label{eq:B2_Sigma_2^+_blow-up_transformation}
    \rho \geq 0, \ (\Bar{y}_1,\Bar{\eps}_1) \in \mathbb{S}^1  \mapsto 
    \begin{cases}
        y_1 = \rho \Bar{y}_1, \\
        \eps_{11} = \rho \Bar{\eps}_{11}.
    \end{cases}
\end{equation}
The relevant region of phase space can be covered by local coordinate charts corresponding to $\bar \eps_{11} = 1$ and $\bar y_1 = \pm 1$. Here we restrict attention to the geometry and dynamics in $\mathfrak K_2: \bar \eps_{11} = 1$, where local coordinates may be denoted by
\begin{equation}
\label{eq:mathfrak_K2_coordinates}
\mathfrak K_2: \ y_1 = \rho_2 y_{12} , \qquad 
\eps_{11} = \rho_2 .
\end{equation}
Rewriting system \eqref{eq:B2_spherical_sigma2_+} in these coordinates and applying a desingularisation which amounts to formal multiplication by $\rho_2$ leads to the following system in $\bar \eps_{11} = 1$:
%
\begin{equation}\label{eq:B2_spherical_sigma2_+_cylinder_scaling}
    \begin{aligned} 
    r_1'&=r_1\rho_2 G\left(\frac{1}{\rho_2},\frac{y_{12}}{\Tilde{\eps}_2}, r_1\right), \\
    y_{12}'&=\frac{1}{\theta_2e^{r_1\rho_2 y_{12}}}\left( 1-  \Hat{H}\left(\frac{1}{\rho_2}\right) \Hat{H}\left(\frac{y_{12}}{\Tilde{\eps}_2}\right) -\kappa_2 e^{r_1\rho_2 y_{12}}\right) , \\
    \rho_2'&=-\rho_2^2G\left(\frac{1}{\rho_2},\frac{y_{12}}{\Tilde{\eps}_2}, r_1\right) .
\end{aligned}
\end{equation}
The key singular limit features associated with system \eqref{eq:B2_spherical_sigma2_+_cylinder_scaling} are summarised in the following result.

\begin{lem} \label{lem:B2_S_2^+}
    Consider system \eqref{eq:B2_spherical_sigma2_+_cylinder_scaling} with $(\kappa_1, \kappa_2) \in \Lambda$ and $\tilde \eps_2 \in [\beta_1, \beta_2]$. The following assertions are true: 
    \begin{enumerate}
        \item[(i)] The line
        \[
        \mathcal{S}_1^+ := \left\{ \left(r_1, \Tilde{\eps}_2 \ln \frac{1-\kappa_2}{\kappa_2}, 0\right) : r_1 \geq 0 \right\} 
        \]
        defines a critical manifold which is normally hyperbolic and attracting when considered as a critical manifold for the planar system which governs the flow in $\{ \rho_2 = 0 \}$.
        \item[(ii)] The reduced flow on $\mathcal S_1^+$ has a unique attracting equilibrium
        $$q_n : 
        \left(\ln\left(\frac{\kappa_2}{\kappa_1}\right),\Tilde{\eps}_2 \ln \frac{1-\kappa_2}{\kappa_2},0\right).$$
    \end{enumerate}
\end{lem}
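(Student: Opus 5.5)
The plan is to treat \eqref{eq:B2_spherical_sigma2_+_cylinder_scaling} as a slow-fast system in which $y_{12}$ is fast and $(r_1,\rho_2)$ are slow, since $r_1' = \mathcal O(\rho_2)$ and $\rho_2' = \mathcal O(\rho_2^2)$. Both assertions then follow from explicit computations in the layer problem and the reduced problem.

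\textbf{Regularity at $\rho_2=0$.} I first note that $\hat H(1/\rho_2) = (1+e^{-1/\rho_2})^{-1}$ extends to a $C^\infty$ function on $\rho_2 \ge 0$ with $\hat H(1/\rho_2) = 1 + \mathcal O(e^{-1/\rho_2})$; all its $\rho_2$-derivatives vanish at $\rho_2=0$. Hence the vector field is smooth up to $\{\rho_2=0\}$, which is exactly what the cylindrical blow-up \eqref{eq:B2_Sigma_2^+_blow-up_transformation} was designed to achieve. This is the only mildly delicate point, and it is handled by this flatness observation.

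\textbf{Assertion (i): the layer problem.} Setting $\rho_2=0$ gives $r_1'=0$, $\rho_2'=0$ and
\[
y_{12}' = \frac{1}{\theta_2}\left(1-\kappa_2-\hat H\left(\frac{y_{12}}{\tilde\eps_2}\right)\right),
\]
because $e^{r_1\rho_2 y_{12}}=1$ and $\hat H(1/\rho_2)=1$ there. Since $\hat H$ is a strictly increasing bijection $\R\to(0,1)$ and $1-\kappa_2\in(0,1)$ for $(\kappa_1,\kappa_2)\in\Lambda$, there is a unique root. Solving $e^{s}/(1+e^{s}) = 1-\kappa_2$ gives $y_{12} = \tilde\eps_2\ln\frac{1-\kappa_2}{\kappa_2}$. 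This holds for every $r_1\ge0$, which yields the line $\mathcal S_1^+$. The linearisation in the $y_{12}$-direction is
\[
-\frac{1}{\theta_2\tilde\eps_2}\,\hat H'\left(\frac{y_{12}}{\tilde\eps_2}\right) = -\frac{\kappa_2(1-\kappa_2)}{\theta_2\tilde\eps_2} < 0,
\]
using $\hat H'=\hat H(1-\hat H)$. This eigenvalue is uniformly negative for $\tilde\eps_2\in[\beta_1,\beta_2]$, so $\mathcal S_1^+$ is normally hyperbolic and attracting.

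\textbf{Assertion (ii): the reduced flow.} The slow equations are divisible by $\rho_2$, so I pass to the slow time obtained by dividing by $\rho_2$. This gives $\dot r_1 = r_1 G$ and $\dot\rho_2 = -\rho_2 G$, to be evaluated on $\mathcal S_1^+$ at $\rho_2=0$. Substituting $\hat H(1/\rho_2)=1$ and $\hat H(y_{12}/\tilde\eps_2)=1-\kappa_2$ yields
\[
G = \frac{1}{\theta_1 e^{r_1}}\bigl(1+(1-\kappa_2)-2(1-\kappa_2)-\kappa_1e^{r_1}\bigr) = \frac{1}{\theta_1}\bigl(\kappa_2e^{-r_1}-\kappa_1\bigr).
\]
The reduced flow on $\mathcal S_1^+\subset\{\rho_2=0\}$ is therefore
\[
\dot r_1 = \frac{r_1}{\theta_1}\bigl(\kappa_2e^{-r_1}-\kappa_1\bigr).
\]
Because $\kappa_1<\kappa_2$, its only equilibrium with $r_1>0$ is $r_1=\ln(\kappa_2/\kappa_1)$. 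The remaining equilibrium $r_1=0$ lies on the blow-up sphere and corresponds to the endpoint of $\mathcal S_1^+$ there, so $q_n$ is the unique equilibrium away from the sphere. Differentiating at $q_n$ gives $-\ln(\kappa_2/\kappa_1)\,\kappa_1/\theta_1<0$, so $q_n$ is attracting along $\mathcal S_1^+$.

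\textbf{Transversal stability of $q_n$.} In the $\rho_2$-direction, the linearisation of $\dot\rho_2=-\rho_2G$ at $q_n$ is $-G=0$, so within the reduced system $q_n$ is neutral in $\rho_2$. Attractivity in the $\rho_2$-direction is a higher-order (nonlinear) effect, seen only after restoring the factor $\rho_2$: in the original time the equation reads $\rho_2'=-\rho_2^2 G$, which is degenerate (quadratic) at $\rho_2=0$, so its linearisation there cannot settle the question either. The assertion in the lemma is attractivity within the reduced flow on the line $\mathcal S_1^+$, and that is what the computation above establishes. Together with the normal hyperbolicity from (i), this gives both assertions.
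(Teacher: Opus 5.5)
Your proof is correct. Assertion (i) is done as in the paper. Your eigenvalue $-\kappa_2(1-\kappa_2)/(\theta_2\tilde\eps_2)$ keeps the factor $\theta_2^{-1}$ that the paper's display drops, and your flatness remark on $\hat H(1/\rho_2)-1$ at $\rho_2=0$ makes explicit a smoothness point the paper uses tacitly.

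For (ii) your route differs. The paper uses the conserved quantity $\eps_1=r_1\rho_2$ to eliminate $\rho_2$ when $r_1>0$. This gives a planar slow-fast system in standard form with a genuine parameter $\eps_1$, whose reduced problem is $\dot r_1=G|_{\mathcal S_1^+}$. Its prefactor should be $\theta_1^{-1}e^{-r_1}$, as in your computation, not the $\theta_2$ printed in the paper. You instead divide the slow equations of the three-dimensional system by $\rho_2$, the usual entry-chart desingularisation, and obtain $\dot r_1=r_1G|_{\mathcal S_1^+}$. The two slow times differ by the factor $r_1$, so on $r_1>0$ the flows have the same orbits, the same equilibrium $q_n$ and the same stability.

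What each buys:
\begin{itemize}
\item The paper's version lands directly in the textbook setting where Fenichel theory applies, but only away from the sphere.
\item Yours extends continuously to $r_1=0$, so it also shows how $\mathcal S_1^+$ leaves the sphere (cf.~Remark~\ref{rem:center_manifold_connection}). However, calling the divided vector field ``the reduced flow'' is formal unless you back it with a two-dimensional attracting centre manifold along $\mathcal S_1^+$, or with the conserved quantity. The issue is that $\rho_2$ is simultaneously a phase variable and the perturbation parameter.
\end{itemize}

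Two small fixes are needed.

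First, your desingularised flow has the extra equilibrium $r_1=0$ on the closed half-line $\mathcal S_1^+$. Uniqueness of the \emph{attracting} equilibrium therefore needs the one-line check $\frac{d}{dr_1}(r_1G)|_{r_1=0}=\theta_1^{-1}(\kappa_2-\kappa_1)>0$. This shows the endpoint is repelling along $\mathcal S_1^+$.

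Second, your final paragraph should be dropped or rewritten: there is no attraction in the $\rho_2$-direction waiting to be found at higher order.
\begin{itemize}
\item Since $r_1\rho_2$ is conserved, the divided flow $\dot r_1=r_1G$, $\dot\rho_2=-\rho_2G$ on the two-dimensional slow manifold vanishes along the whole curve $\{G=0\}$.
\item This curve crosses $\{\rho_2=0\}$ transversally at $q_n$, because $\partial_{r_1}G(q_n)=-\kappa_1/\theta_1\neq0$. Its points are the nodes $q_n(\eps_1)$.
\item The zero eigenvalue you found is the tangent direction of this curve of equilibria, i.e.\ the parameter direction. It is genuinely neutral in either time.
\end{itemize}
The lemma only concerns the one-dimensional flow on $\mathcal S_1^+$, which you have established.
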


\begin{proof}
The existence of the critical manifold $\mathcal S_1^+$ follows from direct calculations. Note that the expression which defines $\mathcal S_1^+$ is well-defined because $(\kappa_1, \kappa_2) \in \Lambda \implies \kappa_2 \in (0,1)$. 
The fact that $\mathcal S_1^+$ is normally hyperbolic and attracting within $\{ \rho_2 = 0 \}$ (i.e.~on the blow-up cylinder) follows from the fact that the non-trivial eigenvalue associated with the linearisation along $\mathcal S_1^+$ is 
$$\lambda=-\frac{1}{\Tilde{\eps}_2}\Hat{H}\left(\frac{y_{12}}{\Tilde{\eps}_2}\right)\left( 1-\Hat{H}\left(\frac{y_{12}}{\Tilde{\eps}_2}\right)\right)\bigg|_{\mathcal{S}_1^+}=\frac{(\kappa_2-1)\kappa_2}{\Tilde{\eps}_2}<0.$$

In order to prove Assertion (ii), we need to determine the reduced problem on $\mathcal S_1^+$. This can be done
by using the conserved quantity $\eps_1 = r_1 \rho_2$ 
to eliminate $\rho_2$ and rewrite \eqref{eq:B2_spherical_sigma2_+_cylinder_scaling} in the following form:
\begin{equation}\label{eq:B2_spherical_sigma2_+_cylinder_away_from_sphere}
    \begin{aligned} 
    r_1'&=\eps_1 G\left(\frac{r_1}{\eps_1},\frac{y_{12}}{\Tilde{\eps}_2}, r_1\right) , \\
    y_{12}'&=\frac{1}{\theta_2e^{\eps_1 y_{12}}}\left( 1-  \Hat{H}\left(\frac{r_1}{\eps_1}\right) \Hat{H}\left(\frac{y_{12}}{\Tilde{\eps}_2}\right) -\kappa_2 e^{\eps_1 y_{12}}\right) , 
\end{aligned}
\end{equation}
which is a planar slow-fast system with perturbation parameter $0 < \eps_1 \ll 1$. Note that this is only possible when $\rho_2 = \eps_1 / r_1$ makes sense, i.e.~as long as we restrict to regions with $r_1 > 0$ that are bounded away from the blow-up sphere at $\{r_1 = 0\}$. 
Since system \eqref{eq:B2_spherical_sigma2_+_cylinder_scaling} is slow-fast in standard form, the reduced problem on $\mathcal S_1^+$ can be obtained in the `usual manner' and shown to be given by
$$\Dot r_1 = \frac{1}{\theta_2 e^{r_1}}(\kappa_2-\kappa_1 e^{r_1}) , $$
when $r_1 > 0$ (the dot denotes differentiation with respect to slow time). This equation has a unique, stable equilibrium which corresponds precisely to the point $q_n$ referred to in Assertion (iii). 
\end{proof}

Lemma \ref{lem:B2_S_2^+} justifies that part of Figure \ref{fig:B2_blow-up_2} which shows the singular geometry and dynamics associated with the blow-up of the sphere and right-most cylinder.

\begin{rem} \label{rem:center_manifold_connection}
    Figure \ref{fig:B2_blow-up_2} also indicates local flow on a local 1-dimensional center manifold which `extends' $\mathcal S_1^+$ onto the blow-up sphere. The existence of a 2-dimensional local center-stable manifold follows from Assertion (ii) of Lemma \ref{lem:B2_S_2^+} and the center manifold theorem, and restriction of this center manifold to $\{r_1 = 0\}$ yields the 1-dimensional center manifold shown in the figure (the orientation of the flow is confirmed via standard computations). The proof is (i) similar to the entry chart analysis in e.g.~\cite{Krupa_2001_Extend,krupa_extending_transcritical}, and (ii) not sufficiently relevant to the analysis which follows in this work, so we have opted to omit it.
\end{rem}

The degeneracies associated with the loss of smoothness along $\Sigma_1^-$ and $\Sigma_2^\pm$ can be resolved using homogeneous cylindrical blow-up transformations analogous to \eqref{eq:B2_Sigma_2^+_blow-up_transformation} in charts $\mathcal K_1^-$ and $\mathcal K_2^\pm$ respectively. These analyses, which we omit for the sake of brevity, reveal the existence of an attracting (repelling) normally hyperbolic critical manifold, which are sketched and denoted by $\mathcal S_2^+$ ($\mathcal S_2^-$) in Figure \ref{fig:B2_blow-up}. Direct calculations which mirror those applied in the proof of Lemma \ref{lem:B2_S_2^+} Assertion (iii) show that the reduced flow along each critical manifold is oriented towards the blow-up sphere for any fixed $(\kappa_1, \kappa_2) \in \Lambda$ and $\tilde \eps_2 \in [\beta_1, \beta_2]$, and that this flow can be locally extended using center manifold theory via the same type of argument that is summarised in Remark \ref{rem:center_manifold_connection} above. All together, this analysis leads to the blown-up geometry shown in Figure \ref{fig:B2_blow-up_2}, which is fully resolved in the sense that there are no additional points or sets which need to be blown-up.

\subsection{Dynamics on the blow-up sphere and the proof of Theorem \ref{thm:B2}}
\label{sec:B2_bifurcation_analysis}

In order to prove Theorem \ref{thm:B2} it suffices to work with system \eqref{eq:B2_spherical_scaling_chart} with $0 < \eps_1 \ll 1$ on compact subsets of $\R^2$. For our purposes, it is easier to perform the smooth change of variables 
    \begin{align} \label{B2_variable_change_BT}
        X = \Hat{H}(x_2), \qquad Y = \Hat{H} \left( \frac{y_2}{\Tilde{\eps}_2} \right),
    \end{align}
    and consider the system
    \begin{equation}
        \label{eq:B2_SN_equation_proof_perturbed}
        \begin{pmatrix}
        X' \\
        Y'
    \end{pmatrix}
    = F(X,Y,\kappa_1,\kappa_2) + \eps_1 F_{rem}(X,Y,\kappa_1,\kappa_2,\eps_1)=:\hat F(X,Y,\kappa_1,\kappa_2,\eps_1)
    \end{equation}
    where
    \[
    F(X,Y,\kappa_1,\kappa_2) := 
    \begin{pmatrix}
        F_1(X,Y,\kappa_1) \\
        F_2(X,Y,\kappa_2)
    \end{pmatrix}
    =
    \begin{pmatrix}
        \frac{1}{\theta_1}X(1-X)(X+Y-2XY-\kappa_1) \\
        \frac{1}{\theta_2 \Tilde{\eps}_2}Y(1-Y)(1-XY-\kappa_2)
    \end{pmatrix}. 
    \]
    The remainder $F_{rem}(X,Y,\kappa_1,\kappa_2,\eps_1)$ is smooth and $\mathcal O(1)$ as $\eps_1 \to 0$ due to the fact that $e^{\eps_1 x_2} = 1 + \mathcal O(\eps_1)$ and $e^{\eps_1 y_2} = 1 + \mathcal O(\eps_1)$ as $\eps_1 \to 0$ on compact subsets of $(X,Y) \in (0,1)^2$ (which correspond to compact subsets of $\R^2$ in the $(x_2, y_2)$-plane). The advantage of working with system \eqref{eq:B2_SN__equation_proof} is that the limiting problem when $\eps_1 = 0$, namely
    \begin{equation}
    \label{eq:B2_SN__equation_proof}
       \begin{pmatrix}
            X' \\
            Y' 
      \end{pmatrix}
      = F(X,Y,\kappa_1,\kappa_2) ,
    \end{equation}
    is \textit{polynomial}. In the previous section, we claimed that system \eqref{eq:B2_spherical_scaling_chart} is a regular perturbation of the limiting system \eqref{eq:B2_dynamics_on_sphere} obtained for $\eps_1 \to 0$ when considered on compact domains in $\R^2$, in the sense that the latter system has only isolated equilibria. This can be seen by considering the equilibria of the limiting system \eqref{eq:B2_SN__equation_proof}. When they exist, these are located at $(X,Y) = (X^\pm, Y^\pm)$ where
    \begin{equation}
    \label{eq:eq_coords}
        X^\pm = 1-\kappa_2+\frac{\kappa_1}{2}\pm \sqrt{(1-\kappa_2+\frac{\kappa_1}{2})^2+\kappa_2 -1}, \qquad
        Y^\pm = \frac{1-\kappa_2}{X^\pm} .
    \end{equation}
    In the following we denote these equilibria by $q_s : (X^+, Y^+)$ and $q_{f/n} : (X^-, Y^-)$. The (somewhat suggestive) notation will be justified below. 

    \begin{rem}
        When $\kappa_1 = 1/2$ the limiting system \eqref{eq:B2_SN__equation_proof} satisfies
        \[
        X'|_{X = 1/2} = 0 , \qquad 
        Y'|_{X = 1/2} = \frac{1}{\theta_2 \Tilde{\eps}_2}Y(1-Y) \left( 1-\frac{1}{2}Y-\kappa_2 \right) ,
        \]
        which shows that the line $\{(1/2,Y) : Y \in (0,1)\}$ which corresponds to $x_2 = 0$ and $y_2 \in \R$ in system \eqref{eq:B2_dynamics_on_sphere} is invariant. There is an equilibrium on this line when $Y = 2 (1 - \kappa_2)$ as long as $\kappa_2 \in (1/2, 1)$ (we require that $Y \in (0,1)$), which is saddle-type as long as $\kappa_2 \in (3/4, 1)$. This justifies the observations in Remark \ref{rem:heteroclinic2}, see also Figure \ref{fig:B2_heteroclinic}.
    \end{rem}

\begin{figure}[]
    \centering
    \begin{subfigure}[t]{0.3\textwidth}
        \centering
            \includegraphics[width=1\linewidth]{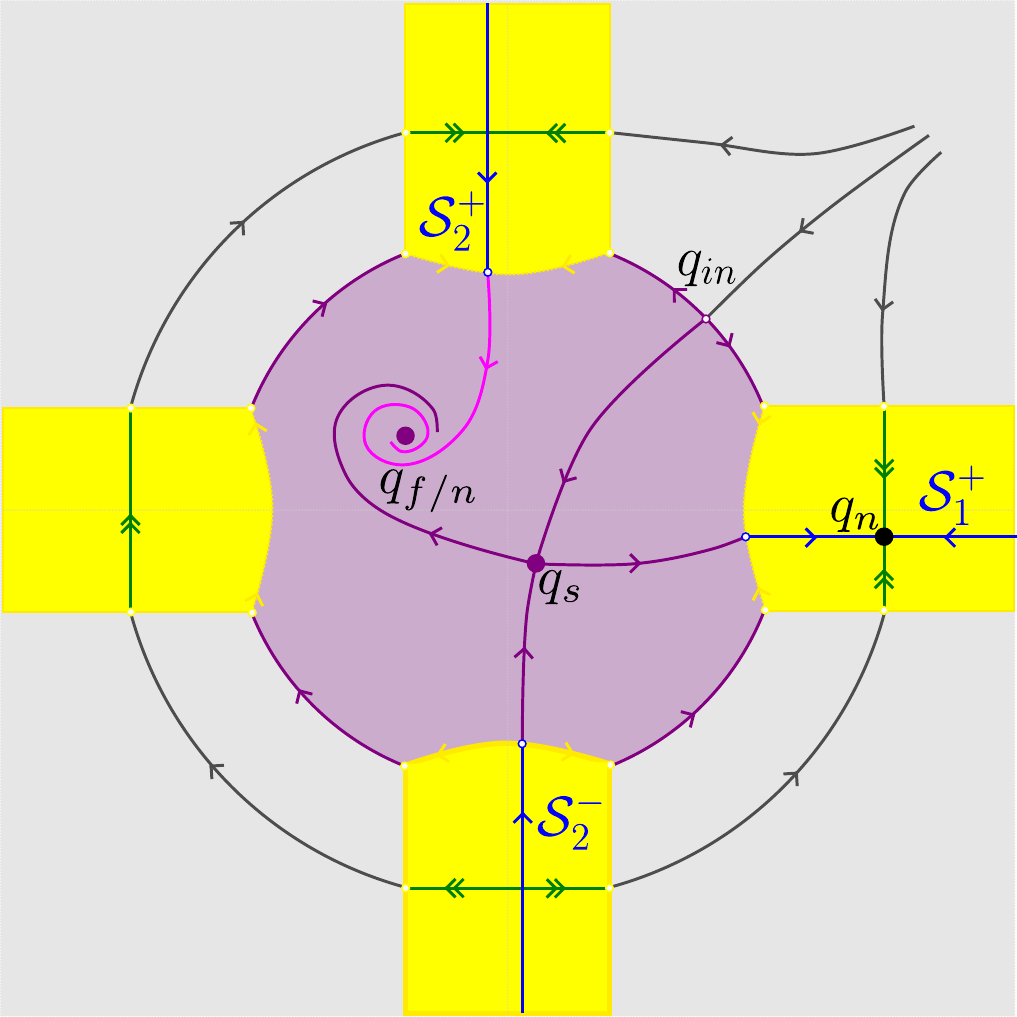}
    \caption{Region II.}
    \label{fig:B2_II}
    \end{subfigure}%
    \hfill
    \begin{subfigure}[t]{0.3\textwidth}
        \centering
            \includegraphics[width=1\linewidth]{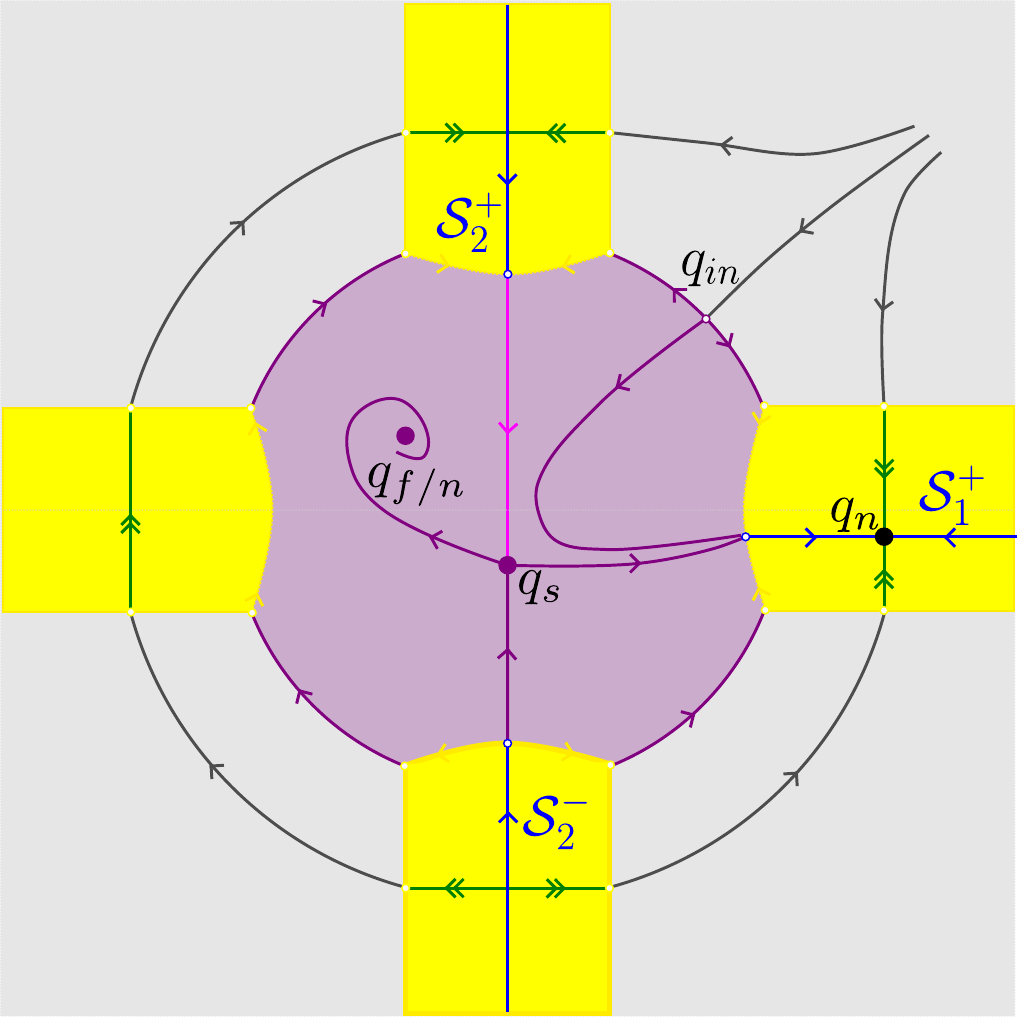}
    \caption{Border between region II and III in Figure \ref{fig:B2_bifurcation_diagram}, i.e., $\kappa_1=1/2$ and $\kappa_2 \in (3/4,1)$.}
    \label{fig:B2_II_III}
    \end{subfigure}
    \hfill
    \begin{subfigure}[t]{0.3\textwidth}
        \centering
        \includegraphics[width=1\linewidth]{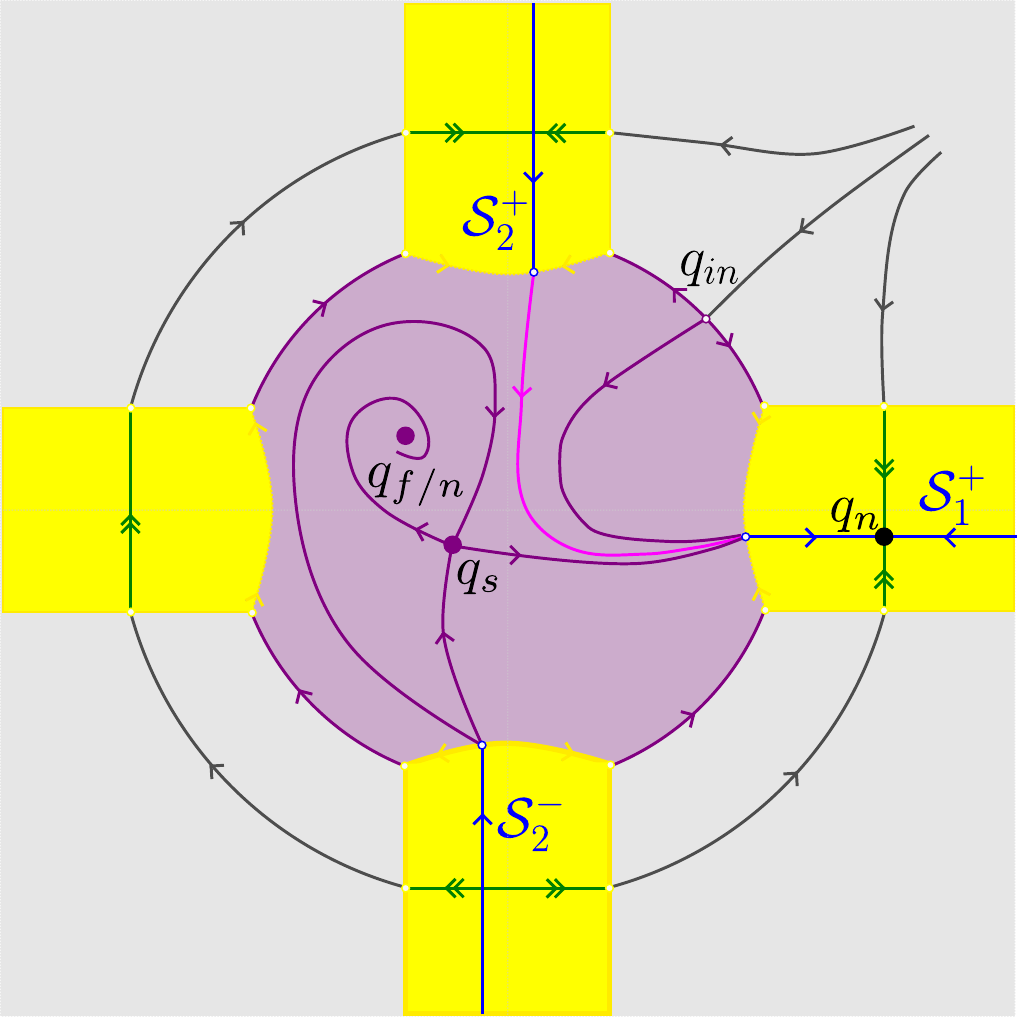}
    \caption{Region III.}
    \label{fig:B2_III}
    \end{subfigure}
    \caption{Selected phase portraits of \eqref{eq:B2_SN__equation_proof}, emphasizing the importance for the global dynamics of the transition between region II and III in the $B_2$ bifurcation diagram (vertical dotted black line  in Figure \ref{fig:B2_bifurcation_diagram}) as indicated in Remark \ref{rem:main_global_dynamics}. 
    In (a): Phase portrait in region II. The stable manifold of $q_s$, denoted $w^s(q_s)$, acts as a separatrix and dissects the state space. Solutions with initial conditions to the right of the separatrix are attracted to $\mathcal{S}_1^+$ and follow the slow flow towards $q_n$ (in the singular limit at least). Solutions with initial conditions to the left of $w^s(q_s)$ are attracted to $\mathcal{S}_2^+$ and follow its (singular) extension on the sphere (magenta) towards $q_{f/n}$. 
    In (b): The critical case on the border line between region II and III where a heteroclinic connection (magenta) between the endpoint of $\mathcal{S}_2^+$ and $q_s$ exists (in fact $\mathcal{S}_2^+$ and its extension is part of the stable manifold $w^s(q_s)$).
    In (c): Phase portrait in region III. The (singular) extension (magenta) of $\mathcal{S}_2^+$ on the sphere (and therefore all solutions attracted by $\mathcal{S}_2^+$) converges to $q_n$. The basin of attraction of $q_{f/n}$ shrinks to the region enclosed by the stable manifold of $q_s$ which emanates from $\mathcal{S}_2^-$.}
    \label{fig:B2_heteroclinic}
\end{figure}

   
    The following result describes a (subcritical) Bogdanov-Takens bifurcation in the limiting system \eqref{eq:B2_SN__equation_proof}.

    \begin{lem}
    \label{lem:B2_BT}
        Fix $\beta_2 > \beta_1 > 0$ and $\tilde \eps_2 \in [\beta_1, \beta_2]$. Then the limiting system \eqref{eq:B2_SN__equation_proof} has a regular Bogdanov-Takens bifurcation at $(X,Y) = (z_{BT}, z_{BT})$ when $(\kappa_1, \kappa_2) = (\kappa_{1,BT}, \kappa_{2,BT})$, where $z_{BT}$ is given by \eqref{eq:x_B} and
        \[
        \kappa_{1,BT} = 2 z_{BT} (1 - z_{BT}) , \qquad 
        \kappa_{2,BT} = 1 + z_{BT}^2.
        \]
        The associated Hopf bifurcation is subcritical.
    \end{lem}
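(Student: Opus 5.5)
The plan is to verify the Bogdanov--Takens conditions of \cite[Ch.~8.4]{Kuznetsov_1998} directly for the polynomial system \eqref{eq:B2_SN__equation_proof}, restricted to the open square $(X,Y)\in(0,1)^2$. I will use four steps: locate the candidate point, check the double-zero linearisation, compute the two quadratic normal form coefficients, and check transversality in $(\kappa_1,\kappa_2)$.

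\textbf{Step 1: linearisation and the candidate point.} Interior equilibria satisfy $X+Y-2XY=\kappa_1$ and $XY=1-\kappa_2$, which gives \eqref{eq:eq_coords}. Since the prefactors $X(1-X)$ and $Y(1-Y)$ vanish at equilibria only on the boundary of the square, the Jacobian at an interior equilibrium factorises as
\[
J=\begin{pmatrix} a & 0\\ 0 & b\end{pmatrix}\begin{pmatrix} 1-2Y & 1-2X\\ -Y & -X\end{pmatrix},
\qquad a=\frac{X(1-X)}{\theta_1},\quad b=\frac{Y(1-Y)}{\theta_2\tilde\eps_2}.
\]
From this factorisation, $\det J=ab(Y-X)$ and $\trace J=a(1-2Y)-bX$. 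Hence the saddle-node condition is $X=Y$, i.e.\ $X^+=X^-$; eliminating $X$ from the equilibrium equations recovers $\kappa_1=2\kappa_2-2+2\sqrt{1-\kappa_2}$. Imposing in addition $\trace J=0$ at $X=Y=z$ gives $(1-2z)/\theta_1=z/(\theta_2\tilde\eps_2)$, i.e.\ $z=z_{BT}$ as in \eqref{eq:x_B}. Substituting back gives $\kappa_1=2z(1-z)$ and $\kappa_2=1-XY=1-z^2$. This is consistent with \eqref{eq:BT_pars_B2}, so the sign in the statement should read $1-z_{BT}^2$. I will also check that $z_{BT}\in(0,1/2)$ and that the resulting parameters lie in $\Lambda$. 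At this point $J$ is nonzero with trace and determinant both zero, so it is a nilpotent Jordan block, as required.

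\textbf{Step 2: quadratic normal form coefficients.} I will choose generalised eigenvectors $v_0,v_1$ with $Jv_0=0$ and $Jv_1=v_0$, together with adjoint vectors $w_0,w_1$. Following \cite[Ch.~8.4]{Kuznetsov_1998}, I then compute
\[
a_{2}=\tfrac12\langle w_1,B(v_0,v_0)\rangle,\qquad b_{2}=\langle w_0,B(v_0,v_0)\rangle+\langle w_1,B(v_0,v_1)\rangle ,
\]
where $B$ is the Hessian bilinear form of $F$ at $(z,z)$. Because $F$ is a product of simple polynomial factors, $B$ is explicit, and the coefficients become rational functions of $z$, $\theta_1$ and $\theta_2\tilde\eps_2$. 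I can reduce the number of quantities by using $\theta_2\tilde\eps_2=z\theta_1/(1-2z)$. The goal is to show $a_2b_2\neq0$ for all $z\in(0,1/2)$, and to determine the sign of $a_2b_2$. By \cite{Kuznetsov_1998}, that sign decides whether the emanating Hopf branch is sub- or supercritical, so this gives the subcriticality claim.

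\textbf{Step 3: transversality.} I need the map $(\kappa_1,\kappa_2)\mapsto(\det J,\trace J)$, evaluated along the equilibrium branch, to have nonsingular derivative at the BT point. Using the implicit dependence $X=X^-(\kappa_1,\kappa_2)$, this reduces to a $2\times2$ determinant in $z$. The sign considerations are eased by noting that $\partial_{\kappa_1}$ moves the equilibrium off the line $X=Y$, while $\partial_{\kappa_2}$ changes the trace.

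\textbf{Main obstacle.} I expect the main difficulty to be Step 2, namely proving that $a_2b_2$ has a fixed sign uniformly in $\tilde\eps_2\in[\beta_1,\beta_2]$. If the direct formula is too messy, my fallback is to translate the equilibrium to the origin, use the linear change $u=X-z$, $v=F_1$, and read off the coefficients of $u^2$ and $uv$ in the $\dot v$ equation of the resulting Liénard-type system, whose product has the same sign as $a_2b_2$. I expect both coefficients to factor as nonzero multiples of powers of $z$, $1-z$ and $1-2z$.
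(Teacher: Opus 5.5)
Your Steps 1 and 2 follow the paper's route, which is to verify the hypotheses of \cite[Thm.~8.4]{Kuznetsov_1998}; you do by hand what the paper does with computer algebra. Your correction $\kappa_{2,BT}=1-z_{BT}^2$ is right. It matches \eqref{eq:BT_pars_B2} and the translated system in the paper's proof, and $1+z_{BT}^2>1$ is not even admissible.

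The step that fails as written is Step 3. The BT point lies on the saddle-node curve, where $J$ is singular, so there is no smooth branch $X^-(\kappa_1,\kappa_2)$ there. Concretely, \eqref{eq:eq_coords} gives $Y^--X^-=2\sqrt{D}$ with $D=(1-\kappa_2+\kappa_1/2)^2+\kappa_2-1$. Hence along $q_{f/n}$ your $\det J=ab(Y-X)=2ab\sqrt{D}$ has unbounded $\kappa$-derivatives at the BT point, and ``nonsingular derivative of $(\kappa_1,\kappa_2)\mapsto(\det J,\trace J)$'' is meaningless there. Geometrically, the equilibrium set is a smooth surface, but its projection $(X,Y)\mapsto(\kappa_1,\kappa_2)=(X+Y-2XY,\,1-XY)$ has Jacobian determinant $Y-X$, so it folds exactly at $X=Y$. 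Your heuristic about $\partial_{\kappa_1}$ and $\partial_{\kappa_2}$ breaks down for the same reason.

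The fix is to parametrise the equilibrium surface by $(X,Y)$ instead. Since $\partial F/\partial(\kappa_1,\kappa_2)=\mathrm{diag}(-a,-b)$ is invertible, a Schur-complement argument shows that Kuznetsov's condition (BT.3) is equivalent to $\det\,\partial(\trace J,\det J)/\partial(X,Y)\neq 0$ at $(z,z)$. (BT.3) is regularity of $(X,Y,\kappa_1,\kappa_2)\mapsto(F,\trace J,\det J)$, which the paper checks as a $4\times4$ determinant. On the surface your factorised formulas hold identically. With $a=a(X)$, $b=b(Y)$ and $(1-2z)/\theta_1=z/(\theta_2\tilde\eps_2)$, one gets $\partial_X\det J=-ab$, $\partial_Y\det J=ab$ and $(\partial_X+\partial_Y)\trace J=-(1-z)/\theta_1$. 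So the determinant equals $-ab(1-z)/\theta_1\neq 0$.

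Your ``main obstacle'' is in fact harmless. Take $v_0=(1,-1)^T$, $v_1=(m^{-1},0)^T$, $w_1=m(1,1)^T$, $w_0=(0,-1)^T$ and $m=z(1-z)(1-2z)/\theta_1$. Then $a_2=mz(1-z)\bigl(2\theta_1^{-1}+(\theta_2\tilde\eps_2)^{-1}\bigr)>0$, and $b_2=\theta_1^{-1}(1-3z+4z^2)>0$ for all real $z$. So $a_2b_2>0$ uniformly in $\tilde\eps_2$, which gives $s=+1$ and a subcritical Hopf branch. This agrees with the paper, since $(\theta_1+2\tilde\eps_2\theta_2)^2(1-3z_{BT}+4z_{BT}^2)=\theta_1^2+\tilde\eps_2\theta_1\theta_2+2\tilde\eps_2^2\theta_2^2$.
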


    \begin{proof}
    We start by applying a coordinate and parameter translation of the form
    $$
    \tilde X := X-z_{BT}, \qquad 
    \tilde Y := Y-z_{BT}, \qquad 
    \tilde{\kappa}_1 := \kappa_1 - \kappa_{1,BT}, \qquad 
    \Tilde{\kappa}_2 := \kappa_2 - \kappa_{2,BT},
    $$
    so that the (proposed) Bogdanov-Takens bifurcation takes place at the origin, both in parameter and variable space. This leads to
\begin{equation}\label{eq:B2_BT_system_origin}
\begin{aligned} 
    \tilde X'&=\frac{1}{\theta_1}(\tilde X+z_{BT})(1-\tilde X-z_{BT})\left( \tilde X+ \tilde Y -2 \tilde X \tilde Y-2(\tilde X+\tilde Y)z_{BT} -\Tilde{\kappa}_1 \right) , \\
    \tilde Y'&=\frac{1}{\Tilde{\eps}_2 \theta_2}(\tilde Y+z_{BT})(1-\tilde Y-z_{BT})\left( -  \tilde X \tilde Y - (\tilde X+\tilde Y)z_{BT} -\Tilde{\kappa}_2 \right) .
\end{aligned}
\end{equation}
We now show that system \eqref{eq:B2_BT_system_origin} has a regular Bogdanov-Takens bifurcation at $(\tilde X, \tilde Y, \tilde \kappa_1, \tilde \kappa_2) = (0, 0, 0, 0)$ by verifying the relevant eigenvalue, genericity and non-degeneracy conditions from \cite[Theorem 8.4]{Kuznetsov_1998}. The Jacobian at $(0, 0, 0, 0)$ is given by
\begin{equation} \label{eq:B2_BT_Jacobian}
   J_0 = z_{BT}(1-z_{BT}) \begin{pmatrix}
        \frac{1-2z_{BT}}{\theta_1} & \frac{1-2z_{BT}}{\theta_1}\\
        -\frac{z_{BT}}{\Tilde{\eps}_2 \theta_2} & -\frac{z_{BT}}{\Tilde{\eps}_2 \theta_2}
    \end{pmatrix} .
\end{equation}
Note that $z_{BT}$ is contained within a compact subinterval of $(0,1)$, since $\tilde \eps_2 \in [\beta_1, \beta_2]$. A direct calculation using \eqref{eq:x_B} shows that $\trace J_0 = \det J_0 = 0$, implying a double-zero eigenvalue. It remains to check the following conditions from \cite[Theorem 8.4]{Kuznetsov_1998}, which may be formulated using the notation $\tilde Z := (\tilde X, \tilde Y)$ and $\tilde \kappa := (\tilde \kappa_1, \tilde \kappa_2)$:
\begin{itemize}
    \item[(BT.0)] $J_0 \neq \mathbb O_{2,2}$, where $\mathbb O_{2,2}$ denotes the $2 \times 2$ zero matrix;
    \item[(BT.1)] $a_{20}(0) + b_{11}(0) \neq 0$, where $a_{20}(\tilde \kappa)$ and $b_{11}(\tilde \kappa)$ denote coefficients associated with the linear part of the Bogdanov-Takens normal form (these are defined precisely below);
    \item[(BT.2)] $b_{20}(0) \neq 0$ where, as above, $b_{20}(\tilde \kappa)$ is a coefficient associated with the linear part of the Bogdanov-Takens normal form (this is also defined precisely below);
    \item[(BT.3)] the map $(\tilde Z, \tilde \kappa) \mapsto (F, \trace J, \det J)$ is regular at $(\tilde Z, \tilde \kappa) = (0, 0)$. 
\end{itemize}
(BT.0) follows immediately from \eqref{eq:B2_BT_Jacobian} and the fact that $z_{BT} \in (0,1)$. In order to check (BT.1)-(BT.2), we need to determine the coefficients $a_{20}(0)$, $b_{11}(0)$ and $b_{20}(0)$ using formulae derived as part of the normal form derivation presented in \cite[Sec.~8.4.1]{Kuznetsov_1998}.
%
%
%
These formulae depend on the (generalized) right and left eigenvectors of $J_0$, which we denote by $v_0$, $v_1$ and $w_1$, $w_0$ respectively. A direct calculation shows that
$$
v_0 = \frac{z_B}{\Tilde{\eps}_2\theta_2}
\begin{pmatrix}
    1 \\
    -1
\end{pmatrix}, \qquad 
v_1 = \frac{\Tilde{\eps}_2\theta_2}{z_B}
\begin{pmatrix}
    1 \\
    0
\end{pmatrix}, \qquad
w_1 = 
\frac{z_B}{\Tilde{\eps}_2\theta_2}
\begin{pmatrix}
    1 \\
    1
\end{pmatrix}, \qquad 
w_0 = \frac{\Tilde{\eps}_2\theta_2}{z_B}
\begin{pmatrix}
    0\\
    -1
\end{pmatrix} , $$
where the pre-factors have been chosen to ensure a normalisation which satisfies $\langle v_0,w_0 \rangle=\langle v_1,w_1\rangle=1$ and $\langle v_1,w_0\rangle=\langle v_0,w_1\rangle=0$. Under the linear change of coordinates defined by
\[
z_1 = \langle \tilde Z,w_0 \rangle=-\frac{z_{BT}}{\Tilde{\eps}_2\theta_2}\tilde Y , \qquad
z_2 = \langle \tilde Z,w_1 \rangle=\frac{z_{BT}}{\Tilde{\eps}_2\theta_2}(\tilde X+ \tilde Y) ,
\]
system \eqref{eq:B2_BT_system_origin} is transformed to
\begin{equation}\label{eq:B2_BT_system_origin_transformed}
    \begin{aligned} 
    z_1' &= G_1(z_1, z_2, \tilde \kappa) , \\
    z_2' &= G_2(z_1, z_2, \tilde \kappa) ,
    \end{aligned}
\end{equation}
where the functions $G_i$ are smooth; here we have opted to omit the explicit expressions, which are lengthy and algebraically messy.
Using these expressions and a little computer algebra (SymPy package in Python), we obtain the following expressions for the coefficient functions $a_{20}$, $b_{20}$ and $b_{11}$:
\[
\begin{split}
    a_{20}(\Tilde{\kappa}) &= \frac{\partial^2}{\partial z_1^2}G_1 \vert_{(z_1,z_2)=(0,0)}=\frac{2 z_{BT}^{2} \left(z_{BT} - 1\right)-2\Tilde{\kappa}_2 z_{BT}}{\Tilde{\eps}_2^{2} \theta_{2}^{2}}, \\
    b_{20}(\Tilde{\kappa}) &= \frac{\partial^2}{\partial z_1^2}G_2 \vert_{(z_1,z_2)=(0,0)}= \frac{2 z_{BT}^{3} \left(\Tilde{\eps}_2 \theta_2(\Tilde{\kappa}_1+2z_{BT}(1-z_{BT}))+\theta_1(\Tilde{\kappa}_2+z_{BT}(1-z_{BT}))\right)}{\Tilde{\eps}_2^{4} \theta_{2}^{4} \theta_1} , \\
    b_{11}(\Tilde{\kappa}) &= \frac{\partial^2}{\partial z_1 \partial z_2}G_2 \vert_{(z_1,z_2)=(0,0)}=\frac{z_{BT} \left(2 \Tilde{\eps}_2 \theta_{2} (\Tilde{\kappa}_1 -z_{BT}(z_{BT}-1))+\Tilde{\eps}_2\theta_2(2z_{BT}-1)^2-\theta_1z_{BT}(3z_{BT}-2) \right)}{\Tilde{\eps}_2^{2} \theta_{2}^{2}\theta_1}.
\end{split}
\]
After inserting \eqref{eq:x_B} for $z_{BT}$, a direct computation shows that 
(BT.1) and (BT.2) are satisfied with
$$a_{20}(0)+b_{11}(0)=\frac{2 \Tilde{\eps}_2^{2} \theta_{2}^{2} + \Tilde{\eps}_2 \theta_{1} \theta_{2}  + \theta_{1}^{2}}{\theta_1(2\Tilde{\eps}_2 \theta_2 +\theta_1)^3}>0 , \qquad
b_{20}(0)=\frac{2 \left(\Tilde{\eps}_2 \theta_{2} + \theta_{1}\right) }{\theta_1 \left(2 \Tilde{\eps}_2 \theta_{2} + \theta_{1}\right)^{4}} >0 ,$$
respectively.

It remains to check (BT.3), i.e., the regularity of the map $\Theta : (\tilde Z,\Tilde{\kappa}) \mapsto (F,\trace J, \det J)$ at $(\tilde Z,\Tilde{\kappa})=(0,0)$. A direct computation using computer alegbra (SymPy in Python again) shows that the determinant of the Jacobian matrix associated with this map at $(\tilde Z,\Tilde{\kappa})=(0,0)$ has the form $\det \Theta (0,0) = N / D$, where
\[
\begin{split}
    N :=& \Tilde{\eps}_2^{2} \theta_{2}^{2} \left(\Tilde{\eps}_2^{5} \theta_{2}^{5} + 5 \Tilde{\eps}_2^{4} \theta_{1} \theta_{2}^{4} + 10 \Tilde{\eps}_2^{3} \theta_{1}^{2} \theta_{2}^{3} + 10 \Tilde{\eps}_2^{2} \theta_{1}^{3} \theta_{2}^{2} + 5 \Tilde{\eps}_2 \theta_{1}^{4} \theta_{2} + \theta_{1}^{5}\right) > 0 , \\
    D :=& 512 \Tilde{\eps}_2^{9} \theta_{2}^{9} + 2304 \Tilde{\eps}_2^{8} \theta_{1} \theta_{2}^{8} + 4608 \Tilde{\eps}_2^{7} \theta_{1}^{2} \theta_{2}^{7} + 5376 \Tilde{\eps}_2^{6} \theta_{1}^{3} \theta_{2}^{6} + 4032 \Tilde{\eps}_2^{5} \theta_{1}^{4} \theta_{2}^{5} + 2016 \Tilde{\eps}_2^{4} \theta_{1}^{5} \theta_{2}^{4} + 672 \Tilde{\eps}_2^{3} \theta_{1}^{6} \theta_{2}^{3} \\
    &+ 144 \Tilde{\eps}_2^{2} \theta_{1}^{7} \theta_{2}^{2} + 18 \Tilde{\eps}_2 \theta_{1}^{8} \theta_{2} + \theta_{1}^{9} > 0 .
\end{split}
\]
Thus $\det \Theta (0,0) > 0$, which shows that (BT.3) is satisfied.
%

Since (BT.0)-(BT.3) are satisfied with $a_{20}(0)+b_{11}(0)>0$, \cite[Thm.~8.4]{Kuznetsov_1998} implies that system \eqref{eq:B2_SN__equation_proof} has a regular Bogdanov-Takens bifurcation at $(X,Y) = (X_{BT}, Y_{BT})$ when $(\kappa_1, \kappa_2) = (\kappa_{1,BT}, \kappa_{2,BT})$, and that the associated Hopf bifurcation is subcritical.
\end{proof}

Because the Bogdanov-Takens bifurcation identified in the limiting system \eqref{eq:B2_SN__equation_proof} is \textit{regular}, it persists as a regular Bogdanov-Takens bifurcation in system \eqref{eq:B2_SN_equation_proof_perturbed} when $0 < \eps_1 \ll 1$. This is formalised in the following result.

\begin{prop}
\label{prop:B2}
\textup{(Regular Bogdanov-Takens bifurcation in $B_2$)}
Fix $\beta_2 > \beta_1 > 0$ and $\tilde \eps_2 \in [\beta_1, \beta_2]$. 
There exists an $\eps_{1,0} > 0$ and $C^1$-function 
$(\kappa_{1,BT}, \kappa_{2,BT}) : [0, \eps_{1,0}) \to \R^2$ such that system \eqref{eq:B2_SN_equation_proof_perturbed} undergoes a Bogdanov-Takens bifurcation 
when $(\kappa_1, \kappa_2) = (\kappa_{1,BT}, \kappa_{2,BT})(\eps_1)$. The functions 
$\kappa_{1,BT}$ and $\kappa_{2,BT}$ satisfy
\[
    \kappa_{1,BT}(\eps_1) = 2 z_{BT} (1 - z_{BT}) + \mathcal O(\eps_1) , \qquad
    \kappa_{2,BT}(\eps_1) = 1 + z_{BT}^2 + \mathcal O(\eps_1) ,
\]
as $\eps_1 \to 0$. The associated Hopf bifurcation is subcritical.
  %
\end{prop}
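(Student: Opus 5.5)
The plan is a perturbation argument. The conditions (BT.0)--(BT.3) checked in Lemma \ref{lem:B2_BT} are open conditions, and system \eqref{eq:B2_SN_equation_proof_perturbed} is a smooth $\mathcal O(\eps_1)$ perturbation of the limiting system \eqref{eq:B2_SN__equation_proof}. We therefore locate the Bogdanov-Takens point of the perturbed system with the implicit function theorem, and then re-verify the nondegeneracy conditions by continuity.

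\textbf{Step 1 (locating the BT point).} Work on a compact neighbourhood of $(X,Y)=(z_{BT},z_{BT})$ in $(0,1)^2$. There $\hat F$ is smooth in $(X,Y,\kappa_1,\kappa_2,\eps_1)$, including at $\eps_1=0$, because $F_{rem}$ is smooth and bounded. Consider the map
\[
\hat\Theta(X,Y,\kappa_1,\kappa_2,\eps_1) := \bigl(\hat F, \trace D\hat F, \det D\hat F\bigr) \in \R^4 .
\]
At $\eps_1=0$ it vanishes at $(z_{BT},z_{BT},\kappa_{1,BT},\kappa_{2,BT})$. Condition (BT.3) says exactly that its Jacobian with respect to $(X,Y,\kappa_1,\kappa_2)$ is invertible there. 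The implicit function theorem then gives a $C^1$ (in fact smooth) curve
\[
\eps_1 \mapsto (X_{BT},Y_{BT},\kappa_{1,BT},\kappa_{2,BT})(\eps_1)
\]
of zeros of $\hat\Theta$. Its base point is the one from Lemma \ref{lem:B2_BT}, which yields the stated $\mathcal O(\eps_1)$ asymptotics. Along this curve the perturbed system has an equilibrium whose linearisation has a double zero eigenvalue.

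\textbf{Step 2 (nondegeneracy for $\eps_1>0$).} Check (BT.0)--(BT.3) for the perturbed system at the translated point.
\begin{itemize}
\item (BT.0) holds for small $\eps_1$ because $J_0 \neq \mathbb O_{2,2}$ and the Jacobian depends continuously on $\eps_1$.
\item (BT.3) holds because invertibility is open.
\item (BT.1) and (BT.2) need more care. The generalized eigenvectors $v_0,v_1,w_0,w_1$ are obtained from a nilpotent Jordan block; because that block is nonzero, they can be chosen to depend continuously on $\eps_1$ with the same normalisation. For instance, take $v_0$ spanning the kernel and solve for $v_1$ and the left vectors. The coefficients $a_{20},b_{11},b_{20}$ are second derivatives of $\hat F$ contracted with these vectors, so they are continuous in $\eps_1$. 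Hence $a_{20}(0)+b_{11}(0)>0$ and $b_{20}(0)>0$ persist for $\eps_1$ small.
\end{itemize}
By \cite[Thm.~8.4]{Kuznetsov_1998}, the perturbed system then undergoes a generic BT bifurcation. Because the sign of $a_{20}+b_{11}$ determines the criticality and is preserved, the associated Hopf bifurcation is subcritical.

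\textbf{Step 3 (uniformity and the main difficulty).} Since $\tilde\eps_2\in[\beta_1,\beta_2]$ lies in a compact set and all the quantities in Lemma \ref{lem:B2_BT} are continuous and nonvanishing in $\tilde\eps_2$, $\eps_{1,0}$ can be chosen uniformly if desired; for fixed $\tilde\eps_2$ this is not even needed. The main obstacle is the regularity bookkeeping. One must confirm that $F_{rem}$ is genuinely $C^2$ jointly in $(X,Y,\eps_1)$ up to $\eps_1=0$ under the substitution \eqref{B2_variable_change_BT}. This follows by writing $x_2=\ln(X/(1-X))$ and $y_2=\tilde\eps_2\ln(Y/(1-Y))$ and noting that the factors $e^{\eps_1x_2}$ and $e^{\eps_1y_2}$ are smooth in all arguments on compact subsets of $(0,1)^2$. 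One must also confirm that the equilibrium coordinates do not leave this compact set, which holds because $z_{BT}\in(0,1)$.
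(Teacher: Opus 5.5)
Your proposal is correct and follows essentially the same route as the paper. Both apply the implicit function theorem to $(\hat F,\trace D\hat F,\det D\hat F)$, using the invertibility given by (BT.3), and then argue that the open conditions (BT.0)--(BT.3) persist for small $\eps_1$; your continuous choice of generalized eigenvectors simply spells out what the paper calls ``similar arguments''.

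One correction you inherited from the text: the base point must have $\kappa_{2,BT}(0)=1-z_{BT}^2$, as in Theorem \ref{thm:B2}, not $1+z_{BT}^2$ as printed in the statement and in Lemma \ref{lem:B2_BT}. Indeed $F_2(z_{BT},z_{BT},\kappa_2)\propto 1-z_{BT}^2-\kappa_2$, so your Step 1 claim that $\hat\Theta$ vanishes at the base point holds only with this corrected value.
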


\begin{proof}
    The local persistence of an equilibrium with an associated Jacobian matrix $J$ which has a double-zero eigenvalue follows from the implicit function theorem and the fact that the differential associated with the map $(\tilde Z, \tilde \kappa, s_2) \mapsto (F, \trace J, \det J)$ at $(\tilde Z, \tilde \kappa, \eps_1) = (0, 0, 0)$ has maximal rank, which is a consequence of the fact that the genericity condition (BT.3) is satisfied when $\eps_1 = 0$ (this was shown in the proof of Lemma \ref{lem:B2_BT} above).

    In order to show that a Bogdanov-Takens bifurcation occurs, it remains to check the genericity and non-degeneracy conditions (BT.0)-(BT.3). Since these are `non-zero conditions', they can be verified for all $0 \leq \eps_1 \ll 1$ sufficiently small by appealing to the fact that they are true when $\eps_1 = 0$. For example: because the Jacobian matrix satisfies $J = J_0 + \mathcal O(\eps_1)$ as $\eps_1 \to 0$ and $J_0 \neq \mathbb O_{2,2}$ is smooth, it follows that $J \neq \mathbb O_{2,2}$ for all $0 \leq \eps_1 \ll 1$ sufficiently small. Thus (BT.0) is satisfied. Similar arguments show that (BT.1)-(BT.2) are satisfied for all $0 \leq \eps_1 \ll 1$ sufficiently small, with the same signs. This means that the associated Hopf bifurcation is also subcritical. Finally, (BT.3) requires that the map $(\tilde Z, \tilde \kappa, \eps_1) \mapsto (\hat F, \trace J, \det J)$ is regular. Since regularity of the map $(\tilde Z, \tilde \kappa) \mapsto (\tilde Z, \trace J_0, \det J_0)$ is enough to guarantee maximal rank for all $0 \leq \eps_1 \ll 1$, the proof is complete.
\end{proof}

Proposition \ref{prop:B2} confirms that a regular Bogdanov-Takens bifurcation occurs in system \eqref{eq:B2_SN_equation_proof_perturbed}, for all $0 \leq \eps_1 \ll 1$. This implies the existence of regular saddle-node, subcritical Hopf and homoclinic bifurcations along codimension-1 branches in the $(\kappa_1, \kappa_2)$-bifurcation set, \textit{locally near} $(\kappa_{1,BT}(\eps_1), \kappa_{2,BT}(\eps_1))$. In order to complete the proof of Theorem \ref{thm:B2}, however, we still need to (i) characterise the saddle-node curve for system \eqref{eq:B2_SN_equation_proof_perturbed} outside of this neighbourhood, and (ii) show that this blows down to the desired result for system \eqref{eq:fund_prob_P1} in the original $(x,y)$-coordinates. We start with (i).

\begin{prop}
    \label{prop:B2_SN_Hopf}
\textup{(Regular saddle-node bifurcation in $B_2$)}
Fix $\beta_2 > \beta_1 > 0$, $\tilde \eps_2 \in [\beta_1, \beta_2]$, a compact subinterval $\mathcal I \subset (0,1)$ and $\mathcal J(\eps_1) = \{ \kappa_2 \in \mathcal I : \kappa_2 > \kappa_{2,BT}(\eps_1) \}$. 
There exists an $\eps_{1,0} > 0$ and a $C^1$-function $\kappa_{1,sn} : \mathcal I \times [0, \eps_{1,0}) \to \R^2$ 
such that system \eqref{eq:B2_SN_equation_proof_perturbed} undergoes a regular saddle-node bifurcation when $\kappa_1 = \kappa_{1,sn}(\kappa_2, \eps_1)$, 
unless $(\kappa_1, \kappa_2) = (\kappa_{1,BT}(\eps_1), \kappa_{2,BT}(\eps_1))$. 
We have that
\[
    \kappa_{1,sn}(\kappa_2, \eps_1) = 2 \kappa_2 - 2 + 2 \sqrt{1 - \kappa_2} + \mathcal O(\eps_1) 
\]
as $\eps_1 \to 0$. 
\end{prop}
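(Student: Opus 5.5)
The plan is to find the saddle-node curve first in the polynomial limiting system \eqref{eq:B2_SN__equation_proof}, and then carry it over to $0<\eps_1\ll1$ using the implicit function theorem.

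At $\eps_1=0$, equilibria in the open square $(0,1)^2$ satisfy
\[
X+Y-2XY=\kappa_1, \qquad XY=1-\kappa_2 .
\]
Substituting $Y=(1-\kappa_2)/X$ gives the quadratic
\[
X^2-(2-2\kappa_2+\kappa_1)X+(1-\kappa_2)=0 ,
\]
whose roots are $X^\pm$ in \eqref{eq:eq_coords}. The two equilibria $q_s$ and $q_{f/n}$ coalesce when the discriminant vanishes, that is, when $(1-\kappa_2+\kappa_1/2)^2=1-\kappa_2$. Taking the positive root gives $\kappa_1=2\kappa_2-2+2\sqrt{1-\kappa_2}$. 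At this value the double root is $X^*=\sqrt{1-\kappa_2}=Y^*\in(0,1)$, so the point lies in the interior of the square, and it stays in a compact subset of the square uniformly for $\kappa_2\in\mathcal I$.

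Next I verify the conditions for a regular saddle-node at $(X^*,Y^*)$. On the diagonal $X=Y=z$, the factors $X(1-X)$ and $Y(1-Y)$ are nonzero, so the Jacobian equals $z(1-z)\,\mathrm{diag}(\theta_1^{-1},(\theta_2\tilde\eps_2)^{-1})$ multiplied by the Jacobian of $(X+Y-2XY-\kappa_1,\ 1-XY-\kappa_2)$. This gives
\[
J=z(1-z)\begin{pmatrix}(1-2z)/\theta_1 & (1-2z)/\theta_1\\ -z/(\tilde\eps_2\theta_2) & -z/(\tilde\eps_2\theta_2)\end{pmatrix},
\]
which has rank one, with kernel spanned by $(1,-1)$. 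Its trace is
\[
z(1-z)\Bigl(\frac{1-2z}{\theta_1}-\frac{z}{\tilde\eps_2\theta_2}\Bigr),
\]
and this vanishes exactly when $z=z_{BT}$. So the zero eigenvalue is simple whenever $\sqrt{1-\kappa_2}\neq z_{BT}$, i.e.\ away from the BT point.

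The remaining two conditions are as follows.
\begin{itemize}
\item Transversality: the left null vector is proportional to $(z,\,-(1-2z)\tilde\eps_2\theta_2/\theta_1)$ up to the diagonal scaling. Pairing it with $\partial_{\kappa_1}F=(-z(1-z)/\theta_1,0)^T$ gives a nonzero result.
\item Quadratic nondegeneracy: restricting $(X+Y-2XY, XY)$ to the direction $(1,-1)$ gives second derivatives $(4,-2)$, up to the diagonal factors. Pairing these with the left null vector yields an expression that is a nonzero multiple of the trace-type quantity above, hence nonzero for $z\ne z_{BT}$.
\end{itemize}
Rather than checking this by hand, I would confirm it with the same SymPy computation used in Lemma \ref{lem:B2_BT}.

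For the perturbation step, I apply the implicit function theorem to the map
\[
(X,Y,\kappa_1;\kappa_2,\eps_1)\mapsto(\hat F,\det D\hat F).
\]
Its derivative in $(X,Y,\kappa_1)$ at the limiting saddle-node point is invertible precisely because the transversality and nondegeneracy conditions hold. Since $F_{rem}$ is smooth on compact subsets of $(0,1)^2$, this yields a $C^1$ function $\kappa_{1,sn}(\kappa_2,\eps_1)=\kappa_{1,sn}(\kappa_2,0)+\mathcal O(\eps_1)$. The nonzero conditions persist for small $\eps_1$, so the bifurcation remains a regular saddle-node. Compactness of $\mathcal I$ makes $\eps_{1,0}$ uniform, provided $\kappa_2$ stays away from $\kappa_{2,BT}$.

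The main obstacle is uniformity near $\kappa_{2,BT}$, where simplicity of the zero eigenvalue fails. Near that point I would not use the implicit function theorem argument above. Instead I would invoke Proposition \ref{prop:B2} and the local Bogdanov–Takens unfolding from \cite{Kuznetsov_1998}, which already supplies a $C^1$ saddle-node branch through $(\kappa_{1,BT},\kappa_{2,BT})(\eps_1)$ and excludes only the BT point itself. The last step is to glue that local branch to the one obtained away from the BT point: both solve $\hat F=0$, $\det D\hat F=0$, and the solution set of these equations is locally unique, so the two branches coincide on their overlap.
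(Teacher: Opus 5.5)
Your main line is the paper's proof: locate the fold of the polynomial limit \eqref{eq:B2_SN__equation_proof} at $X=Y=\sqrt{1-\kappa_2}$, check transversality and quadratic nondegeneracy, then apply the implicit function theorem to $(X,Y,\kappa_1)\mapsto(\hat F,\det D\hat F)$ with $(\kappa_2,\eps_1)$ as parameters. The problem is your nondegeneracy step, which creates an obstacle that does not exist.

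\textbf{The nondegeneracy computation.} The left null vector of $J$ is $w=\bigl(z/(\theta_2\tilde\eps_2),\,(1-2z)/\theta_1\bigr)^T$; your second entry has the wrong sign. The second bracket is $1-XY-\kappa_2$, not $XY$, so for $v=(1,-1)^T$ one gets $D^2F[v,v]=z(1-z)\bigl(4/\theta_1,\,2/(\theta_2\tilde\eps_2)\bigr)^T$. Hence
\[
w^TF_{\kappa_1}=-\frac{z^2(1-z)}{\theta_1\theta_2\tilde\eps_2},\qquad w^TD^2F[v,v]=\frac{2z(1-z)}{\theta_1\theta_2\tilde\eps_2},
\]
and both are nonzero for \emph{every} $z\in(0,1)$. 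The quadratic coefficient is not a multiple of the trace, and it does not vanish at $z=z_{BT}$. In fact, at $z=z_{BT}$ it is, up to the normalisation of $v_0,w_1$, exactly the coefficient $b_{20}(0)>0$ of Lemma~\ref{lem:B2_BT}. So your claim also contradicts (BT.2), on which Proposition~\ref{prop:B2} rests, and you invoke that proposition near the BT point.

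\textbf{Consequence: the implicit function theorem works through the BT point.} The derivative of $(X,Y,\kappa_1)\mapsto(\hat F,\det D\hat F)$ is invertible at every point of the limiting fold curve, including $\kappa_2=\kappa_{2,BT}$. Suppose $(\xi,\beta)$ lies in its kernel.
\begin{itemize}
\item Pairing the first two rows with $w$ gives $\beta=0$.
\item Then $\xi\in\ker J=\mathrm{span}(v)$.
\item The last row gives $\xi=0$, because $\nabla\det J\cdot v=c\,w^TD^2F[v,v]$, where $\mathrm{adj}\,J=c\,vw^T$ with $c\neq0$.
\end{itemize}
This only needs $J\neq0$; it does not need a simple zero eigenvalue. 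The implicit function theorem plus compactness of $\mathcal I$ therefore gives $\kappa_{1,sn}$ on all of $\mathcal I\times[0,\eps_{1,0})$ in one step, as in the paper.

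The BT point is excluded from the statement only because the zero eigenvalue is double there, so it is not labelled a saddle-node. The implicit function theorem does not break down there. To see that the perturbed fold curve has only the one double-zero point $\kappa_{2,BT}(\eps_1)$, note that along the limiting fold curve the trace equals
\[
z(1-z)\Bigl(\frac{1-2z}{\theta_1}-\frac{z}{\tilde\eps_2\theta_2}\Bigr),
\]
which has a simple zero at $z_{BT}$. That zero persists uniquely for small $\eps_1$.

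\textbf{Drop the gluing step.} The gluing with the Bogdanov--Takens unfolding is unnecessary, and as written it is also incomplete. Proposition~\ref{prop:B2} and \cite[Thm.~8.4]{Kuznetsov_1998} are applied for each fixed $\eps_1$. They therefore give neither joint $C^1$ dependence of the branch on $(\kappa_2,\eps_1)$ nor a neighbourhood of $\kappa_{2,BT}$ that is uniform in $\eps_1$.
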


\begin{proof}
    We start by showing that the limiting problem \eqref{eq:B2_SN__equation_proof} has a regular saddle-node bifurcation. Direct calculations show that $F(Z_{sn},Z_{sn},\kappa_{1,sn}(\kappa_2,0),\kappa_2)=0$, where $Z_{sn} = \sqrt{1 - \kappa_2} \in (0,1)$, $\kappa_{1,sn}(\kappa_2,0)$ is as in the statement of the proposition, and that the Jacobian matrix 
    \begin{equation}
        \label{eq:B2_Jacobian}
        J(Z_{sn},Z_{sn},\kappa_{1,sn}(\kappa_2,0),\kappa_2,0)=Z_{sn}(1-Z_{sn})\begin{pmatrix}
    \frac{1}{\theta_1}(1-2Z_{sn}) & \frac{1}{\theta_1}(1-2Z_{sn})\\
    \frac{-Z_{sn}}{\theta_2 \Tilde{\eps}_2} & \frac{-Z_{sn}}{\theta_2 \Tilde{\eps}_2}
    \end{pmatrix} ,
    \end{equation}
    has exactly one zero eigenvalue as long as $\kappa_2 \in \mathcal I \setminus \{ \kappa_{2,BT} \}$, i.e.~as long as one stays away from the Bogdanov-Takens point identified in Lemma \ref{lem:B2_BT}. In order to confirm that the loss of hyperbolicity along $\kappa_1 = \kappa_{1,sn}(\kappa_2,0)$ is due to a saddle-node bifurcation, it remains to check the following transversality and non-degeneracy conditions, which are obtained from \cite[formula (5.50)]{Kuznetsov_1998} after projection onto the relevant center manifold:
    \begin{enumerate}
        \item[(SN.1)] $w^T F_{\kappa_1}(Z_{sn},Z_{sn},\kappa_{1,sn}(\kappa_2,0),\kappa_2)\neq 0$, where $F_{\kappa_1}$ denotes the $2 \times 1$ vector of partial derivatives of components of $F$ with respect to $\kappa_1$;
        \item[(SN.2)] $w^T D^2_{(X,Y)} F(Z_{sn},Z_{sn},\kappa_{1,sn}(\kappa_2,0),\kappa_2)[v,v] \neq 0$, where $D^2_{(X,Y)} F[v,w]$ denotes the bilinear form which is defined entry-wise via $v^TD^2_{(X,Y)} F_iw$ and $D^2_{(X,Y)} F_i$, where the latter is the $2\times2$ Hessian matrix that contains all the second order derivatives of $F=(F_1,F_2)$ ($i \in \{1,2\}$).
    \end{enumerate}
for all $\kappa_2 \in \mathcal I \setminus \{ \kappa_{2,BT} \}$, where 
$$v=\begin{pmatrix}
    1\\
    -1
\end{pmatrix}, \qquad 
w=\begin{pmatrix}
    \frac{Z_{sn}}{\theta_2 \Tilde{\eps}_2}\\ 
    \frac{1}{\theta_1}(1-2Z_{sn})
\end{pmatrix},$$
are the right and left eigenvectors associated with the zero eigenvalue of the Jacobian respectively. Substituting all of the relevant formulae into the transversality and nondegeneracy conditions above, we find that
$$w^T  F_{\kappa_1}(Z_{sn},Z_{sn},\kappa_{1,sn}(\kappa_2,0),\kappa_2)=-\frac{1}{\theta_1\theta_2 \Tilde{\eps}_2}{Z_{sn}}^2(1-Z_{sn})\neq 0,$$
and
$$w^T D^2_{(X,Y)}  F(Z_{sn},Z_{sn},\kappa_{1,sn}(\kappa_2,0),\kappa_2)[v,v]= \frac{1}{\theta_1\theta_2 \Tilde{\eps}_2} 2Z_{sn}(1-Z_{sn}) \neq 0$$
for all $\kappa_2 \in (0,1)$. Thus (SN.1)-(SN.2) are satisfied, implying the existence of a saddle-node bifurcation when $\kappa_1 = \kappa_{1,sn}(\kappa_2,0)$ for all $\kappa_2 \in \mathcal (0,1) \setminus \{ \kappa_{2,BT} \}$. The existence of the $C^1$-function $\kappa_{1,sn}$ described in the proposition follows from the implicit function theorem, since the differential associated with the map $(Z, \kappa, \eps_1) \mapsto (\hat F, \det D_{(X,Y)} \hat F)$ at $(Z, \kappa, \eps_1) = (Z_{sn}, Z_{sn}, \kappa_{1,sn}(\kappa_2,0), \kappa_2, 0)$ has maximal rank, which implies that the equilibrium $(Z_{sn}, Z_{sn})$ with a single zero eigenvalue persists for $0<\eps_1 \ll 1$. Together with regular perturbation arguments which ensure that the non-zero conditions (SN.1)-(SN.2) remain true after sufficiently small perturbations, this completes the proof.
\end{proof}

 %

We are now in a position to complete the proof of Theorem \ref{thm:B2}.


\begin{proof}[Proof of Theorem \ref{thm:B2}]
Together, Propositions \ref{prop:B2} and \ref{prop:B2_SN_Hopf} show that for all $\eps_1 \in [0, \eps_{1,0})$ with $\eps_{1,0} > 0$ sufficiently small, the transformed system \eqref{eq:B2_SN_equation_proof_perturbed} has (i) a regular Bogdanov-Takens bifurcation, and (ii) a saddle-node bifurcation which admits of a parameterisation on larger regions in $(\kappa_1, \kappa_2)$-space. Since the coordinate change \eqref{B2_variable_change_BT} is smooth and smoothly invertible, corresponding results hold for system \eqref{eq:B2_spherical_scaling_chart}. In particular, since the coordinate transformation leaves $\kappa_1$ and $\kappa_2$ fixed, the functions $\kappa_{1,BT}$, $\kappa_{2,BT}$ and $\kappa_{1,sn}$ have the same asymptotic formulae as $\eps_1 \to 0$ in system \eqref{eq:B2_spherical_scaling_chart}. The results obtained in chart $\mathcal K_2$ (Propositions \ref{prop:B2} and \ref{prop:B2_SN_Hopf} in particular) apply almost verbatim after blowing down to the original system \eqref{eq:fund_prob_P1}, except that the blow-down transformation leads to statements for $\eps_1 \in (0, \eps_{1,0})$ (i.e.~without $\eps_1 = 0$).
\end{proof}

\begin{rem} \label{rem:implicit_hopf}
    Proposition \ref{prop:B2} shows that the Hopf bifurcation is subcritical for $\kappa_2$-values sufficiently close to $\kappa_{2,BT}$, however, we have not included a result analogous to Proposition \ref{prop:B2_SN_Hopf} which extends the parametrization of the Hopf curve. Direct calculations show that the Jacobian matrix $J$ associated with the limiting system \eqref{eq:B2_SN__equation_proof} satisfies the zero trace condition along a parameter space curve defined implicitly by
    \[
    \theta_1^{-1} (1 - X^-) (1 - 2Y^-) - \theta_2^{-1} \tilde \eps_2^{-1} Y^- (1 - Y^-) = 0 , 
    \]
    where $X^-$ and $Y^-$ denote the $(\kappa_1,\kappa_2)$-dependent coordinates of the equilibrium $q_{f/n}$ defined in \eqref{eq:eq_coords}. The determinant associated with $J$ can be shown to be positive along this curve as long as $\kappa_2 \in (3/4,1)$, however, we were unable to verify the usual transversality and non-degeneracy conditions that are required to prove the existence of a nondegenerate Hopf bifurcation. Numerical continuation in MatCont suggests that the branch of subcritical Hopf bifurcations which emerges from the Bogdanov-Takens point continues in a regular fashion as shown in Figure \ref{fig:B2_bifurcation_diagram}.
\end{rem}

\section{\texorpdfstring{Region $B_1$}{Region B1}} \label{sec:Region_B1}
We turn now to the geometric blow-up construction in $B_1$. We continue to work with system \eqref{eq:fund_prob_P1}, i.e.~in parameter chart $\mathcal P_1$, except that our primary focus now shifts to the singular geometry and dynamics as both $\eps_1 \to 0$ and $\tilde \eps_2 \to 0$ (as opposed to the $B_2$ analysis in the preceding section, where only $\eps_1 \to 0$). 

\subsection{Blow-up, singular geometry and dynamics}
\label{sec:B1_blow-up_geometry}


We can build upon the geometry established in Section \ref{sec:Region_B2}. We saw in Section \ref{sub:B2_geometry} that by starting with the extended system \eqref{eq:P1_extended} and applying a sequence of blow-up transformations which consisted of (i) a spherical blow-up of the intersection point $\Sigma_1 \cap \Sigma_2 \cap \{0\}$, followed by (ii) cylindrical blow-ups along $\Sigma_1^\pm \times \{0\}$ and $\Sigma_2^\pm \times \{0\}$, the loss of smoothness could be resolved for any fixed $\tilde \eps_2 > 0$. Letting $\tilde \eps_2 \to 0$, however, leads to additional loss of smoothness along the horizontal axes in the local coordinate charts $\mathcal K_2$, $\mathfrak K_2$ (defined in \eqref{eq:mathfrak_K2_coordinates}), and the counterpart to $\mathfrak K_2$ which would be associated with the cylindrical blow-up of $\Sigma_1^- \times \{0\}$ to the left of the blow-up sphere. Three more cylindrical blow-ups will be necessary to resolve this.

We start with the degeneracy arising on the blow-up sphere, which is visible in chart $\mathcal K_2$. Recall that the dynamics in chart $\mathcal K_2$ are governed by
\begin{equation}
\label{eq:B2_eqns_again}
\begin{aligned} 
    x_2'&=\frac{1}{\theta_1e^{\eps_1 x_2}}\left( \Hat{H}(x_2)+ \Hat{H}\left(\frac{y_2}{\Tilde{\eps}_2}\right) -2 \Hat{H}(x_2) \Hat{H}\left(\frac{y_2}{\Tilde{\eps}_2}\right) -\kappa_1 e^{\eps_1 x_2}\right) , \\
    y_2'&=\frac{1}{\theta_2e^{\eps_1 y_2}}\left( 1-  \Hat{H}(x_2) \Hat{H}\left(\frac{y_2}{\Tilde{\eps}_2}\right) -\kappa_2 e^{\eps_1 y_2}\right) ,
\end{aligned}
\end{equation}
where $0 \leq \eps_1 \ll 1$. This system has a PWS singular limit when $\tilde \eps_2 \to 0$, due to \eqref{eq:Hill_limit}, which can be written as
\begin{equation} \label{eq:B1_PWS_limit}
    \begin{aligned} 
    \begin{pmatrix}
        x_2' \\
        y_2'
    \end{pmatrix}=
    \begin{cases}
        Z^+(x_2,y_2,\eps_1), & y_2>0, \\ 
        Z^-(x_2,y_2,\eps_1), & y_2<0, \\ 
    \end{cases}
\end{aligned}
\end{equation}
where
\begin{equation} \label{eq:B1_Z+_Z-}
    \begin{aligned} 
        Z^+(x_2,y_2,\eps_1) :=
        \begin{pmatrix}
    \frac{1}{\theta_1e^{\eps_1 x_2}} (1 - \Hat{H}(x_2)  -\kappa_1 e^{\eps_1 x_2}) \\
    \frac{1}{\theta_2e^{\eps_1 y_2}}( 1-  \Hat{H}(x_2)  -\kappa_2 e^{\eps_1 y_2})
    \end{pmatrix}, \qquad 
        Z^-(x_2,y_2,\eps_1) :=
        \begin{pmatrix}
    \frac{1}{\theta_1e^{\eps_1 x_2}}(  \Hat{H}(x_2) - \kappa_1 e^{\eps_1 x_2})\\
    \frac{1}{\theta_2e^{\eps_1 y_2}}( 1  -\kappa_2 e^{\eps_1 y_2}) 
    \end{pmatrix},
\end{aligned}
\end{equation}
and the switching manifold is
\[
\Sigma_1 := \{(x_2,0) : x_2 \in \R \}.
\]
The main features of the limiting PWS system \eqref{eq:B1_PWS_limit} are summarised in the following result.

\begin{lem}
\label{lem:B1_pws}
    Consider the limiting PWS system \eqref{eq:B1_PWS_limit}, with $(\kappa_1, \kappa_2) \in \Lambda$. The following assertions are true:
    \begin{enumerate}
        \item[(i)] The half-line $\{ ( \ln ( (1 - \kappa_1) / \kappa_1), y_2) : y_2 > 0 \}$ is invariant under the flow induced by $Z^+|_{\eps_1 = 0}$, and $y_2' < 0$ along it;
        \item[(ii)] The half-line $\{ ( \ln ( \kappa_1 / (1 - \kappa_1)), y_2) : y_2 < 0 \}$ is invariant under the flow induced by $Z^-|_{\eps_1 = 0}$, and $y_2' > 0$ along it;
        \item[(iii)] There exists an $\eps_{1,0} > 0$ such that there is a regular invisible fold point $F : (\ln(1 - \kappa_2) / \kappa_2, 0) \in \Sigma_1$ for all $\eps_1 \in [0,\eps_{1,0})$.
    \end{enumerate}
\end{lem}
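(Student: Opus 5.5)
All three assertions follow from direct computations with the explicit vector fields $Z^\pm$ in \eqref{eq:B1_Z+_Z-}. Two facts do most of the work: $\hat H$ is a smooth increasing bijection $\R\to(0,1)$ with $\hat H'=\hat H(1-\hat H)>0$, and $(\kappa_1,\kappa_2)\in\Lambda$ gives $0<\kappa_1<\kappa_2<1$.

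For (i), set $\eps_1=0$ in $Z^+$. The first component becomes $\theta_1^{-1}(1-\hat H(x_2)-\kappa_1)$, which depends on $x_2$ only. It vanishes exactly where $\hat H(x_2)=1-\kappa_1$, that is at $x_2=\ln((1-\kappa_1)/\kappa_1)$, since $\hat H^{-1}(s)=\ln(s/(1-s))$. So the vertical half-line at this $x_2$-value in $\{y_2>0\}$ is invariant. On it, $1-\hat H=\kappa_1$, so $y_2'=\theta_2^{-1}(\kappa_1-\kappa_2)<0$.

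For (ii), the argument is the same with $Z^-|_{\eps_1=0}$. Here $x_2'=\theta_1^{-1}(\hat H(x_2)-\kappa_1)$ vanishes at $x_2=\ln(\kappa_1/(1-\kappa_1))$. Along that half-line, $y_2'=\theta_2^{-1}(1-\kappa_2)>0$.

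For (iii), I use the standard Filippov characterisation of a fold with $h(x_2,y_2)=y_2$. A point of $\Sigma_1$ is a fold of $Z^\pm$ if $Z^\pm h=0$ there and $(Z^\pm)^2h\neq0$. Visibility is decided by the sign of $(Z^\pm)^2h$ relative to the side on which $Z^\pm$ is defined.

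On $\Sigma_1$ we have $y_2=0$, so $e^{\eps_1y_2}=1$ and
\[
Z^+h\big|_{\Sigma_1}=\theta_2^{-1}\bigl(1-\hat H(x_2)-\kappa_2\bigr).
\]
This vanishes only at $x_2^F=\ln((1-\kappa_2)/\kappa_2)$, independently of $\eps_1$. (I read the location in the statement as this value.) By contrast, $Z^-h|_{\Sigma_1}=\theta_2^{-1}(1-\kappa_2)>0$, so $Z^-$ is transversal everywhere on $\Sigma_1$. Hence $F$ is a singularity of $Z^+$ only.

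Next, differentiate $Z^+_2$ along $Z^+$. Because $Z^+_2$ vanishes at $F$, the $y_2$-derivative term drops out and
\[
(Z^+)^2h\big|_F=Z^+_1\,\partial_{x_2}Z^+_2=-\frac{\hat H'(x_2^F)}{\theta_2}\,Z^+_1(x_2^F,0,\eps_1).
\]
At $\eps_1=0$ we have $Z^+_1(x_2^F,0,0)=\theta_1^{-1}(\kappa_2-\kappa_1)>0$, and this stays positive for $\eps_1\in[0,\eps_{1,0})$ by continuity. Since $\hat H'>0$, it follows that $(Z^+)^2h|_F<0$. The orbit of $Z^+$ through $F$ therefore touches $\Sigma_1$ quadratically and bends into $\{y_2<0\}$, away from the region $\{y_2>0\}$ where $Z^+$ is defined. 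This is exactly an invisible fold.

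The fold is regular: it is quadratic, $Z^+_1\neq0$ at $F$, and $Z^-$ is transversal there. The only delicate step is fixing the visibility convention correctly and checking that it matches the $\eps_1$-uniform sign argument above; everything else is explicit.
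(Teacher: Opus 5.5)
Your proposal is correct and follows essentially the same route as the paper. You verify (i)--(ii) directly from the $y_2$-independence of $Z^\pm|_{\eps_1=0}$, and (iii) from the fold conditions $Z^+f=0$, $Z^+(Z^+f)<0$, $Z^-f>0$ with $f=y_2$; your factorisation $(Z^+)^2h|_F=-\theta_2^{-1}\hat H'(x_2^F)\,Z_1^+(x_2^F,0,\eps_1)$ reproduces the paper's leading-order value $-\theta_1^{-1}\theta_2^{-1}(1-\kappa_2)\kappa_2(\kappa_2-\kappa_1)$, and your reading of the fold location as $\ln((1-\kappa_2)/\kappa_2)$ is the intended one, since the statement has a misplaced parenthesis.
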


\begin{proof}
    Assertions (i)-(ii) are consequences of the fact that the limiting vector fields $Z^\pm(x_2,y_2,0)$ do not depend upon $y_2$, and can be checked directly. One can directly verify that $F$ is a regular invisible fold point by writing $z = (x_2, y_2)$, $f(z) = y_2$ and showing that the defining conditions (see e.g.~\cite{Kristiansen_2015}), namely
    \[
    Z^+f(F,\eps_1) = 0, \qquad
    Z^+(Z^+f)(F,\eps_1) < 0, \qquad
    Z^-f(F,\eps_1) > 0,
    \]
    where $\mathcal Z f(\cdot) := \langle \nabla f(\cdot),\mathcal Z(\cdot) \rangle$ denotes the Lie derivative of $f$ along the vector field $\mathcal Z$, are satisfied for all $\eps_1 \in [0,\eps_{1,0})$. In particular: a direct calculation shows that $Z^+f(F,\eps_1) = 0$ for all $\eps_1 \geq 0$, and that
    \[
    Z^+(Z^+f)(F,\eps_1) = -\frac{1}{\theta_1\theta_2}(1-\kappa_2)\kappa_2(\kappa_2-\kappa_1) + \mathcal O(\eps_1) , \qquad 
    Z^-f(F,\eps_1) = \frac{1}{\theta_2} (1 - \kappa_2) + \mathcal O(\eps_1),
    \]
    as $\eps_1 \to 0$. Fixing $(\kappa_1, \kappa_2) \in \Lambda$, it follows that there exists an $\eps_{1,0} > 0$ sufficiently small to ensure that these quantities are strictly negative and positive respectfully for all $\eps_1 \in [0, \eps_{1,0})$.
\end{proof}

Lemma \ref{lem:B1_pws} justifies the sketch of the PWS geometry and dynamics on the blow-up sphere shown in Figure \ref{fig:B1_piecewise_smooth}. Away from $\Sigma_1$, system \eqref{eq:B2_eqns_again} is regularly perturbed in both $\tilde \eps_2$ and $\eps_1$, so the PWS description is expected to provide a good description of the leading order dynamics (on compact subsets of the $(x_2, y_2)$-plane).

In order to determine the behaviour close to $\Sigma_1$, we permit an abuse of notation by dropping the subscripts on the variables and parameters, consider the extended system obtained by appending $\tilde \eps'=0$ to system \eqref{eq:B2_eqns_again}, which is degenerate along $\Sigma_1 \times \{0\}$, and apply a cylindrical blow-up of the form
\begin{equation}
\label{eq:cylindrical_blow-up_B1}
\eta \geq 0, \ (\bar y, \bar{\tilde \eps} ) \in \mathbb S^1 \mapsto 
\begin{cases}
    y = \eta \bar y, \\
    \tilde \eps = \eta \bar{\tilde \eps} .
\end{cases}
\end{equation}
A complete treatment would involve an analysis in the three coordinate charts defined via $\bar y = \pm 1$ and $\bar{\tilde \eps} = 1$. We shall present a detailed analysis in the scaling chart $\bar{\tilde \eps} = 1$ at the end of Section \ref{sec:B1_bifurcation_analysis} below. Except for a couple of minor things in $\bar y = 1$, the details of the dynamics have been omitted because (i) they are not necessary for proving Theorem \ref{thm:B1}, and (ii) the complete treatment would lengthen the manuscript significantly.


The remaining two cylindrical blow-ups, which should be applied to the degenerate lines which lie along the horizontal axes in chart $\mathfrak K_2$ and its counterpart on the left-hand side of the blow-up sphere, are homogeneous like \eqref{eq:cylindrical_blow-up_B1}. For brevity, we shall omit the details on the left blow-up, and focus on the blow-up on the right-hand side with an aim towards the identification of the stable node $q_n$. We consider the extended system obtained by appending $\tilde \eps_2' = 0$ to system \eqref{eq:B2_spherical_sigma2_+_cylinder_scaling}, drop the subscripts, apply a cylindrical blow-up along $\Sigma_1^+ := \{ (r, 0, \rho, 0) : r, \rho \geq 0 \}$ defined by
    \[
    \eta \geq 0, \ (\bar y, \bar{\tilde \eps}) \in \mathbb S^1 \mapsto
    \begin{cases}
        y = \eta \bar y , \\
        \tilde \eps = \eta \bar{\tilde \eps} ,
    \end{cases}
    \]
    and consider the following (desingularised) system in the rescaling chart with $y = \tilde \eps y_2$:
    \begin{equation}
        \label{eq:mathfrak_K2_right_eqns}
        \begin{split}
            r' &= \tilde \eps r \rho G\left(\frac{1}{\rho}, y_2, r \right) , \\
            y_2' &= \theta_2^{-1} e^{- r \rho \tilde \eps y_2} \left( 1 - \hat H\left( \frac{1}{\rho} \right) \hat H(y_2) - \kappa_2 e^{r \rho \tilde \eps y_2} \right) , \\
            \rho' &= - \tilde \eps \rho^2 G\left(\frac{1}{\rho}, y_2, r \right) .
        \end{split}
    \end{equation}
    System \eqref{eq:mathfrak_K2_right_eqns} is slow-fast with $0 < \tilde \eps \ll 1$. The layer problem \eqref{eq:mathfrak_K2_right_eqns}$|_{\tilde \eps = 0}$ has a 2-dimensional critical manifold
    \[
    \mathcal S^+_{1} := \left\{ (r, y_2, \rho) \in \R^+ \times \R \times \R^+ : 1 - \hat H \left( \frac{1}{\rho} \right) \hat H(y_2) - \kappa_2 = 0 \right\} ,
    \]
    where we permit a slight abuse of notation by reusing the label $\mathcal S_1^+$. 
    The following result will be used in the proof of Proposition \ref{prop:node}.

\begin{lem}
    \label{lem:node_B1}
    Consider system \eqref{eq:mathfrak_K2_right_eqns} with $(\kappa_1, \kappa_2) \in \Lambda$. The 1-dimensional curve $\mathcal S_{1}^+|_{\rho = 0}$ is an attracting and normally hyperbolic critical manifold for the planar system which governs the flow in $\{\rho = 0\}$. The reduced flow on $\mathcal S_{1}^+|_{\rho = 0}$ has a unique attracting equilibrium $q_{n}$ which is $\mathcal O(\eps_1)$-close to the point $(\ln(\kappa_2 / \kappa_1), \ln((1 - \kappa_2)/\kappa_2), 0)$.
\end{lem}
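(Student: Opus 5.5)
The plan mirrors the proof of Lemma \ref{lem:B2_S_2^+}, now with $\tilde \eps$ as the singular perturbation parameter. The argument has three steps: identify the critical curve in $\{\rho=0\}$, check its normal hyperbolicity, and then compute and analyse the reduced flow on it.

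\textbf{Step 1 (critical curve and normal hyperbolicity).} Restricting to $\{\rho = 0\}$ uses $\hat H(1/\rho) \to 1$ as $\rho \to 0^+$, which holds since $\hat H$ extends smoothly (indeed flatly) to $\rho=0$. The layer equation then reduces to
\[
y_2' = \theta_2^{-1}\left(1 - \hat H(y_2) - \kappa_2\right).
\]
Its zero set is $y_2 = \ln((1-\kappa_2)/\kappa_2)$, independent of $r$. This curve is well defined because $\kappa_2 \in (0,1)$ on $\Lambda$. The nontrivial eigenvalue is
\[
-\theta_2^{-1}\hat H(y_2)(1-\hat H(y_2)) = -\theta_2^{-1}\kappa_2(1-\kappa_2) < 0,
\]
which gives normal hyperbolicity and attractivity. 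The same calculation shows that $\mathcal S_1^+$ is a graph $y_2 = h(r,\rho)$ near $\rho=0$, by the implicit function theorem.

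\textbf{Step 2 (reduced flow).} I will compute the reduced flow with respect to the slow time $\tau = \tilde\eps t$. Inside $\{\rho=0\}$ the slow equation $r' = \tilde\eps r\rho G$ degenerates, so, as in Lemma \ref{lem:B2_S_2^+}, I will not work in $\{\rho=0\}$ directly. Instead I will use the conserved quantity $\eps_1 = r\rho$ for $r>0$ to eliminate $\rho = \eps_1/r$. This leaves a planar slow-fast system in $(r,y_2)$ with small parameter $\tilde\eps$ and regular parameter $\eps_1$. Its reduced problem on $\mathcal S_1^+$ is $\dot r = \eps_1 G(r/\eps_1, y_2, r)|_{\mathcal S_1^+}$. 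Since $\hat H(r/\eps_1) \to 1$ and $\hat H(y_2) = 1-\kappa_2$ on $\mathcal S_1^+$, we get $G \to \theta_1^{-1}e^{-r}(\kappa_2 - \kappa_1 e^r)$ up to exponentially small terms for $r$ bounded away from zero. After rescaling time by $\eps_1$, the reduced flow is
\[
\dot r = \theta_1^{-1}e^{-r}(\kappa_2-\kappa_1e^r) + \mathcal O(\eps_1).
\]
This is where the factor $\eps_1$ (equivalently $\rho$) is divided out so that the flow is nontrivial in the limit.

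\textbf{Step 3 (equilibrium).} At $\eps_1=0$ the reduced flow has a unique zero at $r = \ln(\kappa_2/\kappa_1)$, which is positive since $\kappa_1<\kappa_2$. Its derivative there is $-\theta_1^{-1}\kappa_1 < 0$, so the zero is attracting. The implicit function theorem then gives a unique equilibrium $q_n$ that is $\mathcal O(\eps_1)$-close to $(\ln(\kappa_2/\kappa_1), \ln((1-\kappa_2)/\kappa_2), 0)$. The $\mathcal O(\eps_1)$ corrections come from the factors $e^{r\rho\tilde\eps y_2}$, whose exponent is $\eps_1\tilde\eps y_2$, and from the difference $\hat H(1/\rho)-1$.

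\textbf{Main obstacle.} The hardest step is justifying the reduced flow on $\mathcal S_1^+|_{\rho=0}$, since the slow vector field vanishes identically there. Uniqueness requires that no other equilibria appear near $r=0$ or for large $r$. For large $r$ the sign of $\kappa_2 - \kappa_1 e^r$ rules out extra zeros. Near the sphere I would invoke the center-manifold extension from Remark \ref{rem:center_manifold_connection}, which shows the flow there points away from the sphere along the relevant direction.
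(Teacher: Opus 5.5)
Your proposal is correct and follows the paper's proof essentially step for step: normal hyperbolicity is checked directly, and the conserved quantity $\eps_1 = r\rho$ (valid for $r>0$) reduces the problem to a planar slow-fast system in $\tilde\eps$, whose reduced flow $\dot r = \theta_1^{-1}e^{-r}(\kappa_2-\kappa_1e^{r})+\mathcal O(\eps_1)$ gives $q_n$ by the implicit function theorem. Your extra uniqueness discussion near the blow-up sphere goes beyond the paper, which simply restricts to $r$ bounded away from $0$; if you keep it, claim only that the flow points away from the sphere near the corner where $\mathcal S_1^+$ meets it, because $q_s$ and $q_{f/n}$ can lie on the extension $\mathcal S$ over the sphere itself.
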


\begin{proof}
    For clarity in this proof, we reinstate the subscripts on the small parameters by writing $\eps = \eps_1$ and $\tilde \eps = \tilde \eps_2$ as before. The fact that $\mathcal S_{1}^+|_{\rho = 0}$ is an attracting and normally hyperbolic critical manifold within $\{\rho = 0\}$ can be verified directly. In order to determine the reduced flow on $\mathcal S_{1}^+|_{\rho = 0}$, we use $\eps_1 = \rho r$ to eliminate $\rho$ (this is valid when bounded away from the blow-up sphere, i.e.~when $r > 0$), and combine this with the fact that $\tilde \eps_2 = \eps_2 / \eps_1$ in order to obtain a planar slow-fast system of the form
    \[
    \begin{split}
        r' &= \tilde \eps_2 G\left( \frac{r}{\eps_1}, y_2, r \right), \\
        y_2' &= \theta_2^{-1} e^{- \tilde \eps_2 y_2} \left( 1 - \hat H\left(\frac{r}{\eps_1}\right) \hat H(y_2) - \kappa_2 e^{\tilde \eps_2 y_2} \right) ,
    \end{split}
    \]
    where $0 < \eps_1, \tilde \eps_2 \ll 1$ and we have restricted to $r > 0$. For each fixed $(\kappa_1, \kappa_2) \in \Lambda$, the layer problem ($\tilde \eps_2=0$) associated with this system has a normally hyperbolic and attracting critical manifold for all $0 < \eps_1 \ll 1$ sufficiently small. The reduced problem satisfies
    \[
    \dot r = \theta_1^{-1} e^{-r} ( \kappa_2 - \kappa_1 e^r + \mathcal O(\eps_1) )
    \]
    as $\eps_1 \to 0$. A straightforward application of the implicit function theorem shows that there is a unique equilibrium $\mathcal O(\eps_1)$-close to the limiting value $(\ln(\kappa_2 / \kappa_1), \ln((1 - \kappa_2)/\kappa_2), 0)$.
\end{proof}

Lemmas \ref{lem:B1_pws} and \ref{lem:node_B1}, together with the preceding discussion, justify the part of the geometric blow-up construction, singular geometry and dynamics to the right of the blow-up sphere shown in Figure \ref{fig:B1_blow-up}.

\begin{rem}
    There are, of course, gaps in our description. In addition to the geometric and dynamic features identified above, one can identify a local center-stable manifold which has its base along the attracting critical manifolds at the intersection of the blow-up cylinders on top of the blow-up sphere and right-hand blow-up cylinder. There are also lines of resonant saddle equilibria along the intersections between the blow-up cylinders and the underlying blow-up sphere or cylinders (except at the point $\mathcal Q$, which is degenerate). These are (by now) somewhat standard features in blow-up analyses of this kind, and can be analysed using a suite of techniques that have been developed in e.g.~\cite{Krupa_2001_Extend,Kosiuk2009}; we refer also to the recent book \cite{Maesschalck_2021} and the references therein.
\end{rem}

\subsection{Dynamics in the scaling chart and the proof of Theorem \ref{thm:B1}} \label{sec:B1_bifurcation_analysis}

After applying the cylindrical blow-up in \eqref{eq:cylindrical_blow-up_B1}, we obtain the following system in the scaling chart $\bar{\tilde\eps} = 1$ after an appropriate desingularisation, expressed in local coordinates $(x, y) = (x, \tilde \eps y_2)$:
\begin{equation} \label{eq:B1_sphere_cylinder_scaling_chart}
    \begin{aligned} 
    x'&=\frac{\tilde \eps}{\theta_1 e^{\eps x}}\left( \Hat{H}(x)+ \Hat{H}(y_2) -2 \Hat{H}(x) \Hat{H}(y_2) -\kappa_1 e^{\eps x}\right), \\
    y_2'&=\frac{1}{\theta_2e^{\eps \tilde \eps y_2}}\left( 1-  \Hat{H}(x) \Hat{H}(y_2) -\kappa_2 e^{\eps \tilde \eps y_2}\right) ,
\end{aligned}
\end{equation}
where $0 < \eps, \tilde \eps \ll 1$. We note that system \eqref{eq:B1_sphere_cylinder_scaling_chart} is singularly perturbed (i.e.~slow-fast) with respect to $0 < \tilde \eps \ll 1$, and regularly perturbed by $0 < \eps \ll 1$. The layer problem is obtained by letting $\tilde \eps \to 0$, and given by
\[
\begin{split}
    x' &= 0 , \\
    y_2' &= \frac{1}{\theta_2}(1-\Hat{H}(x)\Hat{H}(y_2)-\kappa_2) ,
\end{split}
\]
which does not depend on $\eps$. A direct calculation shows that the critical manifold can be written as
$$\mathcal{S} := \{(x,\varphi_0(x,\kappa_2)) \in \R^2 : x \in (x^-, \infty) \},$$
where $x^- := \ln((1-\kappa_2) / \kappa_2)$, the function $\varphi_0(x,\kappa_2)$ is smooth and uniquely determined by
\[
\Hat{H}(x) \Hat{H}(\varphi_0(x,\kappa_2)) = 1 - \kappa_2  ,
\]
and the non-trivial eigenvalue associated with the linearisation along $\mathcal S$ is given by
$$\lambda=-(1-\kappa_2)\left(1-\frac{1-\kappa_2}{\Hat{H}(x)}\right).$$
Since $\hat H(x) \in (0,1)$ for all $x \in \R$ and $\kappa_2 < 1$ for all $(\kappa_1, \kappa_2) \in \Lambda$, $\lambda < 0$ and so $\mathcal S$ is normally hyperbolic and attracting. Fenichel theory therefore implies for any fixed compact interval $\mathcal J \subset (x^-, \infty)$, that the compact submanifold $\mathcal S \cap \{ x \in \mathcal J \}$ persists as a nearby attracting slow manifold
\[
\mathcal S_{\tilde \eps} = \left\{ (x, \varphi(x, \kappa_1, \kappa_2, \eps, \tilde \eps)) : x \in \mathcal J \right\} 
\]
where $\varphi(x, \kappa_1, \kappa_2, \eps, \tilde \eps ) = \varphi_0(x, \kappa_2) + \tilde \eps \varphi_{rem}(x, \kappa_1, \kappa_2, \eps, \tilde \eps)$, for a remainder function $\varphi_{rem}$ which is sufficiently smooth for our purposes; see e.g.~\cite{Fenichel_1979,Jones_1995,Wiggins2013} for details. Restricting system \eqref{eq:B1_sphere_cylinder_scaling_chart} to $\mathcal S_{\tilde \eps}$ leads to the following equation for the slow flow:
\begin{equation}
    \label{eq:B1_slow_flow}
    \Dot{x} = \frac{1}{\theta_1e^{\eps x}}\left( \Hat{H}(x) + (1-2 \Hat{H}(x)) 
\hat H(\varphi(x, \kappa_1, \kappa_2, \eps, \tilde \eps)) - \kappa_1 e^{\eps x} \right) .
\end{equation}
This problem is well-defined and regularly perturbed with respect to both $0 \leq \eps, \tilde \eps \ll 1$.

\begin{prop}
\label{prop:B1}
    Fix $(\kappa_1, \kappa_2) \in \Lambda$, a compact subinterval $\mathcal I \subset (0,1)$ and consider equation \eqref{eq:B1_slow_flow}. There exists $\eps_0, \tilde \eps_0 > 0$ and a $C^1$-function $\kappa_{1,sn}(\kappa_2,\eps, \tilde \eps) : \mathcal I \times [0,\eps_0) \times [0,\tilde \eps_0) \to \R$ such that there is a saddle-node bifurcation for $\kappa_1=\kappa_{1,sn}(\kappa_2,\eps,\tilde \eps)$. Moreover,
    \[
    \kappa_{1,sn}(\kappa_2,0,0) = 2\kappa_2-2+2\sqrt{1-\kappa_2} .
    \]
\end{prop}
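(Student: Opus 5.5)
The plan is to analyse the one-dimensional slow flow \eqref{eq:B1_slow_flow} first in the double limit $\eps = \tilde \eps = 0$, and to verify there the standard saddle-node conditions for a scalar ODE: a zero of the vector field, a vanishing first derivative, a nonvanishing second derivative, and nonvanishing transversality with respect to $\kappa_1$. The implicit function theorem will then give persistence for $0 \leq \eps, \tilde \eps \ll 1$.

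\textbf{Step 1: the limiting slow flow in algebraic coordinates.} At $\eps=\tilde\eps=0$ we have $\varphi = \varphi_0$, so $\hat H(\varphi_0(x,\kappa_2)) = (1-\kappa_2)/\hat H(x)$ by the defining relation of $\mathcal S$. As in the $B_2$ analysis, I pass to $X = \hat H(x)$. This is a smooth diffeomorphism of $(x^-,\infty)$ onto $(1-\kappa_2,1)$, and it satisfies $\dot X = X(1-X)\dot x$ with $X(1-X)>0$. Equilibria and their saddle-node character are therefore preserved, and the limiting slow flow becomes
\[
\dot X = \frac{X(1-X)}{\theta_1}\, g(X,\kappa_1,\kappa_2), \qquad g := X + \frac{1-\kappa_2}{X} - 2(1-\kappa_2) - \kappa_1 .
\]
Because the prefactor $X(1-X)/\theta_1$ is positive, it suffices to study zeros of $g$. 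The equation $g=0$ is equivalent to $X^2 - (2(1-\kappa_2)+\kappa_1)X + (1-\kappa_2) = 0$, which is consistent with \eqref{eq:eq_coords}. Its roots coalesce exactly when $\kappa_1 = 2\sqrt{1-\kappa_2} - 2(1-\kappa_2)$, and the double root is $X_{sn} = \sqrt{1-\kappa_2}$. This gives the claimed formula for $\kappa_{1,sn}(\kappa_2,0,0)$. I also check that $X_{sn} \in (1-\kappa_2,1)$, so the double root lies on $\mathcal S$ and not outside it.

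\textbf{Step 2: nondegeneracy at $\eps=\tilde\eps=0$.} At the double root the derivatives are
\[
g_X = 1 - \frac{1-\kappa_2}{X^2} = 0, \qquad g_{XX} = \frac{2(1-\kappa_2)}{X_{sn}^3} > 0, \qquad g_{\kappa_1} = -1 \neq 0 .
\]
These are the non-degeneracy and transversality conditions for a fold, as in \cite[Thm.~3.1]{Kuznetsov_1998}. Consequently, the map $(X,\kappa_1) \mapsto (g, g_X)$ has Jacobian determinant $g_{\kappa_1}\cdot(-g_{XX}) \cdot(\pm1) \neq 0$, that is, $g_{XX}\neq0$, at $(X_{sn}, \kappa_{1,sn}(\kappa_2,0,0))$.

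\textbf{Step 3: persistence.} Fix the compact interval $\mathcal I$. The fold points $x_{sn}(\kappa_2) = \hat H^{-1}(\sqrt{1-\kappa_2})$, for $\kappa_2 \in \mathcal I$, form a compact subset of $(x^-,\infty)$, where $x^-$ itself depends on $\kappa_2$ only continuously. I choose the compact interval $\mathcal J$ in the Fenichel argument so that it contains a neighbourhood of all of these points, with $\kappa_2$ ranging over a slightly enlarged compact set. The slow manifold $\mathcal S_{\tilde\eps}$, and hence the right-hand side of \eqref{eq:B1_slow_flow}, is then $C^1$ (indeed $C^k$) jointly in $(x,\kappa_1,\kappa_2,\eps,\tilde\eps)$, and is regularly perturbed from the limit above. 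In the $X$-coordinate the slow flow equals $\theta_1^{-1}X(1-X)\,[g + \mathcal O(\eps,\tilde\eps)]$, with a smooth remainder coming from $e^{\eps x}$ and $\tilde\eps\varphi_{rem}$. Applying the implicit function theorem to $(\hat g, \hat g_X) = 0$, for each $\kappa_2 \in \mathcal I$, yields $C^1$ functions $X_{sn}(\kappa_2,\eps,\tilde\eps)$ and $\kappa_{1,sn}(\kappa_2,\eps,\tilde\eps)$. A compactness argument over $\mathcal I$ then gives uniform $\eps_0$ and $\tilde\eps_0$. The inequalities $\hat g_{XX}\neq0$ and $\hat g_{\kappa_1}\neq0$ persist by continuity, so a saddle-node bifurcation occurs along this curve.

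The main obstacle I anticipate is the bookkeeping in Step 3: ensuring that the Fenichel slow manifold, whose domain $\mathcal J$ must be compact and bounded away from the degenerate endpoint $\mathcal Q$ (cf.\ Remark \ref{rem:Q_degeneracy}), has $C^2$ dependence on $(x,\kappa_1,\kappa_2,\eps)$ uniformly on the relevant parameter set. This is needed so that $\hat g_{XX}$ is defined and close to $g_{XX}$. I would handle it by invoking Fenichel's theorem for the system extended with the trivial parameter equations $\kappa_i'=0$ and $\eps'=0$. Because $\mathcal S$ is uniformly attracting ($\lambda<0$), this gives $C^r$ dependence on all parameters.
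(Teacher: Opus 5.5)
Your proposal is correct and follows essentially the same route as the paper. Both verify the fold conditions $F=F_x=0$, $F_{xx}\neq 0$ and $F_{\kappa_1}\neq 0$ for the limiting slow flow at $\eps=\tilde\eps=0$; the paper's maximal-rank condition on the derivatives of $(F,F_x)$ with respect to $(x,\kappa_1)$ is exactly your Jacobian condition. Both then continue the fold by the implicit function theorem and regular perturbation. The differences are cosmetic: working in $X=\hat H(x)$ makes the double root $X_{sn}=\sqrt{1-\kappa_2}$ (the paper's $\hat H(x_{sn})=1-\kappa_2+\kappa_1/2$) and the sign of the second derivative immediate, and you spell out the uniformity over $\mathcal I$ and the $C^2$ parameter dependence of $\mathcal S_{\tilde\eps}$, which the paper omits for brevity.
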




\begin{proof}
%
 %
   We denote the right-hand side of \eqref{eq:B1_slow_flow} by $F(x,\kappa_1,\kappa_2,\eps,\tilde \eps)$ and verify the defining conditions
    \begin{equation}
    \label{eq:SN_cond}
        F(x_{sn},\kappa_{1,sn}(\kappa_2,0,0),\kappa_2,0,0) = F_{x}(x_{sn},\kappa_{1,sn}(\kappa_2,0,0),\kappa_2,0,0) = 0 ,
    \end{equation}
    at a point $(x, \kappa_1, \kappa_2, \eps, \tilde \eps) = (x_{sn}, \kappa_{1,sn}(\kappa_2, 0, 0), \kappa_2, 0, 0)$ with $\kappa_2 \in (0,1)$, as well as the combined transversality/nondegeneracy condition that the matrix
    \begin{equation}
    \label{eq:SN_genericity}
    \begin{pmatrix}
        F_{x_2} & F_{\kappa_1} \\
        F_{x_2 x_2} & F_{x_2 \kappa_1} 
    \end{pmatrix} \bigg|_{(x_{sn}, \kappa_{1,sn}(\kappa_2, 0, 0), \kappa_2, 0, 0)}
    \end{equation}
    has maximal rank. Direct calculations show that \eqref{eq:SN_cond} is satisfied for the unique $x_{sn}$ which satisfies
    \[
    \Hat{H}(x_{sn}) = 1 - \kappa_2 + \frac{\kappa_1}{2}.
    \]
    In order to verify that the matrix \eqref{eq:SN_genericity} has maximal rank, it suffices to show that $F_{x x} F_{\kappa_1} (x_{sn}, \kappa_{1,sn}(\kappa_2, 0, 0), \kappa_2, 0, 0) \neq 0$ (since $F_{x} (x_{sn}, \kappa_{1,sn}(\kappa_2, 0, 0), \kappa_2, 0, 0) = 0$ due to \eqref{eq:SN_cond}). Direct calculations yield
    \[
    \begin{split}
        F_{x x}(x_{sn}, \kappa_{1,sn}(\kappa_2, 0, 0), \kappa_2, 0, 0) &= \frac{1}{\theta_1} \left( \kappa_2-\frac{\kappa_1}{2} \right)^2 \left( 1-\kappa_2+\frac{\kappa_1}{2}+\frac{1-\kappa_2}{1-\kappa_2+\frac{\kappa_1}{2}} \right) > 0, \\ 
        F_{\kappa_1} (x_{sn}, \kappa_{1,sn}(\kappa_2, 0, 0), \kappa_2, 0, 0) &= -\frac{1}{\theta_1} < 0,  
    \end{split}
    \]
    where the former inequality holds for all $(\kappa_1, \kappa_2) \in \Lambda$, and therefore also along $\kappa_1 = \kappa_{1,sn}(\kappa_2,0,0)$ for all $\kappa_2 \in (0,1)$.
    
    The preceding arguments show that the reduced problem \eqref{eq:B1_slow_flow}$|_{\eps = \tilde \eps = 0}$ has a regular saddle-node bifurcation when $\kappa_1 = \kappa_{1,sn}(\kappa_2,0,0)$, for all $\kappa_2 \in (0,1)$. The persistence of the saddle-node bifurcation for $0 \leq \eps, \tilde \eps \ll 1$ follows from the implicit function theorem and regular perturbation theory. The details are similar to those presented in the proofs of Propositions \ref{prop:B2} and \ref{prop:B2_SN_Hopf} above, so we omit them here for brevity.
\end{proof}

We are now in a position to prove Theorem \ref{thm:B1}.


\begin{proof}[Proof of Theorem \ref{thm:B1}]
   Theorem \ref{thm:B1} follows directly from Proposition \ref{prop:B1} after composing the blow-down maps associated with spherical and cylindrical blow-up transformations that were applied in order to obtain system \eqref{eq:B1_sphere_cylinder_scaling_chart}. The blow-down forces the restriction to $\eps = \eps_1 > 0$ and $\tilde \eps = \tilde \eps_2 > 0$ in Theorem \ref{thm:B1}.  
\end{proof}

We conclude this section with some brief observations that are relevant for understanding the global dynamics. In particular, we are interested in a connection from the endpoint of $\mathcal{S}_2^+$ to the saddle equilibrium $q_s$.
This is easiest to view in chart $\bar y=1$ $(y=\eta_1,\, \Tilde{\eps}=\eta_1 \hat \eps)$ of the cylindrical blow-up \eqref{eq:cylindrical_blow-up_B1} applied to \eqref{eq:B2_eqns_again}, where the dynamics is governed by
\begin{equation} \label{eq:B1_sphere_cylinder_upper_chart}
    \begin{aligned} 
    x_2'&=\frac{\eta_1}{\theta_1e^{\eps x_2}}\left( \Hat{H}(x_2)+ \Hat{H}(\frac{1}{\hat \eps}) -2 \Hat{H}(x_2) \Hat{H}(\frac{1}{\hat \eps}) -\kappa_1 e^{\eps x_2}\right)\\
    \Hat{\eps}'&=-\frac{\hat \eps}{\theta_2e^{\eps \eta_1 }}\left( 1-  \Hat{H}(x_2) \Hat{H}(\frac{1}{\hat \eps}) -\kappa_2 e^{\eps \eta_1}\right)\\
    \eta_1'&=\frac{\eta_1}{\theta_2e^{\eps \eta_1 }}\left( 1-  \Hat{H}(x_2) \Hat{H}(\frac{1}{\hat \eps}) -\kappa_2 e^{\eps \eta_1}\right).
\end{aligned}
\end{equation}
In terms of $X:=\Hat{H}(x_2)$ and $Y:=\Hat{H}(\frac{1}{\Hat{\eps}})$, the `extension' of $\mathcal S_2^+$ from the blow-up cylinder onto the sphere is found to be contained within the invariant line defined by $X = 1 - \kappa_1$, and $\tilde \eps = 0$; see Figure \ref{fig:B1_heteroclinic}. 
%


\begin{figure}[t!]
    \centering
    \begin{subfigure}[t]{0.3\textwidth}
        \centering
            \includegraphics[width=1\linewidth]{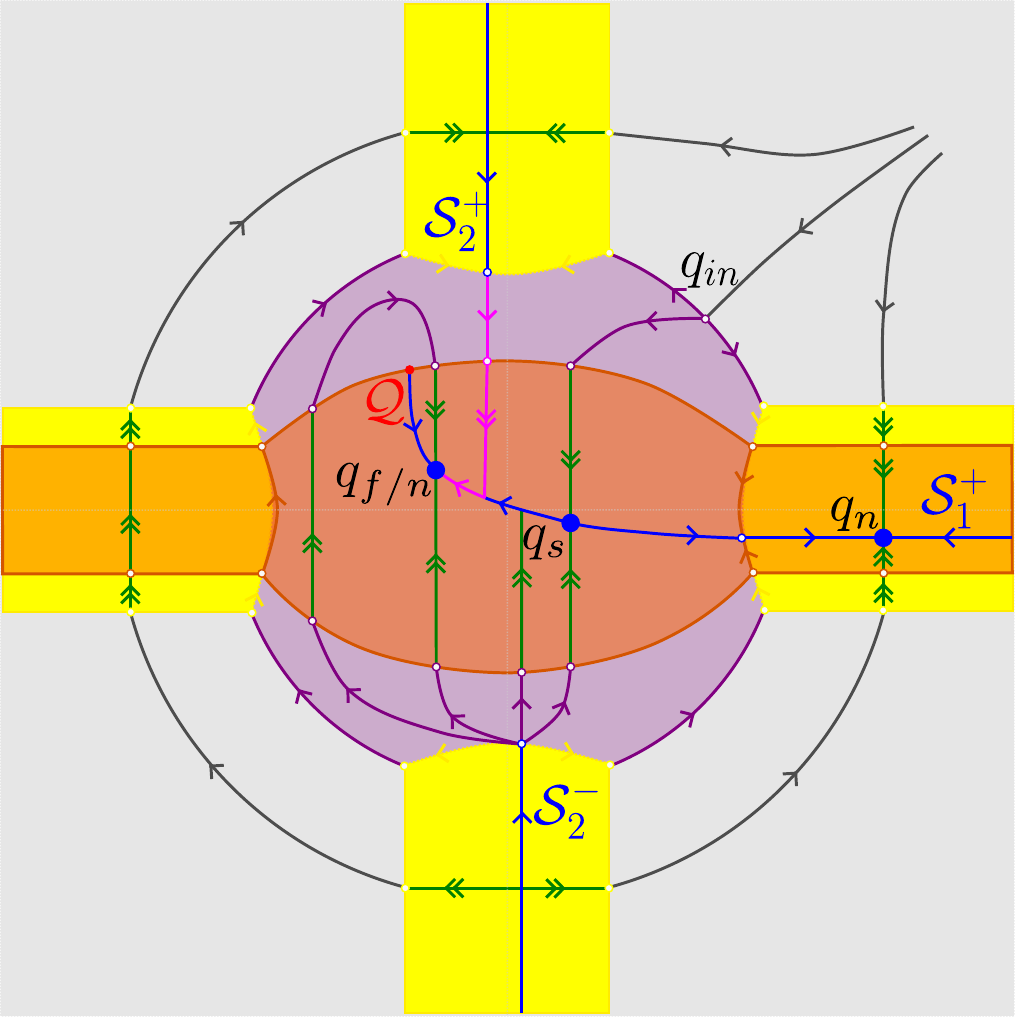}
    \caption{Region II.}
    \label{fig:B1_II}
    \end{subfigure}%
    \hfill
    \begin{subfigure}[t]{0.3\textwidth}
        \centering
            \includegraphics[width=1\linewidth]{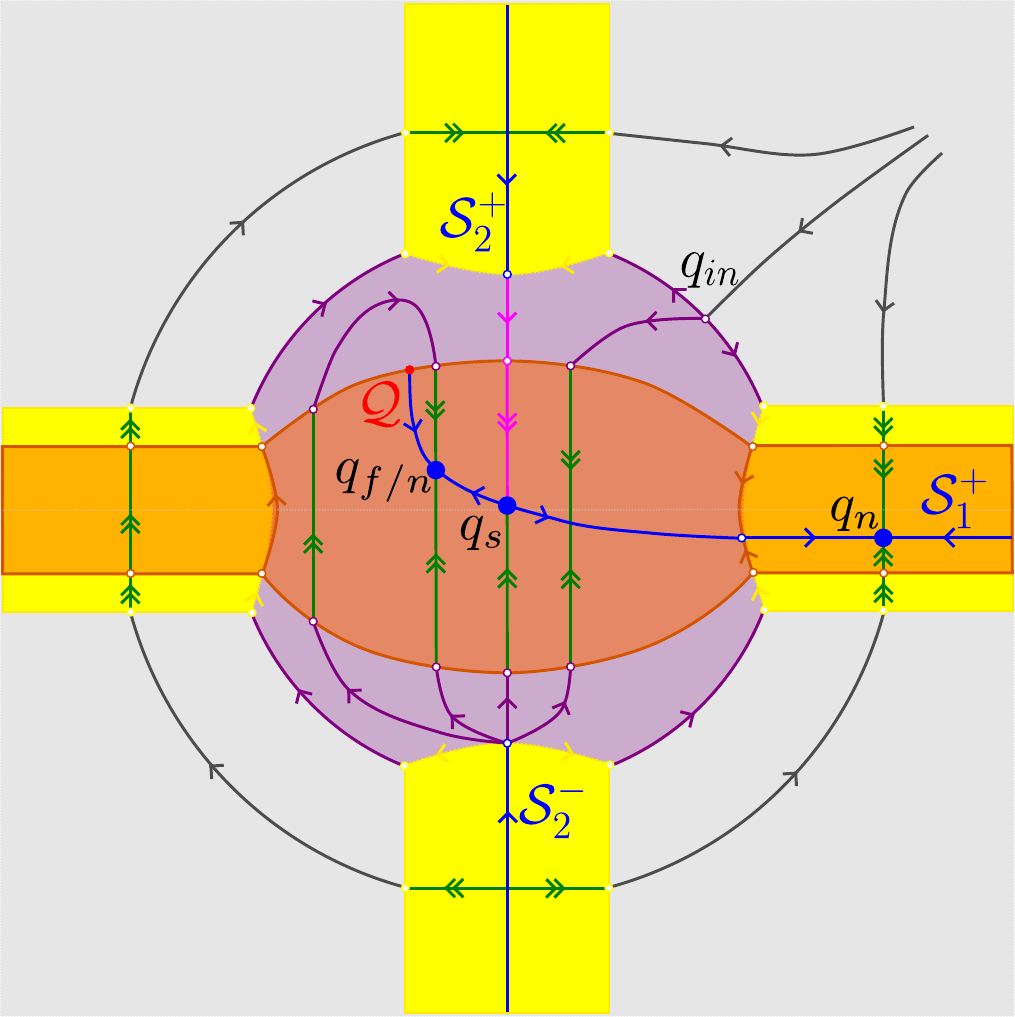}
    \caption{Border between region II and III in Figure \ref{fig:B1_bifurcation_diagram}, i.e., $\kappa_1=1/2$ and $\kappa_2 \in (3/4,1)$.}
    \label{fig:B1_II_III}
    \end{subfigure}
    \hfill
    \begin{subfigure}[t]{0.3\textwidth}
        \centering
        \includegraphics[width=1\linewidth]{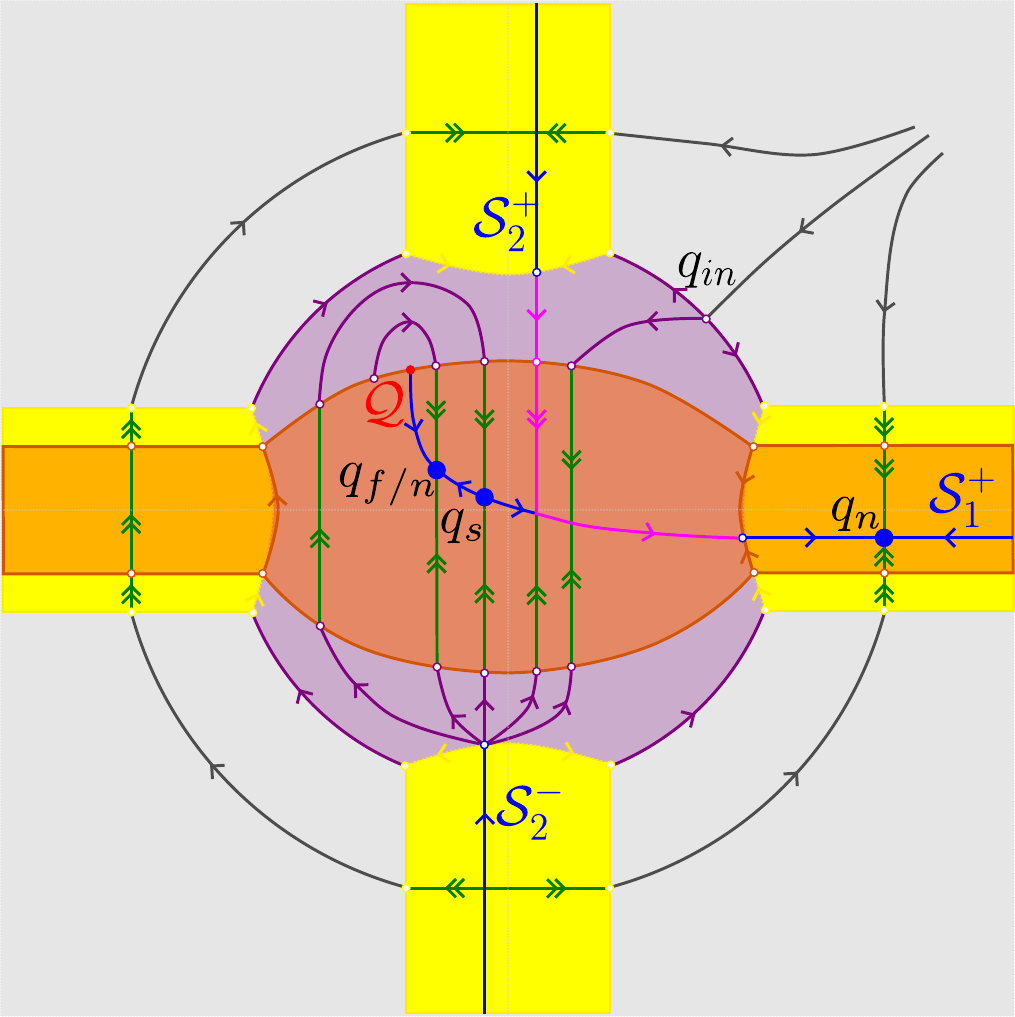}
    \caption{Region III.}
    \label{fig:B1_III}
    \end{subfigure}
    \caption{Selected phase portraits of \eqref{eq:B1_sphere_cylinder_scaling_chart} emphasizing the importance for the global dynamics of the transition between region II and III in the $B_1$ bifurcation diagram (vertical dotted black line  in Figure \ref{fig:B1_bifurcation_diagram}), as indicated in Remark \ref{rem:main_global_dynamics}. 
    In (a): Phase portrait in region II. The stable manifold of $q_s$, denoted $w^s(q_s)$, acts as a separatrix and dissects the state space. Solutions with initial conditions to the right of $w^s(q_s)$ are (on the singular level) attracted to $\mathcal{S}_1^+$ and follow the slow flow towards $q_n$. Solutions with initial conditions to the left of $w^s(q_s)$ are attracted to $\mathcal{S}_2^+$ and follow its extension on the sphere (magenta) towards $q_{f/n}$. 
    In (b): The critical case on the border line between region II and III where a heteroclinic connection (magenta) between the (singular) extension of $\mathcal{S}_2^+$ and $q_s$ exists (in fact $\mathcal{S}_2^+$ and its extension is part of the stable manifold $w^s(q_s)$).
    In (c): Phase portrait in region III. The extension (magenta) of $\mathcal{S}_2^+$ on the sphere (and therefore all solutions attracted by $\mathcal{S}_2^+$) converge to $q_n$. The basin of attraction of $q_{f/n}$ shrinks to the region enclosed by $w^s(q_s)$ which emanates from $\mathcal{S}_2^-$.}
    \label{fig:B1_heteroclinic}
\end{figure}

The coordinates of the two equilibria on the critical manifold $\mathcal{S}$ are given by \eqref{eq:eq_coords}. In particular, there is a heteroclinic connection between the endpoint of $\mathcal{S}_2^+$ and $q_s$ ($q_{f/n}$) when $1-\kappa_1=X^+$ ($1-\kappa_1=X^-$). Solving these two expressions leads to 
$$\kappa_2-\frac{3\kappa_1}{2}=\pm \sqrt{(1-\kappa_2+\frac{\kappa_1}{2})^2+\kappa_2-1}$$
with solutions given by 
$$\kappa_1=\kappa_2,\quad \kappa_1=\frac{1}{2}.$$
The former is part of the boundary of $\Lambda$ and will therefore be ignored. The latter is feasible if $\kappa_2-\frac{3\kappa_1}{2}>0$ ($\kappa_2-\frac{3\kappa_1}{2}<0$) which for $\kappa_1=\frac{1}{2}$ reduces to $\kappa_2>\frac{3}{4}$ ($\kappa_2<\frac{3}{4}$). These calculations justify the observations in Remark \ref{rem:heteroclinic} which pertain to the boundary between regions II and III in the bifurcation diagram shown in Figure \ref{fig:B1_bifurcation_diagram}. In particular, the basins of attraction associated with $q_n$ and $q_{f/n}$ are expected to change significantly as $\kappa_1$ is increased over a neighbourhood of $\kappa_1 = 1/2$ when $\kappa_2 \in (3/4, 1)$, see Figure \ref{fig:B1_heteroclinic}.

\section{\texorpdfstring{Region $B_3$}{Region B3}} \label{sec:Region_B3}
It remains to consider the region $B_3$. In order to do so, we work with system \eqref{eq:fund_prob_P2}, i.e.~in the parameter chart $\mathcal P_2$, and consider both $0 <  \tilde \eps_1, \eps_2 \ll 1$ as small parameters. We start by considering the geometry and dynamics away from $I = \Sigma_1 \cap \Sigma_2$, the (smoothly perturbed) PWS dynamics near $I$, and the extension of locally invariant manifolds within the switching layers associated with $\Sigma_2^\pm$ into a neighbourhood about $I$. Following this, further geometric blow-ups in both parameter and variable space will be applied in order to prove Theorems \ref{thm:B3_BT} and \ref{thm:B3}.



\subsection{Geometric blow-up, regularised PWS dynamics and invariant manifolds}

As with the analysis in $B_1$, we can build upon the $B_2$ analysis presented in Section \ref{sec:Region_B2}. We saw in Section \ref{sub:B2_geometry} that by starting with the extended system \eqref{eq:P1_extended} and applying a sequence of blow-up transformations which consisted of (i) a spherical blow-up of the intersection point $\Sigma_1 \cap \Sigma_2 \cap \{0\}$, followed by (ii) cylindrical blow-ups along $\Sigma_1^\pm \times \{0\}$ and $\Sigma_2^\pm \times \{0\}$, 
the loss of smoothness could be resolved for any fixed $\tilde \eps_2 > 0$. An analogous result can be proven for any fixed $\tilde \eps_1 > 0$ in parameter chart $\mathcal P_1$, after applying the same sequence of blow-ups to the extended system obtained by appending $\dot \eps_2 = 0$ to system \eqref{eq:fund_prob_P2}. 
As $\tilde \eps_1 \to 0$, however, there is an additional loss of smoothness along the vertical axes in chart $\mathcal K_2$ and the `birds eye charts' associated with the cylindrical blow-ups of $\Sigma_2^\pm$. This is reminiscent of, but distinct from, the loss of smoothness along horizontal (as opposed to vertical) axes when we let $\tilde \eps_2 \to 0$ in the $B_1$ analysis. Naturally, at least three more cylindrical blow-ups will be necessary to resolve this.

As noted above, we start with the extended system obtained by appending $\dot \eps_2 = 0$ to system \eqref{eq:fund_prob_P2}, i.e.~
\begin{equation}
\label{eq:fund_extended_P2}
    \begin{split}
    \Dot{x}&=H(x,\tilde \eps_1 \eps_2,\theta_1)+H(y,\eps_2,\theta_2)-2 H(x,\tilde \eps_1 \eps_2,\theta_1) H(y,\eps_2,\theta_2)-\kappa_1 \theta_1^{-1} x , \\
        \Dot{y}&=1-H(x,\tilde \eps_1 \eps_2,\theta_1) H(y,\eps_2,\theta_2)- \kappa_2 \theta_2^{-1} y , \\
        \dot \eps_2 &= 0 ,
    \end{split}
\end{equation}
and apply a spherical blow-up which is analogous to \eqref{eq:sphere_exponential_blow-up}, namely
\begin{equation}
    \label{eq:spherical_blow-up_B3}
    r \geq 0, \  (\bar x, \bar y, \bar \eps_2) \in \mathbb S^2 \mapsto
    \begin{cases}
        x = \theta_1e^{r \Bar{x}} , \\
        y = \theta_2e^{r \Bar{y}} , \\
        \eps_2 = r \Bar{\eps}_2 .
    \end{cases}
\end{equation}
We permit a slight abuse of notation by reusing our notation for the directional charts as follows:
\[
\mathcal K_1^\pm : \bar x = \pm 1, \qquad 
\mathcal K_2 : \bar \eps_2 = 1, \qquad 
\mathcal K_3^\pm : \bar y = \pm 1.
\]
We will mostly be interested in the geometry and dynamics in the scaling chart $\mathcal K_2$. Before moving onto $\mathcal K_2$, however, we shall prove a lemma that will allow us to complete the proof of Proposition \ref{prop:node}. We state it for the following system in chart $\mathfrak K_2$, which is obtained after blowing up the degenerate line $\Sigma_1^+$ in chart $\mathcal K_1^+$ ($\bar x = 1$) via a blow-up analogous to \eqref{eq:B2_Sigma_2^+_blow-up_transformation} (except with $\eps_2 = r_1 \eps_{21}$ in $\mathcal K_1^+$ and $\eps_{21} = \rho \bar \eps_{21}$), and considering the resulting (desingularised) equations in the $\bar \eps_{21} = 1$ chart:
\begin{equation}
\label{eq:K12_system}
    \begin{split}
        r_1' &= r_1 \rho_2 G\left( \frac{1}{\rho_2 \tilde \eps_1}, y_{12}, r_1 \right) , \\
        y_{12}' &= \theta_2^{-1} e^{-r_1 \rho_2 y_{12}} \left( 1 - \hat H\left( \frac{1}{\rho_2 \tilde \eps_1} \right) \hat H(y_{12}) - \kappa_2 e^{r_1 \rho_2 y_{12}} \right) , \\
        \rho_2' &= - \rho_2^2 G\left( \frac{1}{\rho_2 \tilde \eps_1}, y_{12}, r_1 \right)  .
\end{split}
\end{equation}

\begin{lem}
\label{lem:q_node_B3}
    Fix $(\kappa_1, \kappa_2) \in \Lambda$ and consider system \eqref{eq:K12_system}. The following assertions are true:
    \begin{enumerate}
        \item[(i)] The line
        \[
        \mathcal{S}_1^+ := \left\{ \left(r_1, \ln \frac{1-\kappa_2}{\kappa_2}, 0 \right) : r_1 \geq 0 \right\} 
        \]
        defines a critical manifold which is normally hyperbolic and attracting when considered as a critical manifold for the planar system which governs the flow in $\{ \rho_2 = 0 \}$.
        \item[(ii)] The reduced flow on $\mathcal S_1^+$ has a unique, asymptotically stable equilibrium $q_n : ( \ln (\kappa_2 / \kappa_1 ), \ln((1-\kappa_2)/\kappa_2), 0)$.
    \end{enumerate}
\end{lem}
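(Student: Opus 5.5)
This lemma is the $B_3$ analogue of Lemma \ref{lem:B2_S_2^+}, and I will follow that proof closely. The one new feature is that the first Hill function now enters as $\hat H(1/(\rho_2\tilde\eps_1))$ instead of $\hat H(1/\rho_2)$.

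\textbf{Assertion (i).} First I restrict system \eqref{eq:K12_system} to the invariant plane $\{\rho_2 = 0\}$. For fixed $\tilde\eps_1>0$ we have $1/(\rho_2\tilde\eps_1)\to+\infty$ as $\rho_2\to0^+$, so $\hat H(1/(\rho_2\tilde\eps_1))\to 1$. (This limit is also uniform in $\tilde\eps_1\to0^+$, which helps later.) On $\{\rho_2=0\}$ the system therefore reduces to
\[
r_1' = 0, \qquad y_{12}' = \theta_2^{-1}\bigl(1-\hat H(y_{12})-\kappa_2\bigr).
\]
Its equilibria are the points with $\hat H(y_{12}) = 1-\kappa_2$, that is, $y_{12} = \ln((1-\kappa_2)/\kappa_2)$. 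This is well defined since $\kappa_2\in(0,1)$ on $\Lambda$, and it gives the line $\mathcal S_1^+$. The nontrivial eigenvalue is
\[
-\theta_2^{-1}\hat H(1-\hat H)\big|_{\mathcal S_1^+} = -\theta_2^{-1}\kappa_2(1-\kappa_2) < 0,
\]
so $\mathcal S_1^+$ is normally hyperbolic and attracting.

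One technical point needs a remark. The map $\rho_2 \mapsto \hat H(1/(\rho_2\tilde\eps_1))$ extends smoothly to $\rho_2=0$: it equals $1$ there, and it is flat, since $1-\hat H(s)=\mathcal O(e^{-s})$. This makes the extended system $C^\infty$ in a neighbourhood of $\{\rho_2=0\}$, and standard slow-fast theory then applies.

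\textbf{Assertion (ii).} To get the reduced flow, I use the conserved quantity $\eps_2 = r_1\rho_2$, valid for $r_1>0$, to eliminate $\rho_2$. This gives a planar system in which $r_1' = \eps_2\,G(r_1/\eps_1, y_{12}, r_1)$, where $\eps_1=\tilde\eps_1\eps_2$. The $y_{12}$-equation is fast, and it becomes the layer problem above once $\hat H(r_1/\eps_1)\to1$. Restricting $G$ to $\mathcal S_1^+$ with $\hat H(\cdot)=1$ and $\hat H(y_{12})=1-\kappa_2$ yields
\[
\theta_1^{-1}e^{-r_1}\bigl(1+(1-\kappa_2)-2(1-\kappa_2)-\kappa_1e^{r_1}\bigr) = \theta_1^{-1}e^{-r_1}\bigl(\kappa_2-\kappa_1e^{r_1}\bigr).
\]
The reduced equation is then $\dot r_1 = \theta_1^{-1}e^{-r_1}(\kappa_2-\kappa_1e^{r_1})$, which has a unique zero at $r_1=\ln(\kappa_2/\kappa_1)>0$ because $\kappa_1<\kappa_2$. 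The derivative of the right-hand side there is $-\theta_1^{-1}\kappa_1<0$, so this zero is hyperbolic and asymptotically stable. That gives $q_n$.

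\textbf{Main obstacle.} The main difficulty is justifying the reduced flow uniformly. The elimination of $\rho_2$ is only valid away from the sphere, and the reduced flow has to be taken in the correct ordering with respect to $\tilde\eps_1$. I expect this to be manageable: since $q_n$ lies at $r_1>0$, bounded away from $\{r_1=0\}$, and $\hat H(r_1/(\tilde\eps_1\eps_2)) = 1+\mathcal O(e^{-r_1/(\tilde\eps_1\eps_2)})$, the corrections are exponentially small on compact $r_1$-sets. They therefore do not affect either hyperbolicity or the leading-order location of $q_n$.
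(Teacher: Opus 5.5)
Your proposal is correct and follows essentially the same route as the paper. Assertion (i) comes from a direct computation of the layer problem on $\{\rho_2=0\}$, and assertion (ii) from eliminating $\rho_2$ via the conserved quantity $\eps_2=r_1\rho_2$ for $r_1>0$ and reading off the reduced flow $\dot r_1=\theta_1^{-1}e^{-r_1}(\kappa_2-\kappa_1e^{r_1})$. Your remarks on the flatness of $\hat H(1/(\rho_2\tilde\eps_1))$ at $\rho_2=0$ and on the hyperbolicity of the zero (derivative $-\kappa_1/\theta_1$) fill in details the paper leaves implicit.
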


\begin{proof}
    The proof is similar to the proofs that we presented for Lemmas \ref{lem:B2_S_2^+} and \ref{lem:node_B1}, so we present fewer details. Assertion (i) follows from a direct calculation. Note that the computations to check assertion (i) are done in chart $\mathcal{K}_1^+$ before the secondary blow-up of the degenerate line $\Sigma_1^+$. 
    In order to prove Assertion (ii) we notice that $\eps_2 = r_1 \eps_{21} = r_1 \rho_2 \ll 1$ is a conserved quantity. This allows us to reduce to a planar slow-fast system with $0 < \eps_2 \ll 1$ when $r_1 > 0$. This system has a normally hyperbolic and attracting critical manifold $\{ (r_1, y_{12}) \in \R^+ \times \R : \hat H(y_{12}) = 1-\kappa_2 \}$, and the reduced flow on this set is governed by
    \[
    \dot r_1 = \theta_1^{-1} e^{-r_1} ( \kappa_2 - \kappa_1 e^{r_1} ) .
    \]
    Assertion (ii) follows from the fact that this equation has a unique, stable equilibrium at $r_1 = \ln (\kappa_2 / \kappa_1) > 0$ (note that $\kappa_2 > \kappa_1$ for all $(\kappa_1, \kappa_2) \in \Lambda$).
\end{proof}

Lemma \ref{lem:q_node_B3} justifies the geometry and dynamics on the right-hand side of the blow-up sphere, and on the blow-up cylinder to the right in Figure \ref{fig:B3_PWS}. Moreover, Assertion (ii) is the final ingredient in the proof of Proposition \ref{prop:node}: 

\begin{proof}[Proof of Proposition \ref{prop:node}]
    Lemmas \ref{lem:B2_S_2^+}, \ref{lem:node_B1}, and \ref{lem:q_node_B3} prove the existence of a stable node in the singular limit after blow-up in $B_2$, $B_1$ and $B_3$ respectively, and Fenichel theory ensures the persistence of this equilibrium for sufficiently small $\eps_1$ (in the case of Lemma \ref{lem:node_B1}) and $\eps_2$ (in the case of Lemmas \ref{lem:B2_S_2^+} and \ref{lem:q_node_B3}). Fixing $\beta_1$ and $\beta_2$ sufficiently small and large respectively ensures that the regions $B_i$ overlap, and blowing down yields the result.
\end{proof}

We turn now to the dynamics on the blow-up sphere. After a suitable rescaling of time, the equations in the scaling chart $\mathcal K_2$ associated with the spherical blow-up \eqref{eq:spherical_blow-up_B3} applied to \eqref{eq:fund_extended_P2} are given by
\begin{equation}\label{eq:B3_spherical_scaling_chart}
    \begin{aligned} 
    x_2'& = \frac{1}{\theta_1e^{\eps_2 x_2}}\left( \Hat{H}\left(\frac{x_2}{\Tilde{\eps}_1}\right)+ \Hat{H}(y_2) -2 \Hat{H}\left(\frac{x_2}{\Tilde{\eps}_1}\right) \Hat{H}(y_2) -\kappa_1 e^{\eps_2x_2}\right), \\
    y_2'&=\frac{1}{\theta_2e^{\eps_2 y_2}}\left( 1 - \Hat{H}\left(\frac{x_2}{\Tilde{\eps}_1}\right) \Hat{H}(y_2) -\kappa_2 e^{\eps_2y_2}\right) ,
\end{aligned}
\end{equation}
where $0 \leq \eps_2 \ll 1$ and $0 < \tilde \eps_1 \ll 1$. Due to \eqref{eq:Hill_function}, this system has a PWS singular limit when $\tilde \eps_1 \to 0$. We permit a slight abuse of notation by re-using the $Z^\pm$ notation used in the $B_1$ analysis in order to write this system as
\begin{equation} \label{eq:B3_PWS_limit}
    \begin{aligned} 
    z'&=\begin{pmatrix}
        x_2' \\
        y_2'
    \end{pmatrix} =
    \begin{cases}
        Z^+(x_2,y_2,\eps_2), & x_2>0, \\ 
        Z^-(x_2,y_2,\eps_2), & x_2<0,  
    \end{cases}\\
\end{aligned}
\end{equation}
where
\begin{equation} \label{eq:B3_Z+_Z-}
    \begin{aligned} 
        Z^+(x_2,y_2,\eps_2) :=
        \begin{pmatrix}
        \frac{1}{\theta_1e^{\eps_2 x_2}} (1 - \Hat{H}(y_2)  -\kappa_1 e^{\eps_2 x_2})\\
        \frac{1}{\theta_2e^{\eps_2 y_2}}( 1-  \Hat{H}(y_2)  -\kappa_2 e^{\eps_2 y_2})\\
        \end{pmatrix}, \qquad 
        Z^-(x_2,y_2,\eps_2) :=
        \begin{pmatrix}
        \frac{1}{\theta_1e^{\eps_2 x_2}}(  \Hat{H}(y_2)  -\kappa_1 e^{\eps_2 x_2}) \\
        \frac{1}{\theta_2e^{\eps_2 y_2}}( 1  -\kappa_2 e^{\eps_2 y_2}) \\
        \end{pmatrix},
    \end{aligned}
\end{equation}
and the switching manifold is
\[
\Sigma_{2}^1 := \{(0,y_2) : y_2 \in \R\}.
\]
Among other things, the next result shows that system \eqref{eq:B3_spherical_scaling_chart} features a \textit{regularised visible-invisible two-fold}.

\begin{lem} \label{lem:B3_PWS_sphere}
    Consider the limiting PWS system \eqref{eq:B3_PWS_limit} with $(\kappa_1, \kappa_2) \in \Lambda$. There exists an $\eps_{2,0} > 0$ such that the following assertions are true:
    \begin{enumerate}
        \item[(i)] The half-line $\{ (x_2, \ln \frac{1-\kappa_2}{\kappa_2} ) : x_2 > 0 \}$ is invariant under the flow induced by $Z^+|_{\eps_2 = 0}$, and $x_2' > 0$ along it;
        \item[(ii)] If $\kappa_1 \neq \frac{1}{2}$, then for all $\eps_2 \in [0,\eps_{2,0})$, there is a visible fold point $F_V : (0, \ln \frac{1-\kappa_1}{\kappa_1} ) \in \Sigma_2^1$, with tangency between $Z^+$ and $\Sigma_2^1$;
        \item[(iii)] If $\kappa_1 \neq \frac{1}{2}$, then for all $\eps_2 \in [0,\eps_{2,0})$, there is an invisible fold point $F_I : (0, \ln \frac{\kappa_1}{1-\kappa_1}) \in \Sigma_2^1$, with tangency between $Z^-$ and $\Sigma_2^1$;
        \item[(iv)] For $\kappa_1 = \frac{1}{2}$ and for all $\eps_2 \in [0,\eps_{2,0})$, there is a visible-invisible two-fold at $F_V = F_I : (0,0) \in \Sigma_2^1$.
    \end{enumerate}
\end{lem}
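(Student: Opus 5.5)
The plan is to mirror the proof of Lemma \ref{lem:B1_pws}, with the roles of $x_2$ and $y_2$ exchanged, and to verify each assertion through Lie derivatives of $f(z)=x_2$ along $Z^\pm$.

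For Assertion (i), note that $Z^+|_{\eps_2=0}$ does not depend on $x_2$. Its $y_2$-component $\theta_2^{-1}(1-\hat H(y_2)-\kappa_2)$ vanishes exactly when $\hat H(y_2)=1-\kappa_2$, i.e.~when $y_2=\ln\frac{1-\kappa_2}{\kappa_2}$, so this half-line is invariant. Along it, $x_2'=\theta_1^{-1}(\kappa_2-\kappa_1)>0$, because $(\kappa_1,\kappa_2)\in\Lambda$.

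For Assertions (ii)--(iii), I will compute
\[
Z^+f=\theta_1^{-1}e^{-\eps_2x_2}\bigl(1-\hat H(y_2)-\kappa_1e^{\eps_2x_2}\bigr), \qquad Z^-f=\theta_1^{-1}e^{-\eps_2x_2}\bigl(\hat H(y_2)-\kappa_1e^{\eps_2x_2}\bigr).
\]
On $\Sigma_2^1$ we have $x_2=0$, so $e^{\eps_2x_2}=1$ there for every $\eps_2$. Consequently $Z^+f=0$ exactly at $\hat H(y_2)=1-\kappa_1$, i.e.~at $y_2=\ln\frac{1-\kappa_1}{\kappa_1}$, and $Z^-f=0$ exactly at $y_2=\ln\frac{\kappa_1}{1-\kappa_1}$. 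These locations hold for all $\eps_2$.

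The fold conditions follow next. At $F_V$, I need $Z^+(Z^+f)>0$ (visible) and $Z^-f\neq0$. Computing at $\eps_2=0$ gives
\[
Z^+(Z^+f)=\theta_1^{-1}\bigl(-\hat H'(y_2)\bigr)\,\theta_2^{-1}\bigl(1-\hat H(y_2)-\kappa_2\bigr).
\]
At $F_V$ this equals $-\theta_1^{-1}\theta_2^{-1}\kappa_1(1-\kappa_1)(\kappa_1-\kappa_2)>0$, since $\kappa_1<\kappa_2$. Also $Z^-f(F_V)=\theta_1^{-1}(1-2\kappa_1)\neq0$ precisely when $\kappa_1\neq\frac12$. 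For the invisible fold $F_I$, I compute
\[
Z^-(Z^-f)=\theta_1^{-1}\hat H'(y_2)\,\theta_2^{-1}(1-\kappa_2)>0.
\]
Here I must check the sign convention carefully: for $Z^-$, which lives on $x_2<0$, invisibility means that the orbit curves back into $x_2>0$, so the criterion is $Z^-(Z^-f)>0$. Indeed, with $x_2$ tangent and accelerating towards positive values, the orbit stays in $x_2\ge0$ locally, away from the region $\{x_2<0\}$ where $Z^-$ acts. The transversality condition is $Z^+f(F_I)=\theta_1^{-1}(1-2\kappa_1)\neq0$ when $\kappa_1\neq\frac12$.

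All of these quantities are nonzero at $\eps_2=0$ and depend smoothly on $\eps_2$, so they keep their signs for $\eps_2\in[0,\eps_{2,0})$, exactly as in Lemma \ref{lem:B1_pws}. The fold points themselves do not move, because $x_2=0$ on $\Sigma_2^1$.

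For Assertion (iv), setting $\kappa_1=\frac12$ makes both fold locations coincide at $y_2=0$. The second-derivative conditions computed above still hold there: $Z^+(Z^+f)(0)=\theta_1^{-1}\theta_2^{-1}\frac14(\kappa_2-\frac12)$, which is positive because $\kappa_2>\kappa_1=\frac12$, and $Z^-(Z^-f)(0)=\frac14\theta_1^{-1}\theta_2^{-1}(1-\kappa_2)>0$. This gives a visible-invisible two-fold.

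The main obstacle is keeping the visible/invisible sign conventions consistent across the two sides of the switching manifold. One point is easy to overlook: for (ii) one needs $\kappa_1<\kappa_2$, while for (iv) one needs $\kappa_2>\frac12$. Both follow from $\Lambda$ only because $\kappa_1=\frac12$ forces $\kappa_2>\frac12$.
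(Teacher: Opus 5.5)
Your proposal is correct and follows the same route as the paper. You locate $F_V$ and $F_I$ from $Z^\pm f=0$ with $f=x_2$, and classify them by the signs of $Z^+(Z^+f)(F_V)$ and $Z^-(Z^-f)(F_I)$, computed exactly as in the paper. The remaining differences are minor:
\begin{itemize}
\item You additionally check that $Z^\mp f=\theta_1^{-1}(1-2\kappa_1)\neq 0$ at each fold. This is where the hypothesis $\kappa_1\neq\frac12$ actually enters; the paper leaves that implicit.
\item For (iv), the paper also verifies the unfolding condition $\frac{d}{d\kappa_1}(y_{2,V}-y_{2,I})|_{\kappa_1=1/2}=-8\neq 0$. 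You omit this, but the statement as written does not require it.
\end{itemize}
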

\begin{proof}
    Assertion (i) can be verified directly, and follows from the fact that $Z^+|_{\eps_2 = 0}$ is independent of $x_2$. Tangencies between $Z^+, Z^-$ and $\Sigma_2^1 = \{ (x_2, y_2) \in \R^2 : f(x_2,y_2) := x_2 = 0\}$ occur when
    \[
    Z^+f(0,y_2,\kappa_1,\kappa_2,\eps_2) = \theta_1^{-1} \left( 1 - \hat H(y_2) - \kappa_1 \right) = 0 , \qquad 
    Z^-f(0,y_2,\kappa_1,\kappa_2,\eps_2) = \theta_1^{-1} \left( \hat H(y_2) - \kappa_1 \right) = 0 ,
    \]
    i.e.~at the points $F_V$ and $F_I$ respectively. Note that the tangencies occur for all $(\kappa_1, \kappa_2) \in \Lambda$ and $\eps_2 \geq 0$. Moreover, direct calculations show that
    \[
    \begin{split}
        Z^+(Z^+f)(F_V, \kappa_1, \kappa_2, \eps_2) &= -\frac{1}{\theta_1\theta_2} \kappa_1 (1-\kappa_1)(\kappa_1-\kappa_2) + \mathcal O(\eps_2) , \\
        Z^-(Z^-f)(F_I, \kappa_1, \kappa_2, \eps_2) &= \frac{1}{\theta_1\theta_2}\kappa_1(1-\kappa_1)(1-\kappa_2) + \mathcal O(\eps_2) ,
    \end{split}
    \]
    as $\eps_2 \to 0$. Since $(1-\kappa_1)(\kappa_1-\kappa_2) < 0$ for all $(\kappa_1, \kappa_2) \in \Lambda$, it follows that for each fixed $(\kappa_1, \kappa_2) \in \Lambda$, there is an $\eps_{2,0} > 0$ such that $Z^+(Z^+f)(F_V, \kappa_1, \kappa_2, \eps_2) > 0$ and $Z^-(Z^-f)(F_I, \kappa_1, \kappa_2, \eps_2) > 0$ for all $\eps_2 \in [0,\eps_{2,0})$. This proves Assertions (ii)-(iii).

    It remains to prove Assertion (iv). Let $y_{2,V}(\kappa_1) := \ln ((1 - \kappa_1) / \kappa_1)$ and $y_{2,I}(\kappa_1) := \ln ( \kappa_1 / (1 - \kappa_1))$ denote the $y_2$-components of $F_V$ and $F_I$ respectively. Since $y_{2,V}(1/2) = y_{2,I}(1/2) = 0$, $F_V = F_I = \{(0,0)\}$ when $\kappa_1 = 1/2$, i.e.~the limiting PWS system \eqref{eq:B3_PWS_limit} has a visible-invisible two-fold singularity when $\kappa_1 = 1/2$. The associated transversality condition is satisfied since
    \[
    \frac{d}{d \kappa_1} (y_{2,V} - y_{2,I}) \bigg|_{\kappa_1 = 1/2} = -8 \neq 0 ,
    \]
    thereby proving Assertion (iv).
\end{proof}

Lemma \ref{lem:B3_PWS_sphere} justifies the sketch of the PWS geometry and dynamics on the blow-up sphere shown in Figure \ref{fig:B3_PWS}. Away from $\Sigma_2^1$, system \eqref{eq:B3_spherical_scaling_chart} is regularly perturbed in both $\tilde \eps_1$ and $\eps_2$, so the PWS description is expected to provide a good description of the leading order dynamics on compact subsets of the $(x_2, y_2)$-plane. The presence of fold and two-fold singularities in the limiting PWS system \eqref{eq:B3_PWS_limit}, implies the presence of \textit{regularised} fold and two-fold singularities for all $0 < \tilde \eps_1 \ll 1$ in system \eqref{eq:B3_spherical_scaling_chart}. A normal form associated with the regularised visible-invisible two-fold has recently been analysed in detail using geometric blow-up techniques in \cite{Kristiansen_2023} (we also refer again to \cite{Bonet2018,Kristiansen_2015} for earlier and closely related studies). This allows us to prove a result on the existence of canard-type cycles in system \eqref{eq:B3_spherical_scaling_chart}.

In order to formulate our result, we need to construct a family of \textit{singular} canard cycles 
$\Gamma_0^{y_{2,0}}(\eps_2)$ in the limiting PWS problem \eqref{eq:B3_PWS_limit} which is parameterised by the vertical coordinate $y_2 = y_{2,0}$; see Figure \ref{fig:B3_PWS_sing_cycles}. 
To this end, we notice that for each sufficiently small $\eps_2 \geq 0$ and $y_{2,0} < 0$ such that the point $(0,y_{2,0}) \in \Sigma_2^1$ lies between $q_s$ and $(0,0)$ (the location of the two-fold when $\kappa_1 = 1/2$), the forward flow of $(0,y_{2,0})$ under $Z^-(x_2,y_2,\eps_2)$ when $\kappa_1 \approx 1/2$ and $\kappa_2 \in (3/4, 1)$ has a first return point $(0, \xi(y_{2,0}, \eps_2))$. Such a construction is always possible when $y_{2,0} < 0$ and $|\kappa_1 - 1/2|$ are sufficiently small (we refer again to \cite{Kristiansen_2023} for further details on an analogous construction). Thus for each $\eps_2 \geq 0$ sufficiently small, $\kappa_2 \in (3/4,1)$ and $\kappa_1 \approx 1/2$, candidate singular orbits can be defined via
\begin{equation}
\label{eq:sing_cycles}
    \Gamma_0^{y_{2,0}}(\eps_2) := \{ (0, y_2) : y_2 \in [y_{2,0}, \xi(y_{2,0}, \eps_2)] \} \cup \mathcal O_0^{y_{2,0}}(\eps_2) ,
\end{equation}
where the left-most component is the sliding component in $\Sigma_2^1$ and the right-most component $\mathcal O_0^{y_{2,0}}(\eps_2)$ is the (simply connected) orbit segment of $Z^-(x_2,y_2,\eps_2)$ which connects the points $(0, y_{2,0})$ and $(0, \xi(y_{2,0}, \eps_2))$. 
Now we can formulate the following result.

\begin{figure}
    \centering
    \includegraphics[width=0.4\linewidth]{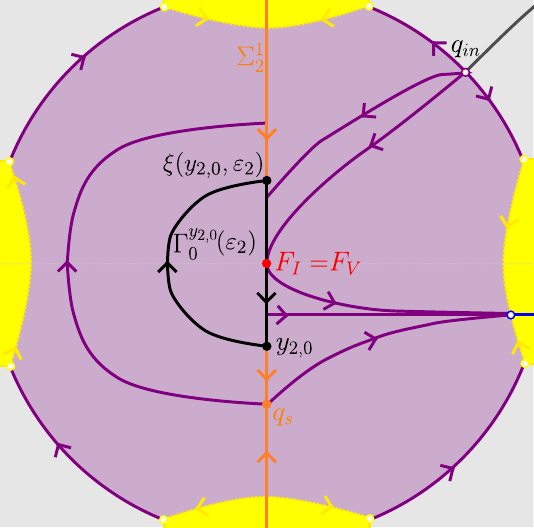}
    \caption{A representative PWS singular cycle $\Gamma_0^{y_{2,0}}(\eps_2)$, as defined in \eqref{eq:sing_cycles}. For each fixed and sufficiently small $\eps_2 \geq 0$, one can construct a family of cycles parameterised by the minimal vertical coordinate $y_{2,0}$, where $(0,y_{2,0}) \in \Sigma_2^1$. Each cycle has two components: a sliding component along $\Sigma_2^1$ and a regular component $\mathcal O_0^{y_{2,0}}(\eps_2)$ induced by the flow of the vector field $Z^-(x_2, y_2, \eps_2)$ when $\kappa_1 = 1/2$ and $\kappa_2 \in (3/4, 1)$, which connects to the first return point $(0, \xi(y_{2,0}, \eps_2)) \in \Sigma_2^1$. This is analogous to the construction in \cite{Kristiansen_2023}.}
    \label{fig:B3_PWS_sing_cycles}
\end{figure}

\begin{prop} \label{prop:B3_canard_uniqueness}
    Fix $\kappa_2 \in (3/4,1)$ and $y_{2,0} < 0$ sufficiently small. Then there exist $\tilde \eps_{1,0} > 0$, $\eps_{2,0}>0$ such that for each $\eps_2 \in [0,\eps_{2,0})$, there is a smooth function $\kappa_{1,c}^{y_{2,0}} : [0, \tilde \eps_{1,0}) \to \R$ with $\kappa_{1,c}^{y_{2,0}}(0) = 1/2$ such that for all $\tilde \eps_1 \in (0, \tilde \eps_{1,0})$, system \eqref{eq:B3_spherical_scaling_chart}$|_{\kappa_1 = \kappa_{1,c}^{y_{2,0}}(\tilde \eps_1)}$ has a periodic orbit $\Gamma_{\tilde \eps_1}^{y_{2,0}}(\eps_2)$ that is isolated, hyperbolic and Hausdorff close to $\Gamma_0^{y_{2,0}}(\eps_2)$ as $\tilde \eps_1 \to 0$.
   %
\end{prop}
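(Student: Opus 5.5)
The plan is to reduce system \eqref{eq:B3_spherical_scaling_chart} near the two-fold to the normal form of a regularised visible-invisible two-fold treated in \cite{Kristiansen_2023}, and then apply their canard-cycle result. Fix $\kappa_2 \in (3/4,1)$ and $\eps_2 \geq 0$ small, which enters only as a regular parameter. Write $\kappa_1 = 1/2 + \mu$. Lemma \ref{lem:B3_PWS_sphere} shows that for $\mu = 0$ the limiting PWS system \eqref{eq:B3_PWS_limit} has a visible-invisible two-fold at the origin, and that the unfolding in $\mu$ is transversal (derivative $-8$). The regularisation is of Sotomayor--Teixeira type with the monotone regularisation function $\hat H$, since $x_2' = \theta_1^{-1}e^{-\eps_2 x_2}(\hat H(x_2/\tilde\eps_1)(1-2\hat H(y_2)) + \hat H(y_2) - \kappa_1 e^{\eps_2x_2})$, and similarly for $y_2'$. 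The regularisation parameter is $\tilde \eps_1$.

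The first step is a local change of coordinates near $(0,0)$, smooth in $(\mu,\eps_2)$, that brings $Z^\pm$ into the form required in \cite{Kristiansen_2023}. Concretely, I will straighten $Z^-$ near the invisible fold and verify their nondegeneracy conditions from the signs already computed:
\begin{itemize}
\item[(a)] $Z^+(Z^+f)(F_V)>0$;
\item[(b)] $Z^-(Z^-f)(F_I)>0$;
\item[(c)] the relative orientation of $Z^\pm$ at the two-fold produces a sliding segment $\{x_2=0,\ y_2<0\}$ which is repelling below and attracting above. This matches $\mathcal S_2^\pm$ after the cylindrical blow-up of $\Sigma_2^1$.
\end{itemize}
One also needs the regularisation function to satisfy their asymptotic hypotheses. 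Here $\hat H$ approaches $0$ and $1$ exponentially, which is the case I expect them to cover, or which follows with minor modification.

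The second step is the global part. For $y_{2,0}<0$ close to $0$ and below the fold, the $Z^-$-orbit from $(0,y_{2,0})$ returns to $\Sigma_2^1$ at $\xi(y_{2,0},\eps_2)>0$. This gives the singular cycle $\Gamma_0^{y_{2,0}}(\eps_2)$ in \eqref{eq:sing_cycles}. I will then apply the canard cycle theorem of \cite{Kristiansen_2023}, which works via cylindrical blow-up of $\Sigma_2^1$, the slow manifolds $\mathcal S^{a/r}_{\tilde\eps_1}$, a Melnikov computation at the two-fold for the separation of the slow manifolds in $\mu$, and a slow divergence integral along the sliding segment.

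The main obstacle is the Melnikov and slow-divergence step. The existence of a cycle near $\Gamma_0^{y_{2,0}}$ requires solving a bifurcation equation $D(\mu,\tilde\eps_1)=0$, where $D$ is the distance between the forward continuation of $\mathcal S^a$ and the backward continuation of $\mathcal S^r$ measured at a section along $\mathcal O_0^{y_{2,0}}$. I need $\partial_\mu D\neq 0$ at $\mu=0$, which should follow from the transversality in Lemma \ref{lem:B3_PWS_sphere}(iv) together with the Melnikov integral along the connecting canard on the blow-up cylinder. The implicit function theorem then gives $\kappa^{y_{2,0}}_{1,c}(\tilde\eps_1)=1/2+o(1)$, smooth in $\tilde\eps_1$.

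Hyperbolicity and isolation follow from the slow divergence integral along the sliding segment $[y_{2,0},\xi]$. I must check that this integral is nonzero, i.e. that the attracting and repelling contraction rates along $\mathcal S^\pm_2$ do not cancel. I will compute it from the eigenvalues of the layer problem on the cylinder, which are of the form $\mp\theta_1^{-1}(1-2\hat H(y_2))\hat H(\cdot)(1-\hat H(\cdot))$. I will restrict $y_{2,0}$ so that the integral has a definite sign, taking $y_{2,0}<0$ small with $|y_{2,0}|\neq\xi$ generically. This is where the condition "$y_{2,0}$ sufficiently small" will be used.

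Uniformity in $\eps_2\in[0,\eps_{2,0})$ follows because every nondegeneracy condition holds at $\eps_2=0$ and is open. Hausdorff closeness comes directly from the construction.
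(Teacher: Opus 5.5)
Your strategy matches the paper's: write the chart-$\mathcal K_2$ system as the $\hat H$-regularisation of $Z^\pm$ (after the rotation $(x_2,y_2)=(v,-u)$) and invoke \cite[Thm.~3.1]{Kristiansen_2023}. However, the two hypotheses that carry the real content are left unverified, and your argument for the second one does not work. The bifurcation equation $D(\mu,\tilde\eps_1)=0$ and its Melnikov transversality, which you call the main obstacle, are part of the conclusion of that theorem once its hypotheses hold.

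\emph{First hypothesis.} The two-fold must be of canard type $VI_3$: the sliding flow has to carry the attracting branch $y_2>0$ through the two-fold onto the repelling branch $y_2<0$. Your (a)--(c) only record fold conditions and stabilities. These hold for every $(\kappa_1,\kappa_2)\in\Lambda$, so $\kappa_2>3/4$ is never used. What you need is the direction of the sliding flow. At $\kappa_1=1/2$ it is $\dot y_2=\theta_2^{-1}\bigl(1-\kappa_2-\tfrac12\hat H(y_2)\bigr)+\mathcal O(\eps_2)$. This equals $\theta_2^{-1}(3/4-\kappa_2)<0$ at the two-fold and vanishes only at $q_s$, $y_2=\ln\frac{2-2\kappa_2}{2\kappa_2-1}<0$, which is bounded away from the origin. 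This also fixes what ``$y_{2,0}$ sufficiently small'' means: $y_{2,0}$ lies between $q_s$ and $0$.

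\emph{Second hypothesis, and the more serious gap: the slow divergence integral.} You plan to get a definite sign by taking $|y_{2,0}|\neq\xi$ ``generically''. That option does not exist. For $\eps_2=0$, $\kappa_1=1/2$, the field $Z^-$ has $x_2'=\theta_1^{-1}(\hat H(y_2)-\tfrac12)$, which is odd in $y_2$, and $y_2'=\theta_2^{-1}(1-\kappa_2)$, which is constant. Hence $\xi(y_{2,0},0)=-y_{2,0}$ exactly. A genericity argument would in any case not give the statement for every small $y_{2,0}$.

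Moreover, the layer eigenvalue along $\mathcal S^a\cup\mathcal S^r$ at $\kappa_1=1/2$ is $\theta_1^{-1}(1-2\hat H(y_2))/4$, which is itself odd. Integrating eigenvalues alone over the symmetric interval would therefore suggest cancellation. The missing idea is that the integrand is the eigenvalue divided by the sliding speed, and the sliding speed is not even: it is slower on the repelling side, where it vanishes at $q_s$.

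In $u=-y_2$ one gets, up to a positive factor, $I(u_0,0)=\int_{-u_0}^{u_0}f(u)\,du$ with $f(u)=\frac{\hat H(u)-1/2}{2\kappa_2-1-\hat H(u)}$. Set $h=\hat H(u)-\tfrac12$ and $c=2\kappa_2-\tfrac32>0$. Then $f(u)+f(-u)=\frac{2h^2}{c^2-h^2}>0$ for $0<u<u^*=\ln\frac{2\kappa_2-1}{2-2\kappa_2}$. This is equivalent to the paper's comparison $f(u)>-f(-u)\iff\hat H(u)>\tfrac12$.

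So $I(u_0,0)>0$ for every admissible $u_0$, and this persists for small $\eps_2$. Through the theorem, it gives $k=1$, i.e.\ a unique hyperbolic cycle. Without this sign computation you have neither hyperbolicity nor isolation of $\Gamma^{y_{2,0}}_{\tilde\eps_1}(\eps_2)$.
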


\begin{proof}
    We need to check the conditions for \cite[Theorem 3.1]{Kristiansen_2023}. In order to do so, we make a preliminary transformation $(x_2, y_2) = (v, -u)$ (i.e.~a counterclockwise rotation by $\pi / 2$), and use the fact that $\hat H(-u) = 1 - \hat H(u)$, in order to arrive at a transformed system which satisfies
    \begin{equation}
        \label{eq:regularised_pws}
        \begin{split}
            u'&=\frac{1}{\theta_2}\left( -1+ \Hat{H} \left(\frac{v}{\Tilde{\eps}_1} \right) - \Hat{H}(u) \Hat{H}\left(\frac{v}{\Tilde{\eps}_1} \right) +\kappa_2 + \mathcal{O}(\eps_2) \right) ,\\
            v'&= \frac{1}{\theta_1}\left( 1 - \Hat{H}(u) - \Hat{H} \left( \frac{v}{\Tilde{\eps}_1} \right) + 2  \Hat{H}(u) \Hat{H} \left( \frac{v}{\Tilde{\eps}_1} \right) -\kappa_1 + \mathcal{O}(\eps_2) \right) ,
        \end{split}
    \end{equation}
    as $\eps_2 \to 0$. We note that the $\mathcal O(\eps_2)$ terms are smooth. System \eqref{eq:regularised_pws} is in the relevant general form \cite[eq.~(2.1)]{Kristiansen_2023}. In particular, the PWS singular limit when $\tilde \eps_1 \to 0$ can be written as
    \begin{equation} \label{eq:B3_canard_PWS_limit}
        \begin{aligned} 
        z'  &=
        \begin{pmatrix}
            u' \\
            v'
        \end{pmatrix}
        =
        \begin{cases}
            Z^+(u,v,\kappa_1,\kappa_2,\eps_2), & v>0 , \\
            Z^-(u,v,\kappa_1,\kappa_2,\eps_2), & v<0 ,
        \end{cases}
    \end{aligned}
    \end{equation}
    where the switching manifold is $\{ v = 0 \}$ and the vector fields $Z^\pm := (U^\pm, V^\pm)$ satisfy
    \begin{equation} \label{eq:B3_canard_Z+_Z-}
        \begin{aligned} 
            Z^+(u,v,\kappa_1,\kappa_2,\eps_2)&=\begin{pmatrix}
            U^+(u,v,\kappa_1,\kappa_2,\eps_2)\\
            V^+(u,v,\kappa_1,\kappa_2,\eps_2)
        \end{pmatrix}=\begin{pmatrix}
        \frac{1}{\theta_2} (- \Hat{H}(u)  +\kappa_2) + \mathcal O(\eps_2) \\
        \frac{1}{\theta_1}(\Hat{H}(u)  -\kappa_1) + \mathcal O(\eps_2) 
        \end{pmatrix} , \\
          Z^-(u,v,\kappa_1,\kappa_2,\eps_2)&=\begin{pmatrix}
            U^-(u,v,\kappa_1,\kappa_2,\eps_2)\\
            V^-(u,v,\kappa_1,\kappa_2,\eps_2)
        \end{pmatrix}=\begin{pmatrix}
    \frac{1}{\theta_2}(  \kappa_2-1 ) + \mathcal O(\eps_2) \\
    \frac{1}{\theta_1}( 1 - \Hat{H}(u) - \kappa_1) + \mathcal O(\eps_2) \\
    \end{pmatrix} ,
    \end{aligned}
    \end{equation}
    as $\eps_2 \to 0$. System \eqref{eq:regularised_pws} can be viewed as a regularised PWS system with regularisation function $\hat H$. More precisely, it can be written as
    $$\begin{pmatrix}
        u' \\
        v'
    \end{pmatrix} = \hat H\left( \frac{v}{\tilde \eps_1} \right) Z^+(u, v, \kappa_1, \kappa_2, \eps_2) + \left(1 - \hat H\left( \frac{v}{\tilde \eps_1} \right) \right) Z^-(u, v, \kappa_1, \kappa_2, \eps_2).$$
    Now we have to check the following assumptions, which are directly implied by the corresponding assumptions from \cite{Kristiansen_2023}:
    \begin{enumerate}
        \item[(A1)] $\lim_{z \to -\infty} \hat H(z) = 0$ and $\lim_{z \to \infty} \hat H(z) = 1$;
        \item[(A2)] $\hat H'(z) > 0$ for all $z \in \R$;
        \item[(A3)] $\hat H(z)$ is smooth as $z \to \pm \infty$;
        \item[(A4)] There exists a neighbourhood of $(u,v) = (0,0)$ for which the regularised visible-invisible two-fold is type $VI_3$, i.e., the stable sliding region ($u<0$) and the unstable sliding region ($u>0$) are connected by the flow of the sliding vector field $U_{sl}>0$ in a neighbourhood of the origin. 
    \end{enumerate}
    (A1)-(A3) are immediate, so it remains to check (A4). A direct calculation using \cite[equation (2.5)]{Kristiansen_2023} shows that the sliding vector field $U_{sl}(u, \kappa_1, \kappa_2, \eps_2)$ on $\{ v = 0 \}$ satisfies
    \begin{equation} \label{eq:B3_sliding}
        u' = U_{sl}\left(u, \frac{1}{2}, \kappa_2, \eps_2\right) = 
        \frac{-(\Hat{H}(u)-\frac{1}{2})(\Hat{H}(u)+1-\kappa_2)}{\theta_2(2\Hat{H}(u)-1)} + \mathcal O(\eps_2) 
    \end{equation}
    as $\eps_2 \to 0$. A direct calculation reveals the existence of a unique attracting equilibrium corresponding to $q_s$ at $u = u^\ast(\kappa_2, \eps_2)$, with asymptotics $u^\ast(\kappa_2, \eps_2) = \ln((2\kappa_2 - 1) / (2 - 2 \kappa_2)) + \mathcal O(\eps_2)$ as $\eps_2 \to 0$. For each fixed $\kappa_2 \in (3/4, 1)$, there exists an $\eps_{2,0} > 0$ and a constant $\Omega > 0$ such that $u^*(\kappa_2, \eps_2) > \Omega > 0$ for all $\eps_2 \in [0,\eps_{2,0})$. Thus, there is a neighbourhood about $(0,0)$ on which $U_{sl}(u, 1/2, \kappa_2, \eps_2) > 0$ and (A4) is satisfied.

    Given (A1)-(A4), \cite[Thm.~3.1]{Kristiansen_2023} asserts the existence of $k \in \mathbb N_+$ canard-type limit cycles, where $k-1$ is the number of simple zeros of a particular \textit{slow divergence integral} (eqn.~(3.1) in \cite{Kristiansen_2023}). In our case, the limiting slow divergence integral when $\eps_2 = 0$ simplifies to
    \[
    I(u_0,0) = \int \limits_{-u_0}^{u_0} \frac{\Hat{H}(u)-\frac{1}{2}}{2\kappa_2-1-\Hat{H}(u)}\, du,
    \]
    where $u_0 \in (0, \Omega]$ and we have included a zero in the second argument to indicate that $I(u_0,0) = \lim_{\eps_2 \to 0} I(u_0, \eps_2)$, where $I(u_0, \eps_2)$ denotes the slow divergence integral for $\eps_2 \geq 0$. Since the domain of the integral $I(u_0,0)$ is symmetric, we can show $I(u_0,0)>0$ on $(0,\Omega]$ by a simple comparison argument, i.e.,
    $$f(u)>-f(-u),\, u \in (0,\Omega] \implies I(u_0,0) = \int \limits_{-u_0}^{u_0} f(u)\, du>0 ,$$
    where here
    \begin{equation} \label{eq:B3_denominator}
        f(u)=\frac{\Hat{H}(u)-\frac{1}{2}}{2\kappa_2-1-\Hat{H}(u)}, \qquad f(-u)=\frac{\frac{1}{2}-\Hat{H}(u)}{2\kappa_2-2+\Hat{H}(u)} ,
    \end{equation}
    and we have used $\Hat{H}(-u)=1-\Hat{H}(u)$. From the choice of $\Omega$ it follows that the two denominators in \eqref{eq:B3_denominator} are positive for $u \in (0,\Omega]$ and $\kappa_2 \in (3/4,1)$. A straightforward computation then simplifies $f(u)>-f(-u)$ to $\Hat{H}(u)>1/2$, which is true for $u \in (0,\Omega]$.
    The comparison argument above ensures that $I(u_0,0)$ is strictly positive in $(0,\Omega]$ and we conclude that $k=1$.
    Since \eqref{eq:regularised_pws} and therefore \eqref{eq:B3_canard_PWS_limit} depend regularly on $\eps_2$ this implies that the inequality $I(u_0,\eps_2)>0$ is fulfilled also for $\eps_2 \in [0,\eps_{2,0})$ for $\eps_{2,0}>0$ small enough.
    
    Therefore, it follows from \cite[Thm~3.1]{Kristiansen_2023} that for each $\eps_2 \in [0,\eps_{2,0})$ and $u_0 \in (0,\Omega]$, there exists a smooth function $\kappa_{1,c}^{u_0}(\Tilde{\eps}_1)$ with $\kappa_{1,c}^{u_0}(0)=1/2$ such that \eqref{eq:regularised_pws} has a periodic orbit $\Gamma_{\Tilde{\eps}_1}^{u_0}(\eps_2)$ for $0<\Tilde{\eps}_1 \ll 1$. The above-mentioned result also guarantees that the periodic orbit $\Gamma_{\Tilde{\eps}_1}^{u_0}(\eps_2)$ is isolated, hyperbolic and Hausdorff close to the canard cycle $\Gamma_{0}^{u_0}(\eps_2)$ which corresponds to $\Gamma_{0}^{-y_{2,0}}(\eps_2)$ after applying the rotation $(x_2, y_2) = (v, - u)$. Proposition \ref{prop:B3_canard_uniqueness} itself follows after inverting the coordinate rotation.
\end{proof}

Proposition \ref{prop:B3_canard_uniqueness} provides us with quite a bit of detail about the dynamics when $\kappa_2 \in (3/4, 1)$ and $\kappa_1 = \tfrac{1}{2} + \mathcal O(\tilde \eps_1)$, i.e.~in the scaling regime (S3). However, we need to do more work if we would like to understand (i) the global geometry and dynamics, and (ii) the behaviour in scaling regime (S2) close to $(\kappa_1, \kappa_2) = (1/2, 3/4)$. In order to address this, we need more detailed information about the behaviour close to $\Sigma_2^1$. To that end, we need to apply more blow-ups.

We start by considering the extended system obtained by appending $\tilde \eps_1'=0$ to system \eqref{eq:B3_spherical_scaling_chart}, which is degenerate along $\Sigma_2^1 \times \{0\}$. We then drop the subscripts to simplify the notation, and apply a cylindrical blow-up of the form
\begin{equation}
\label{eq:cylindrical_blow-up_B3}
\eta \geq 0, \ (\bar x, \bar{\tilde \eps} ) \in \mathbb S^1 \mapsto 
\begin{cases}
    x = \eta \bar x, \\
    \tilde \eps = \eta \bar{\tilde \eps}.
\end{cases}
\end{equation}
Note that we have permitted another slight abuse of notation by reusing the radial variable $\eta$ (as in \eqref{eq:cylindrical_blow-up_B1}). A complete treatment would involve an analysis in the three coordinate charts defined via $\bar x = \pm 1$ and $\bar{\tilde \eps} = 1$. We shall present a detailed analysis in the scaling chart $\bar{\tilde \eps} = 1$; the details of the dynamics in the $\bar x = \pm 1$ charts are omitted for brevity, and because they are not necessary for proving Theorem \ref{thm:B3}. Further details of the normal form analysis in these charts can also be found in \cite{Kristiansen_2023}.

We write the local coordinates in $\bar{\tilde \eps} = 1$ as $x = \tilde \eps x_2$. After a suitable desingularisation (multiplication by $\tilde \eps$), the equations are given by
\begin{equation}\label{eq:B3_spherical_cylinder_scaling_chart}
    \begin{aligned} 
    x_2' &= \frac{1}{\theta_1 e^{\tilde \eps \eps x_2}}\left( \Hat{H}(x_2)+ \Hat{H}(y) - 2 \Hat{H}(x_2) \Hat{H}(y) -\kappa_1 e^{\tilde \eps \eps x_2}\right) , \\
    y'&=\frac{\tilde \eps}{\theta_2e^{\eps y}}\left( 1 - \Hat{H}(x_2) \Hat{H}(y) -\kappa_2 e^{\eps y}\right) ,
\end{aligned}
\end{equation}
with $0 < \tilde \eps, \eps \ll 1$ (recall that we dropped the subscripts; in terms of our earlier notation, $\tilde \eps = \tilde \eps_1$ and $\eps = \eps_2$). This system is slow-fast with respect to the limit $\tilde \eps \to 0$, for all $\eps \geq 0$. The associated layer problem, obtained by setting $\tilde \eps = 0$, is given by
\begin{equation}
\label{eq:B3_layer_problem}
\begin{split}
x_2' &= \frac{1}{\theta_1}\left( \Hat{H}(x_2)+ \Hat{H}(y) -2 \Hat{H}(x_2) \Hat{H}(y) -\kappa_1\right) , \\
y_2' &= 0,
\end{split}
\end{equation}
which does not depend on $\eps$. The critical manifold has the form
\begin{equation} \label{eq:B3_critical manifold}
    \mathcal{S} := \{(x_2,y) \in \R^2 : G(x_2,y,\kappa_1) = 0\} ,
\end{equation}
where $G(x_2, y, \kappa_1) :=  \Hat{H}(x_2)+ \Hat{H}(y) - 2 \Hat{H}(x_2) \Hat{H}(y) -\kappa_1$. In general, $\mathcal S$ is given by the union of two curves which are non-intersecting unless $\kappa_1 = 1/2$ (which is precisely the critical value associated with the regularised two-fold identified above). We consider the cases $\kappa_1 \neq 1/2$ and $\kappa_1 = 1/2$ in turn. The geometry is sketched for $\kappa_1 < 1/2$, $\kappa_1 = 1/2$ and $\kappa_1 > 1/2$ in Figure \ref{fig:B3_kappa1_geometry}. 

When $\kappa_1 \neq 1/2$, $\mathcal S$ can be written as the disjoint union $\mathcal S = \mathcal S^a \cup \mathcal S^r$, where
\[
\mathcal S^a := \{ (x_2, y) \in \mathcal S : y > 0 \} , \qquad
\mathcal S^r := \{ (x_2, y) \in \mathcal S : y < 0 \} .
\]
The superscripts "a/r" stand for "attracting/repelling" respectively. This is justified by the fact that the non-trivial eigenvalue associated with the linearisation along $\mathcal{S}$ is given by
\[
\lambda = \frac{1}{\theta_1}\Hat{H}(x_2)(1-\Hat{H}(x_2))(1-2\Hat{H}(y)) ,
\]
which implies that $\mathcal S^a$ ($\mathcal S^r$) is normally hyperbolic and attracting (repelling). When $\kappa_1 = 1/2$, $\mathcal S$ takes the more degenerate form $\mathcal S = \mathcal S^r \cup \mathcal C \cup \mathcal S^a$, where
\[
\mathcal S^r := \{ (0, y) : y < 0 \} , \qquad 
\mathcal C := \{ (x_2, 0) : x_2 \in \R \}, \qquad
\mathcal S^a := \{ (0, y) : y > 0 \} .
\]
Direct calculations show that the critical manifold $\mathcal S^{a,+}$ ($\mathcal S^{r,-}$) is normally hyperbolic and attracting (repelling), and that $\mathcal C$ is degenerate (the non-trivial multiplier associated with the linearisation along $\mathcal C$ is identically zero).

We obtain the following result.

\begin{lem} \label{lem:B3_slow-manifold}
    Fix $(\kappa_1, \kappa_2) \in \Lambda$, $\eps \geq 0$ and $\delta \in (0,1)$. There exists an $\tilde \eps_0>0$ such that for all $\tilde \eps \in (0, \tilde \eps_0)$, system \eqref{eq:B3_spherical_cylinder_scaling_chart} has normally attracting/repelling slow manifolds $\mathcal{S}^{a/r}_{\tilde \eps}$ of the form
    \[
    \mathcal S_{\tilde \eps}^a := \left\{ (\varphi^a(y, \kappa_1, \kappa_2, \tilde \eps, \eps) , y) : y \in [\delta, 1/\delta] \right\} , \qquad
    \mathcal S_{\tilde \eps}^r := \left\{ (\varphi^r(y, \kappa_1, \kappa_2, \tilde \eps, \eps) , y) : y \in [-1/\delta, -\delta] \right\} ,
    \]
    where the functions $\varphi^{a/r}$ are $C^r$-smooth and satisfy $G(\varphi^{a/r}(y, \kappa_1, \kappa_2, \tilde \eps, \eps), y, \kappa_1) = \mathcal O(\tilde \eps)$ as $\tilde \eps \to 0$. Moreover,
    \[
    \varphi^{a/r}(y, 1/2, \kappa_2, \tilde \eps, \eps) = \mathcal O(\tilde \eps)
    \]
    as $\tilde \eps \to 0$.
\end{lem}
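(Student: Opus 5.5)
The plan is to apply Fenichel theory to the slow--fast system \eqref{eq:B3_spherical_cylinder_scaling_chart}, in which $y$ is slow and $x_2$ is fast. It is convenient to pass first to the compactifying fast coordinate $X := \hat H(x_2)$, exactly as in \eqref{B2_variable_change_BT}. With $Y := \hat H(y)$ the fast equation becomes
\[
X' = \theta_1^{-1} X(1-X)\bigl(X(1-2Y) + Y - \kappa_1\bigr) + \mathcal O(\tilde \eps \eps),
\]
and the slow equation for $y$ is unchanged up to the factor $\tilde \eps$. Both right-hand sides depend smoothly on $(X, y, \kappa_1, \kappa_2, \tilde \eps, \eps)$. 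The layer problem \eqref{eq:B3_layer_problem} does not depend on $\eps$, and $\eps \geq 0$ is fixed. Hence Fenichel's theorem can be applied with $\eps$ treated as a regular parameter, and $C^r$-dependence on $\eps$ follows from the standard parameter-dependent version of the theorem.

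\textbf{Step 1 (critical manifold as a graph).} In these coordinates $G = 0$ reads
\[
X = \Phi(y, \kappa_1) := \frac{\hat H(y) - \kappa_1}{2\hat H(y) - 1}, \qquad \hat H(y) \neq \tfrac12 .
\]
This is a smooth graph over $y$ on each of the half-lines $y>0$ and $y<0$, and it lies in the phase space precisely where $\Phi \in (0,1)$. Undoing the change of coordinates gives $\varphi^{a/r}_0(y) = \hat H^{-1}(\Phi(y,\kappa_1))$. The linearisation along $\mathcal S$ has eigenvalue
\[
\lambda = \theta_1^{-1}\Phi(1-\Phi)(1-2\hat H(y)),
\]
which is negative for $y>0$ and positive for $y<0$ whenever $\Phi \in (0,1)$. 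On any compact $y$-set where this holds, the branches $\mathcal S^{a/r}$ are therefore normally hyperbolic, attracting for $y>0$ and repelling for $y<0$.

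\textbf{Step 2 (the case $\kappa_1 = 1/2$).} Here $\Phi \equiv 1/2$, that is, $\varphi_0^{a/r} \equiv 0$ for every $y \neq 0$. Moreover $|1 - 2\hat H(y)| \geq |1 - 2\hat H(\delta)| > 0$ on $\delta \leq |y| \leq 1/\delta$. Thus $[\delta,1/\delta]$ and $[-1/\delta,-\delta]$ are uniformly normally hyperbolic for \emph{every} $\delta \in (0,1)$. Fenichel theory then yields $\mathcal S^{a/r}_{\tilde \eps}$ as $C^r$ graphs that are $\mathcal O(\tilde \eps)$-close to $x_2 = 0$, which is the final assertion of the lemma. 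To obtain it uniformly in $\kappa_1$ near $1/2$, I would append $\kappa_1' = 0$ and apply Fenichel theory to the compact family.

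\textbf{Step 3 (general $(\kappa_1,\kappa_2)\in\Lambda$).} The same argument applies on any compact $y$-set on which $\Phi(\cdot,\kappa_1) \in (0,1)$. This yields $G(\varphi^{a/r},y,\kappa_1) = \mathcal O(\tilde \eps)$, since the perturbed graph is $\mathcal O(\tilde \eps)$-close to the zero set of $G$.

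\textbf{Main obstacle.} The hard step is making this uniform over the whole interval $\delta \leq |y| \leq 1/\delta$ when $\kappa_1 \neq 1/2$. One has $\Phi \to 0$ or $\Phi \to 1$, that is $x_2 \to \mp\infty$, exactly at the fold values $y_V = \ln((1-\kappa_1)/\kappa_1)$ and $y_I = \ln(\kappa_1/(1-\kappa_1))$; these are the degenerate points $\mathcal Q_V, \mathcal Q_I$. The branch $\mathcal S^a$ exists only for $\hat H(y) > \max(\kappa_1, 1-\kappa_1)$, and $\mathcal S^r$ only for $\hat H(y) < \min(\kappa_1, 1-\kappa_1)$. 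For small $\delta$ the interval $[\delta,1/\delta]$ can reach this fold region, where the eigenvalue $\lambda$ tends to $0$ and no Fenichel graph over $y$ is available.

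I would first try the $X$-coordinates, in which the vector field extends smoothly to $X \in \R$. I would check whether the extended graph $X = \Phi$ can be used across $X \in \{0,1\}$. It cannot give normal hyperbolicity there, because $\lambda = 0$ at those points. My conclusion is that the lemma can only be established as stated for those $\delta$, depending on $\kappa_1$, with $\hat H(\delta) > \max(\kappa_1, 1-\kappa_1)$, or else for $\kappa_1$ in a $\delta$-dependent neighbourhood of $1/2$. For $\kappa_1 = 1/2$ itself, Step 2 gives the statement for every $\delta \in (0,1)$. I would record this restriction explicitly in the proof rather than try to extend the manifolds through $\mathcal Q_{V/I}$, which would need the exponential-tangency blow-up mentioned in Remark \ref{rem:Q_degeneracy}.
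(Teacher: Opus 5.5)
Your route is the paper's. Its proof of this lemma is only an appeal to Fenichel theory and the eigenvalue computation preceding the statement, and your Steps 1--3 spell that out. Step 2 correctly gives the $\kappa_1=1/2$ assertion for every $\delta\in(0,1)$. Making it uniform in $\kappa_1$ near $1/2$, as you propose, is what Proposition~\ref{prop:B3_Hopf_canard} actually uses, since there $\kappa_1\to 1/2$ together with $\tilde\eps\to0$.

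Your caveat is also correct, and the paper's proof does not address it. It is a defect of the statement, not a limitation of the method. Since $G$ is affine in $X=\hat H(x_2)\in(0,1)$ with endpoint values $\hat H(y)-\kappa_1$ and $1-\hat H(y)-\kappa_1$, it has no zero for $0<|y|\le|\ln(\kappa_1/(1-\kappa_1))|$.

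Concretely, take $\kappa_1=1/10$. Then $G(x_2,1,1/10)\ge (1+e)^{-1}-1/10>0.16$ for all $x_2$. Since $y=1\in[\delta,1/\delta]$ for every $\delta\in(0,1)$, no function $\varphi^a$ with $G(\varphi^a(y),y,\kappa_1)=\mathcal O(\tilde\eps)$ exists for any admissible $\delta$. This also shows that your fix ``restrict $\delta$'' becomes vacuous whenever $|\ln(\kappa_1/(1-\kappa_1))|\ge1$, because $\delta<1$ is imposed. The clean correction is to replace $[\delta,1/\delta]$ by an arbitrary compact interval inside $\{y>|\ln(\kappa_1/(1-\kappa_1))|\}$, and its reflection for $\mathcal S^r$. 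That is all Proposition~\ref{prop:B3_saddle-node} needs, since the equilibria there lie on the existing branches.

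One minor slip, which you do not rely on: for $\tilde\eps\eps>0$ the vector field does not extend smoothly to $X\in\{0,1\}$, because of the factors $e^{\pm\tilde\eps\eps x_2}=(X/(1-X))^{\pm\tilde\eps\eps}$. Smoothness holds only on compact subsets of $X\in(0,1)$, which is all Steps 1--3 require.
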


\begin{proof}
    This follows from Fenichel theory \cite{Fenichel_1979,Jones_1995,Wiggins2013} and the calculations which preceded the statement of the Lemma.
\end{proof}

The Fenichel slow manifolds $\mathcal S_{\tilde \eps}^{a/r}$ described by Lemma \ref{lem:B3_slow-manifold} blow-down to submanifolds of the locally invariant manifolds described by Lemma \ref{lem:B3_invariant_manifolds}, a proof of which is given 
below.

\begin{proof}[Proof of Lemma \ref{lem:B3_invariant_manifolds}]
   Lemma \ref{lem:B3_slow-manifold} describes the slow manifolds $\mathcal S^{a/r}_{\tilde \eps}$ on compact subsets in the scaling chart which covers the top of the blow-up cylinder on top of the blow-up sphere in Figures \ref{fig:B3_kappa1_geometry} and \ref{fig:B3_kappa1_0.5_kappa2_varied}, but it does not describe their connection to the manifolds denoted by $\mathcal S_2^\pm$ in these figures, which lie on the blow-up cylinders above and below the blow-up sphere. In order to complete the proof of Lemma \ref{lem:B3_slow-manifold}, one must (i) identify $\mathcal S^+_2$ and $\mathcal S^-_2$ as attracting and repelling slow manifolds respectively in suitable coordinate charts, and (ii) show that $\mathcal S_2^+$ connects to $\mathcal S^a_{\tilde \eps}$ and that $\mathcal S_2^-$ connects to $\mathcal S^r_{\tilde \eps}$. For the attracting branches $\mathcal S^+_2$ and $\mathcal S^a_{\tilde \eps}$, both (i) and (ii) can be shown using the following set of equations, which are obtained after writing system \eqref{eq:fund_extended_P2} in chart $\mathcal K_3^+$ coordinates 
   $$x=\theta_1e^{r_3x_3},\quad y=\theta_2e^{r_3},\quad \eps_2=r_3\eps_{23},$$ 
   associated with the spherical blow-up \eqref{eq:spherical_blow-up_B3}, and subsequently applying a cylindrical blow-up 
   \begin{equation}
\label{eq:cylindrical_blow-up_B3_upper}
\rho \geq 0, \ (\bar x_3, \bar{ \eps}_{23} ) \in \mathbb S^1 \mapsto 
\begin{cases}
    x_3 = \rho_2 \bar x_{3}, \\
    \eps_{23} = \rho_2 \bar{ \eps}_{23},
\end{cases}
\end{equation}
to resolve the degeneracy corresponding to $\Sigma_2^+$. The (desingularised) equations in the $\bar \eps_{23} = 1$ chart, with coordinates $(x_{32}, r_3, \rho_2)$ given by $x_3=\rho_2x_{32}$ and $\eps_{23}=\rho_2$ are still degenerate along $\{ (0, r_3, \rho_2) : r_3 \geq 0 \}$ when $\tilde \eps_1 = 0$ due to a persistent loss of smoothness; see Figure \ref{fig:B3_PWS}. This is resolved by a cylindrical blow-up applied to the relevant extended system (including $\tilde \eps_1' = 0$) of the form
   \[
   \eta \geq 0 , \ (\bar x_{32}, \bar{\tilde \eps}_1) \in \mathbb S^1 \mapsto
   \begin{cases}
       x_{32} = \eta \bar x_{32} , \\
       \tilde \eps_1 = \eta \bar{\eps}_1 .
   \end{cases}
   \]
   Considering the scaling chart $\bar{\tilde \eps}_1=1$ with coordinates $(x_{322}, r_3, \rho_2)$, where $x_{32}=\Tilde{\eps}_1x_{322}$, and dropping the subscripts, leads to the following (desingularised) system of equations:
\[
\begin{split}
    x'&=\frac{1}{\theta_1e^{r\Tilde{\eps}\rho x}}[\Hat{H}(x)+\Hat{H}(1/\rho)-2\Hat{H}(x)\Hat{H}(1/\rho)-\kappa_1e^{r\Tilde{\eps} \rho x}] , \\
    r'&=\frac{r\rho\Tilde{\eps}}{\theta_2 e^r}[1-\Hat{H}(x)\Hat{H}(1/\rho)-\kappa_2e^r] , \\
    \rho'&=-\frac{\rho^2\Tilde{\eps}}{\theta_2 e^r}[1-\Hat{H}(x)\Hat{H}(1/\rho)-\kappa_2e^r],
\end{split}
\]
where $0 \leq \Tilde{\eps}\ll 1$ is a singular perturbation parameter. The layer problem is given by
$$x'=\frac{1}{\theta_1}[\Hat{H}(x)+\Hat{H}(1/\rho)-2\Hat{H}(x)\Hat{H}(1/\rho)-\kappa_1].$$
The critical manifold is given by
\begin{equation} \label{eq:B3_critical manifold_upper}
    \mathcal{S}_0 := \{(x,r,\rho) \in \R^3 : \Hat{H}(x)+ \Hat{H}(1/\rho) - 2 \Hat{H}(x) \Hat{H}(1/\rho) -\kappa_1 = 0\} ,
\end{equation}
and is attracting for all $\rho \geq 0$ (the non-trivial eigenvalue is given by $\lambda = -\theta_1^{-1} \hat H(x) ( 1 - \hat H(x))(1-2\hat H(1/\rho))) < 0$ for all $\rho \geq 0$). 
Within the invariant plane $\{\rho=0\}$ we recover the attracting critical manifold $\mathcal{S}_2^+=\{\Hat{H}(x)=1-\kappa_1\}$. Fenichel theory implies that this perturbs to the slow manifold required to prove (i). For $\rho>0$ we recover the critical manifold geometry of \eqref{eq:B3_critical manifold} in the current coordinates for all $r\geq 0$. The two one-dimensional critical manifold branches $\mathcal{S}_2^+$ and $\mathcal{S}^a$ above can be identified in the current coordinate chart within the two-dimensional attracting critical manifold $\mathcal{S}_0$. In particular, we have $\mathcal{S}_2^+=\mathcal{S}_0|_{\rho=0}$ and $\mathcal{S}^a=\mathcal{S}_0|_{r=0}$, which connect at $(\ln( (1-\kappa_1)/\kappa_1),0,0)$. Therefore, 
Fenichel theory implies (ii) and justifies Figures \ref{fig:B3_kappa1_geometry} and \ref{fig:B3_kappa1_0.5_kappa2_varied}. 
The argument for the repelling branches $\mathcal S^-_2$ and $\mathcal S^r_{\tilde \eps}$ is analogous, so we omit it for brevity. 
\end{proof}

\begin{rem} \label{rem:B3_non-hyperbolic_kappa1_neq_0.5}
    Certain features in Figures \ref{fig:B3_kappa1_geometry} and \ref{fig:B3_kappa1_0.5_kappa2_varied} have been included without proof for brevity. Here we note the presence of two degenerate points $\mathcal Q_I$ and $Q_V$ in particular, which are correlated with exponential/beyond all orders tangencies between the fast foliation and the extension of $\mathcal S^{a/r}$ to the edge of the blow-up cylinder when $|x_2| \to \infty$ (see Figure \ref{fig:B3_kappa1_geometry}). We expect the blow-up analyses associated with these points to be quite involved, for reasons analogous to those given in Remark \ref{rem:Q_degeneracy} above. We omit this analysis for brevity, and because the local dynamics near $\mathcal Q_I$ and $\mathcal Q_V$ is not important in the proof of our main results.
\end{rem}

\subsection{Saddle-node bifurcation and reduced dynamics when \texorpdfstring{$\kappa_1 \neq \frac{1}{2}$}{kappa1 not 0.5}} \label{sec:B3_kappa_1neq0.5}


The qualitative geometry and dynamics of the layer problem \eqref{eq:B3_layer_problem} is the same as in the blow-up analysis of the regularised two-fold presented in \cite{Kristiansen_2023}, however, there is an important difference in the reduced problem. A direct calculation shows that the reduced flow on $\mathcal S^{a/r}$ 
is governed by
\begin{equation}
\label{eq:B3_reduced}
    \dot y = \theta_2^{-1} e^{-\eps y} \left[ 1 - \frac{\kappa_1 - \hat H(y)}{1 - 2 \hat H(y)} \hat H(y) - \kappa_2 e^{\eps y} \right] , \qquad y \neq 0 ,
\end{equation}
which is a regular $\mathcal O(\eps)$ perturbation of
\begin{equation}
\label{eq:B3_reduced_limit}
    \dot y = \theta_2^{-1} \left[ 1 - \frac{\kappa_1 - \hat H(y)}{1 - 2 \hat H(y)} \hat H(y) - \kappa_2 \right] , \qquad y \neq 0 ,
\end{equation}
on compact domains for $y$. 
%
We obtain a similar expression for the equilibria -- when they exist -- as in region $B_2$:
\begin{equation}
\label{eq:B3_eqs}
    \Hat{H}^{\pm}(y)=1-\kappa_2+\frac{\kappa_1}{2}\pm \sqrt{(1-\kappa_2+\frac{\kappa_1}{2})^2+\kappa_2 -1},
\end{equation}
cf equation \eqref{eq:eq_coords}. Indeed, the equilibria corresponding to solutions of this equation can be viewed as the `extensions' of the equilibria $q_s$ and $q_{f/n}$ identified in $B_2$ in Section \ref{sec:Region_B2} up to $\tilde \eps \to 0$. We shall therefore denote them here by
%
%
$q_s : (x^+, y^+)$ and $q_{f/n} :(x^-, y^-)$, where $x^\pm$ and $y^\pm$ are uniquely determined by the equations $G(\varphi^{a/r}(y, \kappa_1, \kappa_2, 0, 0), y, \kappa_1) = 0$ and \eqref{eq:B3_eqs} respectively.

\begin{rem}
    \label{rem:eq_relations}
    Explicit formulae for $(x^\pm, y^\pm)$ exist, but they are lengthy and not necessary in what follows. One can use these formulae to show that $x^-<x^+$ and $y^+<y^-$. In the following we shall use these relations without proof.
\end{rem}

The following result describes the creation/destruction of these equilibria in a saddle-node bifurcation under variation in $(\kappa_1, \kappa_2)$, away from the degenerate point $(\kappa_1, \kappa_2) = (1/2, 3/4)$. In order to state it, we write $\mathcal I := \mathcal I^- \cup \mathcal I^+$, where $\mathcal I^\pm \subset \R$ are compact intervals satisfying $\mathcal I^- \subset (0,3/4)$ and $\mathcal I^+ \subset (3/4, 1)$.

\begin{prop} \label{prop:B3_saddle-node}
    Fix $\mathcal I$ (defined above) and consider system \eqref{eq:B3_spherical_cylinder_scaling_chart}. There exists $\tilde \eps_0, \eps_0 > 0$ and a $C^1$-smooth function $\kappa_{1,sn}:\mathcal I \times [0,\tilde \eps_0) \times [0,\eps_0) \to \R$ such that there is a saddle-node bifurcation when $\kappa_1 = \kappa_{1,sn}(\kappa_2,\tilde \eps,\eps)$. Moreover,
    \[
    \kappa_{1,sn}(\kappa_2,0,0) = 2\kappa_2-2+2\sqrt{1-\kappa_2} 
    \]
    for all $\kappa_2 \in \mathcal I$.
%
\end{prop}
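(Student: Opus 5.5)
Write the reduced problem on $\mathcal S^{a/r}_{\tilde\eps}$ as a one-dimensional equation $\dot y = F(y,\kappa_1,\kappa_2,\tilde\eps,\eps)$ and treat the saddle-node exactly as in Proposition \ref{prop:B1}. By Lemma \ref{lem:B3_slow-manifold}, for $\kappa_1\neq 1/2$ the slow manifolds are graphs $x_2=\varphi^{a/r}(y,\kappa_1,\kappa_2,\tilde\eps,\eps)$ over compact $y$-intervals bounded away from $y=0$. Restricting the $y$-equation of \eqref{eq:B3_spherical_cylinder_scaling_chart} to these graphs and dividing by $\tilde\eps$ gives $F$. This $F$ is a $C^1$ (indeed $C^r$) regular perturbation in $(\tilde\eps,\eps)$ of the right-hand side of \eqref{eq:B3_reduced_limit}, and \eqref{eq:B3_reduced} is the intermediate step in $\eps$.

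Next I would verify a nondegenerate fold of \eqref{eq:B3_reduced_limit}. Setting $Y=\hat H(y)$, the equilibrium condition becomes $Y^2-(2-2\kappa_2+\kappa_1)Y+1-\kappa_2=0$, consistent with \eqref{eq:B3_eqs}. The two roots coalesce when the discriminant vanishes, i.e.\ $(1-\kappa_2+\kappa_1/2)^2=1-\kappa_2$. Taking the relevant root $1-\kappa_2+\kappa_1/2=\sqrt{1-\kappa_2}$ gives $\kappa_1=2\kappa_2-2+2\sqrt{1-\kappa_2}$, with double root $Y_{sn}=\sqrt{1-\kappa_2}$. Since $y\mapsto Y$ is a diffeomorphism, I can check the conditions $F=F_y=0$, $F_{yy}\neq0$ and $F_{\kappa_1}\neq0$ in the $Y$ variable, using the factor $(1-2Y)^{-1}$.

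Multiplying \eqref{eq:B3_reduced_limit} by $\theta_2(1-2Y)$ gives the quadratic $Q(Y)=(1-2Y)(1-\kappa_2)-(\kappa_1-Y)Y$. The conditions then reduce to three facts.
\begin{enumerate}
\item $Q$ has a double root at $Y_{sn}$.
\item $Q''=2\neq0$.
\item $\partial_{\kappa_1}Q=-Y_{sn}\neq 0$.
\end{enumerate}
These are valid as long as $Y_{sn}\neq 1/2$, i.e.\ $\kappa_2\neq 3/4$. This is exactly why $\mathcal I$ must avoid $3/4$: at $\kappa_2=3/4$ one has $\kappa_{1,sn}=1/2$ and the fold sits on $y=0$, where $\mathcal S$ degenerates to include $\mathcal C$.

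For $\kappa_2\in\mathcal I^-$ we have $Y_{sn}>1/2$, so the fold lies on $\mathcal S^a$. For $\kappa_2\in\mathcal I^+$ we have $Y_{sn}<1/2$, so it lies on $\mathcal S^r$. In both cases $y_{sn}$ is bounded away from $0$, uniformly on the compact set $\mathcal I$. This means I can fix $\delta$ in Lemma \ref{lem:B3_slow-manifold} so that $y_{sn}\in[\delta,1/\delta]$ or $y_{sn}\in[-1/\delta,-\delta]$.

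The implicit function theorem applied to $(y,\kappa_1)\mapsto(F,F_y)$ at $(y_{sn},\kappa_{1,sn}(\kappa_2,0,0))$ then yields a $C^1$ continuation $\kappa_{1,sn}(\kappa_2,\tilde\eps,\eps)$. Its Jacobian is nonsingular by $F_{yy}F_{\kappa_1}\neq0$, and the open conditions persist for small $(\tilde\eps,\eps)$. Compactness of $\mathcal I$ gives uniform $\tilde\eps_0,\eps_0$. The resulting equilibria lie on the one-dimensional normally hyperbolic slow manifold, so the fold of the reduced flow is a genuine saddle-node of the planar system \eqref{eq:B3_spherical_cylinder_scaling_chart}.

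The main obstacle is justifying that the reduced vector field depends $C^1$ on $(\tilde\eps,\eps)$ up to $\tilde\eps=0$ on the relevant compact set. This needs the uniformity in $\kappa_1$ near $\kappa_{1,sn}$ of the Fenichel graphs, which holds because hyperbolicity of $\mathcal S^{a/r}$ is uniform away from $y=0$.
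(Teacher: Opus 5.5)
Your proposal is correct and follows essentially the same route as the paper: establish a nondegenerate fold of the limiting reduced flow \eqref{eq:B3_reduced_limit}, then obtain persistence via Fenichel theory in $\tilde\eps$ and the implicit function theorem in $\eps$. Your reduction to the quadratic $Q(Y)$ with double root $Y_{sn}=\sqrt{1-\kappa_2}$ is a tidy way to check $F_{yy}F_{\kappa_1}\neq 0$, and, as in the paper, the restriction $\kappa_2\neq 3/4$ is exactly what keeps $\kappa_{1,sn}$ away from $1/2$ and the fold off the non-hyperbolic set $\mathcal C$.
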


\begin{proof}
The proof is similar to the proof of Theorem \ref{thm:B1}: after verifying the existence of a regular saddle-node bifurcation in the limiting reduced problem \eqref{eq:B3_reduced_limit}, the persistence as a saddle-node bifurcation in system \eqref{eq:B3_spherical_cylinder_scaling_chart} can be proven using the implicit function theorem (to perturb in $\eps$) and Fenichel theory (to perturb in $\tilde \eps$). The latter argument relying on Fenichel theory only applies when $\kappa_1$ is bounded away from $\kappa_1 = 1/2$; when $\kappa_1 = 1/2$ the equilibrium lies on the non-hyperbolic critical manifold $\mathcal C$. This is the reason that the saddle-node curve is only identified for $\kappa_1 = \kappa_{1,sn}(\kappa_2, \tilde \eps, \eps)$ with $\kappa_2 \in \mathcal I$ (alternatively, one can show that the genericity condition \eqref{eq:SN_genericity} for a one-dimensional saddle-node bifurcation breaks down when $\kappa_1 = 1/2$ in system \eqref{eq:B3_reduced_limit}).
\end{proof}

Proposition \ref{prop:B3_saddle-node} justifies the bifurcation diagram in Figure \ref{fig:B3_bifurcation_diagram_singular}, which is sketched in the limiting case with $\tilde \eps = \eps = 0$. 
The saddle-node curve shown in orange divides $\Lambda$ into two open regions. Let $\bar \kappa_{1,sn}(\kappa_2)$ denote the extension of $\kappa_{1,sn}(\kappa_2, 0, 0)$ over $\kappa_2 \in (0,1)$. In the left region denoted by I, we have $\kappa_1 < \bar \kappa_{1,sn}(\kappa_2) < 1/2$. Here there are no equilibria and the orientation of the critical manifolds $\mathcal S^{a/r}$ is as shown in Figure \ref{fig:B3_kappa1_less_0,5}. On the right, when $\kappa_1 > \bar \kappa_{1,sn}(\kappa_2)$, we distinguish three different `sub-regions' depending on whether $\kappa_1 \gtrless 1/2$ and $\kappa_2 \gtrless 3/4$, in order to account for the fact that (i) the relative orientation of the critical manifolds $\mathcal S^{a/r}$ varies in an important way as $\kappa_1$ is varied over $1/2$, and (ii) the equilibria $q_s$ and $q_{f/n}$ are found to lie on different branches depending on $(\kappa_1, \kappa_2)$. The relevant sub-regions are defined as follows:
\[
\begin{split}
    IIi &:= \left\{ (\kappa_1, \kappa_2) \in \Lambda : \kappa_1 > 1/2 \right\} , \\
    IIii &:= \left\{ (\kappa_1, \kappa_2) \in \Lambda : \kappa_1 \in (\bar \kappa_{1,sn}(\kappa_2), 1/2), \kappa_2 < 3/4 \right\} ,\\
    V &:= \left\{ (\kappa_1, \kappa_2) \in \Lambda : \kappa_1 \in (\bar \kappa_{1,sn}(\kappa_2), 1/2), \kappa_2 > 3/4 \right\} , \\
\end{split}
\]
see Figure \ref{fig:B3_bifurcation_diagram_singular} (we refrain from introducing ``region III'' and ``region IV" at this point because we anticipate additional regions close to $\kappa_1 = 1/2$). Using equation \eqref{eq:B3_eqs} and the relations in Remark \ref{rem:eq_relations}, we infer that 
\[
0 < \Hat{H}^{\pm}(y) < \frac{1}{2} \iff y^{\pm} < 0 
\]
in V. This implies that both $q_s, q_{f/n} \in \mathcal S^r$. Similarly,
\[
\frac{1}{2} < \Hat{H}^{\pm}(y) < 1 \iff y^{\pm} >0
\]
in IIii, which in this case implies that $q_s, q_{f/n} \in \mathcal S^a$. In IIi we have that
\[
\Hat{H}(y^-) < \frac{1}{2} < \Hat{H}(y^+) \iff y^+<0<y^-,
\]
which implies that $q_{f/n} \in \mathcal S^a$ and $q_s \in \mathcal S^r$. 
Figure \ref{fig:B3_area_I_II_III_regular} shows the phase portrait in singular limit, for $(\kappa_1,\kappa_2)$ values in regions V, IIi and IIii (the phase portrait in region I, which is not shown, is similar to that in region V or IIii except that there are no equilibria on either $\mathcal S^{a/r}$).

\begin{figure}[h!]
    \centering
    \begin{subfigure}{0.3\textwidth}
         \includegraphics[width=\linewidth]{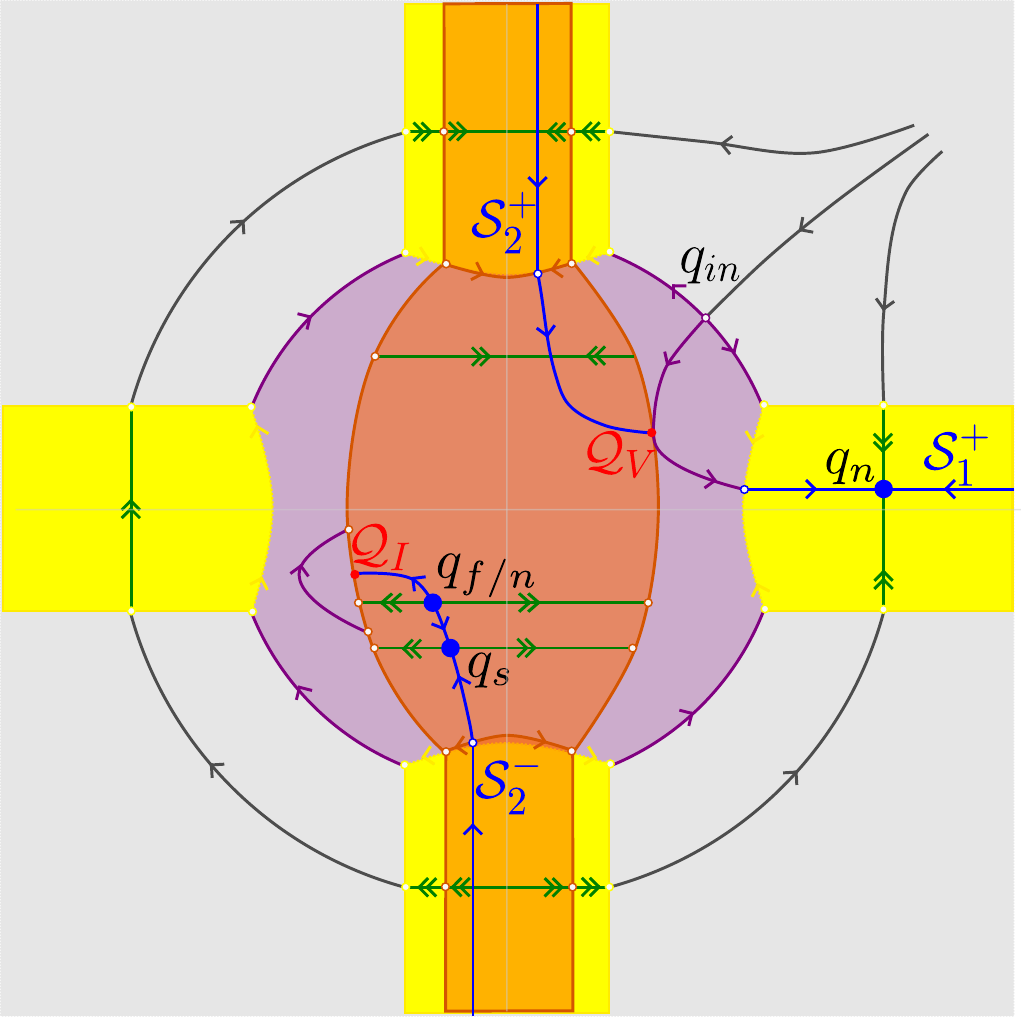}
    \caption{Region V.}
    \label{fig:B3_V}
    \end{subfigure}
    \hfill
    \begin{subfigure}{0.3\textwidth}
         \includegraphics[width=\linewidth]{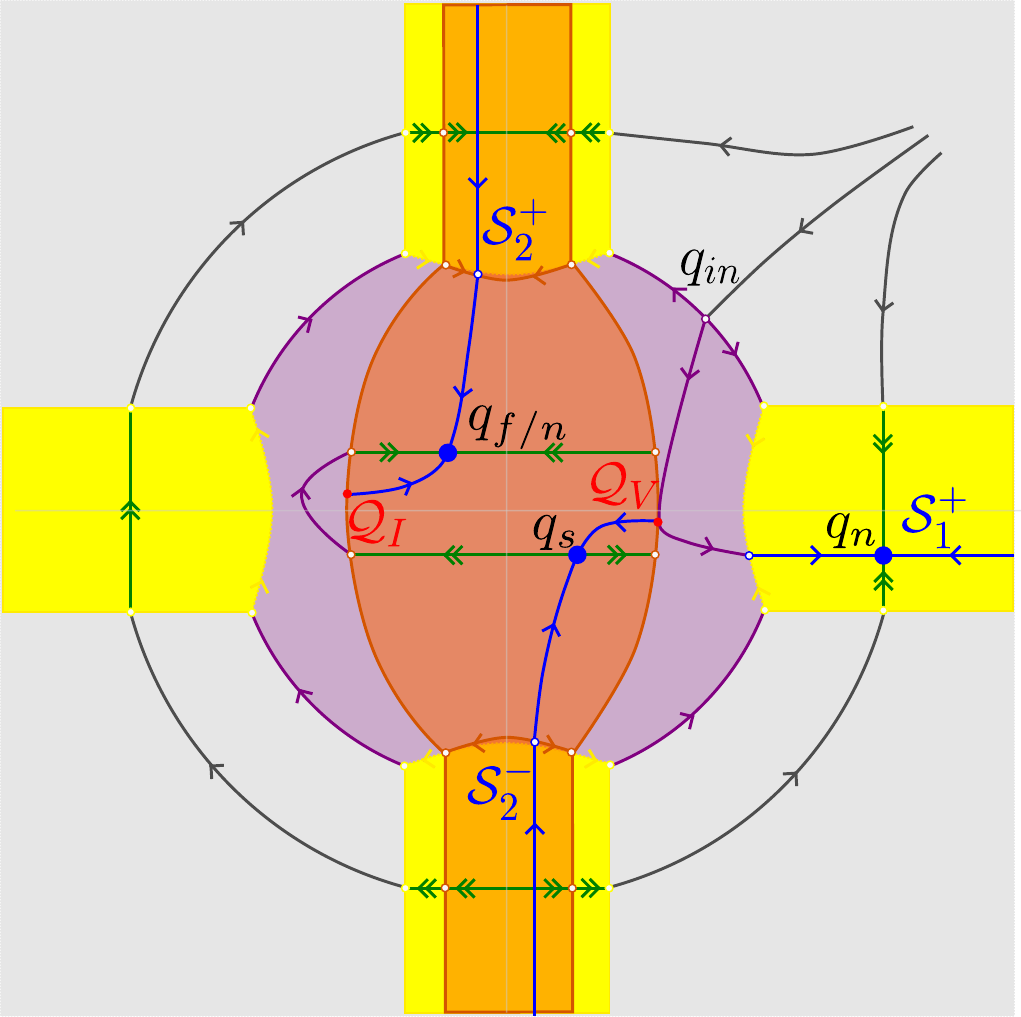}
    \caption{Region IIi.}
    \label{fig:B3_IIi}
    \end{subfigure}
    \hfill
    \begin{subfigure}{0.3\textwidth}
         \includegraphics[width=\linewidth]{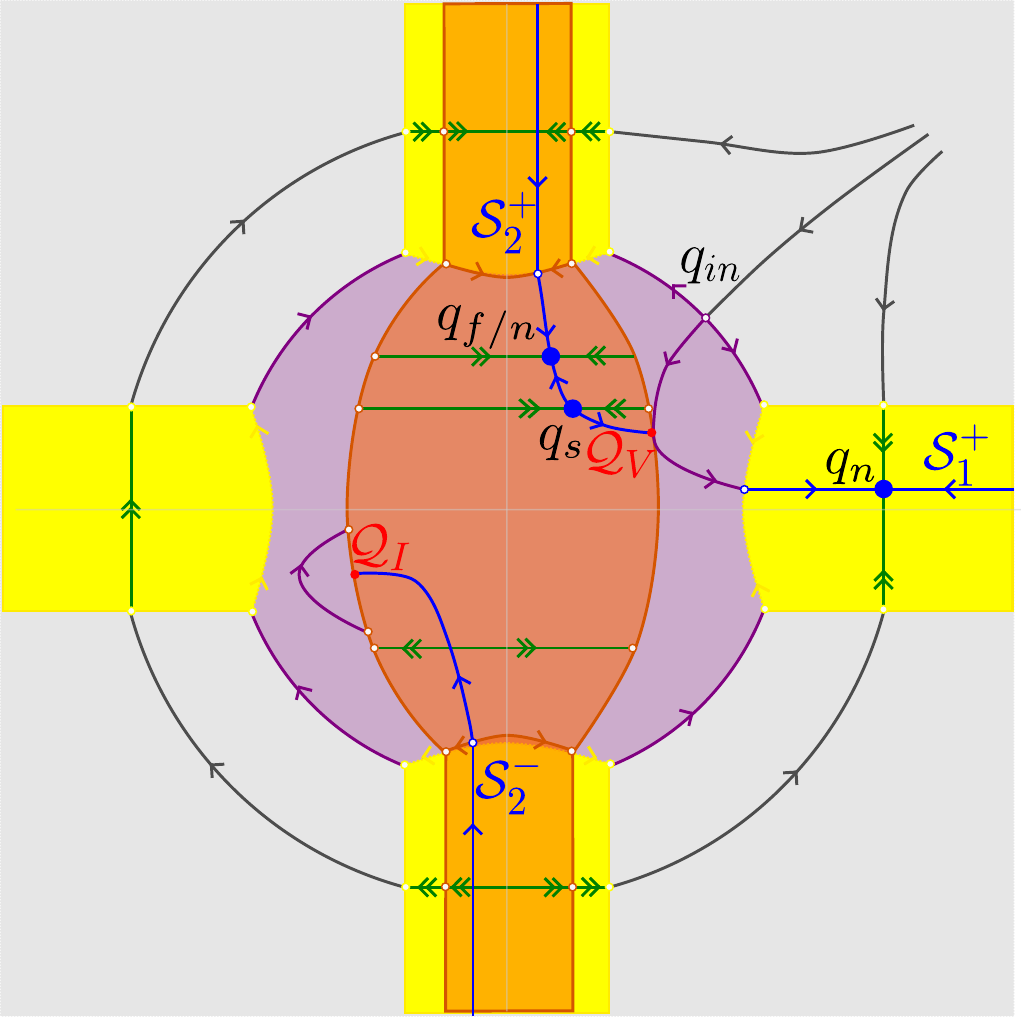}
    \caption{Region IIii.}
    \label{fig:B3_IIii}
    \end{subfigure}
    \caption{Phase portraits in the parameter regions V, IIi and IIii shown in (a), (b) and (c), respectively. As seen already in Figure \ref{fig:B3_kappa1_0.5_kappa2_varied}, the critical manifolds $\mathcal S^{a/r}$ reverse their orientation as $\kappa_1$ is varied over $1/2$; this accounts for the different geometry in region IIi (subfigure (b) and the only case shown here with $\kappa_1 > 1/2$). Here was also shown that the location and type of the equilibria $q_s$ and $q_{f/n}$ depends upon the region: $q_s, q_{f/n} \in \mathcal S^r$ in region V (a), $q_s \in \mathcal S^r$ and $q_{f/n} \in \mathcal S^a$ in region IIi (b), and $q_s, q_{f/n} \in \mathcal S^a$ in region IIii (c).}
    \label{fig:B3_area_I_II_III_regular}
\end{figure}


The (extended) saddle-node curve along $\kappa_1 = \bar \kappa_{1,sn}(\kappa_2)$ shown in Figure \ref{fig:B3_bifurcation_diagram_singular} is degenerate at the red point $(\kappa_1, \kappa_2) = (1/2,3/4)$ which lies at the turning point, which we have (somewhat preemptively) labeled "BT" because it lies at the limiting location of the Bogdanov-Takens point identified in the $B_2$ analysis (consider letting $\tilde \eps_1 = 1 / \tilde \eps_2 \to 0$ in equation \eqref{eq:BT_pars_B2}). In order to gain a better understanding of the dynamics and bifurcation structure near $(\kappa_1, \kappa_2) = (1/2, 3/4)$ and close to the line segment $\{(1/2, \kappa_2) : \kappa_2 \in (3/4,1) \}$ which emanates from this point, we need to apply subsequent blow-ups.

\subsubsection{Singular Bogdanov-Takens bifurcation}
\label{sec:B3_BT_blow-up}

We start by dropping the subscripts in system \eqref{eq:B3_spherical_cylinder_scaling_chart} and blowing up the degenerate point $(\kappa_1,\kappa_2,\tilde \eps)=(\frac{1}{2},\frac{3}{4},0)$ in parameter space. 
We apply a weighted spherical blow-up defined via
\begin{equation} \label{eq:B3_parameter_BU_spherical}
    \sigma \geq 0, \ (\Bar{\kappa}_1,\Bar{\kappa}_2,\bar{\tilde \eps}) \in \mathbb{S}^2 \mapsto 
    \begin{cases}
        \kappa_1=\frac{1}{2}+\sigma^2 \Bar{\kappa}_1, \\
        \kappa_2=\frac{3}{4}+\sigma \Bar{\kappa}_2, \\
        \tilde \eps = \sigma \bar{\tilde \eps} ,
    \end{cases}
\end{equation}
and consider the dynamics in the `scaling chart' $\bar{\tilde \eps} = 1$, with local coordinates denoted by
\[
\kappa_1 = \frac{1}{2} + \tilde \eps^2 \gamma_1, \qquad \kappa_2 = \frac{3}{4} + \tilde \eps \gamma_2,
\]
see Figure \ref{fig:B3_spherical_parameter_blow-up}.
In this chart, system \eqref{eq:B3_spherical_cylinder_scaling_chart} 
becomes
\begin{equation} \label{eq:B3_parameter_scaling_chart}
\begin{split}
    x' &= \mathcal F(x,y,\gamma_1,\tilde \eps,\eps) , \\
    y'&= \tilde \eps \mathcal G(x,y,\gamma_2,\tilde \eps,\eps) ,
\end{split}
\end{equation}
where $\mathcal F, \mathcal G$ satisfy
\[
\begin{split}
    \mathcal F(x,y,\gamma_1,\tilde \eps,\eps) &= \frac{1}{\theta_1}[\Hat{H}(x)+\Hat{H}(y)-2\Hat{H}(x)\Hat{H}(y)-\frac{1}{2}-\tilde \eps^2\gamma_1]+\mathcal{O}(\tilde \eps \eps) , \\
    \mathcal G(x,y,\gamma_2,\tilde \eps,\eps) &= \frac{1}{\theta_2}[1-\Hat{H}(x)\Hat{H}(y)-\frac{3}{4}-\tilde \eps\gamma_2+\mathcal{O}(\eps)] ,
\end{split}
\]
as $\eps \to 0$. 
\begin{figure}[t!]
    \centering
    \begin{subfigure}{0.49\textwidth}
         \includegraphics[width=1\linewidth]{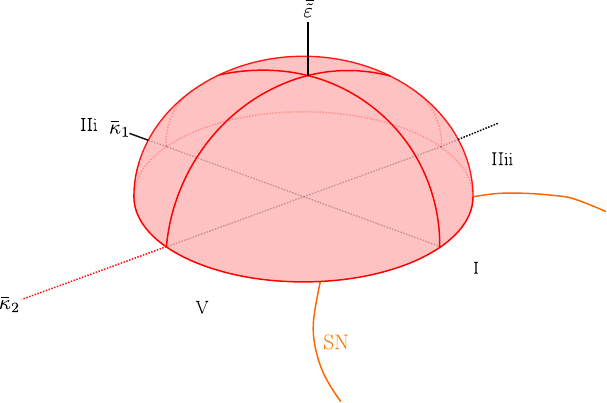}
    \caption{}
    \label{fig:B3_spherical_parameter_blow-up}
    \end{subfigure}
    \hfill
    \begin{subfigure}{0.49\textwidth}
        \includegraphics[width=1\linewidth]{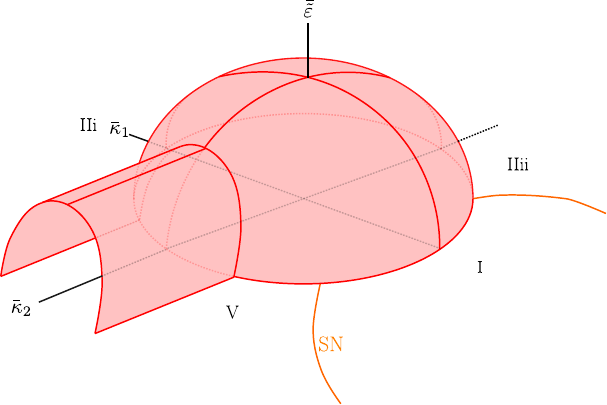}
    \caption{}
    \label{fig:B3_parameter_blow-up_cylinder_attached}
    \end{subfigure}
    \caption{(a) Spherical parameter blow-up of the degenerate point $BT: (\kappa_1,\kappa_2, \tilde \eps) =(1/2,3/4,0)$ (red point in Figure \ref{fig:B3_bifurcation_diagram_singular}) via \eqref{eq:B3_parameter_BU_spherical}. (b) Subsequent cylindrical blow-up of the degenerate line segment $\{(1/2,\kappa_2,0), \, \kappa_2 \in (3/4,1)\}$ (dotted-red in Figure \ref{fig:parameter_blow-up_2D_sequence}) via \eqref{eq:B3_parameter_BU_cylinder}.}
    \label{fig:B3_parameter_sequence}
\end{figure}
This system slow-fast with respect to $0 < \tilde \eps \ll 1$, and regularly perturbed by $0 < \eps \ll 1$. A direct calculation shows that $\mathcal F(0, y, \gamma_1, 0, \eps) = \mathcal F(x, 0, \gamma_1, 0, \eps) = 0$, and that the critical manifold is given by $\mathcal S = \mathcal S^r \cup \mathcal C \cup \mathcal S^a$ as before; we refer again to Figure \ref{fig:B3_kappa1=1/2}. 
Another direct calculation shows that the reduced problem on the normally hyperbolic part $\mathcal{S}^r \cup \mathcal S^a$ is given by 
\[
\Dot{y} = \frac{1}{4} - \frac{\Hat{H}(y)}{2} , \qquad y \neq 0 .
\]
The extension of this flow to $y \in \R$ has a unique equilibrium at $y = 0$, i.e.~at $(0,0) \in \mathcal C$. In order to understand the dynamics near $(0,0)$, we apply the following spherical blow-up to the extended system obtained after appending $\tilde \eps' = 0$ to system \eqref{eq:B3_parameter_scaling_chart} in the variable/state space:
\begin{equation}
\label{eq:B3_BT_BU}
    \rho \geq 0 , \ (\Bar{x},\Bar{y},\Bar{\tilde \eps}) \in \mathbb{S}^2 \mapsto 
    \begin{cases}
        x=\rho \Bar{x}, \\
        y=\rho \Bar{y}, \\ 
        \tilde \eps=\rho \Bar{\tilde \eps} .
    \end{cases}
\end{equation}
We shall work with the (desingularised) equations in the scaling chart $\bar{\tilde \eps} = 1$ with coordinates
$$x = \tilde \eps x_2, \qquad y = \tilde \eps y_2, $$ 
which can be shown to be given by
\begin{equation} \label{eq:B3_BT_blow-up_scaling}
    \begin{aligned} 
    x_2' & =\frac{1}{\theta_1} \left( -\frac{x_2 y_2}{8}-\gamma_1 \right) +\mathcal{O}(\tilde \eps, \eps) ,\\
    y_2' &= \frac{1}{\theta_2} \left( -\frac{x_2 + y_2}{8} -\gamma_2 \right)+\mathcal{O}(\tilde \eps^2, \eps) .
\end{aligned}
\end{equation}
Note that we have used the fact that $\Hat{H}(z)=\tfrac{1}{2}+\tfrac{z}{4}+\mathcal{O}(z^3)$ as $z \to 0$ in order to obtain these equations. The upshot of all of this is that system \eqref{eq:B3_BT_blow-up_scaling} is a \textit{regular perturbation} of the limiting system
\begin{equation}\label{eq:B3_BT_blow-up_scaling_rho=0}
    \begin{aligned} 
    x_2 '&= \frac{1}{\theta_1}\left( -\frac{x_2 y_2}{8} - \gamma_1 \right) , \\
    y_2' &= \frac{1}{\theta_2}\left( -\frac{x_2 + y_2}{8} - \gamma_2 \right) ,
\end{aligned}
\end{equation}
which we can analyse using standard bifurcation theory.

\begin{prop}
\label{prop:BT_B3}
    Consider system \eqref{eq:B3_BT_BU} and fix $M > 0$. There exists $\tilde \eps_0, \eps_0 > 0$ such that for all $\tilde \eps \in [0,\tilde \eps_0)$ and $\eps \in [0,\eps_0)$, the following assertions are true:
    \begin{itemize}
        \item[(i)] There are $C^1$-smooth functions $\gamma_{i,BT} : [0,\tilde \eps_0) \times [0,\eps_0) \to \R$, $i \in \{1,2\}$, such that there is a regular Bogdanov-Takens bifurcation when $(\gamma_1, \gamma_2) = (\gamma_{1,BT}(\tilde \eps, \eps), \gamma_{2,BT}(\tilde \eps, \eps))$, where
        \[
        \left(\gamma_{1,BT}(0, 0), \gamma_{2,BT}(0, 0) \right) = \left( - \frac{\theta_1^2}{8\theta_2^2}, \frac{\theta_1}{4\theta_2} \right) .
        \]
        \item[(ii)] There is a $C^1$-smooth function $\gamma_{1,sn} : [-M,M] \times [0,\Tilde \eps_0) \times [0, \eps_0)$ such that there is a regular saddle-node bifurcation when $\gamma_1 = \gamma_{1,sn}(\gamma_2, \tilde \eps, \eps)$ unless $\gamma_2 = \gamma_{2,BT}(\tilde \eps, \eps)$. We have
        \[
        \gamma_{1,sn}(\gamma_2, 0, 0) = - 2 \gamma_2^2 , \qquad \gamma_2 \neq \gamma_{2,BT}(0,0).
        \]
        \item[(iii)] There is a $C^1$-smooth function $\gamma_{1,h} : [-M,M] \times [0,\tilde \eps_0) \times [0, \eps_0)$ such that there is a subcritical Hopf bifurcation when $\gamma_1 = \gamma_{1,h}(\gamma_2, \tilde \eps, \eps)$ and $\gamma_2 > \gamma_{2,BT}(\tilde \eps, \eps)$. We have
        \[
        \gamma_{1,h}(\gamma_2, 0, 0) = \frac{\theta_1}{\theta_2} \left( \frac{\theta_1}{8 \theta_2} - \gamma_2 \right) .
        \]
    \end{itemize}
\end{prop}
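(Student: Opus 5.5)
The plan is to treat \eqref{eq:B3_BT_blow-up_scaling} as a regular perturbation of the quadratic limiting system \eqref{eq:B3_BT_blow-up_scaling_rho=0}, on compact subsets of the $(x_2,y_2)$-plane and for $(\gamma_1,\gamma_2)$ in compact sets. I would first establish all three bifurcations, with their nondegeneracy conditions, for $\tilde\eps=\eps=0$. Then I would perturb in $(\tilde\eps,\eps)$ exactly as in the proofs of Propositions \ref{prop:B2} and \ref{prop:B2_SN_Hopf}: implicit function theorem plus the fact that the genericity conditions are ``non-zero conditions''. Since the remainders in \eqref{eq:B3_BT_blow-up_scaling} are smooth and $\mathcal O(\tilde\eps,\eps)$ on such compact sets, this reduction is routine.

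\textbf{Equilibria, saddle-node, BT and Hopf curves in the limit.} From $y_2'=0$ we get $x_2=-y_2-8\gamma_2$. Substituting into $x_2'=0$ gives $y_2^2+8\gamma_2y_2-8\gamma_1=0$. The Jacobian is
\[
J=\begin{pmatrix} -\frac{y_2}{8\theta_1} & -\frac{x_2}{8\theta_1}\\ -\frac{1}{8\theta_2} & -\frac{1}{8\theta_2}\end{pmatrix},\qquad
\det J=\frac{y_2-x_2}{64\theta_1\theta_2},\qquad \trace J=-\frac{y_2}{8\theta_1}-\frac{1}{8\theta_2}.
\]
The condition $\det J=0$ forces $x_2=y_2$, which is the double root $y_2=-4\gamma_2$. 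This occurs precisely when the discriminant vanishes, i.e.\ $\gamma_1=-2\gamma_2^2$, which is assertion (ii) at $\tilde\eps=\eps=0$.

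Adding $\trace J=0$ gives $y_2=-\theta_1/\theta_2$, hence $\gamma_2=\theta_1/(4\theta_2)$ and $\gamma_1=-\theta_1^2/(8\theta_2^2)$. This is the BT point of assertion (i).

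For the Hopf curve, impose $\trace J=0$ at an equilibrium: $y_2=-\theta_1/\theta_2$ and $x_2=\theta_1/\theta_2-8\gamma_2$. Then $\gamma_1=-x_2y_2/8=\frac{\theta_1}{\theta_2}\bigl(\frac{\theta_1}{8\theta_2}-\gamma_2\bigr)$. Along this curve $\det J>0$ if and only if $y_2>x_2$, i.e.\ $\gamma_2>\theta_1/(4\theta_2)$, which matches the restriction $\gamma_2>\gamma_{2,BT}$ in (iii).

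\textbf{Nondegeneracy.} For (ii), I would check (SN.1)--(SN.2) with the explicit kernel vectors, as in Proposition \ref{prop:B2_SN_Hopf}. The transversality $\partial_{\gamma_1}$ is nonzero since the $x_2'$-equation depends on $\gamma_1$ with coefficient $-1/\theta_1$. The quadratic coefficient comes from the single nonlinearity $-x_2y_2/(8\theta_1)$, and I expect it to be nonzero away from the BT point.

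For (i), I would verify (BT.0)--(BT.3) of \cite[Thm.~8.4]{Kuznetsov_1998} directly. The system is quadratic with a single nonlinear monomial, so $a_{20}$, $b_{20}$, $b_{11}$ and the regularity of $(Z,\gamma)\mapsto(F,\trace J,\det J)$ are short hand computations; the regularity map here is linear enough that its Jacobian determinant can be written explicitly. The sign of $a_{20}+b_{11}$ (or equivalently $s=\sgn(b_{20}(a_{20}+b_{11}))$) then determines subcriticality near the BT point.

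\textbf{Main obstacle.} The hardest step is assertion (iii) on the whole interval $[-M,M]$, not just near BT. This requires the first Lyapunov coefficient $\ell_1$ to be nonzero (and negative-subcritical, i.e.\ positive) along the entire Hopf curve. Unlike in Remark \ref{rem:implicit_hopf}, the limiting system is now an explicit quadratic vector field. I would therefore translate to the equilibrium $(x_2,y_2)=(\theta_1/\theta_2-8\gamma_2,\,-\theta_1/\theta_2)$ and bring the linear part to rotation form with frequency $\omega=\sqrt{\det J}$. I would then apply the standard quadratic-system formula for $\ell_1$ (cubic terms vanish, so only products of quadratic coefficients appear). I expect a closed-form expression whose sign is constant for $\gamma_2>\theta_1/(4\theta_2)$ and which agrees with the BT-determined sign as $\gamma_2\downarrow\gamma_{2,BT}$.

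If a clean sign does not emerge, the fallback is to prove (iii) only in a neighbourhood of the BT point, via the BT unfolding. I would then treat the rest of the curve by checking the transversality $\frac{d}{d\gamma_1}\trace J\neq0$ together with $\ell_1\neq0$ on the compact set $[-M,M]$.

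Transversality itself is easy here. Along the equilibrium branch, $\trace J$ depends on $\gamma_1$ through $y_2$, and $\partial y_2/\partial\gamma_1\neq0$ when $\det J\neq0$.

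Persistence for $0\le\tilde\eps,\eps\ll1$ then follows by the implicit function theorem applied to $(Z,\gamma_1,\tilde\eps,\eps)\mapsto(\hat F,\trace J)$, with the nonzero conditions preserved under small perturbation.
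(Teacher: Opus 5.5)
Your proposal follows the paper's proof essentially step for step. Both first establish all three bifurcations for the quadratic limiting system \eqref{eq:B3_BT_blow-up_scaling_rho=0}: explicit equilibria, then $\det J=0$ giving $\gamma_1=-2\gamma_2^2$, $\trace J=0$ giving the BT point and the Hopf curve, followed by checks of (BT.0)--(BT.3), (SN.1)--(SN.2) and Hopf transversality, with persistence for $0\le\tilde\eps,\eps\ll1$ deferred to the implicit function theorem and the openness of these non-zero conditions. The explicit first Lyapunov coefficient you anticipate is exactly what the paper computes via the Guckenheimer--Holmes formula: a closed form with a factor $(4\gamma_2\theta_2-\theta_1)^{-1}$, positive for all $\gamma_2>\gamma_{2,BT}$, so your fallback is not needed.
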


\begin{proof}
We prove the corresponding (but simpler) statement for the limiting system \eqref{eq:B3_BT_blow-up_scaling_rho=0}; the perturbation arguments which guarantee persistence for all $0 \leq \tilde \eps, \eps \ll 1$ are similar to those presented in earlier proofs, e.g.~the proof of Proposition \ref{prop:B2}, so we omit them here for brevity. 

System \eqref{eq:B3_BT_blow-up_scaling_rho=0} has equilibria at
\begin{align*}
    (x_2^1 , y_2^1) &= \left(-4\gamma_2 - 4\sqrt{\gamma_2^2+\frac{\gamma_1}{2}},-4\gamma_2 + 4\sqrt{\gamma_2^2+\frac{\gamma_1}{2}}\right) ,\\
    (x_2^2 , y_2^2) &= \left(-4\gamma_2 + 4\sqrt{\gamma_2^2+\frac{\gamma_1}{2}},-4\gamma_2 - 4\sqrt{\gamma_2^2+\frac{\gamma_1}{2}}\right) ,
\end{align*}
whenever $\gamma_2^2 + \gamma_1 / 2 \geq 0$. The Jacobian of \eqref{eq:B3_BT_blow-up_scaling_rho=0} is given by
$$J=\begin{pmatrix}
    -\frac{y}{8 \theta_1} & - \frac{x}{8\theta_1}\\
    -\frac{1}{8\theta_2} & -\frac{1}{8\theta_2}
\end{pmatrix}.$$
A direct calculation shows that the Jacobian matrix $J$ associated with the linearisation at $(x_2^{1,2}, y_2^{1,2})$ satisfies $\trace J = \det J = 0$ if an only if $x_2^{1,2} = y_2^{1,2} = z_{2,BT}$, where $z_{2,BT} = -\theta_1 / \theta_2$ and
\[
\gamma_1 = \gamma_{1,BT} = - \frac{\theta_1^2}{8 \theta_2^2}, \qquad
\gamma_2 = \gamma_{2,BT} = \frac{\theta_1}{4 \theta_2} .
\]
This suggests the presence of a Bogdanov-Takens bifurcation at $(x_2, y_2) = (z_{2,BT}, z_{2,BT})$ when $(\gamma_1, \gamma_2) = (\gamma_{1,BT},\gamma_{2,BT})$.
%
In order to prove this, we first translate the proposed Bogdanov-Takens point to the origin via
$$X=x_2-z_{2,BT}, \qquad Y=y_2-z_{2,BT}, \qquad \Tilde{\gamma}_1=\gamma_1-\gamma_{1,BT}, \qquad \Tilde{\gamma}_2=\gamma_2-\gamma_{2,BT},$$
which transforms system \eqref{eq:B3_BT_blow-up_scaling_rho=0} into
\begin{equation}\label{eq:B3_BT_blow-up_scaling_BT_transformed}
\begin{aligned} 
    X'&=\frac{1}{\theta_1}\left(-\frac{XY}{8}+\frac{\theta_1}{8\theta_2}(X+Y)-\Tilde{\gamma}_1 \right) , \\ 
    Y'&=\frac{1}{\theta_2}\left( -\frac{X+Y}{8} -\Tilde{\gamma}_2\right). 
\end{aligned}
\end{equation}
System \eqref{eq:B3_BT_blow-up_scaling_BT_transformed} has an equilibrium at $X = Y = \tilde \gamma_1 = \tilde \gamma_2 = 0$, and the Jacobian matrix associated with the linearisation at this point is given by
\begin{equation} \label{eq:B3_singular_BT_Jacobian_at_equilibrium}
   J_0 = \frac{1}{8\theta_2} \begin{pmatrix}
        1 & 1\\
        -1 & -1
    \end{pmatrix} . 
\end{equation}
As expected, $\trace J_0 = \det J_0 = 0$, so $J_0$ has two zero eigenvalues.

In order to confirm that system \eqref{eq:B3_BT_blow-up_scaling_BT_transformed} has a regular Bogdanov-Takens bifurcation, we need to check the conditions (BT.0)-(BT.3) for the application of \cite[Theorem 8.4]{Kuznetsov_1998}; we refer back to the proof of Proposition \ref{lem:B2_BT} for details. (BT.0) is immediate; equation \eqref{eq:B3_singular_BT_Jacobian_at_equilibrium} shows that $J_0 \neq \mathbb O_{2,2}$. In order to check (BT.1)-(BT.2), we need the (generalized) right and left eigenvectors of $J_0$, which we denote again by $v_0, v_1$ and $w_1, w_0$ respectively. These are given by 
\[
v_0=\begin{pmatrix}
    1\\-1
\end{pmatrix}, \qquad 
v_1=\begin{pmatrix}
    1\\0
\end{pmatrix}, \qquad
w_1=\begin{pmatrix}
    1\\1
\end{pmatrix}, \qquad 
w_0=\begin{pmatrix}
    0\\-1
\end{pmatrix}, \qquad 
\]
and satisfy the normalisation property $\langle v_0,w_0 \rangle=\langle v_1,w_1\rangle=1$ and $\langle v_1,w_0\rangle=\langle v_0,w_1\rangle=0$.
The linear coordinate change
\[
z_1 = \langle Z,w_0 \rangle = -Y , \qquad
z_2 = \langle Z,w_1 \rangle = X + Y ,
\]
puts system \eqref{eq:B3_BT_blow-up_scaling_BT_transformed} into the following local normal form:
\begin{equation}\label{eq:B3_BT_blow-up_scaling_BT_transformed_normal_form}
    \begin{aligned} 
    z_1'&=\frac{1}{\theta_2}\left(\frac{z_2}{8}+\Tilde{\gamma}_2\right) , \\
    z_2'&=\frac{1}{\theta_1}\left(\frac{z_1(z_2+z_1)}{8}-\Tilde{\gamma}_1-\frac{\theta_1 \Tilde{\gamma}_2}{\theta_2} \right) .
\end{aligned}
\end{equation}
We may write this as $(z_1', z_2') = (F_1(z_1, z_2, \tilde \gamma) , F_2(z_1, z_2, \tilde \gamma))$, where $\tilde \gamma := (\tilde \gamma_1, \tilde \gamma_2)$. A direct calculation shows that
\[
a_{20}(\Tilde{\gamma}) = \frac{\partial^2 F_1}{\partial z_1^2} (0,0,\tilde \gamma) = 0, \qquad 
b_{11}(\Tilde{\gamma}) = \frac{\partial^2 F_2}{\partial z_1 \partial z_2} (0,0,\tilde \gamma) = \frac{1}{8\theta_1}, \qquad
b_{20}(\Tilde{\gamma}) = \frac{\partial^2 F_2}{\partial z_1^2} (0,0,\tilde \gamma) = \frac{1}{4 \theta_1} .
\]
Therefore,
\begin{equation}
\label{eq:BT_coefficients}
    a_{20}(0) + b_{11}(0) = \frac{1}{8\theta_1} > 0 , \qquad
    b_{20}(0) = \frac{1}{4 \theta_1} > 0 ,
\end{equation}
which shows that (BT.1)-(BT.2) are satisfied. The former inequality in particular shows that the associated Hopf bifurcation is subcritical in a neighbourhood of the Bogdanov-Takens point.

It remains to check (BT.3), i.e., the regularity of the map $(X,Y,\tilde{\gamma}) \mapsto (f_1,f_2,\trace J_0, \det J_0)$ at $(0,0,0)$, where the functions $f_i$ are defined via the right-hand side of \eqref{eq:B3_BT_blow-up_scaling_BT_transformed}, i.e.~via $(X', Y') = (f_1(X, Y, \tilde \gamma), f_2(X, Y, \tilde \gamma))$. This can be verified with a direct calculation that shows that the determinant associated with the differential of this map has the form $-1 / 8^3 \theta_1^3\theta_2^2 \neq 0$. Thus (BT.3) is satisfied, which shows that system \eqref{eq:B3_BT_blow-up_scaling_BT_transformed} has a regular Bogdanov-Takens bifurcation at the origin when $\tilde \gamma = (0, 0)$. This shows that the limiting system \eqref{eq:B3_BT_blow-up_scaling_rho=0} has a regular Bogdanov-Takens bifurcation when $(\gamma_1, \gamma_2) = (\gamma_{1,BT}, \gamma_{2,BT})$. Perturbation arguments analogous to those which were given in earlier proofs imply Assertion (i).

Assertions (ii)-(iii) can also be proven for the limiting system \eqref{eq:B3_BT_blow-up_scaling_rho=0}, and subsequently shown to persist for $0 \leq \tilde \eps, \eps \ll 1$. The leading order expressions for the saddle-node and Hopf curves $\gamma_{1,sn}(\gamma_2,0,0)$ and $\gamma_{1,h}(\gamma_2,0,0)$ are obtained directly as solutions to $\det J|_{(x^{1,2}_2, y^{1,2}_2)} = 0$ and $\trace J|_{(x_2^1,y_2^1)} = 0$ respectively. Here $J$ is the Jacobian of \eqref{eq:B3_BT_blow-up_scaling_rho=0}.
Note that $\det J|_{(x_2^-,y_2^+)}>0$ if and only if $\gamma_2>\gamma_{2,BT}(0,0)$, which restricts the Hopf curve $\gamma_{1,h}(\gamma_2,0,0)$.

It remains to check the genericity conditions associated with the saddle-node and Hopf bifurcations. The saddle-node conditions (SN.1)-(SN.2) are the same as in Proposition \ref{prop:B2_SN_Hopf}. To verify (SN.1)-(SN.2), we denote the right-hand side of system \eqref{eq:B3_BT_blow-up_scaling_rho=0} by $F(x_2, y_2, \gamma_1, \gamma_2)$ and note that the saddle-node bifurcation occurs when $x^{1,2}_2 = y^{1,2}_2 = z_{sn} = - 4 \gamma_2$. A direct calculation shows that $J|_{(z_{sn},z_{sn})}$ has a zero eigenvalue with associated left and right eigenvectors $w = (1, 4 \gamma_2 \theta_2 / \theta_1)^T$ and $v = (1, -1)$ respectively. 
Using these expressions, we find that (SN.1) is satisfied with
$$w^TF_{\gamma_1}(z_{SN},z_{SN},\gamma_1,\gamma_{2,sn})=-\frac{1}{\theta_1}\neq 0,$$
and that (SN.2) is satisfied with
$$w^TD^2F(z_{SN},z_{SN},\gamma_1,\gamma_{2,sn})[v,v]=\frac{1}{4\theta_1}\neq 0.$$
This proves that the limiting system \eqref{eq:B3_BT_blow-up_scaling_rho=0} undergoes a saddle-node bifurcation when $\gamma_1 = \gamma_{1,sn} = -2 \gamma_2^2$. Perturbation arguments similar to those applied in earlier proofs yield Assertion (ii).

The transversality condition (H.1) for the Hopf bifurcation is satisfied with 
$$\frac{\partial \trace J|_{(x_2^1,y_2^1)}}{\partial \gamma_1}(\gamma_{1,h}(\gamma_2,0,0))=\frac{1}{|\gamma_2-\frac{\theta_1}{4\theta_2}|}\neq 0$$
and defined for all $\gamma_2 \neq \gamma_{2,BT}(0,0)$. Thus, it remains to show that the first Lyapunov coefficient $l_1$ satisfies $l_1 > 0$. Although the first inequality in \eqref{eq:BT_coefficients} is sufficient to ensure that the Hopf bifurcations are subcritical in a neighbourhood of the Bogdanov-Takens point, a separate calculation is needed to prove subcriticality for $\gamma_2$ values bounded above $\gamma_{2,BT}(0,0) =\theta_1/ 4 \theta_2$. We calculated the first Lyapunov coefficient directly (after a linear translation and a subsequent basis change) using \cite[formula (3.4.11)]{Guckenheimer_1983}, and found that
\[
l_1 = \frac{\theta_{2}}{128 \theta_{1} \left(4 \gamma_{2} \theta_{2} - \theta_{1}\right)} .
\]
This confirms that the Hopf bifurcation is subcritical for all $\gamma_2 >\gamma_{2,BT}(0,0) = \theta_1 / 4 \theta_2$. Perturbation arguments similar to those applied in earlier proofs imply Assertion (iii).
%
\end{proof}


Proposition \ref{prop:BT_B3} justifies the two-parameter bifurcation set which appears on the blow-up sphere in the (blown-up) bifurcation diagram in Figure \ref{fig:main_results} and can be used to prove Theorem \ref{thm:B3_BT}.

\begin{proof}[Proof of Theorem \ref{thm:B3_BT}]
    Theorem \ref{thm:B3_BT} follows directly from Proposition \ref{prop:BT_B3} after applying the relevant blow-down transformation, which forces the restriction to $(\tilde\eps_1, \eps_2) \in (0,\tilde \eps_{1,0}) \times (0, \eps_{2,0})$.
\end{proof}


We conclude this section with some remarks on the global dynamics. In particular, as with our analyses in $B_1$ and $B_2$ above, we can identify a heteroclinic-like connection of the continuation of $\mathcal{S}_2^+$ and one of the equilibria of \eqref{eq:B3_BT_blow-up_scaling_rho=0}. We notice that setting $\gamma_1=0$ in \eqref{eq:B3_BT_blow-up_scaling_rho=0} leads to
\[
x_2'|_{x_2 = 0} = 0 , \qquad 
y_2'|_{x_2 = 0} = \frac{1}{\theta_2}\left( -\frac{y_2}{8} - \gamma_2 \right),
\]
which shows that the $y_2$-axis and (singular) extension of the extension of the critical manifolds $\mathcal{S}_2^{\pm}$ for $\gamma_1=1/2$) is invariant. Furthermore, we can check that for $\gamma_1=0$ the equilibria of \eqref{eq:B3_BT_blow-up_scaling_rho=0} simplify to 
$$(x_2^1,y_2^1)=(-8\gamma_2,0),\qquad (x_2^2,y_2^2)=(0,-8\gamma_2).$$
The eigenvalues of the Jacobian $J$ evaluated at the equilibria are given by 
$$\lambda_1^1=-\frac{1}{16}+\frac{\sqrt{1-32\gamma_2}}{16}, \ \lambda_2^1=-\frac{1}{16}-\frac{\sqrt{1-32\gamma_2}}{16}; \quad \lambda_1^2=\gamma_2,\ \lambda_2^2=-\frac{1}{8},$$
respectively. We conclude that the equilibrium $(x_2^2,y_2^2)$ which lies on the invariant $y_2$-axis corresponds to $q_s$ ($q_{f/n})$) for $\gamma_2>0$ ($\gamma_2<0$).
This implies a heteroclinic connection between the endpoint of $\mathcal{S}_2^+$ and $q_s$ ($q_{f/n}$) when $\gamma_2>0$ ($\gamma_2<0$).

\begin{rem} \label{rem:B3_heteroclinic_connection}
The preceding observations suggest the existence of a function $\gamma_{1,het}(\gamma_2,\Tilde{\eps}_1,\eps_2)$ with $\gamma_{1,het}(\gamma_2,0,0)=0$ such that the heteroclinic perturbs to a kind of `separatrix' for $\gamma_1=\gamma_{1,het}(\gamma_2,\Tilde{\eps}_1,\eps_2)$ along which the slow manifold, which perturbs from $\mathcal{S}_2^+$, coincides with the stable manifold of $q_s$ if $\gamma_2>0$. These observations, together with Proposition \ref{prop:B3_Hopf_canard} Assertion (ii) below, justify our arguments for the global dynamics in Remark \ref{rem:main_global_dynamics}. 
They also explain the extension of the canard branch for $\kappa_1 = \kappa_{1,c}$ onto the blow-up sphere and up to the turning point on the saddle-node curve (SN) shown in Figure \ref{fig:B3_blown-up_bifurcation_set} below. Since we do not consider the perturbation of the heteroclinic connection identified above, however, the expected behaviour after perturbation shown in Figure \ref{fig:B3_blown-up_bifurcation_set} is only a conjecture.  
\end{rem}

\subsubsection{Singular Hopf bifurcation and canards} 
\label{sec:B3_Hopf_and_canard}


It remains to prove Theorem \ref{thm:B3}. In order to do so, we need to resolve the degeneracy which occurs along the line segment $\kappa_1 = 1/2$, $\kappa_2 > 3/4$ and $\tilde \eps = 0$ of the bifurcation set shown in Figure \ref{fig:B3_spherical_parameter_blow-up}. This is necessary in order to proof Assertions (ii)-(iii) in Theorem \ref{thm:B3}, and will require one final blow-up in parameters, followed by additional blow-ups in the phase space. Since $\kappa_2 > \frac{3}{4}$, we shall work in the coordinate chart $\Bar{\kappa}_2=1$ associated with the blow-up transformation defined in \eqref{eq:B3_parameter_BU_spherical}. We denote the relevant parameters in this chart by 
\[
\tilde \eps = \sigma_3 \tilde \eps_3 ,\qquad 
\kappa_1 = \frac{1}{2} + \sigma_3^2 \kappa_{1,3}, \qquad
\kappa_2 = \frac{3}{4} + \sigma_3.
\]
Applying this transformation to system \eqref{eq:B3_spherical_cylinder_scaling_chart} (after dropping the subscripts there) leads to
\begin{equation} \label{eq:B3_parameter_upper_chart}
\begin{split}
    x' &= \overline{\mathcal F}(x,y,\kappa_{1,3}, \sigma_3, \tilde \eps_3) , \\
    y' &= \sigma_3 \tilde \eps_3 \overline{\mathcal G}(x,y,\kappa_{1,3}, \sigma_3, \tilde \eps_3) ,
\end{split}
\end{equation}
where $\overline{\mathcal F}, \overline{\mathcal G}$ are smooth functions which satisfy
\[
\begin{split}
    \overline{\mathcal F}(x,y,\kappa_{1,3}, \sigma_3, \tilde \eps_3) &= \frac{1}{\theta_1}[\Hat{H}(x)+\Hat{H}(y)-2\Hat{H}(x)\Hat{H}(y)-\frac{1}{2}-\sigma_3^2\kappa_{1,3}] + \mathcal{O}(\sigma_3 \tilde \eps_3 \eps) , \\
    \overline{\mathcal G}(x,y,\kappa_{1,3}, \sigma_3, \tilde \eps_3) &= \frac{1}{\theta_2}[1-\Hat{H}(x)\Hat{H}(y)-\frac{3}{4}-\sigma_3+\mathcal{O}(\eps)] .
\end{split}
\]
The layer problem \eqref{eq:B3_parameter_upper_chart}$|_{\tilde \eps_3 = 0}$ features the same critical manifold $\mathcal S^r \cup \mathcal C \cup \mathcal S^a$ when $\kappa_{1,3} = 0$, for all $\sigma_3 \geq 0$. This motivates the successive cylindrical blow-up in parameters which is sketched in Figure \ref{fig:B3_parameter_blow-up_cylinder_attached}, which is defined via
\begin{equation}
    \label{eq:B3_parameter_BU_cylinder}
    \omega \geq 0, \ (\bar \kappa_{1,3}, \bar{\tilde \eps}_3) \in \mathbb S^1 \mapsto 
    \begin{cases}
        \kappa_{1,3} = \omega \bar \kappa_{1,3} , \\
        \tilde \eps_3 = \omega^2 \bar{\tilde \eps}_3 .
    \end{cases}
\end{equation}
We shall focus entirely on the dynamics in the $\bar{\tilde \eps}_3 = 1$ chart, where we write $(\kappa_{1,3}, \tilde \eps_3) = (\omega_2 \kappa_{1,3,2}, \omega_2^2)$. Writing out the equations in this chart, dropping the subscripts, resolving the loss of hyperbolicity in variable space along $\mathcal C$ for $\omega=0$ via the blow-up transformation
\begin{equation}
\label{eq:y_blow-up}
    \rho \geq 0, \ (\bar y, \bar \omega) \in \mathbb S^1 \mapsto 
    \begin{cases}
        y = \rho \bar y, \\
        \omega = \rho \bar \omega ,
    \end{cases}
\end{equation}
applied to an extended system with $\omega' = 0$ appended and deriving the equations in the associated scaling chart $\bar \omega = 1$ where $y = \omega y_2$, we arrive at the (desingularised) system
\begin{equation} \label{eq:B3_parameter_upper_chart_cylinder}
\begin{split}
     x' &= \frac{1}{\theta_1}\left( \frac{y_2}{4}(1-2\Hat{H}(x)) - \sigma_3^2\kappa_{1,3,2} \right) + \mathcal{O}(\omega^2, \sigma_3 \eps) , \\
     y_{2}' &= \frac{\sigma_3}{\theta_2}\left( \frac{1}{4} - \sigma_3 - \frac{\Hat{H}(x)}{2} - \omega \Hat{H}(x)\frac{y_2}{4} +\mathcal{O}(\omega^3, \eps \omega) \right) ,
\end{split}
\end{equation}
which we consider with $0 \leq \omega, \eps \ll 1$, $\sigma_3 \geq 0$ and $\kappa_{1,3,2} \in \R$. Finally, for the sake of simplicity, we restrict our analysis to $\sigma_3 > 0$ and apply one last rescaling defined by
\begin{equation}
\label{eq:rescaling}
    y_2 = \sigma_3^{1/2} \Tilde{y}_2, \qquad 
    \kappa_{1,3,2} = \sigma_3^{-3/2} \gamma_1,
\end{equation}
together with a time rescaling which amounts to division of the right-hand side by $\sigma_3^{1/2}$ to obtain
\begin{equation} \label{eq:B3_kappa1=0.5_cylinder}
\begin{aligned}
         x' &= \frac{1}{\theta_1}\left( (1-2\Hat{H}(x)) \frac{\tilde y_{2}}{4} - \gamma_1 \right)  +\mathcal{O}(\omega^2,\eps) , \\
         \tilde y_{2}' &= \frac{1}{\theta_2}\left( 1 - \kappa_2  - \frac{\Hat{H}(x)}{2} - \frac{\omega \sqrt{\kappa_2 - 3/4} }{4} \Hat{H}(x) \tilde y_{2} \right) + \mathcal{O}(\omega^3, \eps \omega) ,
\end{aligned}
\end{equation}
where we also used $\kappa_2 = \sigma_3 + 3/4$.

\begin{rem}
\label{rem:restriction}
    By restricting to $\sigma_3 > 0$, we are essentially ignoring the `matching problem' associated with the connection between the blow-up sphere and cylinder in $(\kappa_1, \kappa_2, \tilde \eps)$-parameter space along $\sigma_3 = \omega = 0$. 
    Because of this, system \eqref{eq:B3_kappa1=0.5_cylinder} could be obtained directly from system \eqref{eq:B3_spherical_cylinder_scaling_chart} by a cylindrical parameter blow-up with $\kappa_1 = 1/2 + \sqrt{\Tilde{\eps}} \gamma_1$ together with an associated rescaling chart to resolve the loss of normal hyperbolicity along $\mathcal{C}$ given by $y=\sqrt{\Tilde{\eps}}y_2$. We opted for the less direct derivation of system \eqref{eq:B3_kappa1=0.5_cylinder} above because it makes it clear how one should proceed if the intention is to provide a complete treatment; in this case, one must also consider system \eqref{eq:B3_parameter_upper_chart_cylinder} in a neighbourhood of $\sigma_3 = \omega = 0$ (where it must be considered as a slow-fast system with $0 \leq \sigma_3 \ll 1$).
\end{rem}

System \eqref{eq:B3_kappa1=0.5_cylinder} is a regular perturbation of the following limiting problem with $\omega = \eps = 0$:
\begin{equation} \label{eq:B3_kappa1=0.5_cylinder_rho=0}
\begin{aligned}
    x'&=\frac{1}{\theta_1}\left( \frac{\tilde y_{2}}{4}(1-2\Hat{H}(x))-\gamma_1 \right), \\
    \tilde y_{2}'&=\frac{1}{\theta_2}\left( 1- \kappa_2  - \frac{\Hat{H}(x)}{2} \right).
\end{aligned}
\end{equation}
The geometry and dynamics of this system is described in the following result. 

\begin{lem} \label{lem:B3_phaseportrait_kappa2not0.75}
    Fix $\kappa_2 \in (1/2,1) \setminus \{3/4\}$. Then system \eqref{eq:B3_kappa1=0.5_cylinder_rho=0} has the following properties:
    \begin{enumerate}
        \item[(i)] There is a unique equilibrium
        \begin{equation} \label{eq:B3_center_coordinates}
            q: \left( \ln \frac{2-2\kappa_2}{2\kappa_2-1}, \frac{4 \gamma_1}{4\kappa_2-3} \right).
        \end{equation}
        \item[(ii)] If $\kappa_2<\frac{3}{4}$, then $q$ is of saddle type. If $\kappa_2 > 3/4$ and $\gamma_1 > 0$ ($\gamma_1 < 0$), $q$ is an attracting (repelling) focus/node.
        \item[(iii)] If $\gamma_1=0$, then trajectories are contained within constant level sets of the function
        $$
        \mathcal H(x, \tilde y_2) := \frac{\tilde y_{2}^2}{2} + 4(\kappa_2-1) x + 2(3-4\kappa_2) \ln \vert e^x-1 \vert .
        $$
        For $\kappa_2>\frac{3}{4}$ and $\gamma_1=0$, $q$ is a nonlinear center. 
    \end{enumerate}
\end{lem}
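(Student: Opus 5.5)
The plan is to verify the three assertions by direct computation on the limiting system \eqref{eq:B3_kappa1=0.5_cylinder_rho=0}, using only the elementary identities $\hat H'(x) = \hat H(x)(1-\hat H(x)) > 0$ and $1 - 2\hat H(x) = (1-e^x)/(1+e^x)$.

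\emph{Equilibrium (i).} Setting $\tilde y_2' = 0$ forces $\hat H(x) = 2(1-\kappa_2)$. This value lies in $(0,1)$ exactly when $\kappa_2 \in (1/2,1)$, so there is a unique root $x_q = \ln\bigl(2(1-\kappa_2)/(2\kappa_2-1)\bigr)$. At $x_q$ we have $1 - 2\hat H(x_q) = 4\kappa_2 - 3$, which is nonzero because $\kappa_2 \neq 3/4$. Setting $x' = 0$ then gives the unique value $\tilde y_2 = 4\gamma_1/(4\kappa_2-3)$, which is \eqref{eq:B3_center_coordinates}.

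\emph{Linearisation (ii).} The Jacobian at $q$ has entries
\[
\partial_x x' = -\tfrac{\tilde y_2}{2\theta_1}\hat H'(x), \qquad \partial_{\tilde y_2} x' = \tfrac{1-2\hat H(x)}{4\theta_1}, \qquad \partial_x \tilde y_2' = -\tfrac{\hat H'(x)}{2\theta_2}, \qquad \partial_{\tilde y_2}\tilde y_2' = 0 .
\]
Hence the determinant is $\det J = (4\kappa_2-3)\hat H'(x_q)/(8\theta_1\theta_2)$. This is negative (saddle) for $\kappa_2 < 3/4$ and positive for $\kappa_2 > 3/4$. The trace is $\operatorname{tr} J = -2\gamma_1 \hat H'(x_q)/\bigl(\theta_1(4\kappa_2-3)\bigr)$. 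For $\kappa_2 > 3/4$ it is therefore negative when $\gamma_1 > 0$ and positive when $\gamma_1 < 0$. Since $\det J > 0$ in that case, $q$ is an attracting, respectively repelling, focus or node.

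\emph{First integral (iii).} With $\gamma_1 = 0$, I will compute $\nabla\mathcal H \cdot (x',\tilde y_2')$ and check that it vanishes identically on $\{x \neq 0\}$. Rewriting $2(3-4\kappa_2)\,\partial_x \ln|e^x-1| = 2(3-4\kappa_2)\,e^x/(e^x-1)$ in terms of $\hat H$ and $1-2\hat H$ should make the $\tilde y_2$-terms from $\partial_x\mathcal H\, x'$ cancel $\tilde y_2\,\tilde y_2'$.

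This is the step I expect to be the main obstacle, because of bookkeeping in the constants. As written, $\mathcal H$ carries no $\theta_1,\theta_2$, so I expect the cancellation to hold only up to the positive factor $\theta_1/\theta_2$. If so, I will either absorb it by rescaling $\tilde y_2 \mapsto (\theta_1/\theta_2)^{1/2}\tilde y_2$, or weight the $\tilde y_2^2/2$ term by that factor. Doing either does not affect the level-set structure. A convenient way to find the correct weighting is to separate variables in $d\tilde y_2/dx$ and integrate $(1-\kappa_2-\hat H/2)/(1-2\hat H)$ in closed form.

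The line $\{x = 0\}$, where $1 - 2\hat H = 0$ and $\mathcal H$ is singular, does not contain $q$, because $\kappa_2 \neq 3/4$. So $\mathcal H$ is smooth near $q$.

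\emph{Nonlinear centre.} For $\kappa_2 > 3/4$, I will show $q$ is a nondegenerate local minimum of $\mathcal H$. The gradient vanishes at $q$, and the Hessian is diagonal. Its entries are $\partial^2_{\tilde y_2}\mathcal H = 1$ and $\partial_x^2\mathcal H = -2(3-4\kappa_2)\,e^x/(e^x-1)^2 > 0$. Thus the level sets near $q$ are closed curves. Since $q$ is the only equilibrium nearby and $\mathcal H$ is invariant, every orbit near $q$ is periodic, so $q$ is a nonlinear centre.
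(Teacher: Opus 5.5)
Your proposal is correct and follows the paper's route: the paper only says that (i)--(iii) follow from direct calculations. Your trace and determinant at $q$ agree with those the paper uses later in the proof of Proposition \ref{prop:B3_Hopf_canard}, and your Hessian argument for the nonlinear centre is sound. You are also right to suspect the constants. The computation gives $\frac{d}{dt}\mathcal H=\tilde y_2\,(1-\kappa_2-\hat H(x)/2)\,(\theta_2^{-1}-\theta_1^{-1})$, so the displayed $\mathcal H$ is conserved only when $\theta_1=\theta_2$. In general the weight on $\tilde y_2^2/2$ must be $\theta_2/\theta_1$, not $\theta_1/\theta_2$; equivalently, your substitution $\tilde y_2=(\theta_1/\theta_2)^{1/2}w$ recovers the paper's formula exactly in the variables $(x,w)$.
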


\begin{proof}
    This follows from direct calculations, so we omit the details for brevity.
\end{proof}

\begin{figure}[t!]
        \centering
        \includegraphics[width=0.5\linewidth]{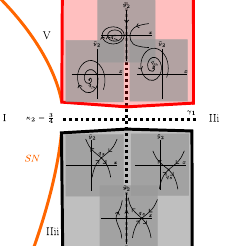}
    \caption{Phase portraits of system \eqref{eq:B3_kappa1=0.5_cylinder_rho=0}, which describes the dynamics on the blow-up cylinder which replaces the non-hyperbolic line $\mathcal C$, in different regions of the blown-up bifurcation set. The shaded red and grey regions are correlated with the scaling region (S3), where $\gamma_1 \sim 0$ (i.e.~$\kappa_1 \sim 1/2$) and $\kappa_2 \neq 3/4$ as $\tilde \eps_1 \to 0$. There is a saddle-type equilibrium $q_s$ when $\kappa_2 < 3/4$ (shaded grey region), and a focus/node-type equilibrium $q_{f/n}$ which undergoes a (vertical) Hopf bifurcation when $\kappa_2 > 3/4$ (shaded red region) and $\gamma_1$ is varied over $\gamma_1 = 0$; we refer to Lemma \ref{lem:B3_phaseportrait_kappa2not0.75} for details, and to Figure \ref{fig:B3_singular_canard_1} for a more global phase portrait in the special case with $\gamma_1 = 0$ and $\kappa_2 > 3/4$.}
    
    \label{fig:B3_blow-up_kappa1=0.5}
\end{figure}

\begin{rem}
    Lemma \ref{lem:B3_phaseportrait_kappa2not0.75} includes results for $\kappa_2 \in (1/2, 3/4)$, even though the sequence of transformations which led from system \eqref{eq:B3_spherical_cylinder_scaling_chart} to the limiting system \eqref{eq:B3_kappa1=0.5_cylinder_rho=0} requires that $\kappa_2 > 3/4$ because $\kappa_2 = 3/4 + \sigma_3$ with $\sigma_3 \geq 0$ in the $\bar \kappa_2 = 1$ chart associated with the blow-up \eqref{eq:B3_parameter_BU_spherical}. These results are valid because the same system can be obtained by an analogous sequence of transformations which start from the $\bar \kappa_2 = -1$ chart instead. We have omitted the details here for the sake of brevity. Alternatively, since $\kappa_2 \neq 3/4$, one could apply a cylindrical blow-up directly, as described in Remark \ref{rem:restriction}.
\end{rem}

A number of representative phase portraits described by Lemma \ref{lem:B3_phaseportrait_kappa2not0.75} are shown in Figure \ref{fig:B3_blow-up_kappa1=0.5}. The phase portrait described in Assertion (iii), when $\kappa_2 > \frac{3}{4}$ and $\gamma_1=0$, 
%
%
determines the dynamics on the `central' blow-up cylinder in shaded red in Figure \ref{fig:B3_singular_canard_1}, and closely resembles the phase portrait that is known to arise within the scaling chart associated with the unfolding of singular Hopf and canard-related phenomena that has been studied in, e.g.~\cite{dumortier1996canard, Maesschalck_2021, Krupa_2001_Canard}. This is not surprising, given that singular Hopf and canard phenomena have been identified in the study of regularised two-fold singularities in \cite{Bonet2018,Kristiansen_2015,Kristiansen_2023}. Using the same techniques that have been applied in all of these works, we can verify the existence of singular Hopf and canard solutions in system \eqref{eq:B3_kappa1=0.5_cylinder}. This is formalised in the following result.


\begin{prop} \label{prop:B3_Hopf_canard}
    Fix $\kappa_2 \in (3/4,1)$. There exists $\omega_0, \eps_0 > 0$ such that for all $\omega \in [0,\omega_0)$ and $\eps \in [0,\eps_0)$, system \eqref{eq:B3_kappa1=0.5_cylinder} has an equilibrium $q_{f/n}(\omega,\eps)$ which converges to \eqref{eq:B3_center_coordinates} as $(\omega, \eps) \to (0,0)$. Moreover, the following assertions are true:
    \begin{enumerate}
        \item[(i)] There is a $C^1$-smooth function $\gamma_{1,h} : [0, \omega_0) \times [0, \eps_0) \to \R$ with $\gamma_{1,h}(0,0) = 0$, for which
        $q_{f/n}(\omega, \eps)$ undergoes a (possibly degenerate) Hopf bifurcation when $\gamma_1 = \gamma_{1,h}(\omega, \eps)$;
        \item[(ii)] there is a $C^1$-smooth function $\gamma_{1,c} : [0, \omega_0) \times [0, \eps_0) \to \R$ with $\gamma_{1,c}(0,0) = 0$, for which there is a canard-like intersection of the forward (backward) extension of the attracting (repelling) slow manifolds $\mathcal S^a_{\tilde \eps}$ ($\mathcal S^r_{\tilde \eps}$) described in Lemma \ref{lem:B3_slow-manifold} when $\gamma_1 = \gamma_{1,c}(\omega, \eps)$. 
    \end{enumerate}
\end{prop}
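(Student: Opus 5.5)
The plan is to treat system \eqref{eq:B3_kappa1=0.5_cylinder} as a regular perturbation of the limiting system \eqref{eq:B3_kappa1=0.5_cylinder_rho=0} on compact subsets of the $(x,\tilde y_2)$-plane, and to use Lemma \ref{lem:B3_phaseportrait_kappa2not0.75} as the unperturbed input. First, existence of $q_{f/n}(\omega,\eps)$: at $\gamma_1=0$, $\omega=\eps=0$ the equilibrium $q$ in \eqref{eq:B3_center_coordinates} has Jacobian with zero trace and determinant $\theta_1^{-1}\theta_2^{-1}\tfrac{1}{4}\hat H'(x_q)(1-2\hat H(x_q))\cdot\tfrac12>0$; the latter holds because $\hat H(x_q)=2-2\kappa_2<1/2$ for $\kappa_2>3/4$. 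Hence the Jacobian is invertible. The implicit function theorem then gives a smooth family $q_{f/n}(\gamma_1,\omega,\eps)$ near $q$ for $(\gamma_1,\omega,\eps)$ small.

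For (i), I will consider $T(\gamma_1,\omega,\eps):=\trace J|_{q_{f/n}}$. Direct differentiation of \eqref{eq:B3_kappa1=0.5_cylinder_rho=0} gives $T(\gamma_1,0,0)=-\tfrac{1}{2\theta_1}\hat H'(x_q)\,\tilde y_{2,q}$ with $\tilde y_{2,q}=4\gamma_1/(4\kappa_2-3)$. Thus $T(0,0,0)=0$ and $\partial_{\gamma_1}T(0,0,0)\neq0$. The implicit function theorem yields $\gamma_{1,h}(\omega,\eps)$ with $\gamma_{1,h}(0,0)=0$. Since the determinant remains positive, eigenvalues cross the imaginary axis transversally, which gives a Hopf bifurcation. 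Non-degeneracy (nonzero first Lyapunov coefficient) is not claimed, and indeed $l_1=0$ at $\omega=\eps=0$ because of the first integral $\mathcal H$. This is why the statement says ``possibly degenerate''.

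For (ii), I will use a Melnikov/distance-function argument in the spirit of \cite{Krupa_2001_Extend,Krupa_2001_Canard}. The slow manifolds $\mathcal S^{a/r}_{\tilde\eps}$ of Lemma \ref{lem:B3_slow-manifold} must first be continued into this chart. In the $\tilde y_2\to\pm\infty$ directions, the unperturbed system has $x\approx0$ invariant-like behaviour. There are special solutions asymptotic to $\{x=0\}$ as $\tilde y_2\to+\infty$ and $\tilde y_2\to-\infty$, lying on the level set of $\mathcal H$ that contains the singular connection $\Gamma_2$ in Figure \ref{fig:B3_singular_canard_1}; this is the $x=0$ branch, where $\ln|e^x-1|$ signals the separatrix. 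I will track these as center-manifold extensions of $\mathcal S^{a/r}$ through the charts $\bar y=\pm1$ of \eqref{eq:y_blow-up}, as in the entry/exit chart analyses of \cite{Krupa_2001_Extend}.

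Next, I take a transversal section $\Delta=\{\tilde y_2=0,\ x<x_q\}$ and define the signed distance $D(\gamma_1,\omega,\eps)$ between the first intersections of the two manifolds with $\Delta$. Then $D(0,0,0)=0$. Evaluating $d\mathcal H$ along the unperturbed connection gives $\partial_{\gamma_1}D(0,0,0)=\int \nabla\mathcal H\cdot(-\theta_1^{-1},0)\,dt=-\theta_1^{-1}\int\partial_x\mathcal H\,dt$. The integrand has a fixed sign along the connection, since $\partial_x\mathcal H\propto x'$-dependent factors, so this derivative is nonzero. The implicit function theorem then gives $\gamma_{1,c}$.

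The main obstacle is the non-compactness of the connection. Making $D$ a $C^1$ function of $(\gamma_1,\omega,\eps)$ requires controlling the extended slow manifolds as $\tilde y_2\to\pm\infty$, and the Melnikov integral converges only because of the decay there. My first approach is to reuse the exit-chart estimates from \cite{Kristiansen_2023}, where the same regularised visible-invisible two-fold geometry is treated, and to treat the $\mathcal O(\eps)$ terms as regular perturbations throughout.
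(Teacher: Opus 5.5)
Your construction of $q_{f/n}(\omega,\eps)$ and your proof of (i) are correct and match the paper. You have the same trace and determinant at $q$, namely $\det J(q)=\tfrac18\theta_1^{-1}\theta_2^{-1}\hat H'(x_q)(4\kappa_2-3)>0$ and $\partial_{\gamma_1}\trace J(q)\neq0$. Your remark that $l_1=0$ at $\omega=\eps=0$, because of the first integral, correctly explains the qualifier ``possibly degenerate''. The outline of (ii) is also the paper's: extend $\mathcal S^{a/r}_{\tilde\eps}$ by centre manifolds in the $\bar y=\pm1$ charts, define a distance function, show its $\gamma_1$-derivative is nonzero, and apply the implicit function theorem.

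\textbf{The gap in (ii).} The one computation that actually carries (ii) fails as written. The singular canard is not an orbit ``asymptotic to $\{x=0\}$'' on a level set of $\mathcal H$. When $\gamma_1=\omega=\eps=0$, the axis $\{x=0\}$ is itself invariant, because $1-2\hat H(0)=0$ annihilates $x'$, and on it $\tilde y_2'=\theta_2^{-1}(3/4-\kappa_2)<0$. So the connection from the attracting end ($\tilde y_2\to+\infty$) to the repelling end ($\tilde y_2\to-\infty$) is $\Theta(t)=(0,-\theta_2^{-1}(\kappa_2-3/4)t)$. On this line $\mathcal H$ is singular: the term $2(3-4\kappa_2)\ln|e^x-1|$ tends to $+\infty$ as $x\to0$, so no finite level set even approaches $x=0$. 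Consequently $\nabla\mathcal H$ is undefined along $\Theta$, and your integral $-\theta_1^{-1}\int\partial_x\mathcal H\,dt$ diverges. Your sign argument via ``$x'$-dependent factors'' is also empty, since $x'\equiv0$ on $\Theta$.

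Separately, your section $\Delta=\{\tilde y_2=0,\ x<x_q\}$ does not meet the connection. Here $x_q=\ln\frac{2-2\kappa_2}{2\kappa_2-1}<0$, whereas $\Theta$ crosses $\tilde y_2=0$ at $x=0$. So your $D$ is not defined near $(\gamma_1,\omega,\eps)=(0,0,0)$, and $D(0,0,0)=0$ is not available.

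\textbf{What is needed.} Linearise along $\Theta$ itself and measure the distance on a neighbourhood of $(0,0)$ in $\{\tilde y_2=0\}$.
\begin{itemize}
\item Since $\partial_{\tilde y_2}x'=0$ on $\{x=0\}$ and $\hat H'(0)=1/4$, the $x$-component of the variational equation decouples as $\delta x'=\tfrac{a}{8}t\,\delta x-\theta_1^{-1}\delta\gamma_1$, with $a=\theta_1^{-1}\theta_2^{-1}(\kappa_2-3/4)>0$.
\item The bounded solution of the adjoint equation is $\psi(t)=(e^{-at^2/16},0)^T$.
\item This gives $\mathcal D_{\gamma_1}(0,0,0)=-\theta_1^{-1}\int_{-\infty}^{\infty}e^{-at^2/16}\,dt=-4\theta_1^{-1}\sqrt{\pi/a}\neq0$. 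This is the paper's computation, and the Gaussian decay supplies the convergence you were worried about.
\end{itemize}
If you want to keep your first-integral viewpoint, replace $\mathcal H$ by a function of it that is smooth across $x=0$. For example, take $\pm e^{-\lambda\mathcal H}$ on either side of $x=0$, with $\lambda>0$ chosen so that $|e^x-1|$ appears to the first power. Its gradient along $\Theta$ then recovers $\psi$ up to a positive factor.
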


\begin{rem}
\label{rem:Hopf_proof}
    The qualifier ``possibly degenerate'' has been included in our references to the singular Hopf bifurcation because we have not computed the first Lyapunov coefficient, which should be nonzero for a nondegenerate Hopf bifurcation. Although we were able to prove that the bifurcation is subcritical in the system \eqref{eq:B3_kappa1=0.5_cylinder}$|_{\eps = 0}$ with $0 < \omega \ll 1$ using \cite[Theorem 2.6]{Chow_1994} (see also \cite[Theorem 3.1]{Krupa_2001_Canard}), we were unable to prove subcriticality in an entire neighbourhood of $(\omega, \eps) = (0,0)$. We believe that subcriticality can be proven after (i) blowing up the point $(\omega, \eps) = (0,0)$ to a quarter circle, and (ii) proving the desired result in the relevant directional charts.
\end{rem}

\begin{proof}
Direct calculations show that the Jacobian $J(q)$ associated with the limiting system \eqref{eq:B3_kappa1=0.5_cylinder_rho=0}, evaluated at $q$, satisfies
\[
\det J(q) = \frac{1}{8} \theta_1^{-1} \theta_2^{-1} \hat H'(q) (4 \kappa_2 - 3) > 0
\]
for all $\kappa_2 \in (3/4, 1)$. We also find that the trace
\[
\mathcal G(\gamma_1) := \textup{Tr} J (q) = - 2 \theta_1^{-1} \frac{\gamma_1 \hat H'(q_x)}{4 \kappa_2 - 3} .
\]
satisfies $\mathcal G(0) = 0$ and
\[
\frac{d \mathcal G}{d \gamma_1} (0) = - 2 \theta_1^{-1} \frac{\hat H'(q_x)}{4 \kappa_2 - 3}  < 0 ,
\]
for all $\kappa_2 \in (3/4,1)$, thereby confirming that $q$ undergoes a (possibly degenerate) Hopf bifurcation in the limiting system \eqref{eq:B3_kappa1=0.5_cylinder_rho=0} when $\gamma_1 = 0$. Combining this with regular perturbation arguments and the implicit function theorem leads to Assertion (i).

\

Assertion (ii) can be proven using an extension of the Melnikov method which applies to problems on non-compact domains which has been formulated for planar slow-fast systems in \cite{krupa_extending_transcritical}, and in arbitrary (finite) dimensions in \cite{Wechselberger2002}. Firstly, one needs to extend the Fenichel slow manifolds $\mathcal S^{a/r}_{\tilde \eps}$ into the scaling chart $\bar \omega = 1$ associated with the blow-up \eqref{eq:y_blow-up}. This should be done in the $\bar y = \pm 1$ charts. The desingularised equations in $\bar y = 1$ (where $y = \rho_1$ and $\omega = \rho_1 \omega_1)$ are given by
\[
\begin{split}
    x' &= \theta_1^{-1} \left[ \frac{1}{4} \left( 1 - 2 \hat H(x) \right) - \omega_1 \gamma_1 + \mathcal O(\rho_1^2 \omega_1^3, \eps \omega_1) \right] , \\
    \rho_1' &= \theta_2^{-1} \rho_1 \omega_1^2 \sigma_3 \left[ \frac{1}{4} - \sigma_3 - \frac{1}{2} \hat H(x) - \frac{1}{4} \rho_1 \hat H(x) + \mathcal O(\rho_1^3, \omega_1^3, \eps \rho_1 \omega_1) \right] , \\
    \omega_1' &= - \theta_2^{-1} \omega_1^3 \sigma_3 \left[ \frac{1}{4} - \sigma_3 - \frac{1}{2} \hat H(x) - \frac{1}{4} \rho_1 \hat H(x) + \mathcal O(\rho_1^3, \omega_1^3, \eps \rho_1 \omega_1) \right] .
\end{split}
\]
The center manifold theorem implies the existence of a local 2-dimensional and attracting center manifold $\mathcal M^a$ in a neighbourhood about $p_a : (0,0,0)$ (the linearisation at the origin has eigenvalues $-1/8, 0, 0$), which can be used to extend $\mathcal S^a_{\tilde \eps}$ into the scaling chart $\bar \omega = 1$; see e.g.~\cite{Krupa_2001_Extend,krupa_extending_transcritical,dumortier1996canard,Maesschalck_2021} for details. An analogous argument shows the existence of a local 2-dimensional and repelling center manifold $\mathcal M^r$ based at $p_r : (0,0,0)$ in the $\bar y = -1$ chart (where $y = -\rho_3$ and $\omega = \rho_3 \omega_3$), which can be used to extend $\mathcal S^r$ into the scaling chart $\bar \omega = 1$.

Combining the observations above with the fact that the limiting system \eqref{eq:B3_kappa1=0.5_cylinder_rho=0}$|_{\gamma_1 = 0}$ has a unique solution $\Theta(t) = (0,-\theta_2^{-1} ( \kappa_2 - 3/4) t)$ which is forward (backward) asymptotic to $p_r$ ($p_a$) when $\kappa_2 \in (3/4, 1)$ shows that Assumptions 1-3 in \cite{Wechselberger2002} are satisfied (we have permitted a slight abuse of notation by denoting the independent variable in system \eqref{eq:B3_kappa1=0.5_cylinder_rho=0} by $t$). This allows one to define a signed distance function
\[
\mathcal D(\omega, \eps, \gamma_1) := x^r(\omega, \eps, \gamma_1) - x^a(\omega, \eps, \gamma_1) ,
\]
where $x^{a/r}$ denotes the $x$-component of the intersection of the extensions of $\mathcal M^{a/r}$ (and therefore also $\mathcal S^{a/r}_{\tilde \eps}$) under the forward/backward flow induced by system \eqref{eq:B3_kappa1=0.5_cylinder} with $\{y = 0\}$ (regular perturbation theory guarantees that intersections exist for sufficiently small $0 < \omega, \eps \ll 1$). Notice that $\mathcal D(0,0,0) = 0$. Our aim in the following is to show that an intersection persists for $0 < \omega, \eps \ll 1$. Following \cite{Wechselberger2002}, our signed distance function $\mathcal D(\omega, \eps, \gamma_1)$ can be written as an expansion of the form
\[
\mathcal D(\omega, \eps, \gamma_1) = \mathcal D_\omega(0,0,0) \omega + \mathcal D_\eps(0,0,0) \eps + \mathcal D_{\gamma_1}(0,0,0) \gamma_1 + \mathcal O(|(\omega, \eps, \gamma_1)|^2) .
\]
The coefficients $\mathcal D_\alpha(0,0,0)$, where $\alpha \in \{\omega, \eps, \gamma_1\}$, are Melnikov integrals of the form
\[
\mathcal D_\alpha(0,0,0) = \int_{-\infty}^\infty \left\langle \psi(t), \frac{\partial f}{\partial \alpha}(\Theta(t), 0, 0) \right\rangle dt , 
\]
where $\psi(t) = (e^{-at^2 / 16} , 0)^T$, where $a: = \theta_1^{-1} \theta_2^{-1} ( \kappa_2 - 3/4) > 0$, is a bounded solution to the adjoint variational equation
\[
\psi'(t) = - 
\begin{pmatrix}
    - a t / 8 & 1 / (8\theta_2) \\
    0 & 0
\end{pmatrix}
\psi(t) 
\]
associated with the limiting system \eqref{eq:B3_kappa1=0.5_cylinder_rho=0}$|_{\gamma_1 = 0}$ and evaluated along $\Theta(t) = (0, - a t)$. Evaluating $\mathcal D_{\gamma_1}(0,0,0)$ in particular leads to
\[
\mathcal D_{\gamma_1}(0,0,0) = - 4 \theta_1^{-1} \sqrt{\frac{\pi}{a}} < 0 .
\]
Since $\mathcal D_{\gamma_1}(0,0,0) \neq 0$, the implicit function theorem implies the existence of the $C^1$-function $\gamma_{1,c}$ in Assertion (ii).
\end{proof}

Proposition \ref{prop:B3_Hopf_canard} confirms the presence of a singular Hopf bifurcation and canard solutions in system \eqref{eq:B3_kappa1=0.5_cylinder}. In addition to that, Proposition \ref{prop:B3_canard_uniqueness} asserts the existence of large amplitude canard cycles in system \eqref{eq:B3_spherical_scaling_chart}, and Lemma \ref{lem:B3_phaseportrait_kappa2not0.75} shows that the left-hand side of the blow-up cylinder shown in red in Figure \ref{fig:B3_singular_canard_1} is foliated by periodic orbits when $\gamma_1 = 0$. All of these features support the claim that system \eqref{eq:B3_spherical_scaling_chart} features a canard explosion in which the small amplitude unstable cycle that is born in the Hopf bifurcation, rapidly grows under exponentially small variations of $\gamma_1$ into a large amplitude canard cycle. Ultimately we expect (but do not prove) that the cycle terminates in a canard-homoclinic orbit which perturbs from a singular canard-homoclinic cycle $\Gamma_4$, which is anchored at $q_s$ and can be constructed as shown in Figure \ref{fig:B3_PWS_sing_cycles}. This is similar to the \textit{incomplete canard explosions} described in \cite{Wechselberger2015}, which also arose when a return mechanism was combined with the unfolding of a singular Bogdanov-Takens bifurcation (the local normal form for the `generic' singular Bogdanov-Takens bifurcation was treated in \cite{deMaesschalck2011}). Altogether, the preceding analyses support the claim that the (blown-up) bifurcation set is as shown in Figure \ref{fig:B3_blown-up_bifurcation_set}.

\begin{figure}[t!]
    \centering
    \includegraphics[width=0.5\linewidth]{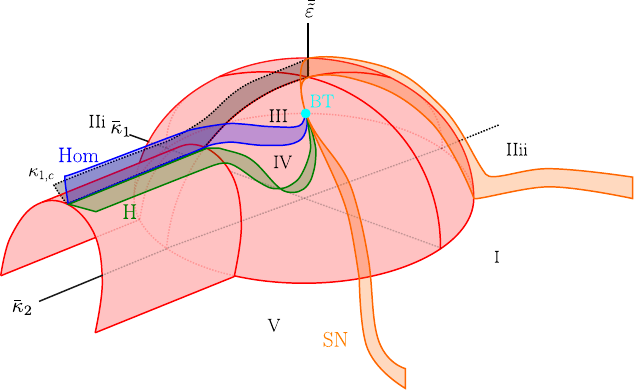}
    \caption{Conjectured $B_3$ bifurcation set in the blown-up $(\bar \kappa_1, \kappa_2, \bar{\tilde \eps})$-space, cf. earlier figures which only show the limiting bifurcation set when $\tilde \eps = 0$. Surfaces corresponding to saddle-node, subcritical Hopf, homoclinic and canard solutions are shown in orange, green, blue and grey respectively. Further details are given in the text.}
    \label{fig:B3_blown-up_bifurcation_set}
\end{figure}

We are now in a position to complete the proof of Theorem \ref{thm:B3}.
\begin{proof}[Proof of Theorem \ref{thm:B3}]
    Theorem \ref{thm:B3} follows directly from the results of this section after applying the relevant blow-down transformations. In particular:
\begin{itemize}
    \item Assertion (i) follows from 
    Proposition \ref{prop:B3_saddle-node}; 
    \item Assertion (ii) follows from 
    Proposition \ref{prop:B3_Hopf_canard} assertion (i);
    \item Assertion (iii) follows from Proposition 
    \ref{prop:B3_Hopf_canard} assertion (ii).
\end{itemize}
In each case, the relevant blow-down transformation is only defined for $\eps_1, \eps_2 > 0$. This leads to the restriction to $(\eps_1, \eps_2) \in (0, \eps_{1,0}) \times (0, \eps_{2,0})$ in each of the assertions (i)-(iii).
\end{proof}

\section{Summary and outlook} \label{sec:Outlook}
Plahte and Kjøglum's \textit{fundamental problem}, as given by system \eqref{eq:fund_prob}, has been analysed by a number of authors because it serves as a low-dimensional testbed for the study of a larger class of ODE-based models for gene regulatory dynamics; we refer again to \cite{Simon_thesis,Machina2011,Edwards2015,Plahte_GRN_2005,Quee2021} for existing studies. While there is already a significant literature devoted to the study of GRN dynamics in the (higher-dimensional counterpart to the) asymptotic regime $0 < \eps_1, \eps_2 \ll 1$, including the above-cited work of Plahte and Kjøglum, analytical studies have primarily involved an equal steepness assumption; $\eps_1 = \eps_2$ in the case of system \eqref{eq:fund_prob} and $\eps_1 = \cdots = \eps_n$ in the case of higher dimensional networks with $n$ steepness parameters $\eps_i$. In this work, we have presented a comprehensive analytical treatment of system \eqref{eq:fund_prob} with $0 < \eps_1, \eps_2 \ll 1$, over a relatively large region of the parameter space for which $(\kappa_1, \kappa_2) = (\alpha_1 \theta_1, \alpha_2 \theta_2) \in \Lambda$. We showed that distinct singular limits are obtained when the limit $(\eps_1, \eps_2) \to (0,0)$ is taken from within $B_1$, $B_2$ and $B_3$. This led to different bifurcation sets, i.e.~to different qualitative dynamics, and it is this subtle but important fact that we believe is the most significant implication for ODE-based models for GRN dynamics more generally.

Our findings strongly suggest that in practice, for a given ODE-based model for GRN dynamics, \textit{it is the relative size of the steepness parameters which determines the `correct' singular limit}. Another significant point in practice relates to the number of effective dimensions at play: our analyses and results in $B_2$ showed that the qualitative dynamics near the intersection $\Sigma_1 \cap \Sigma_2$ are effectively 2-dimensional; this is expected and perfectly consistent with existing studies which assume an equal steepness property. In both $B_1$ and $B_3$, however, the fact that $\eps_1 / \eps_2$ is either small or large leads to slow-fast dynamics near $\Sigma_1 \cap \Sigma_2$, which means that the effective dynamics there are organised (at least in large part) by 1-dimensional layer and reduced problems. In essence, the presence of steepness parameters on different asymptotic orders leads to additional time-scales, and this leads to lower dimensional reduced problems because the dimension of the invariant manifold that is expected to organise the slowest time-scale is equal to the number of slow variables on that time-scale. 
This may be significant in high-dimensional models which have only a small number of `smallest' switching parameters $\eps_i$. Indeed, if dimension-reduction is the goal, then observations such as these suggest that having $\eps_i$ values which span several orders of magnitude is \textit{desirable} for analytical and modelling purposes.

GRNs aside, one of our main intentions in this work has been to showcase and extend an analytical approach to the study of singular perturbation problems with multiple small parameters which has been developed and applied in e.g.~\cite{deMaesschalck2011,baumgartner_robertson_2025}. 
The most important aspect of this approach, the steps of which were summarised towards the end of Section \ref{sec:Intro}, is the use of a preliminary blow-up in the small parameter space. By blowing up the origin $(\eps_1, \eps_2) = (0,0)$ to a quarter-circle via \eqref{eq:parameter_blow-up} as shown in Figure \ref{fig:parameter_blow-up}, we were able to `tease apart' the different singular limits in each $B_i$ by working in parameter charts $\mathcal P_1$, $\mathcal P_2$ with small parameters that are `better suited' to local analyses in particular regions $B_i$. Later on, we applied additional blow-ups in the $(\kappa_1, \kappa_2, \tilde \eps_2)$-parameter space in order to desingularise and characterise the singular bifurcation set in $B_3$; the blow-up sequence is shown in Figure \ref{fig:B3_parameter_sequence}. Although we did not present all the details here (we refer again to Remark \ref{rem:restriction}), the $B_3$ analysis presented in Section \ref{sec:Region_B3} suggested that blow-up approaches of this kind may be very useful as a tool for connecting bifurcation curves across different asymptotic regimes. Looking forward, we are hopeful that the methodology presented herein will be useful for rigorous dynamical systems based analyses of a wide range of problems with multiple small parameters.

\section{Acknowledgements} \label{sec:Acknowledgements}
This work was inspired by the geometric blow-up analysis of Plahte and Kjøglum's fundamental problem presented in Frieder Simon's MSc thesis \cite{Simon_thesis}, which was supervised by Peter Szmolyan. We thank Peter Szmolyan in particular for introducing us to the problem and for helpful discussions throughout the project.
LB also thanks Adelaide University and the second author for their outstanding hospitality during a research stay and TU Wien for their financial support. SJ was supported by European Union Marie Skłodowska-Curie Postdoctoral Fellowship grant 101103827 in the early stages of the project, during which time he was based at TU Wien.

\printbibliography[heading=bibintoc,
title={References}]


\end{document}